\theoremstyle{plain}
\newtheorem{thm}{Theorem}[section]
\newtheorem{lem}[thm]{Lemma}
\newtheorem{cor}[thm]{Corollary}
\newtheorem{prop}[thm]{Proposition}
\newtheorem*{mthm}{Theorem}
\theoremstyle{definition}
\newtheorem{ntt}[thm]{}
\newtheorem{ex}[thm]{Example}
\newtheorem{rem}[thm]{Remark}
\newtheorem{dfn}[thm]{Definition}
\newtheorem{ass}[thm]{Assumption}
\newcommand{\lbr}{[\hspace{-1.5pt}[}  % left bracket <
\newcommand{\rbr}{]\hspace{-1.5pt}]}  % right bracket >
\newcommand{\FGLplus}[1]{+_{#1}} %formal group +
\newcommand{\fplus}{\FGLplus{F}} % formal group +_F
\newcommand{\fminus}{-_F} % formal group -_F
\newcommand{\ftimes}{\cdot_F} % formal group n-times with F
\newcommand{\Fplus}{\boxplus} % + on FGR
\newcommand{\Fminus}{\boxminus} % - on FGR
\newcommand{\Ftimes}{\boxdot} % n-times on FGR
\newcommand{\FGR}[3]{#1 \lbr #2 \rbr_{#3}} % formal group ring 
\newcommand{\RMF}{\FGR{R}{M}{F}} % the same as above, but with R, M, F
\newcommand{\aug}{\epsilon} % augmentation map from the formal group ring to the ring
\newcommand{\augH}{\varepsilon} % augmentation map from the abstract cohomology to the ring
\newcommand{\RMFod}{\FGR{R}{M}{F}^{o\vee}} % continuous R-dual with R, M, F
\newcommand{\DFG}[3]{{\mathcal D}_{#1}(#2)_{#3}} % endomorphisms of the formal group ring generated by the delta and multiplications 
\newcommand{\DMF}{\DFG{}{M}{F}} % the same as above, but with M, F and R 
\newcommand{\eDFG}[3]{\aug{\mathcal D}_{#1}(#2)_{#3}} % linear forms of the form epsilon times an endomorphism in \DFG 
\newcommand{\eDMF}{\eDFG{}{M}{F}} % the same as above, but with M, F and R 
\newcommand{\mH}{{\mathcal H}}
\newcommand{\HFG}[3]{\mH_{#1}(#2)_{#3}} % algebraic replacement of the cohomology 
\newcommand{\HMF}{\HFG{}{M}{F}} % the same as above, but with M, F and R
\newcommand{\Gr}{{\mathcal Gr}} % associated graded thing
\newcommand{\Grr}[4]{{\mathcal Gr}^{#1}_{#2}(#3,#4)} % associated graded ring
\newcommand{\GRMF}{\Grr{*}{R}{M}{F}} % associated graded ring but really with R, M, F
\newcommand{\Sym}[3]{S^{#1}_{#2}(#3)} % Symmetric algebra of #2 with coefficients in #1 
\newcommand{\SRM}{\Sym{*}{R}{M}} % Symmetric algebra of M with coefficients in R 
\newcommand{\DSym}{{\mathcal D}} % Symmetric algebra of M with coefficients in R 
\newcommand{\Lb}{\mathcal{L}} %  line bundle
\newcommand{\Eb}{\mathcal{E}} %  general bundle
\newcommand{\OO}{\mathcal{O}} %  structure sheaf
\newcommand{\Tb}{\mathcal{T}} %  tangent bundle
\DeclareMathOperator{\HH}{\mathsf{h}}   % general cohomology theory
\DeclareMathOperator{\CH}{\mathsf{CH}}  % Chow groups
\DeclareMathOperator{\KK}{\mathsf{k}}   % connective K-theory
\DeclareMathOperator{\KT}{\mathsf{K}}   % K-theory
\newcommand{\pair}[1]{<\hspace{-1.5pt}#1\hspace{-1.5pt}>} % scalar product on caracters
\newcommand{\ie}{i.e.\ }
\newcommand{\loccit}{loc. cit.}
\newcommand{\cf}{cf.\ }
\newcommand{\PP}{\mathbb{P}}  % projective space 
\newcommand{\ZZ}{\mathbb{Z}}  % integers 
\newcommand{\LL}{\mathbb{L}}  % Lazard ring
\newcommand{\TT}{\mathbf{t}}  % variables
\newcommand{\mI}{\mathcal{I}} % augmentation ideal
\newcommand{\mJ}{\mathcal{J}} % relations ideal 
\newcommand{\xra}[1]{\xrightarrow{#1}}  % right arrow with index
\newcommand{\toby}[1]{\stackrel{#1}{\to}} %right arrow with upper label
\newcommand{\equalby}[2]{\stackrel{#2}{#1}}
\newcommand{\tor}{\mathfrak{t}} %torsion index of the root system
\newcommand{\Del}{\Delta} %Delta operator of Demazure
\newcommand{\gDel}{{\mathcal Gr}\Delta} %Delta operator on the graded ring 
\newcommand{\CC}{C} %C operator of BGG
\newcommand{\gCC}{{\mathcal Gr}\CC} % Delta operator on the graded ring 
\newcommand{\Aa}{A} %A operator corresponding to C but on the abstract cohomology 
\newcommand{\AaGB}{\tilde A} %operator corresponding to A on the concrete cohomology of G/B
\newcommand{\Bb}{B} %B operator corresponding to D but on the cohomology 
\newcommand{\ee}{e} %element in \RMF used in the definition of CC
\newcommand{\ab}{\tau} %element of the basis
\newcommand{\bb}{\zeta} %element of the basis
\newcommand{\Zb}{\zeta} %same elements in the tables at the end 
\newcommand{\zz}{z} %element of the basis
\DeclareMathOperator{\id}{\mathrm{id}} % identity map
\newcommand{\pt}{{\rm pt}} %base point
\newcommand{\cc}{\mathfrak{c}} %characteristic map
\newcommand{\rev}{\mathrm{rev}} %reverse sequence
\newcommand{\pr}{\pi_\mH} %algebraic \pi_* 
\title[Oriented cohomology of complete flags]{Invariants, torsion indices and oriented cohomology of complete flags}
\author{B.~Calm\`es}
\address{Universit\'e d'Artois, Laboratoire de Math\'ematiques de Lens, France}
\email{baptiste.calmes@univ-artois.fr}
\author{V.~Petrov}
\address{Max-Plank-Institut f\"ur Mathematik, Bonn, Germany}\email{victorapetrov@googlemail.com}
\author{K.~Zainoulline}
\address{Department of Mathematics and Statistics, University of Ottawa, Canada}
\email{kirill@uottawa.ca}
\thanks{The first author was supported by EPSRC grant EP/E01786X/1. The second author is supported by RFBR~08-01-00756, 09-01-00878, 09-01-90304, 09-01-91333. The third author is supported by NSERC Discovery Grant}
\begin{document}

\maketitle

\tableofcontents

%%%%%%%%%%%%%%%%%%%%%%%%%%%%%%%%%%%%%%%%%%%%%%%%%%%%%%%%%%%%%%
%%%%%%%%%%%%%%%%%%%%%%%%%%%%%%%%%%%%%%%%%%%%%%%%%%%%%%%%%%%%%%
%%%%%%%%%%%%%%%%%%%%%%%%%%%%%%%%%%%%%%%%%%%%%%%%%%%%%%%%%%%%%

\section{Introduction}

Let $\mathsf{H}$ be an algebraic cohomology theory endowed with Chern classes $c_i$ such that for any two line bundles $\Lb_1$ and $\Lb_2$ over a variety $X$ we have 
\begin{equation}\label{fglad}
c_1(\Lb_1\otimes \Lb_2)=c_1(\Lb_1)+c_2(\Lb_2).
\end{equation}
The basic example of such a theory is the Chow group $\CH$ of algebraic cycles modulo rational equivalence. 

Let $G$ be a split semi-simple linear algebraic group over a field $k$ and let $T$ be a split maximal torus inside $G$ contained in a Borel subgroup $B$. 
Consider the variety $G/B$ of Borel subgroups of $G$ with respect to $T$.
In two classical papers \cite{Demazure73} and \cite{Demazure74} 
Demazure studied the cohomology ring $\mathsf{H}(G/B;\ZZ)$ and provided an algorithm to compute 
$\mathsf{H}(G/B;\ZZ)$ in terms of generators and relations.

The main object of his consideration was the so called characteristic map
\begin{equation}\label{charm}
\cc\colon S^*(M)\to \mathsf{H}(G/B;\ZZ),
\end{equation}
where $S^*(M)$ is
the symmetric algebra of the character group $M$ of $T$.
In \cite{Demazure73} Demazure interpreted this map from
the point of view of invariant theory of the Weyl group $W$ of $G$ by identifying its kernel with the ideal generated by non-constant invariants $S^*(M)^W$.
The cohomology ring $\mathsf{H}(G/B;\ZZ)$ was then replaced by a certain algebra constructed in terms of operators and defined in purely combinatorial terms.

In the present paper, we generalize most of the results of \cite{Demazure73} to the case of an arbitrary oriented cohomology theory $\HH$, \ie when \eqref{fglad} is replaced by
$$
c_1(\Lb_1\otimes \Lb_2)=F(c_1(\Lb_1),c_1(\Lb_2))
$$
where $F$ is the formal group law associated to $\HH$.
Such theories were extensively studied by Levine-Morel \cite{Levine07}, Panin-Smirnov \cite{Panin03}, Merkurjev \cite{Merkurjev02} and others. 
Apart from the Chow ring, other examples include algebraic $K$-theory, \'etale cohomology $H^*_{et}(-,\mu_m)$, $(m, char(k))=1$, Morava $K$-theories, connective $K$-theory, elliptic cohomology and the universal such theory: algebraic cobordism $\Omega$.

To generalize the characteristic map \eqref{charm}, we first introduce a substitute for the symmetric algebra $S^*(M)=\CH^*(BT)$.
This new combinatorial object, which we call a {\em formal group ring}, is denoted by $\RMF$, where $R=\HH(pt)$ is the coefficient ring, 
and can be viewed as a substitute 
of the cohomology ring of the classifying space $\HH(BT)$ of $T$. 
As in \cite{Demazure73}, we introduce a subalgebra $\DSym(M)_F$ of the $R$-linear endomorphisms of $\RMF$ generated by specific differential operators and by taking its $R$-dual we obtain $\HMF$, a combinatorial substitute for the cohomology ring $\HH(G/B;\ZZ)$.
The characteristic map \eqref{charm} then turns into the map
$$\cc \colon \RMF\to \HMF.$$ 
The Weyl group $W$ acts naturally on $\RMF$ 
and the main result of our paper (theorems  \ref{DR:surjchar} and \ref{DR:kercharmap_thm}) says that: 
\begin{mthm} \label{main_thm}
If the torsion index of $G$
is invertible in $R$ and $R$ has no $2$-torsion,
then the characteristic map is surjective and its kernel is generated by $W$-invariant elements in the augmentation ideal.
\end{mthm}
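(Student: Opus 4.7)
The proof splits naturally into two assertions, matching the separately stated Theorems \ref{DR:surjchar} and \ref{DR:kercharmap_thm}: surjectivity of $\cc$, and the identification of $\ker(\cc)$ with the ideal $J \subset \RMF$ generated by non-constant $W$-invariants.

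For \textbf{surjectivity}, the plan is to produce a dual BGG/Schubert-type basis of $\HMF$ in the image of $\cc$ via formal Demazure operators. For each simple root $\alpha$, define on $\RMF$ the operator
\[
\Delta_\alpha(f) = \frac{f - s_\alpha(f)}{x_\alpha},
\]
where $x_\alpha$ denotes the class of the character $\alpha$. These should intertwine, via $\cc$, with the corresponding generators of $\DSym(M)_F$, whose $R$-dual is $\HMF$ by construction, yielding a basis $\{X_w\}_{w\in W}$ of $\HMF$ dual to $\{\Delta_w\}$. Starting from the top class $\cc\bigl(\prod_{\alpha>0} x_\alpha\bigr)$, which should coincide up to a unit with $X_{w_0}$, one recovers every $X_w$ by iterated application of Demazure operators, so that the image of $\cc$ --- being stable under these operators --- is all of $\HMF$. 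Here invertibility of the torsion index is needed precisely to ensure that the normalization constant relating the explicit top class and $X_{w_0}$ is a unit in $R$.

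For the \textbf{kernel}, the containment $J \subseteq \ker(\cc)$ is the easy half: since $W$ acts trivially on $\HMF$ while $\cc$ is $W$-equivariant, any non-constant $W$-invariant of $\RMF$ is sent into a subspace that augmentation-theoretic considerations force to be zero. The reverse containment is the substantive point. The plan is to prove that $\RMF/J$ is a free $R$-module of rank exactly $|W|$; since $\HMF$ is free of the same rank $|W|$ and the surjectivity from the previous step factors through $\RMF/J$, comparing ranks forces the resulting surjection $\RMF/J \twoheadrightarrow \HMF$ to be an isomorphism.

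The \textbf{main obstacle} is the rank computation for $\RMF/J$, which is the formal-group-law analog of Chevalley's theorem that $S^*(M)$ is free of rank $|W|$ over its $W$-invariants. In the additive (Chow) case this holds integrally, but for a general formal group law it can fail at primes dividing the torsion index of $G$, which is exactly why the hypothesis of invertibility of the torsion index in $R$ enters. My approach would be to filter $\RMF$ by powers of the augmentation ideal, compare the associated graded to $S^*(M)\otimes_\ZZ R$ where Demazure's classical Chevalley-type results apply after inverting the torsion index, and then lift the rank equality back to $\RMF/J$ by a Nakayama-style argument. The absence of $2$-torsion in $R$ is used to control the sign-character and symmetrizer computations (in particular those involving the pairing of positive and negative roots) that arise in this comparison.
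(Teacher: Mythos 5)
Your overall plan has the right shape—two halves matching the paper's Theorems \ref{DR:surjchar} and \ref{DR:kercharmap_thm}, with the torsion index and the $2$-torsion hypothesis entering where you say they do—but there are two serious gaps, and both happen to be exactly the pitfalls the paper is designed to avoid.

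The first is that you write $\Delta_w$ and speak of ``iterated application of Demazure operators'' as if composites $\Delta_{\alpha_{i_1}}\cdots\Delta_{\alpha_{i_l}}$ depend only on $w=s_{i_1}\cdots s_{i_l}$. For a general formal group law they do not: independence of the reduced decomposition holds only for $F(x,y)=x+y-vxy$ (Theorem \ref{DO:indep_dec}), and its failure in general is flagged in the introduction as the central obstacle. The paper therefore fixes a choice of reduced decompositions $I_w$ once and for all and works throughout with $\Del_{I_w}$; the basis of $\HMF$ dual to $\{\aug\Del_{I_w}\}$ depends on these choices (Theorem \ref{DR:charMapBasis}), and the induction proving surjectivity (Theorem \ref{DR:surjchar}) is carried out with the explicit correction terms coming from $\aug\Del_{I_v}\Del_{I_{w^{-1}w_0}}(u_0)$ for $l(v)>l(w)$, controlled by Lemma \ref{DR:reducedI0}. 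A proof that never confronts this choice-dependence is not merely incomplete; it is structured around objects that do not exist.

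The second gap is in the kernel argument. Your plan—filter $\RMF$ by powers of $\mI_F$, compare the associated graded to the symmetric algebra where Demazure's integral Chevalley theorem applies, and lift the rank of $\RMF/J$ by a Nakayama-style argument—is essentially the strategy that the paper explicitly says is flawed even in the additive case (see the Remark following the theorem on the $\RMF^W$-basis): one cannot know a priori that the associated graded of $J$ is the ideal generated by positive-degree $W$-invariants in $S^*_R(M)$, nor that $J$ has the right form to run graded Nakayama, without already knowing the conclusion; averaging over $W$ to lift invariants to the graded pieces is also unavailable since $|W|$ need not be invertible even when $\tor$ is. The paper instead proves that $\{\Del_{I_w}(u_0)\}_{w\in W}$ is an $\RMF^W$-basis of $\RMF$ by an explicit linear system (Proposition \ref{DR:system}): the system's matrix is triangular modulo $\mI_F$ with $\tor$ on the diagonal, hence invertible since $\mI_F$ is in the Jacobson radical of the complete ring $\RMF$, and the solution is shown to be $W$-invariant by applying $\Del_\alpha$ and using uniqueness. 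Once that basis is in hand, the kernel computation in Theorem \ref{DR:kercharmap_thm} is a two-line comparison of coefficients, with no Nakayama lemma and no rank count on $\RMF/J$. Finally, your justification of the easy inclusion $J\subseteq\ker(\cc)$ (``$W$ acts trivially on $\HMF$'') is not quite the right statement: $\HMF$ is a dual of operator algebras, not a quotient of $\RMF$ carrying a $W$-action, and the correct mechanism is that $\cc(fu)=\aug(f)\cc(u)$ for $f\in\RMF^W$ (Remark \ref{DR:ccepsW}), which rests on the $\RMF^{W}$-linearity of $\Del_\alpha$ and this is precisely where the no-$2$-torsion hypothesis is used.
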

Demazure's methods to prove this theorem in the special case of the additive formal group law do not work in general, for the following reason: the main objects used in his proofs are operators $\Delta_w$ for every $w\in W$. They are defined first for simple reflections, and afterwards for any $w$ by decomposing it into simple reflections and composing the corresponding operators. It is then proved that the resulting composition is independent of the decomposition of $w$. 
For more general formal group laws, similar operators can still be defined for simple reflections (see Definition \ref{DO:defdelta}), but independence of the decomposition does not hold, as it was observed in \cite{Bressler92}. Geometrically, it can be translated into the fact that the cobordism class of a desingularized Schubert variety depends on the desingularization, and not only on the Schubert variety itself (see Lemma \ref{AG:ptcu0_lem}). 
We overcome this problem by working with suitable filtrations such that the associated graded structures are covered by the additive case of Demazure 
(see in particular Proposition \ref{DR:GrDel}). We therefore encourage the reader unfamiliar with \cite{Demazure73} to start by having a quick look at it before reading our sections \ref{DO} to \ref{PS}.

\medskip

As an immediate application of the developed techniques, we provide an efficient
algorithm for computing the cohomology ring $\HMF=\HH(G/B;\ZZ)$. 
To do this we generalize the Bott-Samelson approach introduced in \cite{BottSamelson} and \cite{Demazure74}.
For oriented topological theories, some algorithms were considered by Bressler-Evens in \cite{Bressler90,Bressler92} and for algebraic theories in characteristic $0$ by Hornbostel-Kiritchenko in \cite{Hornbostel09_pre}. See remark \ref{AM:algogood_rem} for a comparison.
\medskip

Note that the theorem also provides another approach to computing
the cohomology ring $\HH(G/B;\ZZ)$ 
by looking at the subring of invariants $\RMF^W$. Observe that in the classical
case when $\HH=\CH$ (or $K_0$) 
and $G$ is simply-connected it is known that $\RMF^W$ is a power series ring in basic polynomial invariants (resp. fundamental representations).
In general, the structure of $\RMF^W$ remains unknown.

\medskip

Finally, for the reader primarily interested in topology, let us mention that while all our proofs are algebraic and written in the language of algebraic geometry, the results apply as they are to topological cobordism or other complex oriented theories. Indeed, there is a canonical ring morphism
$$\Omega^*(G/B) \to {\rm MU}^*(G/B(\mathbb C))$$
(see \cite[Ex. 1.2.10]{Levine07}) that is an isomorphism because both are free modules over the Lazard ring with bases corresponding to each other (given by desingularized Schubert cells).

\medskip
The paper consists of three parts. 

In the first part, we generalize the results of \cite{Demazure73}
by introducing and studying the generalized characteristic map $\cc\colon \RMF\to\HMF$. 
In section \ref{FGR}, we introduce the formal group ring $\RMF$ and prove its main properties. In section \ref{DO}, we define the main operators $\Del$ and $\CC$ on $\RMF$. In sections \ref{DR} and \ref{IT}
we study the subalgebra $\DMF$ of operators generated by $\Del$ (resp.~$\CC$)
and multiplications. In section~\ref{IC} 
we define $\HMF$ and prove the main theorem.
In section \ref{PS} we introduce a product on $\HMF$ compatible
with the characteristic map $\cc$.

In the second part, we generalize some of the results of \cite{Demazure74}
to arbitrary oriented cohomology theories.
In section \ref{AC}, we discuss properties of oriented theories.
In sections \ref{OC} to \ref{BR}, 
we carry out the Bott-Samelson desingularization approach. 

In the last part, we apply the results of the first and the second parts to obtain
information about the ring structure in $\HH(G/B;\ZZ)$.
In section \ref{AG}, we prove that our algebraic replacement $\HMF$ is isomorphic (as a ring) to the oriented cohomology $\HH^*(G/B;\ZZ)$ and that the characteristic maps $\cc_{G/B}$ and $\cc$ correspond to each other via this isomorphism. In section \ref{MA}, we give an algebraic description of the push-forward to the point and we prove various formulas. In sections \ref{AM} and \ref{LN}, we explain an algorithm to compute the ring structure of $\HH^*(G/B;\ZZ)$ and the Landweber-Novikov operations on algebraic cobordism. Finally, in section \ref{EG}, we give multiplication tables for $\Omega^*(G/B;\ZZ)$ for groups $G$ of rank $2$. 

\medskip
\paragraph{\bf Notation}
Let $k$ denote a base field of arbitrary characteristic.
A variety over $k$ means a reduced irreducible scheme of finite type over $k$.
By $X$ and $Y$ we always mean smooth varieties over $k$.
The base point $Spec\;k$ is denoted by $\pt$.

A ring always means a commutative ring with a unit and $R$ always denotes a ring. 
A ring $R'$ is called an $R$-algebra if it comes equipped with an injective ring homomorphism $R\hookrightarrow R'$.
The letter $M$ always denotes an abelian group.

All formal group laws are assumed to be one-dimensional and commutative.
Let $F$ denote a formal group law and let $\LL$ denote the Lazard ring, \ie\ the coefficient ring of the universal formal group law $U$.

\medskip
\paragraph{\bf Acknowledgments}
Our sincere gratitude goes to Michel Demazure 
for his answers to our questions concerning the paper \cite{Demazure73}.
We would like to thank the authors of the Macaulay~2 software \cite{M2}, 
and, in particular Dan Grayson for his quick and 
accurate responses on the Macaulay~2 mailing list. 
We would also like to thank Vladimir Chernousov, Fabien Morel and Burt Totaro for their encouraging attention to our work.

%%%%%%%%%%%%%%%%%%%%%%%%%%%%%%%%%%%%%%%%%%%%%%%%%%%%%%%%%%%%%%
%%%%%%%%%%%%%%%%%%%%%%%%%%%%%%%%%%%%%%%%%%%%%%%%%%%%%%%%%%%%%%
%%%%%%%%%%%%%%%%%%%%%%%%%%%%%%%%%%%%%%%%%%%%%%%%%%%%%%%%%%%%%

\part{Invariants, torsion indices and formal group laws.}

\section{Formal group rings}\label{FGR}

Let $R$ be a ring, 
let $M$ be an abelian group and 
let $F$ be a formal group law over $R$.
In the present section we introduce and study 
the {\em formal group ring} $\RMF$.
For this, we use several auxiliary facts concerning 
topological rings and their completions which can be found in 
\cite[III, \S2]{Bourbaki85}.
The main result here is the decomposition theorem~\ref{FGR:sumdec}. 
At the end we provide some examples of computations of $\RMF$.

\begin{dfn}
Let $R$ be a ring and let $S$ be a set. 
Let $R[x_S]:=R[x_s, s\in S]$ 
denote the polynomial ring over $R$ 
with variables indexed by $S$. 
Let $\aug: R[x_S] \to R$ be the augmentation morphism 
which maps any $x_s$ to $0$. 
Consider the $\ker(\aug)$-adic topology on $R[x_S]$ given by
ideals $\ker(\aug)^i$, $i\ge 0$, which
form a fundamental system of open neighborhoods of $0$. 
Note that a polynomial is in $\ker(\aug)^i$ 
if and only if 
its valuation is at least $i$, 
hence, we have $\cap_i \ker(\aug)^i = \{0\}$ and 
the $\ker(\aug)$-adic topology is Hausdorff.

We define $R\lbr x_S \rbr$ to be the $\ker(\aug)$-adic completion 
of the polynomial ring $R [x_S]$.
\end{dfn}

\begin{rem}
By definition, an element in $R \lbr x_S \rbr$ 
can be written uniquely as a formal sum 
$$
\sum_{s_1,\ldots,s_m \in S} a_{s_1,\ldots,s_m}\cdot x_{s_1}\ldots x_{s_m},\qquad
a_{s_1,\ldots,s_m}\in R,
$$ 
where for any positive $m$ 
there is only a finite number of non-zero coefficients $a_{s_1,\ldots,s_m}$.
In particular, when $S$ is a finite set of order $n$, 
the ring $R\lbr x_S \rbr$ is 
the usual ring of power series in $n$ variables. 
\end{rem}

\begin{ntt}
Let $F$ be a formal group law 
over a ring $R$ (see \cite[p.4]{Levine07}). 
Given an integer $m\geq 0$ we use the notation 
$$
x\fplus y=F(x,y),\quad
m\ftimes x=\underbrace{x\fplus\ldots \fplus x}_{\text{$m$ times}}\;\text{ and }\;
(-m)\ftimes x= \fminus (m \ftimes x)
$$
By associativity of $F$, for any $m_1,m_2 \in \ZZ$ we have
$$
(m_1+m_2)\ftimes x =(m_1 \ftimes x)\fplus (m_2 \ftimes x).
$$ 
\end{ntt}

Now, here comes the definition playing a central role in the sequel.

\begin{dfn}\label{FGR:defin}
Let $M$ be an abelian group and 
let $F$ be a formal group law 
over a ring $R$.
Consider the ring $R\lbr x_M \rbr$ and 
let $\mJ_F$ be the closure of the ideal generated by the elements  
$x_0$ and $x_{\lambda+\mu}-(x_\lambda\fplus x_\mu)$ 
for all $\lambda,\mu \in M$. 
We define the {\em formal group ring} $\RMF$ to be the quotient
$$
\RMF=R \lbr x_M \rbr /\mJ_F.
$$
The class of $x_\lambda$ in $\RMF$ will be denoted by the same letter.
\end{dfn}

\begin{ntt}\label{FGR:keraug}
By definition, $R\lbr x_M \rbr$ is a complete Hausdorff ring 
with respect to the topology induced
by the kernel of the augmentation
$R\lbr x_M \rbr\to R$. Since $\mJ_{F}$ is clearly contained in this kernel,
the augmentation map $R\lbr x_M \rbr\to R$ factors through the quotient 
$\RMF$. 
Therefore,
$\RMF$ is a {\em complete Hausdorff} ring 
with respect to the $\mI_F$-adic topology,
where $\mI_F$ denotes the kernel of the augmentation $\RMF\to R$.
\end{ntt}

Let $f\colon R \to R'$ be a morphism of rings 
respecting the formal group laws, 
\ie sending every coefficient of $F$ 
to the corresponding coefficient of $F'$. 
Then, for every abelian group $M$, 
$f$ induces a ring homomorphism 
$f_\star\colon \RMF \to \FGR{R'}{M}{F'}$ 
sending $x_\lambda \in \RMF$ to 
$x_\lambda \in \FGR{R'}{M}{F'}$ 
for every $\lambda \in M$. 
This morphism sends $\mI_F$ to $\mI_{F'}$ and, 
hence, is continuous.

Let now $f\colon M\to M'$ be a morphism of abelian groups.
It induces a continuous ring homomorphism 
$\hat f\colon \RMF \to \FGR{R}{M'}{F}$ 
sending $x_{\lambda}$ to $x_{f(\lambda)}$.  
Moreover, if $f$ is surjective, then so is $\hat f$.

Finally, let $f\colon F \to F'$ be a morphism 
of formal group laws over $R$, 
\ie a formal power series $f \in R \lbr x \rbr$ such that 
$f(x \fplus y)=f(x) \FGLplus{F'} f(y)$. 
Then, $f$ induces a continuous ring homomorphism 
$f^\star\colon \FGR{R}{M}{F'} \to \RMF$ 
sending $x_\lambda$ to $f(x_\lambda)$.

We have therefore proved:
\begin{lem}\label{FGR:funct}
Via the above constructions $(-)_\star$, $\hat{(-)}$ and $(-)^\star$, the assignment taking $(R,M,F)$ to the topological ring $\FGR{R}{M}{F}$ is {\em covariant} with respect to ring morphisms $R\to R'$ and morphisms of abelian groups $M\to M'$,
and is {\em contravariant} with respect to morphisms of formal group laws $F\to F'$.
\end{lem}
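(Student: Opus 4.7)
The plan is to verify the three functoriality statements separately by unwinding each of the three constructions introduced just above the lemma. In each case, the heart of the matter is (i) checking that the relevant map on polynomial rings is continuous and extends to the $\mI$-adic completion, (ii) checking that the defining ideal $\mJ_F$ (or $\mJ_{F'}$) is respected, and (iii) checking the identity and composition laws, which reduce to equality on the topological generators $x_\lambda$ since our maps are continuous ring homomorphisms.

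For a ring morphism $f\colon R\to R'$ respecting formal group laws, the coefficientwise extension $R[x_M]\to R'[x_M]$ sends $\ker(\aug)$ into $\ker(\aug)$ and so induces a continuous map on completions. Because $f$ sends the coefficients of $F$ to those of $F'$, the relations $x_{\lambda+\mu}-(x_\lambda\fplus x_\mu)$ map to $x_{\lambda+\mu}-(x_\lambda\FGLplus{F'} x_\mu)\in\mJ_{F'}$, so $f_\star$ descends to $\RMF\to\FGR{R'}{M}{F'}$. Covariance $(g\circ f)_\star=g_\star\circ f_\star$ and $\id_\star=\id$ then follow because both sides are continuous ring homomorphisms agreeing on every $x_\lambda$. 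The argument for a group morphism $f\colon M\to M'$ is essentially the same: the map $x_\lambda\mapsto x_{f(\lambda)}$ sends $x_0$ to $x_0$ and $x_{\lambda+\mu}-(x_\lambda\fplus x_\mu)$ to $x_{f(\lambda)+f(\mu)}-(x_{f(\lambda)}\fplus x_{f(\mu)})$, both lying in $\mJ_F$; surjectivity when $f$ is surjective follows from density of the polynomial ring, and covariance is again checked on generators.

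The step requiring the most care is the construction and functoriality of $f^\star$ for a morphism $f\colon F\to F'$ of formal group laws over $R$. Here one must use that $f\in R\lbr x\rbr$ has zero constant term; this ensures $f(x_\lambda)\in\mI_F$, so the substitution $x_\lambda\mapsto f(x_\lambda)$ lands in $\mI_F$ and hence defines a continuous ring homomorphism $R[x_M]\to\RMF$ for the $\mI_F$-adic topology, which extends uniquely to the completion $R\lbr x_M\rbr$. To see that this extension kills $\mJ_{F'}$, note first that $f(x_0)=f(0)=0$; and because $x_\lambda\fplus x_\mu=x_{\lambda+\mu}$ already holds in $\RMF$, the FGL morphism identity $f(x\fplus y)=f(x)\FGLplus{F'}f(y)$ yields
\[
f(x_{\lambda+\mu})=f(x_\lambda\fplus x_\mu)=f(x_\lambda)\FGLplus{F'}f(x_\mu),
\]
so the relation $x_{\lambda+\mu}-(x_\lambda\FGLplus{F'}x_\mu)$ is killed. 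For the contravariance $(g\circ f)^\star=f^\star\circ g^\star$, on a generator $x_\lambda\in\FGR{R}{M}{F''}$ the right-hand side produces $f^\star(g(x_\lambda))$, which is the power series $g$ evaluated at $f(x_\lambda)$, i.e.\ $(g\circ f)(x_\lambda)$, matching the left-hand side.

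The main conceptual obstacle is ensuring that each construction survives passage to the completion and the quotient by the closed ideal $\mJ_F$; once the correct topology and the vanishing of constant terms (for FGL morphisms) are noted, everything else is a straightforward verification on generators, using that our maps are continuous ring homomorphisms and that the $x_\lambda$ are topologically dense. No deeper input is needed.
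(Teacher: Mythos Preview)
Your proposal is correct and follows exactly the approach of the paper: the paper's ``proof'' is simply the three paragraphs preceding the lemma (ending with ``We have therefore proved:''), which sketch the constructions $(-)_\star$, $\hat{(-)}$, and $(-)^\star$ without spelling out the continuity, ideal-preservation, or composition checks. You have merely (and correctly) supplied those routine verifications that the paper leaves implicit.
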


\begin{ntt} \label{FGR:uniiso} Let $\LL$ be the Lazard ring
of coefficients of the universal formal group law $U$. 
For any formal group law $F$ over $R$, 
there is a unique ring homomorphism $\vartheta_F\colon \LL \to R$ 
sending the universal formal group law $U$ on $\LL$ to $F$. 
By universality, the morphism
$$
R \otimes_\LL \FGR{\LL}{M}{U} \to \RMF,\qquad 
r \otimes z\mapsto r \cdot (\vartheta_F)_\star(z)
$$ 
is an isomorphism.
\end{ntt}

\begin{ntt} Since $\RMF$ is an $R$-algebra, we may consider
the formal group law $F$ as an element of $\RMF\lbr x,y\rbr$.
Let $a,b\in \mI_F$. 
The specialization at $x=a$ and $y=b$ defines 
a pairing 
$$
\Fplus\colon \mI_F\times \mI_F \to \mI_F.
$$
Similarly, we define $\Fminus\colon \mI_F\to \mI_F$ 
using the inverse of $F$ and 
$\Ftimes\colon \ZZ\times \mI_F\to \mI_F$ 
by applying $\Fplus$ or $\Fminus$ iteratively. 
From the associativity, 
commutativity, inverse properties of $F$ 
and the continuity of the quotient map 
$R\lbr x_M \rbr \to \RMF$ it follows that 
$x_{\lambda+\mu}=x_{\lambda}\Fplus x_{\mu}$ and
$$
x_{-\lambda}=x_{-\lambda} \Fplus (x_{\lambda} \Fminus x_{\lambda}) 
= (x_{-\lambda} \Fplus x_\lambda) \Fminus x_{\lambda} 
= 0 \Fminus(x_\lambda) = \Fminus(x_\lambda).
$$
\end{ntt}

\begin{ntt}\label{FGR:twomaps}
A formal group law $F$ on $R$ 
induces via a ring homomorphism $R \to R'$ 
a formal group law on $R'$. 
In particular, if $M$ and $N$ are abelian groups, 
then using $R \to \RMF =R'$, 
we can define $\FGR{\RMF}{N}{F}$, 
which is naturally an $\RMF$-algebra. 
By functoriality, 
$\FGR{R}{M\oplus N}{F}$ is also an $\RMF$-algebra. 
We define two morphisms of $\RMF$-algebras
$$
\phi\colon \FGR{R}{M\oplus N}{F}\to 
\FGR{\RMF}{N}{F}\;\text{ and }\;
\psi\colon \FGR{\RMF}{N}{F} \to \FGR{R}{M\oplus N}{F}
$$
as follows:

The map 
$R [x_{M \oplus N}] \to R \lbr x_M \rbr \lbr y_N \rbr$ 
sending $x_{(\lambda,\gamma)}$ to $x_{\lambda}\fplus y_{\gamma}$ 
extends to a continuous map 
$f\colon R \lbr x_{M \oplus N} \rbr \to R \lbr x_M \rbr \lbr y_N \rbr$. 
Consider the composition
$$
R \lbr x_{M\oplus N} \rbr \toby{f} R \lbr x_M \rbr \lbr y_N \rbr 
\toby{g} \RMF \lbr y_N \rbr \toby{h} \FGR{\RMF}{N}{F}
$$ 
where $g$ is induced by $R \lbr x_M \rbr \to \RMF$ and 
$h$ is the quotient map. 
We have
$$
h \circ g \circ f(x_{(\lambda,\gamma)}\fplus x_{(\mu,\delta)})  
\equalby{=}{(1)} h \circ g \big((x_{\lambda}\fplus y_{\gamma})
\fplus(x_{\mu}\fplus y_{\delta})\big)= 
$$
$$ 
\equalby{=}{(2)} h \circ g \big((x_{\lambda}\fplus x_{\mu})
\fplus(y_{\gamma}\fplus y_{\delta})\big) 
\equalby{=}{(3)} h \big( (x_{\lambda}\Fplus x_{\mu}) 
\fplus (y_{\gamma} \fplus y_{\delta}) \big)= 
$$
$$
\equalby{=}{(4)} (x_{\lambda}\Fplus x_{\mu}) 
\fplus (y_{\gamma} \Fplus y_{\delta}) 
\equalby{=}{(5)} x_{\lambda+\mu} \fplus y_{\gamma + \delta}
\equalby{=}{(6)} h (x_{\lambda+\mu} \fplus y_{\gamma + \delta})= 
$$
$$ 
\equalby{=}{(7)} h \circ g (x_{\lambda+\mu} \fplus y_{\gamma + \delta}) 
\equalby{=}{(8)} h \circ g \circ f(x_{(\lambda+\mu,\gamma+\delta)}),
$$
where (1) and (8) hold 
by definition and continuity of $f$, 
(2) by associativity and commutativity of $F$, 
(3) and (7) by definition of $g$, 
(4) and (6) by definition of $h$ and (5) 
by definition of $\Fplus$. 
It shows that $h \circ g \circ f$ factors through 
$\FGR{R}{M\oplus N}{F}$ as
$$
h\circ g\circ f\colon R \lbr x_{M\oplus N} \rbr\to  
\FGR{R}{M\oplus N}{F} \stackrel{\phi}\to \FGR{\RMF}{N}{F},
$$ 
where $\phi$ is a morphism of $\RMF$-algebras.

In the opposite direction, we proceed as follows: 
We extend the functorial morphism 
$\RMF \to \FGR{R}{M\oplus N}{F}$ to a morphism 
$\RMF [ y_N ] \to \FGR{R}{M\oplus N}{F}$ 
by sending $y_\gamma$ to $x_{(0,\gamma)}$. 
It induces a morphism 
$\RMF \lbr y_N \rbr \to \FGR{R}{M \oplus N}{F}$ on completions, 
which factors through a continuous morphism 
$\psi\colon \FGR{\RMF}{N}{F} \to \FGR{R}{M\oplus N}{F}$.
\end{ntt}

\begin{thm} \label{FGR:sumdec}
The morphisms of $\RMF$-algebras 
$\phi$ and $\psi$ defined above 
are inverses to each other. 
In other words, we have an isomorphism
$$
\FGR{R}{M\oplus N}{F} \simeq \FGR{\RMF}{N}{F}.
$$
\end{thm}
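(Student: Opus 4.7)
The strategy is to verify that $\phi$ and $\psi$ are mutually inverse by checking on topological generators and then invoking continuity and the $\RMF$-algebra structures that are built into both maps.

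First I note that $\phi$ and $\psi$ are both continuous morphisms of $\RMF$-algebras by their construction. The polynomial subring $R[x_{M\oplus N}]$ is dense in $\FGR{R}{M\oplus N}{F}$, so any continuous $R$-algebra endomorphism of the latter is determined by its values on the generators $x_{(\lambda,\gamma)}$. Similarly, the subring $\RMF[y_N]$ is dense in $\FGR{\RMF}{N}{F}$, so any continuous $\RMF$-algebra endomorphism of the latter is determined by its values on the $y_\gamma$ for $\gamma\in N$. Thus it suffices to check $\psi\circ\phi$ on the classes $x_{(\lambda,\gamma)}$ and to check $\phi\circ\psi$ on the classes $y_\gamma$ (the $\RMF$-linearity handling the action on $\RMF$ for free, provided one checks this linearity is consistent, see below).

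For $\psi\circ\phi$, by definition $\phi(x_{(\lambda,\gamma)})=x_\lambda\fplus y_\gamma$. Since $\psi$ is a continuous ring morphism and $\fplus$ is induced by substitution into the formal series $F$, one has
$$
\psi(x_\lambda\fplus y_\gamma)=\psi(x_\lambda)\fplus\psi(y_\gamma)=x_{(\lambda,0)}\fplus x_{(0,\gamma)},
$$
where the values on the right come from the definitions of $\psi$ as an $\RMF$-algebra map (so $x_\lambda\mapsto x_{(\lambda,0)}$) and as the extension sending $y_\gamma$ to $x_{(0,\gamma)}$. Using the defining relation $x_{\mu+\nu}=x_\mu\Fplus x_\nu$ of the formal group ring $\FGR{R}{M\oplus N}{F}$ with $\mu=(\lambda,0)$ and $\nu=(0,\gamma)$, this simplifies to $x_{(\lambda,\gamma)}$, as required.

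For $\phi\circ\psi$, because both $\phi$ and $\psi$ are $\RMF$-algebra morphisms, the composition is $\RMF$-linear; I only need to verify its effect on $y_\gamma$. We have $\psi(y_\gamma)=x_{(0,\gamma)}$ and then $\phi(x_{(0,\gamma)})=x_0\fplus y_\gamma$. Since $x_0=0$ in $\RMF$ by the very definition of $\mJ_F$, and since $0\fplus z=z$ for any $z$, this gives $y_\gamma$, as required. To be fully rigorous about the $\RMF$-linearity assertion I also check that the two $\RMF$-algebra structures on $\FGR{R}{M\oplus N}{F}$ used to define $\phi$ and $\psi$ agree: both are induced by the inclusion $M\hookrightarrow M\oplus N$, $\lambda\mapsto(\lambda,0)$, since $\phi$ sends $x_\lambda=x_{(\lambda,0)}\in\FGR{R}{M\oplus N}{F}$ (regarded via the structure map) to $x_\lambda\fplus y_0=x_\lambda\in\FGR{\RMF}{N}{F}$. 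The only subtle point is ensuring that the $\ker(\aug)$-adic continuity of all the auxiliary maps built in \ref{FGR:twomaps} propagates to $\phi$ and $\psi$; this is where one must invoke that the formal sum $\fplus$ increases valuation appropriately so that the naive substitutions extend to the completed rings. Once this is granted, the computations above finish the proof.
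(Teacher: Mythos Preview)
Your proof is correct and follows essentially the same approach as the paper: check $\psi\circ\phi$ on the generators $x_{(\lambda,\gamma)}$ using continuity of $\psi$ with respect to $\fplus$, and check $\phi\circ\psi$ on the $y_\gamma$ using that both maps are $\RMF$-algebra morphisms. You simply spell out more of the surrounding justification (density, compatibility of the two $\RMF$-structures) than the paper does.
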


\begin{proof}
We have
$$
\psi \circ \phi( x_{(\lambda,\gamma)}) 
= \psi ( x_{\lambda} \fplus y_{\gamma}) 
= \psi (x_\lambda) \fplus \psi(y_\gamma) 
= x_{(\lambda,0)} \fplus x_{(0,\gamma)} 
= x_{(\lambda,\gamma)}
$$
where the second equality holds by continuity of $\psi$.
The other composition can be checked on $y_{\gamma}$, 
since we are dealing with morphisms of $\RMF$-algebras, and 
we have $\phi \circ \psi(y_\gamma)=\phi(x_{(0,\gamma)}) = 
x_0 \fplus y_\gamma = 0 \fplus y_\gamma =y_\gamma$. 
\end{proof}

\begin{lem} \label{FGR:freeZ}
Let $M = \ZZ$. 
Then sending $x_m$ to $m \ftimes x$ defines a ring isomorphism 
$$
\FGR{R}{\ZZ}{F}\simeq R\lbr x\rbr.
$$
In particular, 
if $M$ is a free abelian group of rank one, 
then $\RMF$ is isomorphic to $R\lbr x \rbr$.
\end{lem}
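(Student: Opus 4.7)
The plan is to construct a map $\alpha\colon R\lbr x\rbr \to \FGR{R}{\ZZ}{F}$ sending $x$ to $x_1$, and its inverse $\beta\colon \FGR{R}{\ZZ}{F} \to R\lbr x \rbr$ sending $x_m$ to $m \ftimes x$, and then verify that the two compositions are identities by a density argument.

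For $\alpha$, I start with the polynomial ring map $R[x] \to \FGR{R}{\ZZ}{F}$, $x\mapsto x_1$. Since $x_1 \in \mI_F$, this map sends the augmentation ideal $(x)$ into $\mI_F$, is therefore continuous, and extends to the $(x)$-adic completion $\alpha\colon R\lbr x \rbr \to \FGR{R}{\ZZ}{F}$ by the universal property in \cite[III, \S2]{Bourbaki85}.

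For $\beta$, I first define a morphism $R[x_\ZZ] \to R\lbr x \rbr$ on the polynomial ring by $x_m \mapsto m \ftimes x$. Each $m\ftimes x$ lies in the ideal $(x)$ of $R\lbr x\rbr$, so this morphism sends $\ker(\aug)$ into $(x)$, is continuous, and extends to $R\lbr x_\ZZ \rbr \to R\lbr x \rbr$. By the associativity identity $(m_1+m_2)\ftimes x = (m_1\ftimes x)\fplus(m_2\ftimes x)$ noted just before Definition \ref{FGR:defin}, the generators $x_0$ and $x_{\lambda+\mu} - (x_\lambda \fplus x_\mu)$ of $\mJ_F$ go to $0$; by continuity, the closure $\mJ_F$ is killed, so we obtain a continuous $R$-algebra morphism $\beta\colon \FGR{R}{\ZZ}{F}\to R\lbr x\rbr$.

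Now $\beta\circ\alpha$ is a continuous $R$-algebra endomorphism of $R\lbr x\rbr$ sending $x$ to $\beta(x_1) = 1\ftimes x = x$, hence it agrees with the identity on the dense subring $R[x]$, so it is the identity. For $\alpha\circ\beta$, I claim it is the identity on every $x_m$: indeed $\alpha\circ\beta(x_m) = \alpha(m\ftimes x) = m\ftimes x_1$, and the formal-group relations defining $\mJ_F$ yield $m\ftimes x_1 = x_m$ in $\FGR{R}{\ZZ}{F}$ (for $m\ge 0$ by induction using $x_{m+1} = x_m \fplus x_1$ in $\FGR{R}{\ZZ}{F}$, and for negative $m$ using $x_{-m} = \fminus x_m$ from paragraph \ref{FGR:keraug}). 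Since the $R$-subalgebra generated by $\{x_m\}_{m\in\ZZ}$ is dense in $\FGR{R}{\ZZ}{F}$ and $\alpha\circ\beta$ is continuous, this shows $\alpha\circ\beta = \id$. The ``in particular'' clause follows because any free abelian group of rank one is isomorphic to $\ZZ$, and the construction $(-)\mapsto \FGR{R}{-}{F}$ is functorial in the abelian-group argument by Lemma \ref{FGR:funct}. The only subtle step is the continuity and well-definedness of $\beta$ on the closure $\mJ_F$, which is handled by noting continuity of the quotient map together with continuity of $\Fplus$ and $\Fminus$.
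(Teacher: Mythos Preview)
Your proof is correct and follows essentially the same approach as the paper: your $\alpha$ and $\beta$ are the paper's $\hat\psi$ and $\bar\phi$, constructed the same way and checked to be mutual inverses on generators by the same density/continuity argument. One small citation slip: the identity $x_{-m}=\Fminus x_m$ is established in the paragraph introducing $\Fplus,\Fminus,\Ftimes$ (just after~\ref{FGR:uniiso}), not in~\ref{FGR:keraug}.
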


\begin{proof}
The morphism $\phi:R[x_\ZZ] \to R\lbr x \rbr$ 
sending $x_n$ to $n\ftimes x$ 
extends to a morphism 
$\hat{\phi}:R \lbr x_\ZZ \rbr \to R \lbr x \rbr$. 
By continuity, 
$\hat{\phi}$ satisfies 
$\hat{\phi}(x_m \fplus x_n) = \hat{\phi}(x_m)\fplus \hat{\phi}(x_n)$ 
as well as 
$\hat{\phi}(\fminus(x_m))=\fminus\hat{\phi}(x_m)$. 
Thus $\hat{\phi}(x_{m+n})=\hat{\phi}(x_m \fplus x_n)$ and 
$\hat{\phi}$ factors through 
a $\bar{\phi}:\FGR{R}{\ZZ}{F} \to R \lbr x \rbr$. 
A map $\hat{\psi}$ in the opposite direction 
is defined similarly using completeness, 
by sending $x$ to $x_1$. 
By continuity, checking that $\bar\phi\circ\hat\psi=\id$ and
$\hat\psi\circ\bar\phi=\id$ can be done on generators. 
Namely, 
$\bar{\phi} \circ \hat{\psi}(x)=\bar{\phi}(x_1)=x$ by definition, 
and
$\hat{\psi} \circ \bar{\phi}(x_n)=\hat{\psi}(n\ftimes x)=
n \Ftimes \hat{\psi}(x) =n \Ftimes x_1 = x_n$.  
\end{proof}

\begin{cor} \label{FGR:freeM}
Let $\phi: M \to \ZZ^{\oplus n}$ be an isomorphism. 
Then $\phi$ induces an isomorphism 
$$
\RMF \simeq R \lbr x_1,\ldots,x_n \rbr.
$$
\end{cor}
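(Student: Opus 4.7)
The strategy is induction on $n$, packaging together the three results already established in this section: functoriality (Lemma \ref{FGR:funct}), the rank-one computation (Lemma \ref{FGR:freeZ}), and the direct-sum decomposition (Theorem \ref{FGR:sumdec}). Since Lemma \ref{FGR:funct} states that $M\mapsto \RMF$ is covariant in $M$, applying $\hat{(-)}$ to $\phi$ and to $\phi^{-1}$ shows that the induced continuous ring map $\hat\phi\colon \RMF\to \FGR{R}{\ZZ^{\oplus n}}{F}$ is an isomorphism. It therefore suffices to prove $\FGR{R}{\ZZ^{\oplus n}}{F}\simeq R\lbr x_1,\ldots,x_n\rbr$.

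The base case $n=1$ is exactly Lemma \ref{FGR:freeZ}. For the inductive step, decompose $\ZZ^{\oplus n}=\ZZ\oplus \ZZ^{\oplus(n-1)}$ and apply Theorem \ref{FGR:sumdec} to obtain
$$
\FGR{R}{\ZZ^{\oplus n}}{F}\simeq \FGR{\FGR{R}{\ZZ}{F}}{\ZZ^{\oplus(n-1)}}{F}.
$$
Lemma \ref{FGR:freeZ} identifies the inner coefficient ring $\FGR{R}{\ZZ}{F}$ with $R\lbr x_1\rbr$, and the inductive hypothesis, applied over the new base ring $R\lbr x_1\rbr$ (equipped with the formal group law obtained by scalar extension along $R\hookrightarrow R\lbr x_1\rbr$, transported by the $(-)_\star$ part of Lemma \ref{FGR:funct}), gives $\FGR{R\lbr x_1\rbr}{\ZZ^{\oplus(n-1)}}{F}\simeq R\lbr x_1\rbr\lbr x_2,\ldots,x_n\rbr$.

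The one step that is not a purely formal consequence is the identification $R\lbr x_1\rbr\lbr x_2,\ldots,x_n\rbr\simeq R\lbr x_1,\ldots,x_n\rbr$, expressing that the iterated completion of the polynomial ring at successive augmentation ideals coincides with its completion at the total augmentation ideal. This is the main, though mild, obstacle. I would settle it by comparing the two sides against the universal property of the completion, or more concretely by using the explicit description of elements as formal sums given in the remark following the definition of $R\lbr x_S\rbr$: in either presentation, an element is determined by a collection of coefficients indexed by monomials in $x_1,\ldots,x_n$ with only finitely many of each total degree, and the two constructions yield the same such collections. Once this identification is in place, the corollary drops out.
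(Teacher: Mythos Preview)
Your proof is correct and follows exactly the same approach as the paper, which simply writes ``It follows from Lemma~\ref{FGR:freeZ} and Theorem~\ref{FGR:sumdec} by induction on $n$.'' You have merely unpacked the details of that induction, including the identification of iterated and simultaneous completions that the paper leaves implicit.
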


\begin{proof}
It follows from Lemma~\ref{FGR:freeZ} and 
Theorem~\ref{FGR:sumdec} by induction on $n$.
\end{proof}

\begin{rem} \label{FGR:intdom}
Note that the right hand side is independent of $F$,
although the isomorphism depends on $F$. 
Also note that if $R$ is an integral domain, 
so is $\FGR{R}{\ZZ^{\oplus n}}{F}$.
\end{rem}

\begin{ex}
Similarly, one can prove that 
$\FGR{R}{\ZZ/n}{F} \simeq R\lbr x \rbr /(n\ftimes x)$ 
by sending $x_m$ to $m \ftimes x$. 
Observe that $n\ftimes x=nx+x^2q$.
In particular, 
if $n$ is invertible in $R$, then
$(n\ftimes x)=(x)$ and $\FGR{R}{\ZZ/n}{F} \simeq R$.
\end{ex}

Let us now examine what happens at a finite level in $\RMF$.

\begin{ntt}
Let $R[M]_F$ denote the subring of $\RMF$ 
defined as the image of the subring $R[x_M]$ 
by means of the composition 
$R[x_M]\to R\lbr x_M \rbr \to \RMF$. 
Then the ring $\RMF$ is the completion of $R[M]_F$ 
at the ideal $\mI_F \cap R[M]_F$.
By the functoriality of $\RMF$ (see \ref{FGR:funct}) 
the assignment
$$
(R,M,F)\mapsto R[M]_F
$$
is a {\em covariant} functor 
with respect to morphism of rings $R\to R'$
and morphisms of abelian groups $M\to M'$. 
Moreover, 
if $M \to M'$ is surjective, then so is $R[M]_F \to R[M']_F$.
\end{ntt}

\begin{ex}
If $M=\ZZ$, 
the isomorphism of Lemma~\ref{FGR:freeZ} 
maps $R[\ZZ]_F$ to polynomials in $n\ftimes x$. 
In particular, 
unless $F$ and the formal inverse 
have a finite number of nonzero coefficients only, 
$R[\ZZ]_F$ does {\em not} map to $R[x]$.
\end{ex}

\begin{rem}
The morphism $\phi$ from \ref{FGR:twomaps} 
doesn't necessarily send $R[M \oplus N]_F$ to $R[M]_F[N]_F$. 
The morphism $\psi$ from \ref{FGR:twomaps}
sends $R[M]_F[N]_F$ into $R[M \oplus N]_F$ 
but is not necessarily surjective. 
\end{rem}

\begin{ex}[\cf \ref{AC:chow}]\label{FGR:addfgl} 
The additive formal group law over $R$ is given by $F(x,y)=x+y$.
In this case we have ring isomorphisms
$$
\FGR{R}{M}{F} \simeq \prod_{i=0}^{\infty} S^i_R(M)\;\text{ and }\;
R [M]_F \simeq \bigoplus_{i=0}^{\infty} S^i_R(M)
$$
where $S^i_R(M)$ is the $i$-th symmetric power of $M$ over $R$ and 
the isomorphisms are induced 
by sending $x_\lambda$ to $\lambda\in S^1_R(M)$. 
\end{ex}

\begin{ex}[\cf \ref{AC:K0}]
The multiplicative periodic formal group law over $R$
is given by $F(x,y)=x+y-\beta xy$, 
where $\beta$ is an invertible element in $R$.
Consider the group ring 
$$
R[M]:=\left\{\sum_j r_j e^{\lambda_j}\mid r_j\in R,\; \lambda_j\in M\right\}.
$$ 
Let 
${\rm tr}: R[M] \to R$ be the trace map, \ie 
a $R$-linear map sending any $e^\lambda$ to $1$. 
Let $R[M]^\wedge$ be the completion of $R[M]$ at $\ker({\rm tr})$.
Then we have ring isomorphisms
$$
\FGR{R}{M}{F} \simeq R[M]^\wedge\;\text{ and }\;R[M]_F \simeq R[M]
$$ 
induced by
$x_\lambda \mapsto \beta^{-1}(1- e^{\lambda})$ and 
$e^\lambda\mapsto (1-\beta x_{\lambda})=(1-\beta x_{-\lambda})^{-1}$.
\end{ex}

\begin{ex}[\cf \ref{AC:connK}]
The multiplicative non-periodic formal group law over $R$
is given by
$F(x,y)=x+y- vxy$, where $v$ is not invertible in $R$.
Specializing $\RMF$ at $v=0$ or $v=\beta$ where $\beta$ is invertible in $R$, 
we obtain the formal group rings of
the previous examples.
\end{ex}

\begin{rem} 
Let $\HH$ be an oriented cohomology theory as defined in 
\cite[Def.~1.1.2]{Levine07} and
let $F$ be the associated formal group law (see Section~\ref{AC}). 
Let $M$ be the group
of characters of a split torus 
$T$ over $k$. 
Then the formal group ring $\RMF$
can be viewed as an algebraic substitute 
of the completed equivariant cohomology ring $\HH_T(\pt)^\wedge$ 
or the cohomology ring of the classifying space $\HH(BT)$.
Its finite counterpart $R[M]_F$ could then play the role of the equivariant cohomology ring $\HH_T(\pt)$ itself.
\end{rem}

%%%%%%%%%%%%%%%%%%%%%%%%%%%%%%%%%%%%%%%%%%%%%%%%%%%%%%%%%%%%%%%
%%%%%%%%%%%%%%%%%%%%%%%%%%%%%%%%%%%%%%%%%%%%%%%%%%%%%%%%%%%%%%
%%%%%%%%%%%%%%%%%%%%%%%%%%%%%%%%%%%%%%%%%%%%%%%%%%%%%%%%%%%%%%%%

\section{Differential operators $\Del_\alpha$ and
$\CC_\alpha$}\label{DO}

In the present section we introduce
two linear operators on $\RMF$.
The first operator $\Delta^F_\alpha$ 
is a generalized version of the operator 
$\Delta_\alpha$ from \cite[\S 3 and \S 4]{Demazure73}. 
Indeed, one recovers the results of \loccit{} 
when $R=\ZZ$ and $F$ is the additive formal group law
from Example~\ref{FGR:addfgl}.  
A version of the second operator $\CC_\alpha^F$ was already used in \cite[\S5]{Bressler92} for topological complex cobordism.

\begin{ntt}
Consider a reduced root system as in \cite[\S 1]{Demazure73}, 
\ie a free $\ZZ$-module $M$ of finite rank, 
a finite subset of $M$ 
whose elements are called {\em roots} and 
a map associating a coroot $\alpha^\vee \in M^\vee$ 
to a root $\alpha$, satisfying certain axioms.\footnote{This is often called a root datum instead of a root system in the literature, but we follow \cite{Demazure73}.}
The reflection 
map $\lambda \mapsto \lambda - \alpha^\vee (\lambda)\alpha$ 
is denoted by $s_\alpha$. 
\end{ntt}

\begin{ntt}
The Weyl group $W$ associated to a reduced root system 
is the subgroup of linear automorphisms of $M$
generated by reflections $s_\alpha$. 
It acts linearly on $M$ and thus by $R$-algebra automorphisms on $\RMF$ using the functoriality in $M$ of $\RMF$.
We have the following obvious equalities for any $u \in \RMF$:
$$
s_{\alpha}(u) = s_{-\alpha}(u) \qquad \text{and} \qquad s_\alpha^2 (u)=u.
$$
\end{ntt}

\begin{lem}
For any $n \in \ZZ$, 
the element $x-x \fplus (n \ftimes y)$ 
is uniquely divisible by $y$ in $R \lbr x,y \rbr$. 
\end{lem}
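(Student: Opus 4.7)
My plan is to show this in two movements: first existence (produce an explicit factor of $y$), then uniqueness (observe that $y$ is a non-zerodivisor).

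For existence, the key observation is that the formal group law satisfies $F(x,z) = x+z+xz\cdot G(x,z)$ for some $G \in R\lbr x,z\rbr$, which follows from the axiom $F(x,0)=x$, $F(0,z)=z$. Therefore
\[
x - x\fplus z \;=\; -z\bigl(1 + x\,G(x,z)\bigr) \;=\; -z\cdot u(x,z),
\]
where $u(x,z) \in R\lbr x,z\rbr$ has constant term $1$ and so is a unit. Next I would verify by induction on $|n|$ that the one-variable series $n\ftimes y \in R\lbr y\rbr$ is divisible by $y$: the cases $n=0,\pm 1$ are immediate, and the inductive step uses $(n{+}1)\ftimes y = (n\ftimes y)\fplus y$ together with the expansion of $F$ above. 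Thus $n\ftimes y = y\cdot w_n(y)$ for some $w_n \in R\lbr y\rbr$. Since $n\ftimes y$ has zero constant term, it is a legitimate substitute for $z$ in $u(x,z)$, giving
\[
x - x\fplus(n\ftimes y) \;=\; -(n\ftimes y)\cdot u(x, n\ftimes y) \;=\; -y\cdot w_n(y)\cdot u(x, n\ftimes y),
\]
which exhibits $y$ as a factor.

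For uniqueness I would observe that $y$ is not a zero divisor in $R\lbr x,y\rbr \cong R\lbr x\rbr\lbr y\rbr$: if $y\cdot f = 0$, then writing $f = \sum_{j\ge 0} f_j(x)\, y^j$ with $f_j \in R\lbr x\rbr$, the product $yf = \sum_j f_j(x)\, y^{j+1}$ forces all $f_j = 0$ by comparing coefficients of $y^{j+1}$. Hence if $y\cdot f_1 = y\cdot f_2$ then $f_1 = f_2$, giving uniqueness of the quotient.

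The proof involves no real obstacle; the only subtlety to watch is that the substitution $z \mapsto n\ftimes y$ into a power series in two variables is legitimate precisely because $n\ftimes y$ lies in the maximal ideal $(x,y)$ of $R\lbr x,y\rbr$, so that $u(x, n\ftimes y)$ converges in the $(x,y)$-adic topology. Everything else is a formal manipulation using only the axioms $F(x,0)=x$, the associativity giving $(n{+}1)\ftimes y = (n\ftimes y)\fplus y$, and the definition of $(-n)\ftimes y = \fminus(n\ftimes y)$.
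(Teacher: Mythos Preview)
Your proof is correct. The paper's argument is shorter and more direct: it simply observes that for \emph{any} power series $g(x,y)\in R\lbr x,y\rbr$ the difference $g(x,0)-g(x,y)$ is divisible by $y$ (every monomial appearing in the difference contains $y$), and then applies this with $g(x,y)=x\fplus(n\ftimes y)$, noting that $g(x,0)=x$ because $n\ftimes 0=0$ and $F(x,0)=x$. Uniqueness is handled the same way you do, by regularity of $y$.

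Your route instead factors $x-F(x,z)=-z\cdot u(x,z)$ with $u$ a unit, separately checks $n\ftimes y=y\cdot w_n(y)$, and then substitutes. This is perfectly valid and in fact yields slightly more: your quotient is visibly $-w_n(y)\,u(x,n\ftimes y)$, a unit when $n=\pm 1$. But the paper's observation avoids the induction on $|n|$, the explicit form of $F$, and the discussion of substitution altogether, since $x\fplus(n\ftimes y)$ is already a power series in $x,y$ and one only needs its value at $y=0$.
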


\begin{proof}
Since $y$ is a regular element in $R \lbr x,y \rbr$, 
we just need to prove divisibility. 
Note that for any power series $g(x,y)$, 
the series $g(x,0)-g(x,y)$ is divisible by $y$. 
Apply it to $g(x,y)=x \fplus (n \ftimes y)$.
\end{proof}

\begin{cor}\label{DO:divis}
Assume that $R$ is integral. 
For any $u \in \RMF$, 
the element $u-s_\alpha (u)$ 
is uniquely divisible by $x_{\alpha}$.
\end{cor}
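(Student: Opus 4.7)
The plan is to reduce to the case of generators $x_\lambda$ using the fact that the operator $D_\alpha\colon u \mapsto u - s_\alpha(u)$ behaves well with respect to the ring structure, then apply the preceding lemma to handle the generators, and finally pass to the completion using integrality.

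Since $s_\alpha$ is an $R$-algebra automorphism, one immediately has the twisted Leibniz rule
$$
D_\alpha(uv) = D_\alpha(u)\cdot v + s_\alpha(u)\cdot D_\alpha(v),
$$
together with $R$-linearity and $D_\alpha(r) = 0$ for $r \in R$. Hence the set of $u \in \RMF$ for which $D_\alpha(u)$ is divisible by $x_\alpha$ is an $R$-subalgebra, and it suffices to verify the claim on the generators $x_\lambda$. For these I would use $s_\alpha(\lambda) = \lambda - \alpha^\vee(\lambda)\alpha$ together with the identities $x_{\mu+\nu} = x_\mu \Fplus x_\nu$ from Section~\ref{FGR} to rewrite
$$
s_\alpha(x_\lambda) = x_\lambda \Fplus \bigl((-\alpha^\vee(\lambda)) \Ftimes x_\alpha\bigr),
$$
and then apply the preceding lemma with the substitution $x := x_\lambda$, $y := x_\alpha$, $n := -\alpha^\vee(\lambda)$. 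This evaluation is legitimate because $x_\lambda, x_\alpha \in \mI_F$ and $\RMF$ is $\mI_F$-adically complete, so the divisibility in $R\lbr x,y\rbr$ pushes forward to a divisibility in $\RMF$.

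To extend from $R[M]_F$ to all of $\RMF$, I would approximate $u \in \RMF$ by a Cauchy sequence $(u_n)$ in $R[M]_F$ and write $D_\alpha(u_n) = x_\alpha v_n$ with $v_n \in \RMF$ uniquely determined. Uniqueness uses that $\RMF$ is an integral domain (Remark~\ref{FGR:intdom}), so $x_\alpha$ is regular. Using an isomorphism $\RMF \simeq R\lbr y_1,\ldots,y_r\rbr$ from Corollary~\ref{FGR:freeM}, $x_\alpha$ has nonzero linear part, so over an integral $R$ multiplication by $x_\alpha$ shifts the $\mI_F$-adic valuation by exactly $1$. This forces $(v_n)$ to be Cauchy: from $x_\alpha(v_n - v_m) \in \mI_F^k$ one deduces $v_n - v_m \in \mI_F^{k-1}$. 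Its limit $v$ then satisfies $x_\alpha v = D_\alpha(u)$ by continuity of multiplication and of $s_\alpha$, and uniqueness again follows from integrality.

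The main obstacle is precisely this last step. The divisibility statement is not a priori stable under infinite sums, and one must argue that the quotients $v_n$ obtained at the polynomial level do not drift apart. This is where both hypotheses matter: integrality of $R$ provides the regularity of $x_\alpha$ that makes ``division by $x_\alpha$'' well-defined and unique, while the fact that $x_\alpha$ has a nonzero linear leading term supplies the precise valuation control that guarantees Cauchy quotients.
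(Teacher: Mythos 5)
Your proof follows the same outline as the paper's: verify the divisibility on the generators $x_\lambda$ via the preceding lemma, propagate to $R[M]_F$ via the twisted Leibniz rule for $D_\alpha(u)=u-s_\alpha(u)$ (your identity is the paper's equation~\eqref{DO:sandprod_eq} after simplifying $u D_\alpha(v) - D_\alpha(u)D_\alpha(v) = s_\alpha(u)D_\alpha(v)$), and then pass to the completion. The paper dispatches that last step with the single word ``density,'' and your Cauchy-sequence unwinding is a genuine filling-in of that gap. One claim you make in doing so is unjustified and can in fact fail: that $x_\alpha$ has nonzero \emph{linear} part under an isomorphism $\RMF\simeq R\lbr y_1,\ldots,y_r\rbr$. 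Take $M$ the weight lattice of $\SL_2$ with $\alpha=2\omega$, $R=\mathbb{F}_2$, and the multiplicative formal group law: then $x_\alpha=x_\omega\fplus x_\omega=2x_\omega-x_\omega^2=x_\omega^2$, whose linear part vanishes although $x_\alpha\neq 0$, so multiplication by $x_\alpha$ shifts valuations by $2$, not $1$. What your Cauchy argument actually needs is only that $x_\alpha$ be nonzero; its image in the graded ring $\GRMF\simeq\SRM$ (Lemma~\ref{DR:SRM=GRMF}), which is an integral domain since $R$ is, is then a nonzero homogeneous element of some degree $d\ge 1$, and multiplication by $x_\alpha$ raises the $\mI_F$-adic valuation by exactly $d$. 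Replacing ``exactly $1$'' by ``exactly $d$'' repairs the step and the rest of your argument goes through unchanged. (If $x_\alpha=0$ the statement itself degenerates and ``uniquely'' fails; that is an issue with the corollary's hypotheses as stated, not with your proof in particular.)
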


\begin{proof}
First note that $\RMF$ is an integral domain by \ref{FGR:intdom}, 
so we just need to prove divisibility. 
Since 
$$
s_\alpha(x_\lambda)=x_{\lambda-\alpha^\vee(\lambda)\alpha} 
= x_\lambda \Fplus (-\alpha^\vee(\lambda) \Ftimes x_{\alpha})
$$
it holds by the previous lemma when $u=x_\lambda$. 
Then, by the formula 
\begin{equation} \label{DO:sandprod_eq}
uv -s_{\alpha}(u v) = 
(u-s_\alpha(u))v + u(v-s_\alpha(v))- (u-s_\alpha (u)) (v-s_\alpha(v))
\end{equation}
the result holds by induction on 
the degree of monomials for any element in $R[M]_F$. 
Finally, it holds by density on the whole $\RMF$.
\end{proof}

For a given root $\alpha$
we define a linear operator $\Delta_\alpha^F$ on $\RMF$ 
as follows:

\begin{dfn}\label{DO:defdelta}
First, using Corollary~\ref{DO:divis} 
we define a linear operator $\Delta_\alpha^{U}$
on $\FGR{\LL}{M}{U}$,
where $U$ is the universal formal group law 
over the Lazard ring $\LL$, as
\begin{equation*}
\Del^{U}_{\alpha}(u)= 
\frac{u-s_\alpha(u)}{x_{\alpha}},\;\text{ where }
u \in \FGR{\LL}{M}{U}.
\end{equation*}
Finally, identifying $\RMF$ with $R \otimes_\LL \FGR{\LL}{M}{U}$ 
via the isomorphism from \ref{FGR:uniiso} 
we define the desired operator $\Delta^{F}_\alpha$ on $\RMF$ as 
$\id_R \otimes \Del^{U}_{\alpha}$.
We will simply write $\Delta_\alpha(u)$ when the formal group law $F$ over $R$ is understood.
\end{dfn}

\begin{rem}
Observe that if $R$ is integral, 
then the operator $\Del^{F}$
can be defined directly 
using the same formula as for $\Del^{U}$.
If $R$ is torsion, 
then it is not the case.
Indeed, take $R=\ZZ/2$, $M=\ZZ$, 
$F$ to be an additive law 
and $\alpha=2$ to be a root. 
Then $x_\alpha=x_{1+1}=x_1+x_1=2x_1=0$ in $\RMF$.
\end{rem}

\begin{rem} \label{DO:otherconv}
We could instead have defined 
$\Del_{\alpha}$ as $\Del_{\alpha}(u)=\frac{s_\alpha (u)-u}{x_{-\alpha}}$. 
This exchanges
$\Del_\alpha$ with $-\Del_{-\alpha}$ 
so it is easy to switch from one convention to the other. Both conventions give the same classical operator when the formal group law is additive. 
\end{rem}

\begin{prop} \label{DO:eqDel}
The following formulas hold for any $u,v \in \RMF$, 
$\lambda \in M$ and $w \in W$ 
(Compare to \cite[\S 3]{Demazure73}\footnote{\label{mistake_fn}
There is a sign mistake in Equation (3) of \cite[\S 3]{Demazure73}. 
One should read $\Delta_{\alpha}=-\Delta_{-\alpha}$}).
\begin{enumerate}
\item \label{DO:defDel_item}
$\Del_{\alpha}(1)=0,\quad \Del_{\alpha}(u) x_{\alpha} = u - s_\alpha(u)$,

\item \label{DO:Delsquare_item}
$\Del_\alpha^2(u) x_{\alpha} = \Del_{\alpha}(u) + \Del_{-\alpha}(u),\quad
\Del_{\alpha}(u)x_{\alpha} = \Del_{-\alpha}(u) x_{-\alpha}$,

\item \label{DO:Delands_item}
$s_{\alpha} \Del_{\alpha}(u) = - \Del_{-\alpha}(u),\quad
\Del_{\alpha} s_\alpha(u)=-\Del_{\alpha}(u)$,

\item \label{DO:DelDeriv_item}
$\Del_{\alpha}(uv) = 
\Del_{\alpha}(u)v + u\Del_{\alpha}(v) - 
\Del_{\alpha}(u) \Del_{\alpha}(v)x_{\alpha} 
=\Del_{\alpha}(u)v + s_\alpha(u)\Del_{\alpha}(v)$,

\item \label{DO:Delw_item}
$w \Del_{\alpha} w^{-1}(u) = \Del_{w(\alpha)}(u)$. 
\end{enumerate}
\end{prop}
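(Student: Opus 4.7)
The plan is to reduce everything to the defining identity $\Delta_\alpha(u) x_\alpha = u - s_\alpha(u)$ (part of item \eqref{DO:defDel_item}) and then clear/cancel factors of $x_\alpha$. Since $\Delta^F_\alpha$ is built as $\id_R \otimes \Delta^U_\alpha$ and both $s_\alpha$ and the multiplication maps are compatible with the base change $\LL \to R$, each of the five identities is stable under $\id_R \otimes -$; it therefore suffices to prove them in the universal case $R = \LL$, where $\RMF$ is an integral domain by \ref{FGR:intdom} and so $x_\alpha$ is a regular element that may be cancelled after being cleared.

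For \eqref{DO:defDel_item}, the first equality is immediate because $s_\alpha(1) = 1$, and the second is just the definition of $\Delta_\alpha$ rewritten. For \eqref{DO:Delsquare_item}, applying the definition twice gives $\Delta_\alpha^2(u) x_\alpha = \Delta_\alpha(u) - s_\alpha\Delta_\alpha(u)$, and substituting $s_\alpha\Delta_\alpha(u) = -\Delta_{-\alpha}(u)$ from \eqref{DO:Delands_item} yields the first formula; the second formula follows from $s_{-\alpha} = s_\alpha$, which gives $\Delta_{-\alpha}(u) x_{-\alpha} = u - s_{-\alpha}(u) = u - s_\alpha(u) = \Delta_\alpha(u) x_\alpha$. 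To establish \eqref{DO:Delands_item}, I apply $s_\alpha$ to both sides of $\Delta_\alpha(u) x_\alpha = u - s_\alpha(u)$ and use $s_\alpha(x_\alpha) = x_{-\alpha}$ to obtain $s_\alpha(\Delta_\alpha(u)) x_{-\alpha} = -(u - s_\alpha(u)) = -\Delta_{-\alpha}(u) x_{-\alpha}$, and then cancel $x_{-\alpha}$; for the second identity, replace $u$ by $s_\alpha(u)$ in the defining relation and use $s_\alpha^2 = \id$, then cancel $x_\alpha$.

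For \eqref{DO:DelDeriv_item}, the first formula is obtained by dividing the already-established expansion \eqref{DO:sandprod_eq} by $x_\alpha$. The second follows by rewriting $uv - s_\alpha(uv) = (u - s_\alpha(u))v + s_\alpha(u)(v - s_\alpha(v))$ and dividing by $x_\alpha$. Finally, for \eqref{DO:Delw_item}, apply $w$ to the defining relation with $u$ replaced by $w^{-1}(u)$: using the $R$-algebra automorphism property of $w$, together with $w \circ s_\alpha \circ w^{-1} = s_{w(\alpha)}$ and $w(x_\alpha) = x_{w(\alpha)}$, this gives $(w\Delta_\alpha w^{-1})(u) \cdot x_{w(\alpha)} = u - s_{w(\alpha)}(u) = \Delta_{w(\alpha)}(u) x_{w(\alpha)}$, and cancelling $x_{w(\alpha)}$ finishes the proof.

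The one point that requires some care is the cancellation of $x_\alpha$ (or $x_{w(\alpha)}$), which is not generally legitimate for arbitrary $R$ since $x_\alpha$ can be a zero-divisor (as in the remark after Definition~\ref{DO:defdelta}). The main obstacle is therefore not any clever manipulation but just the passage through the universal model: one proves all five formulas in $\FGR{\LL}{M}{U}$, which is integral, and then transports them to $\RMF$ via $\id_R \otimes -$.
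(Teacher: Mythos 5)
Your proof is correct and follows essentially the same route as the paper's: reduce to the universal case $\FGR{\LL}{M}{U}$ where $x_\alpha$ is regular, derive item~\eqref{DO:DelDeriv_item} from equation~\eqref{DO:sandprod_eq}, derive item~\eqref{DO:Delw_item} from the coroot identity (equivalently, $w s_\alpha w^{-1}=s_{w(\alpha)}$), and obtain the remaining items by direct manipulation of the defining relation. You have merely spelled out the elementary cancellations that the paper dismisses with ``follow by definition.''
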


\begin{proof}
All formulas can be proved in $\FGR{\LL}{M}{U}$ 
(which is an integral domain) and 
then specialized to any other $\RMF$.
Formula~\eqref{DO:DelDeriv_item} follows from equation \eqref{DO:sandprod_eq} above. 
Formula~\eqref{DO:Delw_item} follows from the fact 
that $\alpha^\vee (w^{-1}(\lambda))= w(\alpha)^\vee(\lambda)$. 
All other formulas follow by definition.
\end{proof}

From now on, we fix a basis of simple roots $(\alpha_1,\ldots,\alpha_n)$, with associated simple reflections $s_1,\ldots,s_n$. This defines a length function on $W$. Let $I$ denote a sequence $(i_1,\ldots,i_l)$ of $l$ integers in $[1,n]$ and let $w(I)=s_{i_1}\cdots s_{i_l}$ be the corresponding product of simple reflections. The decomposition $I$ is {\em reduced} if $w$ has length $l$. We define the linear operator $\Del_I$ as the composition
$$\Del_I=\Del_{\alpha_{i_1}}\circ\cdots \circ\Del_{\alpha_{i_l}}.$$

The operators $\Del_I$ have the following important property:

\begin{thm}\label{DO:indep_dec}
Let $F$ be a formal group law of the form 
$F(x,y)=x+y-v\cdot xy$ for some $v \in R$. 
Let $I$ and $I'$ be two reduced decompositions of $w$ in simple reflections. 
Then $\Del_I= \Del_{I'}$.
\end{thm}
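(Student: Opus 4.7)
My plan is to reduce to the classical braid relations for Demazure operators in $K$-theory by a combination of Matsumoto's lemma, universality, and an identification with the completed group algebra. By Matsumoto's lemma on Coxeter groups, the equality $\Del_I=\Del_{I'}$ for all reduced decompositions $I,I'$ of $w$ follows from the braid relations
$$
\underbrace{\Del_{\alpha_i}\Del_{\alpha_j}\Del_{\alpha_i}\cdots}_{m_{ij}\ \text{factors}} \;=\; \underbrace{\Del_{\alpha_j}\Del_{\alpha_i}\Del_{\alpha_j}\cdots}_{m_{ij}\ \text{factors}}
$$
for every pair of simple roots $\alpha_i,\alpha_j$, where $m_{ij}$ is the order of $s_{\alpha_i}s_{\alpha_j}$.

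First I apply the naturality of $\Del_\alpha$ in $R$ given by Lemma \ref{FGR:funct}: every $F$ of the stated form arises by base change along $\ZZ[v]\to R$, $v\mapsto v$, from the universal multiplicative formal group law on $\ZZ[v]$, so it suffices to prove the identity in $\FGR{\ZZ[v]}{M}{F}$. Since $\ZZ[v]$ is an integral domain, so is $\FGR{\ZZ[v]}{M}{F}$ by Remark \ref{FGR:intdom}, and it injects into the localization $\FGR{\ZZ[v,v^{-1}]}{M}{F}$; hence I may further assume $v$ invertible.

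Under this assumption, the example on the multiplicative periodic formal group law from Section \ref{FGR} provides an isomorphism $\FGR{R}{M}{F}\cong R[M]^\wedge$ via $e^\lambda\leftrightarrow 1-vx_\lambda$, under which $\Del_\alpha$ translates into $u\mapsto v(u-s_\alpha u)/(1-e^\alpha)$. The overall scalar $v^{m_{ij}}$ appears on both sides of a would-be braid identity and cancels, so the question reduces to the braid relations for the operators $\tilde{D}_\alpha(u):=(u-s_\alpha u)/(1-e^\alpha)$ on the completed group algebra $R[M]^\wedge$. These are (up to signs and substitution $\alpha\leftrightarrow -\alpha$) the classical Demazure operators for the $K$-theory of $G/B$, for which the braid relations were established in \cite{Demazure74}. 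Geometrically they reflect the fact that the $K$-theoretic class of a Bott--Samelson variety depends only on the underlying Weyl element and not on the chosen reduced word.

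The main obstacle I anticipate is not in any single step but in the careful bookkeeping across these reductions, in particular in matching the operator produced by the change of variable $x_\alpha\mapsto v^{-1}(1-e^\alpha)$ with the one for which the braid relations are classically available, and in checking the $W$-equivariance of that change of variable. A more self-contained but computationally heavier alternative is to verify the braid relations directly in $\FGR{\ZZ[v]}{M}{F}$ by finite casework over the four rank-two root systems ($A_1\times A_1$, $A_2$, $B_2$, $G_2$), reducing via the Leibniz rule of Proposition \ref{DO:eqDel}\eqref{DO:DelDeriv_item} to the action on generators $x_\lambda$; this avoids the localization argument at the cost of a lengthier explicit computation.
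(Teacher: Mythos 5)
Your proposal is correct in substance, and it ultimately rests on the same citation as the paper's proof (Demazure~\cite[Th.~2 p.~86]{Demazure74}), but the route you take to get there is genuinely different. The paper's proof is a one-line observation: Demazure's argument for $v=1$ carries through verbatim for arbitrary $v$. You instead perform a formal reduction chain: first to $\ZZ[v]$ by universality of the FGL $x+y-vxy$, then to $\ZZ[v,v^{-1}]$ by the injectivity of $\FGR{\ZZ[v]}{M}{F}\hookrightarrow\FGR{\ZZ[v,v^{-1}]}{M}{F}$ (which is fine by Corollary~\ref{FGR:freeM} and Remark~\ref{FGR:intdom}), then via the isomorphism $\RMF\simeq R[M]^\wedge$ to the classical $K$-theoretic Demazure operators on the completed group ring, and finally invoke Matsumoto to reduce to rank-two braid relations. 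What you gain from this longer reduction is that it avoids re-reading Demazure's proof and works by pure functoriality from the $v=1$ statement as a black box; what the paper's shortcut gains is brevity, and it bypasses the localization step since it never needs $v$ invertible. Both are valid, and your alternative self-contained plan (direct rank-two verification) is also a standard strategy, cf.~\cite{Bressler90}.

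Two small points. First, the naturality of the operators in $R$ that you invoke is Proposition~\ref{DO:DelCCfunct_prop}, not Lemma~\ref{FGR:funct} (that lemma is only about the rings, not the operators). Second, you should be a bit careful when matching your $\tilde D_\alpha(u)=(u-s_\alpha u)/(1-e^\alpha)$ with whichever convention Demazure uses in~\cite{Demazure74}; the discrepancy is exactly the sign/$\alpha\leftrightarrow-\alpha$ issue you flag, and it is harmless precisely because of Remark~\ref{DO:otherconv} of the present paper: switching conventions replaces $\Del_\alpha$ by $-\Del_{-\alpha}$, which permutes and negates the operators appearing in each braid relation without affecting its validity.
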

\begin{proof}
See \cite[Theorem 2 p. 86]{Demazure74}. 
The proof assumes $v=1$ but it works for any other value.
\end{proof}

When the formal group law is of the above type, the previous theorem justifies the notation $\Del_w$ instead of $\Del_I$, but in general, we have to keep the dependence on the decomposition.

\begin{rem}
Theorem~\ref{DO:indep_dec} was proved in \cite{Bressler90}
(see \loccit{ Thm.3.7}) 
for topological oriented theories. 
Moreover, the result of \cite{Bressler90} says that 
the formal group law has to be of the above form 
for such an independence of the decomposition to hold.
\end{rem}

For a given root $\alpha$ we define another linear operator
$\CC_\alpha^F$ on $\RMF$ as follows:

\begin{dfn}\label{DO:defC} 
Let $g(x,y)$ be the power series defined by
$$
x \fplus y = x + y - xy\cdot g(x,y)
$$ 
and let $\ee_\alpha$ be the element 
$g(x_\alpha,x_{-\alpha})$ in $\RMF$. 
We set
$$
\CC_{\alpha}^{F}(u)=u \ee_\alpha - \Del_\alpha^{F}(u),
\;\text{ where }u\in \RMF.
$$
We will simply write $\CC_\alpha(u)$ when 
the formal group law $F$ over $R$ is understood. For a sequence of integers $I$ in $[1,n]$, we define the operator $\CC_I$ in the same way as $\Del_I$. 
\end{dfn}

\begin{prop}\label{DO:eqCC}
The following formulas hold for any $u,v \in \RMF$, 
$\lambda \in M$ and $w \in W$. 
\begin{enumerate}
\item
$\CC_{\alpha}(1)=\ee_{\alpha},\quad
\CC_{\alpha}(x_{-\alpha})=2$,

\item
$\CC_{\alpha}(u) x_{\alpha} x_{-\alpha} = u x_{\alpha} + s_\alpha(u) x_{-\alpha},\quad 
\CC_{\alpha}(u x_{-\alpha}) = u + s_{\alpha}(u)$,

\item \label{DO:eqCCCs}
$\CC_{\alpha} s_\alpha(u)=\CC_{-\alpha}(u),\quad
s_{\alpha} \CC_{\alpha}(u) = \CC_{\alpha}(u)$,

\item \label{DO:prodCC_item}
$\CC_{\alpha}(uv) = \CC_{\alpha}(u)v + 
s_\alpha (u)\CC_{\alpha}(v) - s_{\alpha}(u) v \ee_{\alpha}
=\CC_\alpha(u)v-s_\alpha(u)\Del_\alpha(v)$,

\item
$w \CC_{\alpha} w^{-1}(u) = \CC_{w(\alpha)}(u)$,

\item
$\CC_{\alpha} \Del_{\alpha} = \Del_{\alpha} \CC_{\alpha} = 
\Del_{\alpha} \CC_{-\alpha}= 0$.
\end{enumerate}
\end{prop}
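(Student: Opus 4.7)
The plan is to reduce every identity to the universal formal group ring $\FGR{\LL}{M}{U}$, which is an integral domain (by Remark~\ref{FGR:intdom}, since $M$ is free of finite rank), and then specialize to $\RMF$; inside this integral domain the expressions involved can be manipulated freely and divisions by $x_{\pm\alpha}$ are unambiguous. All six items then follow from the definition $\CC_\alpha(u) = u\ee_\alpha - \Del_\alpha(u)$ together with three small preliminary facts that I would establish once at the beginning.

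First, commutativity of $F$ forces $g(x,y)$ to be symmetric, so $\ee_{-\alpha}=\ee_\alpha$, $s_\alpha(\ee_\alpha)=\ee_\alpha$, and consequently $\Del_\alpha(\ee_\alpha)=0$. Second, unfolding $x_\alpha\fplus x_{-\alpha} = 0$ through the defining identity of $g$ yields the key relation
\begin{equation*}
x_\alpha + x_{-\alpha} = x_\alpha x_{-\alpha}\,\ee_\alpha.
\end{equation*}
Third, combining the two identities of Proposition~\ref{DO:eqDel}(2) gives the operator identity $\Del_\alpha^2(u) = \ee_\alpha\Del_\alpha(u)$: one has $x_\alpha\Del_\alpha^2(u) = \Del_\alpha(u) + \Del_{-\alpha}(u)$, and rewriting $\Del_{-\alpha}(u) = (x_\alpha/x_{-\alpha})\Del_\alpha(u)$ then using the key relation gives $\Del_\alpha(u) + \Del_{-\alpha}(u) = x_\alpha\ee_\alpha\Del_\alpha(u)$.

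With these in hand, each item becomes short. For (1): the first equation comes from $\Del_\alpha(1)=0$; for the second, one computes $\Del_\alpha(x_{-\alpha}) = (x_{-\alpha}-x_\alpha)/x_\alpha$ and combines with $x_{-\alpha}\ee_\alpha = 1 + x_{-\alpha}/x_\alpha$, which follows from the key relation. For (2): multiply by $x_\alpha x_{-\alpha}$, substitute the definition of $\CC_\alpha$, and simplify using the key relation together with Proposition~\ref{DO:eqDel}(1); the second equation is obtained by replacing $u$ with $ux_{-\alpha}$ in the first and canceling $x_\alpha x_{-\alpha}$. For (3): after substituting the definitions and applying the sign identities of Proposition~\ref{DO:eqDel}(3), both formulas reduce to $\Del_\alpha(u) + \Del_{-\alpha}(u) = x_\alpha\ee_\alpha\Del_\alpha(u)$, which is the third preliminary fact. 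For (4): substitute $\CC_\alpha(u) = u\ee_\alpha - \Del_\alpha(u)$ into the two versions of the Leibniz rule from Proposition~\ref{DO:eqDel}(4). For (5): use Proposition~\ref{DO:eqDel}(5) together with $w(\ee_\alpha) = \ee_{w(\alpha)}$, which holds because $w$ is a continuous ring homomorphism sending $x_{\pm\alpha}$ to $x_{\pm w(\alpha)}$. For (6): $\CC_\alpha\Del_\alpha = 0$ is immediate from the operator identity of the third preliminary fact; $\Del_\alpha\CC_\alpha(u) = \Del_\alpha(u)\ee_\alpha - \Del_\alpha^2(u) = 0$ by the Leibniz rule together with $\Del_\alpha(\ee_\alpha)=0$; and finally $\Del_\alpha\CC_{-\alpha} = \Del_\alpha\CC_\alpha s_\alpha = 0$ via the first formula of (3).

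The main obstacle is establishing the operator identity $\Del_\alpha^2 = \ee_\alpha\Del_\alpha$; once it is available, everything else is bookkeeping. Note that no $2$-torsion or torsion-index hypothesis is needed here—all the identities hold already over the universal ring $\FGR{\LL}{M}{U}$.
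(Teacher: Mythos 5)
Your proposal is correct and follows essentially the same route as the paper: the paper's proof of this proposition is the single remark that everything follows from the definition of $\CC_\alpha$, the identities in Proposition~\ref{DO:eqDel}, and the relation $\ee_\alpha x_\alpha x_{-\alpha} = x_\alpha + x_{-\alpha}$, and that is exactly what you do, with the details (in particular the intermediate operator identity $\Del_\alpha^2 = \ee_\alpha\Del_\alpha$, the symmetry $\ee_{-\alpha}=\ee_\alpha=s_\alpha(\ee_\alpha)$, and the reduction to the universal integral domain $\FGR{\LL}{M}{U}$) spelled out rather than left implicit.
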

\begin{proof}
All these formulas can easily be derived 
from the ones of Proposition \ref{DO:eqDel} 
using the definition of $C_\alpha$
and the fact that 
$\ee_\alpha x_\alpha x_{-\alpha}= x_\alpha + x_{-\alpha}$.
\end{proof}

\begin{prop} \label{DO:DelWlin}
Let $W_{\alpha}$ be the subgroup of order $2$ 
generated by $s_{\alpha}$ in $W$.
Let $\RMF^{W_\alpha}$ denote the subring of fixed elements of $\RMF$ 
under the action of $W$.

If the ring $R$ has no $2$-torsion, 
then the operators $\Del_\alpha$ and $\CC_\alpha$
are $\RMF^{W_\alpha}$-linear. 
In particular, they are $\RMF^W$-linear. 
\end{prop}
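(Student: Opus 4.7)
The plan is to reduce everything to showing that $\Del_\alpha(u) = 0$ whenever $u \in \RMF^{W_\alpha}$, since the second forms of the Leibniz rules in Proposition~\ref{DO:eqDel}\eqref{DO:DelDeriv_item} and Proposition~\ref{DO:eqCC}\eqref{DO:prodCC_item}, namely
\[
\Del_\alpha(uv) = \Del_\alpha(u)\,v + s_\alpha(u)\,\Del_\alpha(v),\qquad
\CC_\alpha(uv) = \CC_\alpha(u)\,v - s_\alpha(u)\,\Del_\alpha(v),
\]
immediately yield $\Del_\alpha(uv) = u\,\Del_\alpha(v)$ and $\CC_\alpha(uv) = u\,\CC_\alpha(v)$ for such $u$, once we know $s_\alpha(u) = u$, $\Del_\alpha(u) = 0$ (which also forces $\CC_\alpha(u) = u\ee_\alpha$, so that $\CC_\alpha(u) v = u \ee_\alpha v = u\CC_\alpha(v) + u\Del_\alpha(v)$).

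To prove the vanishing of $\Del_\alpha(u)$, I would apply Proposition~\ref{DO:eqDel}\eqref{DO:Delands_item}: since $s_\alpha(u) = u$, we get
\[
\Del_\alpha(u) = \Del_\alpha(s_\alpha(u)) = -\Del_\alpha(u),
\]
that is, $2\,\Del_\alpha(u) = 0$ in $\RMF$. It remains to argue that $\RMF$ has no $2$-torsion whenever $R$ does not. This is where the root-system hypothesis enters: $M$ is a free $\ZZ$-module of finite rank, so by Corollary~\ref{FGR:freeM} there is an $R$-algebra isomorphism $\RMF \simeq R\lbr x_1,\ldots,x_n\rbr$. A formal power series over $R$ is annihilated by $2$ only if each of its coefficients is, so absence of $2$-torsion in $R$ passes to $\RMF$. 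Therefore $\Del_\alpha(u) = 0$, completing the argument for $\Del_\alpha$.

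For $\CC_\alpha$, the cleanest route is to observe that $\CC_\alpha = (\text{multiplication by }\ee_\alpha) - \Del_\alpha$ by Definition~\ref{DO:defC}; since multiplication by the fixed element $\ee_\alpha$ is obviously $\RMF$-linear and $\Del_\alpha$ is $\RMF^{W_\alpha}$-linear by the first part, $\CC_\alpha$ is $\RMF^{W_\alpha}$-linear as well. The last sentence of the proposition follows because $\RMF^W \subseteq \RMF^{W_\alpha}$ for every simple root $\alpha$. The only mildly subtle point is the passage from $2\,\Del_\alpha(u) = 0$ to $\Del_\alpha(u) = 0$, which is precisely where the no-$2$-torsion hypothesis is used and cannot be dispensed with in view of the sign in Proposition~\ref{DO:eqDel}\eqref{DO:Delands_item}.
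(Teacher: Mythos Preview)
Your proof is correct and follows essentially the same route as the paper's: reduce via the Leibniz rule \eqref{DO:DelDeriv_item} to showing $\Del_\alpha(u)=0$ for $s_\alpha$-invariant $u$, use \eqref{DO:Delands_item} to get $2\Del_\alpha(u)=0$, and then deduce the claim for $\CC_\alpha$ from its definition. You are slightly more explicit than the paper in justifying why $\RMF$ inherits the absence of $2$-torsion from $R$ (via Corollary~\ref{FGR:freeM}), which the paper leaves implicit.
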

\begin{proof}
For the operator $\Del_\alpha$,
by formula~\eqref{DO:DelDeriv_item} of \ref{DO:eqDel} it suffices to show that 
$\Del_{\alpha}(u)=0$ for $u \in \RMF^{W_{\alpha}}$.
According to formula~\eqref{DO:Delands_item} of \ref{DO:eqDel}, 
it is equivalent to
$\Del_{\alpha}(u)=-\Del_{\alpha}(u)$ which holds
if $\RMF$ has no $2$-torsion.

The same facts for $\CC_\alpha$ 
then follow from its definition in terms of $\Del_{\alpha}$.
\end{proof}

\begin{lem} \label{DO:Delalpha0_lem}
Assume $R$ has no $2$-torsion. Then for any root $\alpha$, the element $x_\alpha$ is regular in $\RMF$ and the set of elements $u$ such that $\Del_{\alpha}(u)=0$ is $\RMF^{W_\alpha}$.
\end{lem}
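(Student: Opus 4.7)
The plan is first to establish the regularity of $x_\alpha$, and then to read off the kernel description from the tautological relation $\Del_\alpha(u)\cdot x_\alpha = u - s_\alpha(u)$.

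For regularity, I would write $\alpha = m\beta$ with $\beta \in M$ primitive; for the reduced root systems relevant here only $m \in \{1,2\}$ occurs (the case $m=2$ arising e.g.\ for roots $2\epsilon_i$ in type $C$). Extending $\beta$ to a $\ZZ$-basis of $M$, Corollary~\ref{FGR:freeM} identifies $\RMF$ with $R\lbr x_1,\ldots,x_n\rbr$ so that $x_\alpha$ corresponds to $m \ftimes x_1$. Expanding $F$ yields
\[
m \ftimes x_1 \;=\; x_1 \cdot \bigl(m + x_1\, h(x_1)\bigr)
\]
for some $h \in R\lbr x_1\rbr$. The no-$2$-torsion hypothesis makes $m$ a non-zero-divisor in $R$. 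A short coefficient-by-coefficient argument (look at the lowest-degree term of any putative annihilator) then shows that any element of $R\lbr x_1,\ldots,x_n\rbr$ whose augmentation is a non-zero-divisor in $R$ is itself regular; combined with the regularity of $x_1$, this gives the regularity of $x_\alpha$ as a product of two regular elements.

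For the kernel description, I would exploit Definition~\ref{DO:defdelta}: in the integral domain $\FGR{\LL}{M}{U}$ (Remark~\ref{FGR:intdom}), the identity $\Del^U_\alpha(u)\cdot x_\alpha = u - s_\alpha(u)$ holds tautologically. Base-changing along $\LL \to R$ via the isomorphism of \ref{FGR:uniiso} preserves this identity in $\RMF$, since multiplication by $x_\alpha$ and the $W$-action are both $R$-linear. Hence $\Del_\alpha(u)\cdot x_\alpha = u - s_\alpha(u)$ in $\RMF$ as well. If $\Del_\alpha(u)=0$ then $u - s_\alpha(u)=0$, so $u\in\RMF^{W_\alpha}$; conversely, for $u\in\RMF^{W_\alpha}$ one has $\Del_\alpha(u)\cdot x_\alpha = 0$, and the just-established regularity of $x_\alpha$ forces $\Del_\alpha(u)=0$.

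The main obstacle is the regularity step in the non-primitive case $m=2$: since $R$ is not assumed to be integral, one cannot invoke the integrality of $\RMF$ directly and instead must track regularity through the constant term of $m + x_1 h$, which is precisely what the no-$2$-torsion hypothesis is there to control.
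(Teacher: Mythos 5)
Your proof is correct, and the regularity step takes a route that is genuinely different from (and arguably cleaner than) the paper's. The paper picks a system of simple roots with $\alpha=\alpha_1$, passes (after ``possibly extending $M$'') to the basis of fundamental weights $\omega_i$, identifies $\RMF$ with $R\lbr x_{\omega_1},\ldots,x_{\omega_n}\rbr$, and observes that $x_\alpha=\sum_i\pair{\alpha,\alpha_i}x_{\omega_i}+(\text{deg}\ge 2)$ has a linear term whose first coefficient is $\pair{\alpha,\alpha_1}=2$, which is regular; it then asserts that a power series in $n$ variables whose linear part has a regular coefficient is regular. You instead write $\alpha=m\beta$ with $\beta\in M$ primitive, so that $m\mid 2$ (since $m\,\alpha^\vee(\beta)=\alpha^\vee(\alpha)=2$), extend $\beta$ to a $\ZZ$-basis of $M$, and reduce to the one-variable factorization $m\ftimes x_1=x_1\bigl(m+x_1h(x_1)\bigr)$. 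This sidesteps two small frictions in the paper's version: you never need to enlarge $M$ to contain the fundamental weights (and hence you avoid the tacit injectivity of $\RMF\hookrightarrow\FGR{R}{M'}{F}$), and the $n$-variable regularity claim is replaced by the completely standard one-variable fact that a power series with regular constant term is a non-zero-divisor. The two approaches are fed by the same arithmetic input, namely $\alpha^\vee(\alpha)=2$, just packaged differently. The kernel step is identical to the paper's: both reduce to the tautology $\Del_\alpha(u)x_\alpha=u-s_\alpha(u)$ of Proposition~\ref{DO:eqDel}(1) plus the regularity just established; you could simply cite that formula rather than re-derive it by base change from $\FGR{\LL}{M}{U}$, but what you wrote is not wrong.
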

\begin{proof}
Let $\omega_1,\ldots,\omega_n$ be the fundamental weights corresponding to a choice of simple roots $\alpha=\alpha_1,\ldots,\alpha_n$ of the root system. Possibly extending $M$, we can assume that the $\omega_i$ are a basis of $M$. 
By the isomorphism $\RMF \simeq R\lbr x_{\omega_1},\ldots,x_{\omega_n}\rbr$ of Corollary \ref{FGR:freeM}, $x_\alpha$ becomes $\sum_i \pair{\alpha,\alpha_i}x_{\omega_i}+u$, $u\in \mI^2$. One easily checks that this element is regular if at least one of the $\pair{\alpha,\alpha_i}$ is regular in $R$. In particular, this is the case if $\pair{\alpha_1,\alpha_1}=2$ is regular in $R$. By equation \eqref{DO:defDel_item} of Proposition \ref{DO:eqDel}, the last part of the claim is then clear. 
\end{proof}

\begin{prop} \label{DO:DelCCfunct_prop}
Let $f:R \to R'$ be a ring morphism sending a formal group law $F$ over $R$ to a formal group law $F'$ over $R'$. Then, the operators $\Del$ and $\CC$ satisfy
$$f_* \Del_\alpha^F= \Del_\alpha^{F'} f_* \qquad\text{and}\qquad f_* \CC_\alpha^F=\CC_\alpha^{F'}f_*$$ 
where $f_*: \RMF \to \FGR{R'}{M}{F'}$ is the morphism of Lemma~\ref{FGR:funct}.
\end{prop}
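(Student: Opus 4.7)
The plan is to reduce everything to the universal case and then to invoke naturality of tensor products. Recall from Definition \ref{DO:defdelta} that $\Del_\alpha^F$ is defined as $\id_R \otimes \Del_\alpha^U$ under the isomorphism $\RMF \simeq R \otimes_\LL \FGR{\LL}{M}{U}$ of \ref{FGR:uniiso}, where the $\LL$-algebra structure on $R$ is given by $\vartheta_F\colon \LL \to R$. Similarly $\Del_\alpha^{F'} = \id_{R'} \otimes \Del_\alpha^U$, with $R'$ viewed as an $\LL$-algebra via $\vartheta_{F'} = f \circ \vartheta_F$ (which is forced by the hypothesis that $f$ sends $F$ to $F'$).

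First I would verify that the morphism $f_\star\colon \RMF \to \FGR{R'}{M}{F'}$ from Lemma~\ref{FGR:funct} is compatible with these identifications, i.e.\ that it coincides with $f \otimes \id$ under the tensor product presentations above. This is essentially a check on generators: both maps send the class of $x_\lambda$ in $\RMF$ to the class of $x_\lambda$ in $\FGR{R'}{M}{F'}$, and both are continuous morphisms of $R$-algebras (with $R$ acting on the target via $f$). Given that compatibility, the naturality of $\Del$ follows immediately from
\[
f_\star \circ \Del_\alpha^F = (f \otimes \id)\circ(\id_R \otimes \Del_\alpha^U) = f \otimes \Del_\alpha^U = (\id_{R'} \otimes \Del_\alpha^U) \circ (f \otimes \id) = \Del_\alpha^{F'} \circ f_\star.
\]

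For $\CC_\alpha$, I would use its definition $\CC_\alpha^F(u) = u\cdot\ee_\alpha^F - \Del_\alpha^F(u)$, where $\ee_\alpha^F = g^F(x_\alpha, x_{-\alpha})$ and $g^F$ is the power series determined by $F(x,y)=x+y-xy g^F(x,y)$. Since $f$ sends the coefficients of $F$ to those of $F'$, the induced continuous map sends $g^F$ to $g^{F'}$ coefficient-wise, and hence $f_\star(\ee_\alpha^F) = \ee_\alpha^{F'}$. Combined with the fact that $f_\star$ is a ring homomorphism and the naturality of $\Del_\alpha$ just established, this yields
\[
f_\star \CC_\alpha^F(u) = f_\star(u)\cdot\ee_\alpha^{F'} - \Del_\alpha^{F'}f_\star(u) = \CC_\alpha^{F'} f_\star(u).
\]

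There is no real obstacle here: the content of the statement is the assertion that the tensor product presentation of $\RMF$ is functorial in $R$ in the obvious way, and the only point requiring a moment's thought is the unraveling of the definition to see that $f_\star$ corresponds to $f \otimes \id_{\FGR{\LL}{M}{U}}$. Once that is in place, both formulas become tautologies.
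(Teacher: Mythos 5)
Your proof is correct and takes essentially the same route as the paper, which dispatches the statement with ``This is clear by construction''; your argument is simply a careful unfolding of that construction (identifying $f_\star$ with $f\otimes\id$ under the presentation $\RMF\simeq R\otimes_\LL\FGR{\LL}{M}{U}$, which works because $f\circ\vartheta_F=\vartheta_{F'}$, and then handling $\CC_\alpha$ via the observation that $f_\star(\ee_\alpha^F)=\ee_\alpha^{F'}$).
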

\begin{proof}
This is clear by construction.
\end{proof}

%%%%%%%%%%%%%%%%%%%%%%%%%%%%%%%%%%%%%%%%%%%%%%%%%%%%%%%%%%
%%%%%%%%%%%%%%%%%%%%%%%%%%%%%%%%%%%%%%%%%%%%%%%%%%%%%%%%%%
%%%%%%%%%%%%%%%%%%%%%%%%%%%%%%%%%%%%%%%%%%%%%%%%%%%%%%%%%%

\section{Endomorphisms of a formal group ring}\label{DR}

In the present section we introduce and study the subalgebra $\DMF$
of $R$-linear endomorphisms of a formal group ring $\RMF$
generated by the $\Delta$ operators of the previous section.
The main result (Theorem~\ref{DR:basisDMF}) says that operators $\Delta_{I_w}^F$,
where $w$ runs through all elements of the Weyl group and $I_w$ is any chosen 
reduced decomposition of $w$, form a basis of $\DMF$ as an $\RMF$-module. 

\begin{ntt}
Let $\RMF$ be a formal group ring corresponding to a formal group law $F$
over a ring $R$ and an abelian group $M$ (see Def.~\ref{FGR:defin}).
Let $\mI_F$ be the kernel of the augmentation map $\RMF \to R$ 
as in \ref{FGR:keraug}. 
By convention, we set $\mI^i_F=\RMF$ for any $i\leq 0$.
We define the associated graded ring 
$$
\GRMF=\bigoplus_{i=0}^{\infty} \mI_F^i/\mI_F^{i+1}.
$$
\end{ntt}

\begin{lem} \label{DR:SRM=GRMF}
The morphism of graded $R$-algebras
$$
\phi:\SRM \to \GRMF
$$ 
defined by sending $\lambda$ to $x_\lambda$ 
is an isomorphism.
\end{lem}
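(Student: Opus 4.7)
The plan is first to construct $\phi$ and verify surjectivity, then to establish injectivity by reducing from arbitrary $M$ to the cyclic cases via Theorem \ref{FGR:sumdec} and a filtered colimit argument.

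For the construction, any formal group law has the form $F(x,y) = x + y + xy \cdot g(x,y)$, so in $\RMF$ we have $x_{\lambda+\mu} \equiv x_\lambda + x_\mu \pmod{\mI_F^2}$, together with $x_0 = 0$ by definition of $\mJ_F$. Hence $\lambda \mapsto [x_\lambda]$ is a group homomorphism $M \to \mI_F/\mI_F^2$, which by $R$-linear extension and the universal property of the symmetric algebra gives a graded $R$-algebra morphism $\phi$. For surjectivity, the associated graded of $R\lbr x_M \rbr$ with respect to its augmentation filtration is the polynomial ring $R[x_M]$ with standard grading, and the quotient map $R\lbr x_M \rbr \twoheadrightarrow \RMF$ respects filtrations, so $\GRMF$ is generated in degree one as an $R$-algebra by the classes $[x_\lambda]$, all of which lie in the image of $\phi$.

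For injectivity, I would handle cyclic $M$ first. When $M = \ZZ$, Lemma \ref{FGR:freeZ} identifies $\RMF$ with $R\lbr x \rbr$ and $\mI_F$ with $(x)$, so $\GRMF \cong R[x]$ and $\phi \colon \SRM = R[T] \to R[x]$ sends $T \mapsto x$, manifestly an iso. When $M = \ZZ/n$, the example preceding this lemma identifies $\RMF$ with $R\lbr x\rbr/(n\ftimes x)$; tracking the coefficient of $x^i$ in elements of the kernel ideal shows that the only relation imposed on the graded piece $R \cdot [x]^i$ is $n\cdot [x]^i = 0$, matching $\Sym{i}{R}{\ZZ/n} = (R/n)T^i$. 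Passing to finitely generated $M \cong \ZZ^r \oplus \ZZ/n_1 \oplus \cdots \oplus \ZZ/n_k$ is done by iterating Theorem \ref{FGR:sumdec}, and passing to arbitrary $M$ uses $M = \varinjlim_\alpha M_\alpha$ (filtered colimit over finitely generated subgroups): both sides of $\phi$ are compatible with this colimit, since $\Sym{*}{R}{-}$ is a left adjoint and each class of $\mI_F^i/\mI_F^{i+1}$ is represented by a finite sum of monomials and thus factors through some $M_\alpha$.

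The main obstacle is the reduction step via Theorem \ref{FGR:sumdec}: under the isomorphism $\FGR{R}{M_1 \oplus M_2}{F} \cong \FGR{A}{M_2}{F}$ with $A = \FGR{R}{M_1}{F}$, the $\mI_F$-adic filtration on the left is a mixed filtration combining the augmentation ideals of $A$ and of $\FGR{A}{M_2}{F}$ relative to $A$, and one must verify that passing to associated gradeds produces the tensor product decomposition $\Sym{*}{R}{M_1} \otimes_R \Sym{*}{R}{M_2} \cong \Sym{*}{R}{M_1 \oplus M_2}$.
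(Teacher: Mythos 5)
Your construction of $\phi$ and the surjectivity argument are fine and essentially match the paper. But your injectivity strategy — reducing to cyclic $M$ via Theorem \ref{FGR:sumdec} and a filtered colimit — is a genuinely different route from the paper's, and it has a real gap that you yourself flag in the last paragraph without closing it. Under the identification $\FGR{R}{M_1 \oplus M_2}{F} \cong \FGR{A}{M_2}{F}$ with $A = \FGR{R}{M_1}{F}$, the augmentation ideal $\mI_F$ becomes $\mathfrak m B + \mathfrak n$ where $\mathfrak m \subset A$ is the augmentation ideal of $A$ and $\mathfrak n \subset B := \FGR{A}{M_2}{F}$ that of $B$ over $A$. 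You need $\Gr_{\mathfrak m B + \mathfrak n}(B) \cong \Gr_{\mathfrak m}(A) \otimes_R \Gr_{\mI_{F,M_2}}(\FGR{R}{M_2}{F})$, and this is not automatic: for a (completed) tensor product with a mixed-sum filtration, the associated graded of the sum need not split as the tensor of associated gradeds without a flatness or regularity hypothesis, and here none is available since $R$ is arbitrary (and $M$ may have torsion). Your colimit step has a similar subtlety: completion does not commute with filtered colimits, so the claim that $\Gr^i\FGR{R}{\varinjlim M_\alpha}{F} = \varinjlim \Gr^i\FGR{R}{M_\alpha}{F}$, and in particular injectivity of the transition maps into the colimit, needs to be argued at the level of each graded piece; this is plausible because each class is supported on finitely many monomials, but you do not justify it. The $\ZZ/n$ step is also under-argued when $n$ is a zero divisor in $R$: the relation ideal $(n \ftimes x)$ has non-leading terms that could in principle contribute extra relations in low degree, and ``tracking the coefficient of $x^i$'' should be replaced by an actual computation of $(\mJ_F \cap \tilde\mI^i + \tilde\mI^{i+1})/\tilde\mI^{i+1}$.

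The paper sidesteps all of these difficulties by constructing the inverse $\psi$ directly. Writing $\RMF = R\lbr x_M\rbr/\mJ_F$ with $\tilde\mI = \ker\aug$, one has $\mI^i_F/\mI^{i+1}_F = \tilde\mI^i/(\mJ_F\cap\tilde\mI^i + \tilde\mI^{i+1})$, and one defines $\psi_i$ on monomial classes by sending $x_{\lambda_1}\cdots x_{\lambda_i}$ to $\lambda_1\cdots\lambda_i \in S^i_R(M)$. The only thing to check is that $\psi_i$ kills $(\mJ_F\cap\tilde\mI^i + \tilde\mI^{i+1})/\tilde\mI^{i+1}$, and the key observation is that this $R$-module is generated by classes of $\rho\cdot(x_\lambda + x_\mu - x_{\lambda+\mu})$ with $\rho$ a monomial of degree $i-1$; these are manifestly killed by $\psi_i$. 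This is uniform in $R$ and $M$ at once, needs no structure theorem, no colimit, and no control over how filtrations interact under $\FGR{R}{M_1\oplus M_2}{F} \cong \FGR{A}{M_2}{F}$. I recommend you adopt that direct construction rather than trying to repair the reduction; repairing it would likely require exactly the kind of explicit analysis of $\mJ_F$ modulo $\tilde\mI^{i+1}$ that already yields the full result.
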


\begin{proof}
The map is well-defined since 
$x_{\lambda+\mu}-(x_\lambda+x_\mu)$ is in $\mI_F^2$, and it is obviously surjective. 
Let us define a map in the other direction. 
Recall that by definition $\RMF=R \lbr x_M \rbr/\mJ_F$. 
Let $\tilde\mI$ denote the kernel
of the augmentation map 
$R \lbr x_M \rbr \to R$  (see \ref{FGR:keraug}). 
Then we have 
$$
\mI_F = \tilde\mI/\mJ_F\text{ and } 
\mI^i_F/\mI^{i+1}_F=\tilde\mI^i/(\mJ_F \cap \tilde\mI^i + 
\tilde\mI^{i+1}).
$$ 
Since $\tilde\mI^i/\tilde\mI^{i+1}$ 
is given by monomials of degree $i$, 
we may define a map of $R$-modules  
$$
\psi_i\colon \tilde\mI^i/\tilde\mI^{i+1}\to S^i_R(M)
$$ 
by sending a monomial of degree $i$ in some $x_\lambda$-s 
to the symmetric product of the $\lambda$-s involved. 
This map passes to the quotient since 
$(\mJ_F \cap \tilde{\mI}^i + \tilde{\mI}^{i+1})/\tilde{\mI}^{i+1}$ 
is generated as an $R$-module by elements of the form 
$\rho.(x_\lambda+x_\mu-x_{\lambda+\mu})$ where $\rho$ 
is a monomial of degree $i-1$. 
The sum $\oplus_i \psi_i$ passes through the quotient as well
and, hence, gives the desired inverse map.
\end{proof}

\begin{ntt}
Consider now a reduced root system $M$ as in Section~\ref{DO} and
let $\RMF$ be the associated formal group ring.
Consider the operators $\Del_\alpha^F$ introduced in Def.~\ref{DO:defdelta}.
By their very definition, the operators $\Del_\alpha^F$ 
send $\mI^i_F$ to $\mI^{i-1}_F$. 
Hence, they induce $R$-linear operators of degree $-1$ 
on the graded ring $\GRMF$, denoted by $\gDel_\alpha^F$.
Similarly, we define graded versions of operators $\CC_\alpha^F$ 
(see Def.~\ref{DO:defC}) on $\GRMF$, denoted by $\gCC_\alpha^F$. 
\end{ntt}

\begin{prop} \label{DR:GrDel}
The isomorphism $\phi$ of Lemma~\ref{DR:SRM=GRMF} 
exchanges the operator $\gDel_\alpha^F$ (resp. $\gCC_\alpha^F$) 
on $\GRMF$ with $\Del_\alpha^{F_a}$ (resp. $-\Del_\alpha^{F_a}$) 
on the symmetric algebra $\SRM$, where
$F_a$ denotes the additive formal group law as in Example~\ref{FGR:addfgl} and the
$\Del_\alpha^{F_a}=-\CC_\alpha^{F_a}$ are the classical operators
of \cite{Demazure73}.
\end{prop}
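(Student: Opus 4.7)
The plan is to pin down both graded operators on the degree-$1$ generators of $\GRMF$ and then extend to all of $\GRMF$ via the Leibniz rule, using that the associated graded is generated in degree $1$ by Lemma~\ref{DR:SRM=GRMF}. First I would note that the degree-$1$ part of $\GRMF$ consists of the classes $\bar{x}_\lambda$, which correspond to $\lambda \in S^1_R(M) = M$ under $\phi$. Using $s_\alpha(x_\lambda) = x_{\lambda - \alpha^\vee(\lambda)\alpha}$ together with the congruence $x_{\mu+\nu} \equiv x_\mu + x_\nu \pmod{\mI_F^2}$ (immediate from the shape of the generators of $\mJ_F$), I would deduce $x_\lambda - s_\alpha(x_\lambda) \equiv \alpha^\vee(\lambda)\, x_\alpha \pmod{\mI_F^2}$ and hence $\Del_\alpha^F(x_\lambda) \equiv \alpha^\vee(\lambda) \pmod{\mI_F}$. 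This matches $\Del_\alpha^{F_a}(\lambda) = \alpha^\vee(\lambda)$ in $\SRM$, so $\gDel_\alpha^F$ and $\Del_\alpha^{F_a}$ agree on degree $1$ via $\phi$.

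Next I would extend this to all of $\GRMF$ using the Leibniz identity \eqref{DO:DelDeriv_item} of Proposition~\ref{DO:eqDel}: $\Del_\alpha^F(uv) = \Del_\alpha^F(u) v + u \Del_\alpha^F(v) - \Del_\alpha^F(u)\Del_\alpha^F(v)\, x_\alpha$. For $u \in \mI_F^i$ and $v \in \mI_F^j$, all three terms land in $\mI_F^{i+j-1}$, the last one because the extra factor $x_\alpha \in \mI_F$ compensates for the two applications of $\Del_\alpha^F$ (each dropping one filtration degree). Hence the identity passes verbatim to the associated graded ring, and under $\phi$ (with $\bar{x}_\alpha \leftrightarrow \alpha$) it becomes exactly the Leibniz identity satisfied by $\Del_\alpha^{F_a}$ on $\SRM$. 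A routine induction on the graded degree, combined with agreement on degree-$1$ generators, then yields $\phi^{-1} \circ \gDel_\alpha^F \circ \phi = \Del_\alpha^{F_a}$.

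For $\gCC_\alpha^F$, I would directly unpack Definition~\ref{DO:defC}: $\CC_\alpha^F(u) = u\, \ee_\alpha - \Del_\alpha^F(u)$. Multiplication by $\ee_\alpha$ preserves the $\mI_F$-adic filtration, so for $u \in \mI_F^i$ the product $u\, \ee_\alpha$ already lies in $\mI_F^i$ and its class in $\mI_F^{i-1}/\mI_F^i$ vanishes; only the $-\Del_\alpha^F$ term survives in the associated graded. Hence $\gCC_\alpha^F = -\gDel_\alpha^F$, which under $\phi$ corresponds to $-\Del_\alpha^{F_a}$. The main delicate point in my view is the filtration bookkeeping in the Leibniz identity, specifically verifying that the correction term $\Del_\alpha^F(u)\Del_\alpha^F(v)\,x_\alpha$ sits in the same graded piece as the two naive terms; once that is confirmed, the inductive comparison with the classical Demazure operator is completely mechanical.
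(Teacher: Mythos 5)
Your proof is correct and follows exactly the same route as the paper, which simply says ``Induction on the degree using \eqref{DO:DelDeriv_item} of Prop.~\ref{DO:eqDel}.'' You have just spelled out the details: matching the operators on degree $1$, observing that the twisted Leibniz identity passes verbatim to the associated graded because the correction term $\Del_\alpha^F(u)\Del_\alpha^F(v)x_\alpha$ lands in the same graded piece, and noting that multiplication by $\ee_\alpha$ preserves filtration degree so that $\gCC_\alpha^F = -\gDel_\alpha^F$.
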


\begin{proof}
Induction on the degree using \eqref{DO:DelDeriv_item} of Prop.~\ref{DO:eqDel}.
\end{proof}

\begin{dfn}\label{DR:DMFfil}
Let $\DMF$ be the subalgebra of $R$-linear endomorphisms of $\RMF$ 
generated by the $\Del_\alpha^F$ for all roots $\alpha$ and 
by multiplications by elements of $\RMF$. 
Note that by formula \eqref{DO:defDel_item} 
of Proposition \ref{DO:eqDel}, 
$\DMF$ contains $s_\alpha$ and it contains $C_\alpha^F$ by its definition. 
Let $\DMF^{(i)}$ be the sub $\RMF$-module of $\DMF$ 
generated by the $u \Del_{\alpha_1}^F\cdots \Del_{\alpha_n}^F$, 
where $u \in \mI^m_F$ and $m-n \geq i$. 
This defines a filtration on $\DMF$
$$
\DMF \supseteq \cdots \supseteq \DMF^{(i)} 
\supseteq \DMF^{(i+1)} \supseteq \cdots \supseteq 0
$$
with the property that $\DMF=\cup_i \DMF^{(i)}$.
We define the associated graded $\RMF$-module
$$
\Gr^*\DMF=\bigoplus_{i=-\infty}^\infty \DMF^{(i)}/\DMF^{(i+1)}.
$$
\end{dfn}

\begin{prop} \label{DR:DMFFil}
The filtration on $\DMF$ above has the following properties:
\begin{enumerate}
\item \label{DR:Delup_item} 
For any root $\alpha$, we have 
$$
\Del_{\alpha}^F\DMF^{(i)} \subseteq \DMF^{(i-1)},\qquad
\DMF^{(i)}\Del_{\alpha}^F \subseteq \DMF^{(i-1)}$$ 
and similarly for $\CC_\alpha^F$
\item \label{DR:Idown_item} 
For any integer $j$, we have $\mI^j_F \DMF^{(i)} \subseteq \DMF^{(i+j)}$
\item \label{DR:Dfiltrpres_item} 
For any operator $D \in \DMF^{(i)}$, we have $D(\mI^j_F)\subseteq \mI^{i+j}_F$
\item \label{DR:DMFHausdorff_item} 
$\cap_i \DMF^{(i)}=\{0\}$
\end{enumerate}
\end{prop}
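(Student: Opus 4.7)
The plan is to prove the four parts in the order (3), (2), (1), (4), since (3) is the foundational statement about how operators in the filtration act on the filtration of $\RMF$, (2) is immediate from the ideal structure, (1) encapsulates the main computation via the Leibniz rule of Proposition~\ref{DO:eqDel}, and (4) drops out of (3) together with the Hausdorff property recalled in \ref{FGR:keraug}.

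For (3), I would first check it on the generators $u\Del_{\alpha_1}\cdots\Del_{\alpha_n}$ of $\DMF^{(i)}$, with $u\in\mI_F^m$ and $m-n\ge i$. Since each $\Del_\alpha$ sends $\mI_F^k$ into $\mI_F^{k-1}$ (with the convention $\mI_F^k=\RMF$ for $k\le 0$, as recalled just before Proposition~\ref{DR:GrDel}), applying such a generator to $v\in\mI_F^j$ yields an element of $\mI_F^{m+j-n}\subseteq\mI_F^{i+j}$. The property then extends to arbitrary $D\in\DMF^{(i)}$ by $\RMF$-linearity, using that $\mI_F^{i+j}$ is an ideal. For (2), the generator $u\Del_I\in\DMF^{(i)}$ left-multiplied by $v\in\mI_F^j$ becomes $(vu)\Del_I$ with $vu\in\mI_F^{m+j}$ and $(m+j)-|I|\ge i+j$, hence in $\DMF^{(i+j)}$.

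The real work is in (1). I would read the product formula \eqref{DO:DelDeriv_item} of Proposition~\ref{DO:eqDel} as the operator identity
$$\Del_\alpha\circ u=\Del_\alpha(u)+s_\alpha(u)\circ\Del_\alpha,$$
where each element of $\RMF$ stands for the multiplication-by-it operator. Applied to a generator $u\Del_I$ with $u\in\mI_F^m$, this gives $\Del_\alpha(u)\Del_I+s_\alpha(u)\Del_\alpha\Del_I$; the first summand has leading factor in $\mI_F^{m-1}$ with $|I|$ operators, the second has leading factor in $\mI_F^m$ (because $s_\alpha$ is an $R$-algebra automorphism that preserves the augmentation and hence each $\mI_F^k$) with $|I|+1$ operators, so both lie in $\DMF^{(i-1)}$. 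Right multiplication by $\Del_\alpha$ trivially appends one factor. For $\CC_\alpha$, the definition gives $\CC_\alpha=e_\alpha-\Del_\alpha$ as operators; left multiplication is then handled by (2) on the $e_\alpha$-part (which lies in $\DMF^{(i)}\subseteq\DMF^{(i-1)}$) together with the above on the $\Del_\alpha$-part, while right multiplication reduces, after the $\Del_\alpha$-part, to showing $\DMF^{(i)}\circ e_\alpha\subseteq\DMF^{(i)}$; this follows by iterating the same Leibniz identity to push $e_\alpha$ past each $\Del_{\alpha_k}$ in $\Del_I$, producing a sum of generators whose filtration index is no smaller than $i$.

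Finally (4) follows from (3) with $j=0$: if $D\in\bigcap_i\DMF^{(i)}$, then for every $v\in\RMF$ we have $D(v)\in\bigcap_i\mI_F^i=\{0\}$ by the Hausdorff property of the $\mI_F$-adic topology, so $D=0$. The main obstacle, if anywhere, is the right-multiplication step of $\CC_\alpha$ in (1), but as explained this is a straightforward induction on $|I|$ using the single Leibniz identity already invoked for the $\Del_\alpha$ case.
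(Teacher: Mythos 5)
Your proof is correct and follows the same essential strategy as the paper's (which is extremely terse): the Leibniz identity of Proposition~\ref{DO:eqDel}.\eqref{DO:DelDeriv_item} read as the operator relation $\Del_\alpha\circ u=\Del_\alpha(u)+s_\alpha(u)\circ\Del_\alpha$ drives part~\eqref{DR:Delup_item}, the other parts follow by direct inspection on generators, and \eqref{DR:DMFHausdorff_item} is deduced from \eqref{DR:Dfiltrpres_item} via Hausdorffness of $\RMF$. The only cosmetic difference is that for the $\CC_\alpha$ case the paper cites the analogous Leibniz formula of Proposition~\ref{DO:eqCC}.\eqref{DO:prodCC_item} directly, whereas you decompose $\CC_\alpha=e_\alpha-\Del_\alpha$ and then iterate the $\Del$-Leibniz rule to commute $e_\alpha$ past $\Del_I$; both are correct and your version supplies details the paper elides.
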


\begin{proof}
The claim \eqref{DR:Delup_item} 
follows from equation \eqref{DO:DelDeriv_item} in 
Prop.~\ref{DO:eqDel} 
(resp.~equation \eqref{DO:prodCC_item} in Prop.~\ref{DO:eqCC}). 
The claim \eqref{DR:Idown_item} is obvious. 
The claim \eqref{DR:Dfiltrpres_item} follows from the fact that 
$\Del_{\alpha}^F(\mI^i_F)\subseteq \mI^{i-1}_F$. 
To prove \eqref{DR:DMFHausdorff_item} observe that any 
$D$ in $\cap_i \DMF^{(i)}$ sends $\RMF$ to $\cap_i \mI^i_F=0$ 
by \eqref{DR:Dfiltrpres_item} so is the zero operator. 
\end{proof}

\begin{ntt}
By claim (3) of Prop.~\ref{DR:DMFFil} the graded module 
$\Gr^*\DMF$ acts by graded endomorphisms on the graded ring $\GRMF$, 
which is isomorphic to $\SRM$ by Lemma~\ref{DR:SRM=GRMF}. 
Let $F_a$ denote the additive formal group law.
Then the associated subalgebra
$\DSym=\DFG{}{M}{F_a}$ coincides with one 
considered in \cite[\S 3]{Demazure73}. 
By Prop.~\ref{DR:GrDel}, 
the class of $\Del_\alpha^F$ (resp. $\CC_{\alpha}^F$) 
in $\Gr^{(-1)}\DMF$ acts by $\Del_\alpha^{F_a}$ 
(resp. $-\Del_{\alpha}^{F_a}$) on $\SRM$ and 
the class of the multiplication by $x_\alpha$ in $\Gr^{(1)}\DMF$ 
acts by the multiplication by $\alpha$ on $\SRM$. 
\end{ntt}

\begin{prop} \label{DR:GrDMF}
The graded module $\Gr^*\DMF$ over the graded ring $\GRMF \simeq \SRM$ is isomorphic to $\DSym$.
\end{prop}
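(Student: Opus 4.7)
The plan is to build mutually inverse $\SRM$-linear maps between $\Gr^*\DMF$ and $\DSym$. By Proposition~\ref{DR:DMFFil}(3), any $D\in\DMF^{(i)}$ preserves the $\mI_F$-adic filtration with shift $i$, hence induces a degree-$i$ graded endomorphism of $\GRMF$ depending only on $D$ modulo $\DMF^{(i+1)}$. Via Lemma~\ref{DR:SRM=GRMF} and Proposition~\ref{DR:GrDel}, this action identifies $[\Del_\alpha^F]$ with the classical Demazure operator $\Del_\alpha^{F_a}$ and $[x_\lambda]$ with multiplication by $\lambda$ on $\SRM$. Since $\DSym$ is generated in $\operatorname{End}_R(\SRM)$ by these operators, this construction gives a surjective graded $\SRM$-linear map $\rho\colon \Gr^*\DMF\twoheadrightarrow\DSym$.

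For the inverse direction, I would invoke Demazure's result for the additive case, which realizes $\DSym$ as a free $\SRM$-module with basis $\{\partial_{I_w}^{F_a}\}_{w\in W}$ for any chosen reduced decompositions $I_w$. This allows the definition of an $\SRM$-linear map
\[
\sigma\colon \DSym\to\Gr^*\DMF,\qquad v\cdot\partial_{I_w}^{F_a}\mapsto \tilde v\cdot[\Del_{I_w}^F],
\]
where $\tilde v\in\mI_F^m$ is any lift of $v\in S^m_R(M)\cong \mI_F^m/\mI_F^{m+1}$; different choices of lift change the result by an element of $\DMF^{(m-\ell(w)+1)}$ and thus give the same class. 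Evaluating on the basis yields $\rho\circ\sigma=\id_{\DSym}$, so $\sigma$ is injective, and it remains to show that $\sigma$ is surjective, i.e.\ that the classes $\{[\Del_{I_w}^F]\}_{w\in W}$ span $\Gr^*\DMF$ as an $\GRMF$-module.

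This surjectivity of $\sigma$ is where the main work lies. The plan is to lift the two key additive-case relations to the filtered setting: the quadratic relation lifts to $(\Del_\alpha^F)^2\in\DMF^{(-1)}$, and the braid relation lifts to $\Del_I^F-\Del_{I'}^F\in\DMF^{(-\ell(w)+1)}$ for any two reduced decompositions $I,I'$ of the same $w\in W$. Both containments should follow from the formulas of Proposition~\ref{DO:eqDel} by careful bookkeeping verifying that the non-additive corrections of $F$ (e.g.\ $x_{-\alpha}+x_\alpha\in\mI_F^2$, and the discrepancy between $\Del_\alpha^F$ and $-\Del_{-\alpha}^F$) land in the next piece of the filtration. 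Granting these graded-level relations, any $[\Del_I^F]$ reduces to an $\GRMF$-combination of $\{[\Del_{I_w}^F]\}$ by the standard word-reduction argument of the additive case. The hard part is precisely this last verification: once it is in place, $\sigma$ is surjective, hence bijective by $\rho\circ\sigma=\id$, and the proposition follows.
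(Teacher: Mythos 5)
Your overall strategy --- the surjection $\rho$, the section $\sigma$ built from Demazure's additive basis, and the reduction of the claim to surjectivity of $\sigma$ (equivalently, injectivity of $\rho$) --- makes explicit the precise content that the paper's one-sentence proof takes for granted: the paper simply asserts that $\Gr^*\DMF$ is a \emph{sub}module of $\operatorname{End}_R(\Gr^*\RMF)$ ``generated by the same elements as $\DSym$,'' and the sub-module part of that assertion is exactly your missing injectivity. So you have correctly isolated the pinch point, and your $\rho\circ\sigma=\id$ argument is fine. In passing, the quadratic half of your plan is actually easier than you make it: combining the two identities in Proposition~\ref{DO:eqDel}.\eqref{DO:Delsquare_item} with $\ee_\alpha x_\alpha x_{-\alpha}=x_\alpha+x_{-\alpha}$ gives the \emph{exact} operator identity $(\Del_\alpha^F)^2=m_{\ee_\alpha}\circ\Del_\alpha^F$, which lies in $\DMF^{(-1)}$ by the very definition of the filtration, with no estimate needed.

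The braid relation is the genuine obstacle, and ``careful bookkeeping from Proposition~\ref{DO:eqDel}'' does not deliver it. The issue is that membership in $\DMF^{(-\ell(w)+1)}$ means, by Definition~\ref{DR:DMFfil}, that the operator is an $\RMF$-combination of generators $u\,\Del^F_{\alpha_1}\cdots\Del^F_{\alpha_n}$ with $\nu(u)-n\ge -\ell(w)+1$; this is a priori strictly stronger than the statement that the operator shifts the $\mI_F$-adic filtration by at least $-\ell(w)+1$, which is what a valuation estimate naturally produces. The two descriptions of the filtration are only shown to coincide in Lemma~\ref{DR:DMFFilrevis_lem}.\eqref{DR:DMFiis_item}, whose proof depends on Theorem~\ref{DR:basisDMF} and hence on the very proposition at hand, so invoking it here would be circular. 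What you actually need is an explicit rewriting of $\Del^F_I-\Del^F_{I'}$ as an $\RMF$-combination of shorter $\Del^F_J$'s with the required valuations --- a nontrivial computation, precisely because braid relations fail for general $F$ (cf.\ the remark after Theorem~\ref{DO:indep_dec}). As it stands your argument is an outline that stops exactly at its hardest step, in roughly the same place the paper's terse proof leaves the reader to fill in.
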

\begin{proof}
It is a sub-module of the $R$-linear endomorphisms of $\Gr^*\RMF$ generated by the same elements as $\DSym$. 
\end{proof}

\begin{ntt}\label{DR:filtdef}
Let $A$ be a ring and let $\mI$ be an ideal of $A$, 
such that $A$ is complete and Hausdorff for the $\mI$-adic topology. 
Let $M$ be an $A$-module together with a $\mI$-filtration 
$(M^{(i)})_{i\in \ZZ}$, \ie $\mI M^{(i)} \subseteq M^{(i+1)}$ for any $i$. 
Consider the associated graded ring $\Gr^* A$ and the associated
graded $\Gr^* A$-module $\Gr^* M$.

The filtration is called {\em exhaustive} 
(see \cite[III, \S2, 1]{Bourbaki70})
if any $\lambda \in M$ belongs to some 
$M^{(i)}$, \ie $M=\cup_i M^{(i)}$.
For an exhaustive filtration
let $\nu(\lambda)$ denote the largest integer such that 
$\lambda \in M^{\nu(\lambda)}$ and let $\bar \lambda$ 
denote the class of $\lambda$ in 
$M^{(\nu(\lambda))}/M^{(\nu(\lambda)+1)}$ if $\nu(\lambda)$ exists and 
$0$ if $\nu(\lambda)$ does not exist. 
Note that
$\bar \lambda$ is nonzero if and only if $\lambda$ is not in $\cap_i M^{(i)}$.
\end{ntt}

\begin{lem} \label{DR:liftings}
Let $M$ be an $A$-module with an exhaustive $\mI$-filtration. Let $\lambda_1, \ldots , \lambda_n$ be elements of $M$. Then
\begin{enumerate}
\item \label{DR:generate} 
Assume $A$ is complete for the $\mI$-adic topology and 
$M$ is Hausdorff. 
If $\bar \lambda_1, \ldots, \bar \lambda_n$ generate $\Gr^* M$ as a 
$\Gr^* A$-module, then $\lambda_1, \ldots, \lambda_n$ 
generate $M$ as an $A$-module.
\item \label{DR:independent} 
Assume $A$ is Hausdorff. 
If $\bar \lambda_1, \ldots, \bar \lambda_n$ are independent elements in the $\Gr^* A$-module $\Gr^* M$, then $\lambda_1, \ldots, \lambda_n$ are independent in the $A$-module $M$.
\item \label{DR:free} 
Assume $A$ is Hausdorff complete and 
$M$ is Hausdorff.
If $\Gr^* M$ is a finitely generated free $\Gr^* A$-module, 
then $M$ is a finitely generated free $A$-module. 
\end{enumerate}
\end{lem}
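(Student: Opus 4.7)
The overall plan is to prove the three parts in order, with \eqref{DR:generate} and \eqref{DR:independent} carrying the content and \eqref{DR:free} following formally from them. Throughout, I set $\nu_i = \nu(\lambda_i)$ and adopt the convention $\mI^j = A$ for $j\le 0$, so that indices can go freely negative without issue.

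For \eqref{DR:generate}, I would use a Nakayama-style successive approximation. Given a nonzero $m\in M$ with $\nu_0 = \nu(m)$, the generation hypothesis lets me write $\bar m = \sum_i \bar b_i^{(0)}\bar\lambda_i$ in $\Gr^* M$, with $\bar b_i^{(0)}$ homogeneous of degree $\nu_0-\nu_i$; I lift each to $b_i^{(0)}\in \mI^{\nu_0-\nu_i}$, so that the error $m - \sum_i b_i^{(0)}\lambda_i$ lies in $M^{(\nu_0+1)}$. Iterating produces, at each step $k\ge 0$, corrections $b_i^{(k)}\in \mI^{\nu_0+k-\nu_i}$ with $m - \sum_i\sum_{j\le k} b_i^{(j)}\lambda_i \in M^{(\nu_0+k+1)}$. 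Completeness of $A$ then makes $a_i := \sum_{k\ge 0} b_i^{(k)}$ well-defined in $A$, and the Hausdorff hypothesis on $M$, giving $\bigcap_k M^{(\nu_0+k)} = 0$, forces $m = \sum_i a_i\lambda_i$.

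For \eqref{DR:independent}, I would argue by leading terms. Suppose $\sum_i a_i\lambda_i = 0$ with some $a_i\ne 0$. Hausdorffness of $A$ provides, for each such $i$, a largest $\mu_i$ with $a_i\in \mI^{\mu_i}$ and a nonzero class $\bar a_i\in \mI^{\mu_i}/\mI^{\mu_i+1}$. Setting $N = \min\{\mu_i+\nu_i : a_i\ne 0\}$ and reducing the given equation modulo $M^{(N+1)}$ eliminates terms with $\mu_i+\nu_i > N$ and yields the graded relation $\sum_{\mu_i+\nu_i=N}\bar a_i\bar\lambda_i = 0$ in $\Gr^N M$, contradicting independence. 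Part \eqref{DR:free} is then immediate: choose a finite homogeneous free basis of $\Gr^* M$, lift it to representatives in $M$ of the appropriate filtration levels, and apply \eqref{DR:generate} and \eqref{DR:independent} to conclude that these lifts form a free $A$-basis of $M$.

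I expect the main technical obstacle to lie in the index bookkeeping of \eqref{DR:generate}: one must check that the $\mI$-adic orders of $b_i^{(k)}$ grow uniformly with $k$ so that $\sum_k b_i^{(k)}$ really converges in the complete ring $A$, and that the residual error after $k$ steps genuinely lives in $M^{(\nu_0+k+1)}$ so that the Hausdorff condition on $M$ collapses the limit to $m$. Once this is set up, the leading-term argument for \eqref{DR:independent} and the consequence \eqref{DR:free} are essentially formal.
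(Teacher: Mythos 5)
Your proof is correct, but it takes a more self-contained route than the paper does. The paper's own proof is a one-liner: it introduces the shifted filtered module $A[i]$ (namely $A$ with filtration $\mI^{j+\nu(\lambda_i)}$), considers the filtered map $\bigoplus_i A[i] \to M$ sending $(a_i)_i \mapsto \sum_i a_i\lambda_i$, and invokes Bourbaki, \emph{Alg.\ Comm.}, III \S 2, n$^{\circ}$8, Cor.~1 and 2, which say precisely that a filtered map whose associated graded is surjective (resp.\ injective) is itself surjective (resp.\ injective) under the completeness/Hausdorff hypotheses. Your successive-approximation argument for \eqref{DR:generate} and your leading-term argument for \eqref{DR:independent} are exactly the content of those Bourbaki corollaries, so you are effectively reproving them rather than citing them. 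This is perfectly legitimate and has the pedagogical merit of exposing the mechanism; the only place where your write-up glosses over a point the Bourbaki reference would absorb is the final passage of \eqref{DR:generate}, where you need $a_i - \sum_{j<K}b_i^{(j)}$ to lie in $\mI^{\nu_0+K-\nu_i}$ (not merely in its closure) to conclude the tail lands in $M^{(\nu_0+K)}$; this is fine because $\mI^m$ is closed in the $\mI$-adic topology, but it is worth saying explicitly since you flagged the "index bookkeeping" as the delicate point.
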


\begin{proof}
Let $A[i]$ be $A$ itself considered as an 
$A$-module with the shifted 
filtration $\mI^{j+\nu(\lambda_i)}$
(by convention, $\mI^j=A$ for $j\leq 0$). 
The claims \eqref{DR:generate} and \eqref{DR:independent} then 
follow from \cite[III, \S2, 8, Cor.~1 and 2]{Bourbaki85} 
applied to $X=\bigoplus_i A[i]$, $Y=M$ and $\bigoplus_i A[i] \to M$ 
the map sending $(a_i)_i$ to $\sum_i a_i \lambda_i$. 
The last claim \eqref{DR:free} is an immediate consequence
of \eqref{DR:generate} and \eqref{DR:independent}.
\end{proof}

We now come to the main result of this section:

\begin{thm} \label{DR:basisDMF}
Let $\DMF$ be the algebra of operators defined above.
For each element $w \in W$, choose a reduced decomposition 
$I_w$ in simple reflections.
Then the operators $\Del_{I_w}^F$ (resp. $\CC_{I_w}^F$) 
form a basis of $\DMF$ as an $\RMF$-module.
\end{thm}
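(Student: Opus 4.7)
The plan is to reduce the statement to the classical additive case of Demazure via the filtration of Definition \ref{DR:DMFfil} together with the lifting machinery of Lemma \ref{DR:liftings}. The strategy is entirely formal once the correspondence between the graded pieces and the classical operators on $\SRM$ is recorded.

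First, I would collect the graded picture. For any reduced decomposition $I_w=(i_1,\ldots,i_{\ell(w)})$ of $w\in W$, Proposition \ref{DR:DMFFil}\eqref{DR:Delup_item} together with Definition \ref{DR:DMFfil} places $\Del_{I_w}^F$ in $\DMF^{(-\ell(w))}$. By Proposition \ref{DR:GrDel}, the image of each $\Del_{\alpha_i}^F$ in $\Gr^{-1}\DMF$ corresponds, under the isomorphism $\Gr^*\DMF\simeq\DSym$ of Proposition \ref{DR:GrDMF}, to the classical Demazure operator $\Del_{\alpha_i}^{F_a}$ on $\SRM$. Hence the class of $\Del_{I_w}^F$ in $\Gr^{-\ell(w)}\DMF$ corresponds to the composition $\Del_{\alpha_{i_1}}^{F_a}\circ\cdots\circ\Del_{\alpha_{i_{\ell(w)}}}^{F_a}$, which by Theorem \ref{DO:indep_dec} applied to the additive law is the classical operator $\Del_w^{F_a}$, independent of the chosen reduced decomposition.

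Next, I would invoke Demazure's classical result (\cite[\S 3]{Demazure73}), which asserts that the operators $\Del_w^{F_a}$, $w\in W$, form a free $\SRM$-basis of $\DSym$ of rank $|W|$. Through the identifications $\Gr^*\RMF\simeq\SRM$ and $\Gr^*\DMF\simeq\DSym$, this says that the classes of the $\Del_{I_w}^F$ form a free $\Gr^*\RMF$-basis of $\Gr^*\DMF$. I would then apply Lemma \ref{DR:liftings}\eqref{DR:free} to lift this to the ungraded level: $\RMF$ is complete and Hausdorff for the $\mI_F$-adic topology by \ref{FGR:keraug}; the filtration on $\DMF$ is exhaustive by construction and is an $\mI_F$-filtration by Proposition \ref{DR:DMFFil}\eqref{DR:Idown_item}; and $\DMF$ is Hausdorff by Proposition \ref{DR:DMFFil}\eqref{DR:DMFHausdorff_item}. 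This yields that the $\Del_{I_w}^F$ form an $\RMF$-basis of $\DMF$.

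The $\CC_{I_w}^F$ case is handled in exactly the same way: by Proposition \ref{DR:GrDel} the graded class of each $\CC_\alpha^F$ is $-\Del_\alpha^{F_a}$, so the graded class of $\CC_{I_w}^F$ is $(-1)^{\ell(w)}\Del_w^{F_a}$, still an $\SRM$-basis of $\DSym$, and Lemma \ref{DR:liftings}\eqref{DR:free} applies verbatim. The only nontrivial input beyond bookkeeping is Demazure's additive-case theorem; the mild obstacle, if any, is simply verifying the hypotheses of the lifting lemma, but all of these have already been established in Propositions \ref{DR:GrDel}, \ref{DR:DMFFil} and \ref{DR:GrDMF}.
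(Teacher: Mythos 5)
Your proposal is correct and follows essentially the same route as the paper's own proof: reduce to the additive case on the associated graded objects via Proposition \ref{DR:GrDMF}, invoke Demazure's classical basis theorem, and lift through Lemma \ref{DR:liftings}\eqref{DR:free} using the Hausdorff and completeness hypotheses established in Proposition \ref{DR:DMFFil} and \ref{FGR:keraug}. The only difference is that you spell out the intermediate bookkeeping (placement of $\Del_{I_w}^F$ in $\DMF^{(-\ell(w))}$, independence of the graded class from the reduced decomposition via Theorem \ref{DO:indep_dec}) more explicitly than the paper, which leaves it implicit; the paper cites \cite[\S 4, Cor.\ 1]{Demazure73} for the additive fact.
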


\begin{proof}
When the formal group law is additive, this is proved in \cite[\S 4, Cor. 1]{Demazure73} for the $\Del^{F_a}_w$ (and $\CC^{F_a}_w=(-1)^{l(w)}\Del^{F_a}_w$).
The filtered module $\DMF$ is Hausdorff by point \eqref{DR:DMFHausdorff_item} in Proposition \ref{DR:DMFFil}, and $\RMF$ is complete. We can therefore apply point \eqref{DR:free} of Lemma~\ref{DR:liftings} to deduce the general case from the corresponding fact on the associated graded objects, which is the additive case by Proposition \ref{DR:GrDMF}.
\end{proof}

%%%%%%%%%%%%%%%%%%%%%%%%%%%%%%%%%%%%%%%%%%%%%%%%%
%%%%%%%%%%%%%%%%%%%%%%%%%%%%%%%%%%%%%%%%%%%%%%%%
%%%%%%%%%%%%%%%%%%%%%%%%%%%%%%%%%%%%%%%%%%%%%%%%%%

\section{Torsion indices and augmented operators} \label{IT}

In the present section we study the augmentation $\eDMF$
of the algebra of operators $\DMF$. 
The main result is Proposition~\ref{DR:basiseDel}
which is what becomes of Theorem~\ref{DR:basisDMF} when the augmentation $\aug$ is applied.
Similarly, we then examine the filtration on $\eDMF$ obtained by applying $\aug$ to the one on $\DMF$ introduced in \ref{DR:DMFfil}.

\begin{ntt}
Consider the {\em torsion index} $\tor$ of the root system 
as defined in \cite[\S 5]{Demazure73} 
and its prime divisors, called {\em torsion primes}. 
It has been computed for simply connected root systems of all types
(see \cite[\S 7, Prop. 8]{Demazure73} and \cite{Totaro05b,Totaro05}).
The results of these computations are summarized in the following table:
\medskip

{\small
\begin{center}
\begin{tabular}{l||c|c|c|c|c|c|c|c|c}
Type & $A_l$ & $B_l$, $l\geq 3$ & $C_l$ & $D_l$, $l\geq 4$ & $G_2$ & $F_4$ & $E_6$ & $E_7$ & $E_8$ \\
Torsion primes & $\emptyset$ & $2$ & $\emptyset$ & $2$ & $2$ & $2,3$ & $2,3$ & $2,3$ & $2,3,5$ \\
Torsion index & $1$ & $2^{e(l)}$ & $1$ & $2^{e(l-1)}$ & $2$ & $2\cdot 3$ & $2\cdot 3$ & $2^2\cdot 3$ 
& $2^6\cdot 3^2\cdot 5$
\end{tabular}
\end{center}
}
\medskip

\noindent The exponent $e(l)$ is equal to 
$l-\lfloor\log_2(\binom{l+1}{2}+1)\rfloor$ 
except for certain values of $l$ 
equal or slightly larger than a power of $2$ which are 
explicitly given in \cite[Thm. 0.1]{Totaro05}. 
Since $\tor(\mathcal R_1\times \mathcal R_2)=\tor(\mathcal R_1)\cdot \tor(\mathcal R_2)$, we may compute
torsion indices of semi-simple root systems by reducing to irreducible ones.
\end{ntt}

\begin{ntt}
Let $M$ be the character group of a root system of $G$ and let
$F$ be a formal group law over a ring $R$.
Let $\aug\colon \RMF\to R$ be the augmentation map with the kernel $\mI_F$.
Let $N$ be the length of the longest element $w_0$ 
of the Weyl group $W$ of $G$. 
Let $F_a$ be an additive formal group law.

By definition of the torsion index (see \cite[\S 5]{Demazure73})
there exists an element $a$ of the symmetric algebra 
$\Sym{N}{R}{M}$ such that 
$$
\Del^{F_a}_{w_0}(a)=\tor.
$$
Let $\phi\colon S^*_R(M)\stackrel{\simeq}\to \GRMF$ be an isomorphism
from Lemma~\ref{DR:SRM=GRMF}.
Let $u_0\in \mI^N_F$ be an element such that $\bar u_0=\phi(a)$. 
Therefore, 
for any reduced decomposition $I_0$ of $w_0$ 
we have $\aug \Del_{I_0}^F(u_0)=\tor$ or, in other words, by Prop.~\ref{DR:GrDel}
$$
\Del_{I_0}^F(u_0)=\tor+\mI_F \quad \text{and} 
\quad \CC_{I_0}^F(u_0)=(-1)^N \tor+\mI_F \quad \text{in}\ \RMF. 
$$
\end{ntt}

\begin{lem} \label{DR:reducedI0}
Let $I$ be a sequence of simple reflections and 
let $l(I)$ be its length. Then 
\begin{enumerate}
\item \label{DR:Delpull_item} 
for any $i\in \ZZ$, $\Del_I^F(\mI^i_F)\subseteq \mI^{i-l(I)}_F$ and $\CC_I^F(\mI^i_F)\subseteq \mI^{i-l(I)}_F$;
\item \label{DR:notred_item} 
if $l(I)\leq i$ and $I$ is not reduced, 
then 
$$\Del_I^F(\mI^i_F) \subseteq \mI^{i-l(I)+1}_F \qquad\text{and}\qquad \CC_I^F(\mI^i_F) \subseteq \mI^{i-l(I)+1}_F;$$
\item \label{DR:reducedI0_item} 
let $u_0\in\mI_F^N$ be the element chosen above.
If $l(I)\leq N$ then 
$$
\aug\Del_{I}^F(u_0)=(-1)^N \aug\CC_{I}^F(u_0)=
\begin{cases}
\tor & \text{if $I$ is reduced and $l(I)=N$} \\
0 & \text{otherwise.}
\end{cases}
$$
\end{enumerate}
\end{lem}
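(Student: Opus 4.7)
\medskip

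The plan is to reduce everything to statements about the classical Demazure operators $\Del_\alpha^{F_a}$ on $\SRM$ via the graded isomorphism of Proposition \ref{DR:GrDel}, together with the additive-case facts that $\Del_I^{F_a}=0$ whenever $I$ is not reduced and that $\Del_{I}^{F_a}=\Del_{w_0}^{F_a}$ for any two reduced decompositions $I$ of $w_0$.

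\textbf{Part (1).} This is essentially built into the definition: each $\Del_\alpha^F$ (resp.\ $\CC_\alpha^F$) takes $\mI_F^j$ into $\mI_F^{j-1}$, since by construction $\Del_\alpha^F$ lies in $\DMF^{(-1)}$ (and similarly for $\CC_\alpha^F$, using $\ee_\alpha \in \mI_F$), and then Proposition \ref{DR:DMFFil}\eqref{DR:Delup_item}+\eqref{DR:Dfiltrpres_item} gives the claim on iterated composition. Alternatively one just unwinds $\Del_I^F=\Del_{\alpha_{i_1}}^F\cdots \Del_{\alpha_{i_l}}^F$ and applies the single-operator statement $l(I)$ times.

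\textbf{Part (2).} Here I would argue on the associated graded. Since $\Del_I^F\in \DMF^{(-l(I))}$, its class in $\Gr^{-l(I)}\DMF$ acts on $\GRMF\simeq \SRM$ as the composition of the graded operators $\gDel_{\alpha_{i_j}}^F$, which by Proposition \ref{DR:GrDel} is exactly $\Del_I^{F_a}$ (and for $\CC$, it is $(-1)^{l(I)}\Del_I^{F_a}$). Demazure's classical theorem says that $\Del_I^{F_a}=0$ whenever $I$ is not a reduced decomposition. Consequently, for $u\in \mI_F^i$ with $l(I)\le i$, the class of $\Del_I^F(u)$ in $\mI_F^{i-l(I)}/\mI_F^{i-l(I)+1}$ vanishes, giving $\Del_I^F(u)\in \mI_F^{i-l(I)+1}$ (and similarly for $\CC_I^F$).

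\textbf{Part (3).} Take the chosen $u_0\in\mI_F^N$ with $\bar u_0=\phi(a)$ and $\aug\Del_{I_0}^F(u_0)=\tor$ for any reduced decomposition $I_0$ of $w_0$. There are three cases. If $l(I)<N$, part (1) yields $\Del_I^F(u_0)\in \mI_F^{N-l(I)}\subseteq \mI_F$, hence $\aug\Del_I^F(u_0)=0$. If $l(I)=N$ but $I$ is not reduced, part (2) gives $\Del_I^F(u_0)\in \mI_F^{1}$, hence again $\aug\Del_I^F(u_0)=0$. If $l(I)=N$ and $I$ is reduced, then $\Del_I^F(u_0)\in \RMF$ and its image in $R=\Gr^0\RMF$ coincides (via $\phi$) with $\Del_I^{F_a}(a)=\Del_{w_0}^{F_a}(a)=\tor$, where the middle equality is Demazure's independence of reduced decomposition in the additive case. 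The corresponding statement for $\CC_I^F$ is identical once one tracks through the sign: the class of $\CC_\alpha^F$ in $\Gr^{-1}\DMF$ acts by $-\Del_\alpha^{F_a}$, so the class of $\CC_I^F$ for $l(I)=N$ acts by $(-1)^N\Del_I^{F_a}$, which accounts for the $(-1)^N$.

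\textbf{Main obstacle.} The only subtle point is ensuring that the deduction in part (2) is really valid: one must check that $\Del_I^F(u)$ genuinely lies in $\mI_F^{i-l(I)}$ (so that it has a well-defined class in the next graded piece), which is exactly part (1), and that the graded operator identifies with the correct additive operator on $\SRM$, which is Proposition \ref{DR:GrDel}. Beyond this, the bookkeeping for $\CC_I^F$ versus $\Del_I^F$ is just the sign $(-1)^{l(I)}$; once that is fixed, everything proceeds in parallel.
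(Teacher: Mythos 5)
Your proof is correct and follows essentially the same strategy as the paper's: part (1) from the filtration properties of $\DMF$, part (2) by passing to the associated graded via Proposition \ref{DR:GrDel} and invoking Demazure's vanishing for non-reduced words in the additive case, and part (3) by combining (1), (2), and the defining property of $u_0$ (which itself already encodes the reduction to the additive case that you spell out). The only difference is that you fill in details the paper leaves implicit — in particular you explicitly separate the sub-case $l(I)<N$ (handled by part (1)) from the non-reduced $l(I)=N$ sub-case (handled by part (2)), whereas the paper's terse "the `otherwise' case follows from (2)" glosses over this.
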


\begin{proof}
Claim \eqref{DR:Delpull_item} follows from 
Prop.~\ref{DR:DMFFil}, \eqref{DR:Delup_item}. 
To check \eqref{DR:notred_item}, it therefore suffices 
to check that $\gDel_I^F$ is zero. 
It follows from the corresponding fact in the additive case
\cite[\S 4, Prop. 3, (a)]{Demazure73} 
after applying Prop.~\ref{DR:GrDel}. 
The ``otherwise'' case of \eqref{DR:reducedI0_item} 
follows from \eqref{DR:notred_item}. 
The case where $I$ is reduced and has length $N$ 
follows from the definition of $u_0$. 
\end{proof}

\begin{prop} \label{DR:basiseDel}
Consider the $R$-module $\eDMF$ of all $R$-linear forms $\aug D$, where $D\in\DMF$.
Assume the torsion index $\tor$ is regular in $R$. 
Then for any choice of a collection of reduced decomposition $I_w$ 
for each $w \in W$, the $(\aug \Del_{I_w})_{w \in W}$ 
(resp.~the $(\aug \CC_{I_w})_{w \in W}$) form an $R$-basis of $\eDMF$.  
\end{prop}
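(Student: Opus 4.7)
The plan is to combine Theorem~\ref{DR:basisDMF}, which says $(\Del_{I_w}^F)_{w\in W}$ is an $\RMF$-basis of $\DMF$, with Lemma~\ref{DR:reducedI0} to reduce the claim to inverting an $R$-triangular matrix whose diagonal entries are $\tor$.

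For surjectivity, I would start by decomposing any $D \in \DMF$ uniquely as $D = \sum_{w\in W} a_w \Del_{I_w}^F$ with $a_w \in \RMF$. Since $\aug \colon \RMF \to R$ is a ring homomorphism, for every $u \in \RMF$ we have $\aug(D(u)) = \sum_w \aug(a_w)\,\aug(\Del_{I_w}^F(u))$, so $\aug D = \sum_w \aug(a_w)\,(\aug \Del_{I_w}^F)$ in $\eDMF$. Hence $(\aug \Del_{I_w}^F)_{w\in W}$ generates $\eDMF$ over $R$; the same argument applies to $\CC$ using the corresponding basis.

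For independence, I would produce, for each $v \in W$, a ``test element'' $u_v := \Del_{J_v}^F(u_0) \in \mI_F^{l(v)}$, where $J_v$ is a fixed reduced decomposition of $v^{-1}w_0$ and $u_0 \in \mI_F^N$ is the element with $\aug \Del^F_{I_0}(u_0) = \tor$ constructed at the start of the section (recall $l(v^{-1}w_0)=N-l(v)$ since $w_0$ is an involution). The key observation is that
$$M_{w,v} := \aug \Del_{I_w}^F(u_v) = \aug \Del_{I_w J_v}^F(u_0),$$
where the concatenation $I_w J_v$ has length $l(w) + N - l(v)$ and represents $w v^{-1} w_0$ in $W$. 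Applying Lemma~\ref{DR:reducedI0} case by case then gives: $M_{w,v}=0$ whenever $l(w) < l(v)$, since the sequence has length $<N$; $M_{w,v}=0$ whenever $l(w)=l(v)$ and $w\neq v$, since the length-$N$ sequence then represents an element $wv^{-1}w_0\neq w_0$ of length $<N$ and so is not reduced; and $M_{v,v} = \tor$, since $I_v J_v$ is a reduced decomposition of $w_0$. Ordering $W$ by decreasing length, the matrix $(M_{w,v})$ is therefore upper triangular with $\tor$ on the diagonal. Evaluating a hypothetical relation $\sum_w r_w\,(\aug \Del_{I_w}^F) = 0$ at each $u_v$ gives $\sum_w r_w M_{w,v} = 0$, and a downward induction on $l(v)$—using that $\tor$ is regular in $R$—forces every $r_w$ to vanish, starting with $r_{w_0}$. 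The argument for $\CC$ is identical, with $u_v := \CC_{J_v}^F(u_0)$ and diagonal entries $(-1)^N \tor$.

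The main obstacle is that $M_{w,v}$ is only controlled by Lemma~\ref{DR:reducedI0} when $l(w) \le l(v)$, i.e.\ when $l(I_w J_v) \le N$; for $l(w) > l(v)$ the entries are a priori arbitrary because the total derivation order exceeds $N$ and escapes the hypotheses of parts (i)--(ii) of that lemma. This dictates the direction of the induction: one must eliminate the large-length $r_w$ first, so that by the time one reaches $v$ the uncontrolled entries above the diagonal are multiplied by zero and disappear.
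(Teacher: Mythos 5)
Your proof is correct and takes essentially the same approach as the paper's: generation is deduced from Theorem~\ref{DR:basisDMF}, and independence is established by testing against $\Del_{I_{v^{-1}w_0}}^F(u_0)$ and invoking Lemma~\ref{DR:reducedI0}, exactly as you do. The only cosmetic difference is that the paper runs an upward induction on a length bound for the relation (so it never meets the uncontrolled entries with $l(w)>l(v)$), whereas you sweep downward through $W$ and explicitly note that those entries are multiplied by coefficients already known to vanish; both formulations are equivalent.
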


\begin{proof}
By Theorem \ref{DR:basisDMF}, the $\aug\Del_{I_w}$ generate $\eDMF$. We prove that the $\aug\Del_{I_w}$ such that $l(I_w)\leq i$ are independent by induction on $i$. It is obviously true when $i<0$.
Let $\sum_{l(w)\leq i} r_w \aug \Del_{I_w}=0$. 
By induction, it suffices to show that the coefficients $r_w$ with $l(w)=i$ are zero. 
For any $v$ and $w$ of length $i$, the concatenation $I_w+I_{v^{-1}w_0}$ is a reduced decomposition of $w_0$ if and only if $v=w$. 
Thus, by Lemma \ref{DR:reducedI0}.\eqref{DR:Delpull_item} and
\eqref{DR:reducedI0_item} we have $\sum_{l(w)\leq i} r_w \aug \Del_{I_w}(\Del_{I_{v^{-1}w_0}}(u_0))= r_v \tor$ which implies that $r_v=0$ by regularity of $\tor$.
The same proof works for the $\CC_{I_w}$.
\end{proof}

\begin{lem} \label{DR:DMFFilrevis_lem}
Assume $\tor$ is regular in $R$. 
\begin{enumerate}
\item \label{DR:DelIwinlw_item}  
For any reduced decomposition $I_w$ of $w$ and any element 
$u \in \mI^i_F\smallsetminus \mI^{i+1}_F$, the element $u \Del_{I_w}^F$ is in 
$\DMF^{(i-l(w))}\smallsetminus \DMF^{(i-l(w)+1)}$. 
\item \label{DR:DMFiis_item} 
The group $\DMF^{(i)}$ is the set of $D \in \DMF$ such that 
$D(\mI^j_F)\subseteq \mI^{i+j}_F$.
\end{enumerate}
\end{lem}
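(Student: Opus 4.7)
The plan is to prove (1) by a direct contradiction argument using the test element $\Del_{I_{w^{-1}w_0}}^F(u_0)$ together with regularity of $\tor$, and to prove (2) by passing to the associated graded and invoking the faithful action of $\DSym$ on $\SRM$.

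For (1), the containment $u\Del_{I_w}^F \in \DMF^{(i-l(w))}$ is immediate from Definition~\ref{DR:DMFfil}. To rule out $u\Del_{I_w}^F \in \DMF^{(i-l(w)+1)}$, I would suppose the contrary: by Proposition~\ref{DR:DMFFil}.\eqref{DR:Dfiltrpres_item}, $u\Del_{I_w}^F$ would then send $\mI^{l(w)}_F$ into $\mI^{i+1}_F$. I apply it to the test element $v = \Del_{I_{w^{-1}w_0}}^F(u_0)$, which lies in $\mI^{l(w)}_F$ by Lemma~\ref{DR:reducedI0}.\eqref{DR:Delpull_item}. Since $I_w + I_{w^{-1}w_0}$ is a reduced decomposition of $w_0$ of length $N$, Lemma~\ref{DR:reducedI0}.\eqref{DR:reducedI0_item} yields $\Del_{I_w}^F(v) = \tor + \epsilon$ with $\epsilon \in \mI_F$, hence $u\Del_{I_w}^F(v) = u\tor + u\epsilon$. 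The term $u\epsilon$ lies in $\mI^{i+1}_F$, while the class of $u\tor$ in $\mI^i_F/\mI^{i+1}_F \cong S^i_R(M)$ equals $\tor\cdot \bar u$ and is nonzero, because $\tor$ is regular in $R$ and $S^i_R(M)$ is a free $R$-module. This contradicts $u\Del_{I_w}^F(v) \in \mI^{i+1}_F$.

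For (2), one inclusion is exactly Proposition~\ref{DR:DMFFil}.\eqref{DR:Dfiltrpres_item}. For the reverse, I would suppose that $D$ satisfies $D(\mI^j_F)\subseteq \mI^{i+j}_F$ for all $j$ but that $D \notin \DMF^{(i)}$. Since $\bigcap_j \DMF^{(j)} = 0$ by Proposition~\ref{DR:DMFFil}.\eqref{DR:DMFHausdorff_item} and $D \in \DMF = \bigcup_j \DMF^{(j)}$ by Definition~\ref{DR:DMFfil}, there exists a largest integer $j_0$ with $D \in \DMF^{(j_0)}$, and necessarily $j_0 < i$. Then the class $\bar D \in \DMF^{(j_0)}/\DMF^{(j_0+1)}$ is nonzero, and via the isomorphism $\Gr^*\DMF \cong \DSym$ of Proposition~\ref{DR:GrDMF} it corresponds to a nonzero element of $\DSym$, which, being by construction a subalgebra of $\End_R(\SRM)$, acts faithfully. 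But the hypothesis gives $D(\mI^k_F) \subseteq \mI^{k+i}_F \subseteq \mI^{k+j_0+1}_F$ (using $i > j_0$), so $\bar D$ induces the zero map on every graded piece $\mI^k_F/\mI^{k+1}_F$ of $\SRM$, a contradiction.

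The main obstacle, and the only nonobvious ingredient, lies in (2): one must interpret Proposition~\ref{DR:GrDMF} not merely as an abstract isomorphism of $\SRM$-modules but as an identification of $\Gr^*\DMF$ with a concrete subalgebra of $\End_R(\SRM)$, so that nonvanishing of a graded class can be detected by its action on $\SRM$ and subsequently contradicted by the filtration-shifting hypothesis on $D$. Once this framework is in place, both parts reduce to short calculations exploiting the regularity of $\tor$ and the element $u_0$ fixed earlier.
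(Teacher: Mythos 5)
Part (1) is essentially the paper's argument: you use the same test element $v=\Del_{I_{w^{-1}w_0}}^F(u_0)$ and the same input from Lemma~\ref{DR:reducedI0}. The paper phrases the nonvanishing by first inverting $\tor$ so that $\Del_{I_w}^F(v)$ becomes a unit; you instead stay over $R$ and read off the leading term $\tor\cdot\bar u$ in the free $R$-module $S^i_R(M)$. Both are correct and interchangeable.

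Part (2) is where you diverge from the paper, and there is a genuine gap. You derive (2) from the claim that the natural graded map $\Gr^*\DMF\to\operatorname{End}_R(\SRM)$ is injective, which you extract from Proposition~\ref{DR:GrDMF}. But unwind what that injectivity says: a class in $\DMF^{(j_0)}/\DMF^{(j_0+1)}$ acting as zero on every graded piece means exactly that any $D\in\DMF^{(j_0)}$ with $D(\mI^k_F)\subseteq\mI^{k+j_0+1}_F$ for all $k$ must lie in $\DMF^{(j_0+1)}$. Combined with exhaustiveness and the Hausdorff property of the filtration, this statement \emph{is} part (2) — the two are logically equivalent. The paper's one-sentence proof of Proposition~\ref{DR:GrDMF} only exhibits $\Gr^*\DMF\to\operatorname{End}_R(\SRM)$ as having image $\DSym$ (same generators); it does not actually argue injectivity, and it is precisely the present lemma's part (2), proved later and under the extra hypothesis that $\tor$ is regular, that supplies the missing injectivity. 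So by citing Proposition~\ref{DR:GrDMF} for the injectivity, your argument is circular. The paper's route avoids this: it writes $D=\sum_w u_w\Del_{I_w}^F$ via the basis of Theorem~\ref{DR:basisDMF} (which uses Proposition~\ref{DR:GrDMF} only for generation and independence of the classes $\overline{\Del_{I_w}^F}$, facts obtainable from the \emph{existence} of the graded map without injectivity), picks the extremising $w_1$, and evaluates $D$ on the test element $y_1=\Del_{I_{w_1^{-1}w_0}}^F(u_0)$ to read off $\nu(u_{w_1})-l(w_1)\geq i$ directly — effectively proving the injectivity rather than assuming it. To repair your proof of (2), you should replace the appeal to Proposition~\ref{DR:GrDMF} by this concrete basis-and-test-element computation.
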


\begin{proof}
It suffices to prove \eqref{DR:DelIwinlw_item} 
after inverting the torsion index. 
By Lemma~\ref{DR:reducedI0}.\eqref{DR:Delpull_item}, 
the operator $u\Del_{I_w}^F$ is in $\DMF^{(i-l(w))}$.
Consider the element 
$y=\Del_{I_{w^{-1}w_0}}^F(u_0)$. It is in $\mI^{l(w)}_F$.
By Lemma~\ref{DR:reducedI0}.\eqref{DR:reducedI0_item} the
element $s=\Del_{I_{w}}^F(y)$ is invertible.
Hence, $u\Del_{I_w}^F(y)=us$ cannot be in $\mI^{i+1}_F$, 
and therefore $u \Del_{I_w}^F$ is not in $\DMF^{(i-l(w)+1)}$.

To prove \eqref{DR:DMFiis_item}, 
first note that 
Prop.~\ref{DR:DMFFil}.\eqref{DR:Dfiltrpres_item} immediately
gives one inclusion. 
Suppose $D$ is such that $D(\mI^j_F)\subseteq \mI^{i+j}_F$.
Using Theorem~\ref{DR:basisDMF} we may 
write it as 
$$
D=\sum_w u_w \Del_{I_w}^F.
$$
Recall that $\nu(u_w)$ denotes the largest integer such that 
$u_w \in \mI^v_F$ (see \ref{DR:filtdef}). 
Consider the set $S$ of $w\in W$ such that 
$\nu(u_w)-l(w)$ is minimal and take an element 
$w_1 \in S$ such that $l(w_1)$ is maximal. 
To show that 
$D \in \DMF^{(i)}$, it suffices to show that $\nu(u_{w_1})-l(w_1) \geq i$ by minimality of elements of $S$.

Let $y_1=\Del_{I_{w_1^{-1}w_0}}^F(u_0)$ be the corresponding element in $\mI^{l(w_1)}$.
We have
$$
D(y_1)= u_{w_1} \Del_{I_{w_1}}^F(y_1) + 
\sum_{w \in S\smallsetminus\{w_1\}} u_w \Del_{I_w}(y_1)
+\sum_{w \notin S} u_w \Del_{I_w}(y_1).
$$
By Lemma~\ref{DR:reducedI0}.\eqref{DR:Delpull_item} 
and \eqref{DR:notred_item}, 
two sums on the right are in $\mI^{\nu(u_{w_1})+1}_F$. 
As in the proof of \eqref{DR:DelIwinlw_item}, $u_{w_1}\Del_{I_{w_1}}^F(y_1)$ is not in $\mI^{\nu(u_{w_1})+1}$ so
$D(y_1)$ is in $\mI^{\nu(u_{w_1})}_F\smallsetminus\mI^{\nu(u_{w_1})+1}_F$. 
Since $D(y_1)$ is also in $\mI^{l(w_1)+i}_F$, we obtain that 
$\nu(u_{w_1})-l(w_1)\geq i$.
\end{proof}

We now consider the filtration on $\eDMF$ 
that is the image by $\aug$ 
of the filtration on $\DMF$ defined in \ref{DR:DMFfil}, \ie
$$
\eDMF \supseteq \cdots \supseteq \eDMF^{(i)} 
\supseteq \eDMF^{(i+1)} \supseteq \cdots \supseteq 0.
$$

\begin{prop} \label{PS:filtrationeDMF_prop}
Assume $\tor$ is regular in $R$. 
This filtration satisfies the following.
\begin{enumerate}
\item \label{PS:eDizeroIi1_item} 
For any $i \in \ZZ$ and $\aug D \in \eDMF^{(i)}$, 
we have $\aug D(\mI^{-i+1}_F)=0$.
\item \label{PS:eDMFquotientbasis_item} 
Let $E$ be a subset of $W$ of elements of length $l$, 
and let the $I_w$ be reduced decompositions of each of the $w\in E$. 
Then any nonzero $R$-linear combination 
$$
f=\sum_{w \in S} r_w\cdot\aug\Del_{I_w}^F \quad\text{(resp. $\aug C_{I_w}^F)$}
$$ 
is in $\eDMF^{-l}\smallsetminus \eDMF^{-l+1}$.
In particular, for any $w$ 
$$
\aug \Del_{I_w}^F 
\in\eDMF^{-l(w)}\smallsetminus\eDMF^{-l(w)+1}
\quad\text{(resp. with $C_{I_w}^F$).}
$$
\item \label{PS:eDMFibasis_item} 
For any choice of reduced decompositions $I_w$ 
for every element $w \in W$, 
the $R$-module $\eDMF^{(-i)}$ is 
a free $R$-module with basis the $\aug \Del_{I_w}$ (resp. $\aug C_{I_w}$)  
with $l(w) \leq i$. 
\item \label{PS:eDMFibounded_item} 
For any $i\leq -N$, we have $\eDMF^{(i)}=\eDMF$ and 
for any $i>0$, we have $\eDMF^{(i)}=0$, 
\ie the filtration is of the form
$$\eDMF = \eDMF^{(-N)} \supseteq \eDMF^{(-N+1)} 
\supseteq \cdots \supseteq \eDMF^{(0)} \supseteq 0.$$ 
\end{enumerate}
\end{prop}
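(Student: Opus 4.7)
The four statements should be handled in the given order, with (1) providing the key criterion for non-membership that powers (2), and (2) in turn feeding (3) and (4).

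\textbf{Part (1).} This is immediate from Proposition \ref{DR:DMFFil}.\eqref{DR:Dfiltrpres_item}: if $\aug D \in \eDMF^{(i)}$ with $D \in \DMF^{(i)}$, then $D(\mI_F^{-i+1}) \subseteq \mI_F^{1} = \mI_F$, which lies in the kernel of $\aug$.

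\textbf{Part (2).} First, membership in $\eDMF^{-l}$ is easy: by Lemma \ref{DR:DMFFilrevis_lem}.\eqref{DR:DelIwinlw_item} applied with $u=1 \in \mI_F^0 \setminus \mI_F^1$, each $\Del_{I_w}^F$ (and hence each $\CC_{I_w}^F$, since it differs by a lower-order term involving $\ee_\alpha$) lies in $\DMF^{(-l(w))} = \DMF^{(-l)}$ for $w \in E$. For non-membership in $\eDMF^{-l+1}$, the plan is to pair $f$ against elements in $\mI_F^{l}$ of the form $y_v = \Del_{I_{v^{-1}w_0}}^F(u_0)$ for $v \in E$. Using Lemma \ref{DR:reducedI0}.\eqref{DR:reducedI0_item} together with the fact that the concatenation $I_w + I_{v^{-1}w_0}$ is reduced (of length $N$) if and only if $w = v$, one obtains $f(y_v) = r_v\tor$. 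Since $\tor$ is regular in $R$ and some $r_v \neq 0$, this pairing is nonzero. But if $f$ belonged to $\eDMF^{-l+1}$, part (1) with $i = -l+1$ would force $f(\mI_F^{l}) = 0$, contradicting nonvanishing on $y_v$. The $\CC_{I_w}^F$ case is identical using the $(-1)^N$ variant in Lemma \ref{DR:reducedI0}.\eqref{DR:reducedI0_item}.

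\textbf{Part (3).} The elements $\aug\Del_{I_w}^F$ with $l(w) \leq i$ lie in $\eDMF^{(-i)}$ by part (2), and they are $R$-linearly independent by Proposition \ref{DR:basiseDel}. The only thing to check is that they span $\eDMF^{(-i)}$. Given $\aug D \in \eDMF^{(-i)}$, expand $\aug D = \sum_{w \in W} r_w \aug \Del_{I_w}^F$ via Proposition \ref{DR:basiseDel}. Assume for contradiction that some $r_w \neq 0$ with $l(w) > i$, and let $l$ be the maximum such length, $E = \{w \in W : l(w) = l,\ r_w \neq 0\}$. Subtracting from $\aug D$ all terms with $l(w) \leq i$ (which lie in $\eDMF^{(-i)}$) and all terms with $i < l(w) < l$ (which lie in $\eDMF^{(-l+1)} \subseteq \eDMF^{(-i)}$ by induction on the argument just applied to smaller strata), one isolates $g = \sum_{w \in E} r_w \aug\Del_{I_w}^F$ as an element of $\eDMF^{(-i)} \subseteq \eDMF^{(-l+1)}$ (using $-l+1 \leq -i$). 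This contradicts part (2).

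\textbf{Part (4).} Since $l(w) \leq N$ for every $w \in W$, part (3) with $i = N$ gives $\eDMF^{(-N)} = \eDMF$. For $i \geq 1$, part (1) gives $\aug D (\mI_F^{-i+1}) = 0$; as $-i+1 \leq 0$, by convention $\mI_F^{-i+1} = \RMF$, so $\aug D = 0$.

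The main obstacle is part (2): the membership in $\eDMF^{-l}$ is routine, but the separation statement (non-membership in $\eDMF^{-l+1}$) genuinely requires combining the regularity of $\tor$ with the elementary but essential length/reducedness argument about concatenation of reduced decompositions, which is what makes the pairing against $y_v$ compute exactly $r_v \tor$ rather than a more complicated linear combination.
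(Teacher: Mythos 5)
Your overall route matches the paper's: part (1) by the filtration estimate, part (2) by pairing against $\Del_{I_{v^{-1}w_0}}^F(u_0)$ together with the reduced/non-reduced dichotomy of Lemma \ref{DR:reducedI0}.\eqref{DR:reducedI0_item}, and parts (3) and (4) as formal consequences. One small improvement on your side: for part (1) you only invoke Prop.~\ref{DR:DMFFil}.\eqref{DR:Dfiltrpres_item}, whereas the paper cites Lemma \ref{DR:DMFFilrevis_lem}.\eqref{DR:DMFiis_item}; your citation is the right one, since only the easy implication ($D\in\DMF^{(i)}$ implies $D(\mI_F^j)\subseteq\mI_F^{i+j}$) is needed and it holds without invoking regularity of $\tor$.

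There is, however, a genuine misstep in your part (3). The filtration $(\eDMF^{(i)})_i$ is decreasing, and $l>i$ gives $-l+1\leq -i$, so the inclusion is $\eDMF^{(-i)}\subseteq\eDMF^{(-l+1)}$, not the other way around as written in your parenthetical. Worse, the intermediate claim that $g=\sum_{w\in E}r_w\aug\Del_{I_w}^F$ lies in $\eDMF^{(-i)}$ is false: by part (2) itself, $g\notin\eDMF^{(-l+1)}\supseteq\eDMF^{(-i)}$. The ``by induction'' step would require each term $r_w\aug\Del_{I_w}^F$ with $i<l(w)<l$ to lie in $\eDMF^{(-i)}$, which part (2) rules out. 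The correct, and shorter, path is to work entirely in $\eDMF^{(-l+1)}$: since $-i\geq-l+1$ we have $\aug D\in\eDMF^{(-i)}\subseteq\eDMF^{(-l+1)}$, and every term $r_w\aug\Del_{I_w}^F$ with $l(w)<l$ lies in $\eDMF^{(-l(w))}\subseteq\eDMF^{(-l+1)}$; subtracting, $g\in\eDMF^{(-l+1)}$, which contradicts part (2). No induction on lengths is needed. This is a local fix rather than a change of strategy, but as written the argument asserts a false statement, so you should rewrite that sentence.
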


\begin{proof}
Claim \eqref{PS:eDizeroIi1_item} follows from Lemma~\ref{DR:DMFFilrevis_lem}.\eqref{DR:DMFiis_item}.
To prove \eqref{PS:eDMFquotientbasis_item},
we first apply $f$ to $\Delta_{I_{w^{-1}w_0}}^F(u_0)$. 
By Lemma \ref{DR:reducedI0}.\eqref{DR:reducedI0_item}, 
we obtain $\tor r_w$, 
which has to be zero by \eqref{PS:eDizeroIi1_item} 
for the element $f$ to be in $\mI^{l+1}_F$. 
Therefore, each $r_w$ is zero.  
The last two claims follow.
\end{proof}

%%%%%%%%%%%%%%%%%%%%%%%%%%%%%%%%%%%%%%%%%%%%%%%%%%%%%%%
%%%%%%%%%%%%%%%%%%%%%%%%%%%%%%%%%%%%%%%%%%%%%%%%%%%%%%%%
%%%%%%%%%%%%%%%%%%%%%%%%%%%%%%%%%%%%%%%%%%%%%%%%%%%%%%%%

\section{Invariants and the characteristic map}\label{IC}

We now come to the definition of an algebraic replacement $\HMF$ 
for the oriented cohomology $\HH^*(G/B)$ 
when $G$ is a split semi-simple simply connected algebraic group 
corresponding to the root system and 
$B$ is a Borel of $G$. 
The identification of $\HMF$ with $\HH^*(G/B)$ is the subject of section \ref{AG}. 

\begin{dfn} \label{DR:charmap_def}
Let $\HMF$ be the $R$-dual of $\eDMF$ and let
$$\cc^F:\RMF \to \HMF$$
denote the natural map obtained by duality, 
\ie sending $u$ to the evaluation at $u$. When the formal group law is clear from the context, we write $\cc$ for $\cc^F$.

Again, when $F=F_a$ is additive, 
this $\HMF$ corresponds to the one 
defined by Demazure in \cite[\S 3]{Demazure73}.
\end{dfn}

\begin{rem} \label{DR:ccepsW}
Observe that when $R$ has no $2$-torsion, since the operators $\Del_\alpha^F$ are $\RMF$-linear by Proposition \ref{DO:DelWlin}, the characteristic map satisfies $\cc(fu)=\aug(f)\cc(u)$ when $f$ is in $\RMF^W$. 
\end{rem}

\begin{thm} \label{DR:charMapBasis}
Let $(I_w)_{w \in W}$ be a choice of reduced decompositions.
When the torsion index $\tor$ is regular in $R$, there is a unique $R$-basis $\zz^{\Del}_{I_w}$ of $\HMF$ such that the characteristic map is given by
$$
\cc(u)=\sum_w \aug\Del_{I_w}^F(u)\zz^{\Del}_{I_w}.
$$
Similarly, there is a unique $R$-basis 
$\zz^{\CC}_{I_w}$ of $\HMF$ such that the characteristic map is given by
$$\cc(u)=\sum_w \aug\CC_{I_w}^F(u)\zz^{\CC}_{I_w}.$$
\end{thm}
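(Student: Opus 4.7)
The statement is a formal consequence of Proposition~\ref{DR:basiseDel} together with Definition~\ref{DR:charmap_def}, so the plan is mainly to unwind the duality. By Proposition~\ref{DR:basiseDel}, when $\tor$ is regular in $R$, the family $(\aug\Del^F_{I_w})_{w\in W}$ is an $R$-basis of $\eDMF$, so $\eDMF$ is a finitely generated free $R$-module of rank $|W|$. Its $R$-dual $\HMF$ is then also free of rank $|W|$, and I would let $\zz^{\Del}_{I_w}$ denote the dual basis, uniquely characterized by $\zz^{\Del}_{I_w}(\aug\Del^F_{I_v})=\delta_{v,w}$.

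To derive the formula, I would write $\cc(u)\in\HMF$ in this dual basis as $\cc(u)=\sum_{w}a_w(u)\,\zz^{\Del}_{I_w}$ for unique coefficients $a_w(u)\in R$. Since by Definition~\ref{DR:charmap_def} the element $\cc(u)$ is the $R$-linear form $\aug D\mapsto \aug D(u)$ on $\eDMF$, evaluating the expansion at $\aug\Del^F_{I_v}$ immediately gives $a_v(u)=\cc(u)(\aug\Del^F_{I_v})=\aug\Del^F_{I_v}(u)$, which is the desired formula.

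For uniqueness of the basis, suppose a second basis $(\zz'_w)_{w\in W}$ of $\HMF$ satisfied the same identity. Then $\sum_w \aug\Del^F_{I_w}(u)\bigl(\zz'_w-\zz^{\Del}_{I_w}\bigr)=0$ for every $u\in\RMF$. Evaluating this equality at an arbitrary $\aug\Del^F_{I_v}\in\eDMF$ turns it into $\sum_w \bigl(\zz'_w-\zz^{\Del}_{I_w}\bigr)(\aug\Del^F_{I_v})\cdot \aug\Del^F_{I_w}(u)=0$ for all $u$, that is, $\sum_w \bigl(\zz'_w-\zz^{\Del}_{I_w}\bigr)(\aug\Del^F_{I_v})\cdot \aug\Del^F_{I_w}=0$ in $\eDMF\subseteq\Hom_R(\RMF,R)$. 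The $R$-linear independence of the $\aug\Del^F_{I_w}$ given by Proposition~\ref{DR:basiseDel} forces each coefficient to vanish, and since $\zz'_w-\zz^{\Del}_{I_w}\in\Hom_R(\eDMF,R)$ is determined by its values on a basis of $\eDMF$, we conclude $\zz'_w=\zz^{\Del}_{I_w}$. The case of $\CC^F_{I_w}$ is handled verbatim using the other half of Proposition~\ref{DR:basiseDel}.

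There is no substantive obstacle: the whole argument is a dualisation. The only nontrivial input is the hypothesis that $\tor$ is regular in $R$, which is precisely what Proposition~\ref{DR:basiseDel} needs in order to produce an \emph{$R$-basis} of $\eDMF$ (rather than merely a generating family) and thereby permit the passage to the dual basis.
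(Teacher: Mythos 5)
Your argument is exactly the paper's, which simply says ``We take the bases that are dual to the ones of Proposition~\ref{DR:basiseDel}''; you have unwound that one-line proof into the explicit duality computation and the uniqueness check, both of which are correct. No further comment is needed.
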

\begin{proof}
We take the bases that are dual to the ones of Proposition \ref{DR:basiseDel}. 
\end{proof}

\begin{thm} \label{DR:surjchar}
Assume that the torsion index $\tor$ is invertible in $R$. Then for any choice of a collection of reduced decompositions $I_w$ for every $w \in W$, the $\cc\big(\Del_{I_w}(u_0)\big)$ (resp.~the $\cc\big(\CC_{I_w}(u_0)\big)$) form an $R$-basis of the image of $\cc$. In particular, the characteristic map $\cc$ is surjective.
\end{thm}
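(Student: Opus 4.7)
The plan is to show directly that the elements $\cc\bigl(\Del_{I_w}(u_0)\bigr)$, $w\in W$, form an $R$-basis of $\HMF$ itself; since they already lie in the image of $\cc$, this will immediately yield both the surjectivity of $\cc$ and the basis statement on the image. Using Theorem~\ref{DR:charMapBasis} I expand
$$\cc\bigl(\Del_{I_w}(u_0)\bigr)=\sum_{v\in W} a_{v,w}\, z^\Del_{I_v},\qquad a_{v,w}=\aug \Del_{I_v}\bigl(\Del_{I_w}(u_0)\bigr)=\aug \Del_{I_v\cdot I_w}(u_0),$$
where $I_v\cdot I_w$ denotes the concatenation of sequences, and reduce the problem to showing that the matrix $(a_{v,w})_{v,w\in W}$ is invertible over $R$.

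The entries of this matrix are computed using Lemma~\ref{DR:reducedI0}. For $l(v)+l(w)<N$, part~\eqref{DR:Delpull_item} places $\Del_{I_v\cdot I_w}(u_0)$ in a strictly positive power of $\mI_F$, so $a_{v,w}=0$. For $l(v)+l(w)=N$, part~\eqref{DR:reducedI0_item} gives $a_{v,w}=\tor$ precisely when $I_v\cdot I_w$ is a reduced expression for $w_0$, which amounts to $vw=w_0$, i.e.\ $w=v^{-1}w_0$; otherwise $a_{v,w}=0$. The entries with $l(v)+l(w)>N$ are out of reach of these tools, but I shall arrange the indexing so that they are harmless.

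To finish, I permute the columns of $(a_{v,w})$ by the length-reversing involution $\sigma\colon w\mapsto w^{-1}w_0$, which satisfies $l(\sigma(w))=N-l(w)$. Setting $b_{v,w}=a_{v,\sigma(w)}$, the case analysis above translates directly: $b_{v,w}=0$ whenever $l(v)<l(w)$, while on the locus $l(v)=l(w)$ one has $b_{v,w}=\tor\,\delta_{v,w}$. Ordering $W$ by increasing length, the matrix $(b_{v,w})$ is therefore lower triangular with the invertible element $\tor$ along its diagonal; the uncontrolled entries in the region $l(v)>l(w)$ fall strictly below the diagonal and cannot obstruct invertibility. Hence $(b_{v,w})$, and so $(a_{v,w})$, is invertible over $R$. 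The case of the $\CC_{I_w}$ is identical: Lemma~\ref{DR:reducedI0}.\eqref{DR:reducedI0_item} only changes $\tor$ into $(-1)^N\tor$ on the diagonal, which is still a unit.

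The main obstacle is really the bookkeeping needed to reveal the triangularity: observing that the natural pairing produced by Lemma~\ref{DR:reducedI0} is along the anti-diagonal $w=v^{-1}w_0$, so that the correct reindexing is via $\sigma$ rather than the identity. Once this is spotted, the entries with $l(v)+l(w)>N$---which one would otherwise have to control via an expansion of $\Del_{I_v\cdot I_w}$ in the $\RMF$-basis $\{\Del_{I_u}\}$ of $\DMF$ furnished by Theorem~\ref{DR:basisDMF}---become irrelevant.
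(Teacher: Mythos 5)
Your argument is correct and is essentially the paper's proof repackaged: the paper expands $\cc\bigl(\Del_{I_{w^{-1}w_0}}(u_0)\bigr)$ in the basis $\bigl(\zz^\Del_{I_v}\bigr)_v$ and does a decreasing induction on $l(w)$, which is exactly the unwinding of your block-triangularity observation after the column permutation by $\sigma$. One small slip: $\sigma(w)=w^{-1}w_0$ is a length-reversing \emph{bijection} but not in general an involution (since $\sigma^2(w)=w_0 w w_0\neq w$ unless $w_0$ is central); the argument only needs bijectivity, so nothing breaks, but the wording should be adjusted.
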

\begin{proof}
We have $\cc(u_0)=\tor . \zz^{\Del}_{I_0}$ and 
$$\cc\big(\Del_{I_{w^{-1}w_0}}(u_0)\big)=\tor . \zz^{\Del}_w + \sum_{l(v)>l(w)} \aug \Del_{I_v}\Del_{I_{w^{-1}w_0}}(u_0)\zz^{\Del}_{I_v}$$
by point (3) of Lemma \ref{DR:reducedI0}. All the $\zz^\Del_{I_w}$ are thus in the image of the characteristic map by decreasing induction on the length of $w$. The same proof works when replacing $\Del$ by $\CC$. 
\end{proof}

Still assuming that $\tor$ is regular in $R$, let $f:R \to R'$ be a ring morphism sending a formal group law $F$ over $R$ to a formal group law $F'$ over $R'$. By restriction (through $f$), $\HFG{R'}{M}{F'}$ is a $R$-module. Let $f_\mH: \HMF \to \HFG{R'}{M}{F'}$ be the $R$-linear map sending the element $\zz_{I_w}^\Del$ in $\HMF$ to the corresponding one in $\HFG{R'}{M}{F'}$.
\begin{prop} \label{fHfirst_prop}
We have the following:
\begin{enumerate}
\item \label{fHfunct_item} The assignment $f \to f_\mH$ defines a functor from the category of formal group laws $(R,F)$ ($\tor$ regular in $R$) to the category of pairs $(R,M)$ where $M$ is a module over $R$, and the morphisms are the obvious ones. 
\item \label{fHc=cfH_item} We have $f_\mH \cc^F = \cc^{F'} f_*$.
\item \label{fHindepIw_item} The map $f_\mH$ defined above is independent of the choice of the reduced decompositions $I_w$. 
\end{enumerate}
\end{prop}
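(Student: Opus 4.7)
The plan is to prove (2) first as a straightforward computation in the basis, then establish (3), which contains the only real content, and finally obtain (1) formally.

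For (2), the definition via Theorem \ref{DR:charMapBasis} gives $\cc^F(u) = \sum_w \aug\Del^F_{I_w}(u)\, \zz^{\Del}_{I_w}$. Applying $f_\mH$ and using Proposition \ref{DO:DelCCfunct_prop} together with the compatibility $f\circ\aug = \aug\circ f_*$, one rewrites each scalar $f(\aug\Del^F_{I_w}(u))$ as $\aug\Del^{F'}_{I_w}(f_*(u))$. Collecting terms, the resulting sum is by definition $\cc^{F'}(f_*(u))$.

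The core of the argument is (3). Fix two choices $(I_w)$ and $(J_w)$ of reduced decompositions; Proposition \ref{DR:basiseDel} says that $(\aug\Del^F_{I_w})_w$ and $(\aug\Del^F_{J_w})_w$ are both $R$-bases of $\eDMF$, so there is a unique matrix $(a^F_{vw}) \in M_{|W|}(R)$ with $\aug\Del^F_{I_v} = \sum_w a^F_{vw}\,\aug\Del^F_{J_w}$, and similarly a matrix $(a^{F'}_{vw}) \in M_{|W|}(R')$ on the $R'$-side. I would prove the key naturality statement $f(a^F_{vw}) = a^{F'}_{vw}$ by evaluating both expansions on $f_*(u)$ for an arbitrary $u \in \RMF$; combining the two intertwining relations $f_*\Del^F_{I_w} = \Del^{F'}_{I_w}f_*$ and $f\aug = \aug f_*$ yields
\[
\sum_w \bigl(a^{F'}_{vw} - f(a^F_{vw})\bigr)\, \aug\Del^{F'}_{J_w}(f_*(u)) = 0.
\]
The isomorphism $\FGR{R'}{M}{F'} \simeq R'\otimes_\LL \FGR{\LL}{M}{U}$ of \ref{FGR:uniiso}, combined with $f_*\,(\vartheta_F)_* = (\vartheta_{F'})_*$, shows that $f_*(\RMF)$ generates $\FGR{R'}{M}{F'}$ as an $R'$-module, so the displayed identity extends to all of $\FGR{R'}{M}{F'}$; linear independence of $(\aug\Del^{F'}_{J_w})_w$ (again Proposition \ref{DR:basiseDel}) then forces $f(a^F_{vw}) = a^{F'}_{vw}$. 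The dual change-of-basis formula $\zz^{\Del}_{J_w} = \sum_v a^F_{vw}\, \zz^{\Del}_{I_v}$ and its $F'$-analog immediately yield that the two candidate definitions of $f_\mH$ agree on every $\zz^{\Del}_{J_w}$, hence everywhere.

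For (1), the identity case is trivial, and given a composable pair $R\xra{f}R'\xra{g}R''$ respecting formal group laws, both $(g\circ f)_\mH$ and $g_\mH\circ f_\mH$ send $\zz^{\Del}_{I_w}$ to its counterpart over $R''$ for any fixed common choice of reduced decompositions, so they coincide by (3). The hard step throughout is the naturality of the change-of-basis matrix $(a^F_{vw})$ in (3); once this is extracted from \ref{FGR:uniiso} and Propositions \ref{DR:basiseDel} and \ref{DO:DelCCfunct_prop}, both (1) and (2) are essentially formal.
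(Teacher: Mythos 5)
Your proof is correct, but your argument for (3) takes a genuinely different route from the paper. The paper disposes of (3) quickly: by (2), $f_\mH$ is determined on the image of $\cc^F$, and when $\tor$ is invertible in $R$ the characteristic map $\cc^F$ is surjective by Theorem~\ref{DR:surjchar}, so $f_\mH$ is forced; the general case then follows by embedding $\HFG{R}{M}{F}\hookrightarrow \HFG{R[1/\tor]}{M}{F}$ (legitimate because $\HMF$ is a free $R$-module and $\tor$ is regular). You instead work directly with the change-of-basis matrix $(a^F_{vw})$ between two choices of bases $(\aug\Del^F_{I_w})$ and $(\aug\Del^F_{J_w})$ of $\eDMF$, proving the naturality $f(a^F_{vw})=a^{F'}_{vw}$ by combining the intertwining of $\Del$ with $f_*$ (Prop.~\ref{DO:DelCCfunct_prop}), the $R'$-linear independence from Prop.~\ref{DR:basiseDel}, and the fact that $f_*(\RMF)$ generates $\FGR{R'}{M}{F'}$ as an $R'$-module via the isomorphism of \ref{FGR:uniiso}. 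Both approaches are valid; the paper's localization trick is shorter, while yours avoids passing to a localization and yields the stronger explicit statement that the transition matrices themselves are compatible with $f$. Your appeal to (3) in the proof of (1) is unnecessary once a common choice of $I_w$ is fixed, as the paper's ``by construction'' indicates, but it does no harm.
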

\begin{proof}
Point \eqref{fHfunct_item} holds by construction. 
Point \eqref{fHc=cfH_item} follows from the formula for 
the characteristic map given in Theorem \ref{DR:charMapBasis}. 
Since $f_*$ is defined independently of the $I_w$, 
the last item \eqref{fHindepIw_item} holds if the characteristic map is surjective. 
The general case follows by embedding 
$\HFG{R}{M}{F}$ into $\HFG{R\left[1/\tor\right]}{M}{F}$ 
where the characteristic map is surjective by Theorem \ref{DR:surjchar}.
\end{proof}

\begin{prop} \label{DR:system} Assume that $\tor$ is invertible in $R$.
Fix a reduced decomposition $I_w$ for each $w\in W$. Then
\begin{enumerate}
\item \label{DR:system_item} For any $x \in \RMF$, the system of linear equations in $\RMF$ 
$$\Del_{I_v}(x) = \sum_{w \in W} r_w \Del_{I_v} \Del_{I_w}(u_0)$$
for all $v$ in $W$, has a unique solution $(r_w)_{w \in W}$.
\item \label{DR:solutionbasis_item} If $(r_w)_{w \in W}$ is the solution of \eqref{DR:system_item}, then for any $D \in \DMF$, we have the equality
$$D(x) = \sum_{w \in W} r_w D \Del_{I_w}(u_0).$$
\item \label{DR:coeffsfixedW_item} If $(r_w)_{w\in W}$ is the solution of \eqref{DR:system_item} and if $R$ has no $2$-torsion, then all the $r_w$ are in fact in $\RMF^W$.
\end{enumerate}
The same is true when replacing $\Del$ by $\CC$ everywhere in points \eqref{DR:system_item}, \eqref{DR:solutionbasis_item} and \eqref{DR:coeffsfixedW_item}.
\end{prop}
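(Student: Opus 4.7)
The plan is to reduce (1) to the invertibility of an explicit $|W|\times|W|$ matrix, deduce (2) immediately from Theorem~\ref{DR:basisDMF}, and derive (3) by applying the Leibniz rule for $\Del_\alpha$ to the system and feeding the outcome back into the uniqueness of (1).

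For (1), let $A$ be the matrix with $(v,w)$-entry $\Del_{I_v}\Del_{I_w}(u_0)=\Del_{I_v+I_w}(u_0)$, so that the system reads $A\cdot(r_w)_w=(\Del_{I_v}(x))_v$. I would reorder the columns by the involution $\sigma(v)=v^{-1}w_0$: the new diagonal entry at position $v$ is $\Del_{I_v+I_{\sigma(v)}}(u_0)$, and since $I_v+I_{\sigma(v)}$ has length $N=l(w_0)$ and multiplies to $w_0$, it is a reduced decomposition of $w_0$, whence its augmentation equals $\tor$ by Lemma~\ref{DR:reducedI0}.\eqref{DR:reducedI0_item}. After sorting rows and columns by $l(v)$ ascending, any entry strictly above the reordered diagonal (so $l(v)<l(v')$, or $l(v)=l(v')$ with $v\neq v'$) corresponds to a concatenation $I_v+I_{\sigma(v')}$ of length at most $N$ that is either shorter than $N$ or of length $N$ but not reduced, and both cases give elements of $\mI_F$ by the same lemma. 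Thus modulo $\mI_F$ the reordered matrix is lower-triangular with $\tor$ on the diagonal, so $\det A\equiv\tor^{|W|}\pmod{\mI_F}$ is a unit; $\mI_F$-adic completeness of $\RMF$ then lifts this to invertibility of $A$ over $\RMF$. For (2), write $D=\sum_v f_v\Del_{I_v}$ via Theorem~\ref{DR:basisDMF}, substitute the $v$-th equation of the system, and switch the order of summation.

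For (3), assume $R$ has no $2$-torsion, set $y_w=\Del_{I_w}(u_0)$, and fix a simple reflection $s=s_\alpha$. On the one hand, (2) applied to $D=\Del_s\Del_{I_v}\in\DMF$ yields $\Del_s\Del_{I_v}(x)=\sum_w r_w\Del_s\Del_{I_v}(y_w)$. On the other hand, applying $\Del_s$ directly to the $v$-th equation of the system and expanding via Proposition~\ref{DO:eqDel}.\eqref{DO:DelDeriv_item} gives
$$\Del_s\Del_{I_v}(x)=\sum_w\bigl(\Del_s(r_w)\Del_{I_v}(y_w)+s(r_w)\Del_s\Del_{I_v}(y_w)\bigr).$$
Subtracting the two expressions and using $r_w-s(r_w)=x_\alpha\Del_s(r_w)$ together with $u-x_\alpha\Del_s(u)=s(u)$ (a direct consequence of Proposition~\ref{DO:eqDel}.\eqref{DO:defDel_item}), everything collapses to
$$\sum_w\Del_s(r_w)\,s(\Del_{I_v}(y_w))=0 \qquad\text{for every }v\in W.$$
Applying the automorphism $s$ gives $\sum_w s\Del_s(r_w)\,\Del_{I_v}(y_w)=0$ for every $v$; by the uniqueness in (1) applied to the element $x=0$, this forces $s\Del_s(r_w)=0$, hence $\Del_s(r_w)=0$. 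Lemma~\ref{DO:Delalpha0_lem} then yields $r_w\in \RMF^{W_\alpha}$, and ranging $\alpha$ over simple roots gives $r_w\in \RMF^W$.

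The $\CC$-versions go through identically: in (1) the diagonal entries become $(-1)^N\tor$ modulo $\mI_F$, still a unit; in (3) the Leibniz formula of Proposition~\ref{DO:eqCC}.\eqref{DO:prodCC_item} combined with $\CC_s(u)=u\ee_\alpha-\Del_s(u)$ produces, after the same telescoping, the analogous identity $\sum_w\Del_s(r_w)\,s(\CC_{I_v}\CC_{I_w}(u_0))=0$, from which the argument concludes as before. The main obstacle is (3): the $r_w$ supplied by (1) live a priori only in $\RMF$ and the system is not visibly $W$-equivariant because $s_\alpha$ does not commute with $\Del_{I_v}$. The key point is the comparison between the two natural ways to compute $\Del_s\Del_{I_v}(x)$, one via part~(2) and one via Leibniz; their non-trivial equality, pushed through the uniqueness of (1), is exactly what upgrades $r_w\in\RMF$ to $r_w\in\RMF^W$.
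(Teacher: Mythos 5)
Your proposal is correct and follows essentially the same path as the paper: reduce the matrix of the system modulo $\mI_F$ and use Lemma~\ref{DR:reducedI0} to see it is triangular with $\tor$ on the diagonal (the paper phrases this via the Jacobson radical, you via the determinant, but it is the same computation), deduce (2) from the $\RMF$-basis of $\DMF$, and prove (3) by comparing the two expansions of $\Del_\alpha\Del_{I_v}(x)$ and feeding the discrepancy back into the uniqueness of (1). The telescoping you carry out explicitly via $u-x_\alpha\Del_\alpha(u)=s_\alpha(u)$ is exactly what the paper's one-line Leibniz step encodes, and the $\CC$-case handling (still differentiating with $\Del_\alpha$, not $\CC_\alpha$) matches the paper's remark.
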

\begin{proof}
Let $A$ be a ring and $\mI$ be an ideal of $A$ 
contained in its Jacobson radical. 
A matrix with coefficients in $A$ is invertible 
if and only if 
the corresponding matrix with coefficients in $A/\mI$ is invertible. 
Since $\mI_F$ is contained in the radical of $\RMF$, 
it suffices to show that the matrix of the system in $\RMF/\mI_F$ is invertible. 
If we order the $v$'s by increasing length and 
the $w$'s by decreasing length, 
Lemma \ref{DR:reducedI0} shows that 
the matrix is lower triangular with $\tor$ on the diagonal, 
and it is therefore invertible. 
Point \eqref{DR:solutionbasis_item} follows from point \eqref{DR:system_item} since the $\Del_{I_w}$ (resp. the $\CC_{I_w}$) form a basis of $\DMF$ as an $\RMF$-module by Theorem \ref{DR:basisDMF}. Let us prove point \eqref{DR:coeffsfixedW_item}. For any simple root $\alpha$ and for any $v \in W$, we have
\begin{equation*}
\begin{split}
\Del_\alpha \Del_{I_v}(x) & = \sum_{w \in W} \Del_\alpha \big(r_w \Del_{I_v} \Del_{I_w}(u_0)\big) \\
& = \sum_{w \in W} r_w \Del_\alpha \Del_{I_v} \Del_{I_w}(u_0)  + \sum_{w \in W} \Del_\alpha(r_w) s_\alpha \Del_{I_v} \Del_{I_w}(u_0) 
\end{split}
\end{equation*}
using point \eqref{DO:DelDeriv_item} of Proposition \ref{DO:eqDel}.
But we also have
$$\Del_\alpha \Del_{I_v}(x) =\sum_{w \in W} r_w \Del_\alpha \Del_{I_v} \Del_{I_w}(u_0)$$ 
by point \eqref{DR:solutionbasis_item} with $D=\Del_{\alpha}\Del_{I_v}$.
So 
$$\sum_{w \in W} \Del_{\alpha}(r_w) s_{\alpha}\Del_{I_v} \Del_{I_w}(u_0) =0$$
for any $v \in W$. 
Applying the ring automorphism $s_\alpha$, we obtain that the $s_\alpha\Del_\alpha(r_w)$ are solution of the system \eqref{DR:system_item} with $x=0$ and are therefore $0$ by uniqueness. 
Thus $\Del_\alpha(r_w)=0$ and $r_w$ is fixed by $s_\alpha$ for any simple root $\alpha$ by Lemma \ref{DO:Delalpha0_lem} hence by the whole Weyl group $W$, since it is generated by the simple reflections.  

Point \eqref{DR:coeffsfixedW_item} for $\CC$ is proved exactly in the same way (still using $\Del_\alpha$, not $\CC_\alpha$).
\end{proof}

\begin{thm}
Assume that the torsion index $\tor$ is invertible in $R$ and that $R$ has no $2$-torsion. Choose a reduced decomposition $I_w$ for every $w \in W$. The elements $\Del_{I_w}(u_0)$ (resp. $\CC_{I_w}(u_0)$) form a basis of $\RMF$ as an $\RMF^W$-module.  
\end{thm}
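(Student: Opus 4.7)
The plan is to extract everything from Proposition \ref{DR:system}: spanning will come from applying part \eqref{DR:solutionbasis_item} with $D=\mathrm{id}$, and linear independence will come from the uniqueness in part \eqref{DR:system_item} combined with the $\RMF^W$-linearity of the operators $\Del_{I_v}$.

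For spanning, I would take any $x \in \RMF$. By Proposition \ref{DR:system}\eqref{DR:system_item} (applicable because $\tor$ is invertible) there exist unique $r_w \in \RMF$ solving the system $\Del_{I_v}(x)=\sum_w r_w \Del_{I_v}\Del_{I_w}(u_0)$ for all $v$. Since $R$ has no $2$-torsion, Proposition \ref{DR:system}\eqref{DR:coeffsfixedW_item} forces $r_w \in \RMF^W$. Now the identity operator $\mathrm{id} = $ (multiplication by $1$) lies in $\DMF$, so applying part \eqref{DR:solutionbasis_item} with $D=\mathrm{id}$ yields $x=\sum_w r_w \Del_{I_w}(u_0)$. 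Hence the $\Del_{I_w}(u_0)$ generate $\RMF$ as an $\RMF^W$-module.

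For independence, suppose $\sum_w r_w \Del_{I_w}(u_0)=0$ with $r_w \in \RMF^W$. Since each $\Del_\alpha$ is $\RMF^W$-linear by Proposition \ref{DO:DelWlin} (any $W$-invariant element is $W_\alpha$-invariant), the composition $\Del_{I_v}$ is also $\RMF^W$-linear. Applying $\Del_{I_v}$ to the relation gives $0=\sum_w r_w \Del_{I_v}\Del_{I_w}(u_0)$ for every $v \in W$. Thus the tuple $(r_w)$ is a solution to the system in Proposition \ref{DR:system}\eqref{DR:system_item} with $x=0$; by the uniqueness part of that statement (and since $0$ itself is the obvious solution), we conclude $r_w=0$ for every $w$. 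The argument for $\CC_{I_w}(u_0)$ is identical, using the ``$\CC$-version'' of Proposition \ref{DR:system} and the fact that the $\CC_\alpha$ are also $\RMF^W$-linear.

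The main obstacle is essentially already handled upstream: the nontrivial content is that the matrix $(\Del_{I_v}\Del_{I_w}(u_0))_{v,w}$ is invertible over $\RMF$ and that the coefficients $r_w$ are automatically $W$-invariant, both of which are contained in Proposition \ref{DR:system}. Given those, the proof above is essentially formal, and no further computation is needed.
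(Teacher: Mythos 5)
Your proof is correct and follows essentially the same route as the paper's: existence comes from Proposition \ref{DR:system} parts \eqref{DR:system_item} and \eqref{DR:coeffsfixedW_item}, and independence (uniqueness) comes from applying the $\RMF^W$-linear operators $\Del_{I_v}$ to the relation and invoking the uniqueness of the solution to the linear system. Your appeal to part \eqref{DR:solutionbasis_item} with $D=\id$ to recover the decomposition is a small redundancy, since that identity is already the $v=1$ row of the system in part \eqref{DR:system_item}, but this is harmless.
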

\begin{proof}
We need to show that any $x$ can be decomposed in a unique way as $x=\sum_{w \in W} r_w \Del_{I_w} (u_0)$. 
Note that this is the row $v=1$ of the system \eqref{DR:system_item} of Proposition \ref{DR:system}. 
By $\RMF^W$-linearity of any $\Del_{I_v}$ (see Prop.~\ref{DO:DelWlin}), if that row is satisfied with coefficients $r_w$ in $\RMF^W$, the rest of the system is satisfied, so this proves uniqueness. Existence of the decomposition then follows from points \eqref{DR:system_item} and \eqref{DR:coeffsfixedW_item} of the proposition.
The same proof goes through with the $\CC_{I_w}(u_0)$.
\end{proof}

\begin{rem}
The previous theorem for the (uncompleted) symmetric algebra, \ie the additive case, is \cite[\S 6, Th\'eor\`eme 2, (c)]{Demazure73}, but the proof given there is incorrect: in the notation of \loccit, the ideal $I$ is only known a priori to be of the right form to apply the graded Nakayama lemma when it is tensored by $\mathbb{Q}$, a fact that cannot be assumed, this is the whole point of the theorem. 
When contacted by one of the authors, Demazure kindly and quickly supplied another proof which we adapted to our setting in Proposition \ref{DR:system}. 
There is a slight difference: in the symmetric algebra case, the matrix of the system is upper triangular with diagonal $\tor$ and is therefore invertible, whereas in our case, the lower triangular part does not vanish because our formal group law is not additive, but the strictly upper triangular part is in the radical because our ring is complete. 
\end{rem}

\begin{thm} \label{DR:kercharmap_thm}
Assume that the torsion index $\tor$ is invertible in $R$ and that $R$ has no $2$-torsion. 
Then the kernel of the characteristic map $\cc: \RMF \to \HMF$ is the ideal of $\RMF$ generated by elements in $\mI^W_F$.
\end{thm}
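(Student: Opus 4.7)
The plan is to establish the two inclusions $\ker \cc \supseteq (\mI_F^W)$ and $\ker \cc \subseteq (\mI_F^W)$, where $(\mI_F^W)$ denotes the ideal of $\RMF$ generated by $\mI_F^W := \mI_F \cap \RMF^W$ (this intersection is well-defined because $\aug$ is $W$-invariant, since $w(x_\lambda) = x_{w(\lambda)} \in \mI_F$). The first inclusion is immediate from Remark~\ref{DR:ccepsW}: for $f \in \RMF^W$ and $u \in \RMF$ one has $\cc(f u) = \aug(f) \cc(u)$, which vanishes whenever $f \in \mI_F^W$.

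For the reverse inclusion I would take $u \in \ker \cc$ and invoke the preceding theorem, which says that $\RMF$ is a free $\RMF^W$-module with basis $\{\Del_{I_w}(u_0)\}_{w \in W}$. Writing uniquely
$$u = \sum_{w \in W} r_w \Del_{I_w}(u_0), \qquad r_w \in \RMF^W,$$
I would split each coefficient as $r_w = a_w + (r_w - a_w)$, where $a_w := \aug(r_w) \in R$ is a constant and $r_w - a_w \in \mI_F^W$. Correspondingly, $u = u' + u''$ with $u'' \in (\mI_F^W)$, and hence $u'' \in \ker \cc$ by the first inclusion, so $u' := \sum_w a_w \Del_{I_w}(u_0)$ lies in $\ker \cc$ as well. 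It therefore suffices to show that all the constants $a_w$ vanish.

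To do so, I would run a triangular argument parallel to the proof of Proposition~\ref{DR:basiseDel}. The vanishing $\cc(u') = 0$ is equivalent to $\aug \Del_{I_v}(u') = 0$ for every $v \in W$, since the $\aug \Del_{I_v}$ form an $R$-basis of $\eDMF$. Pulling the constants $a_w$ through the $R$-linear operators, this unfolds to
$$\sum_{w \in W} a_w \, \aug \bigl(\Del_{I_v + I_w}(u_0)\bigr) = 0 \qquad \text{for every } v \in W.$$
I would then establish $a_w = 0$ by decreasing induction on $l(w)$. At step $k$, pick $v$ with $l(v) = N - k$. Terms with $l(w) > k$ vanish by the inductive hypothesis; terms with $l(w) < k$ satisfy $l(I_v + I_w) < N$, so Lemma~\ref{DR:reducedI0}.\eqref{DR:Delpull_item} places $\Del_{I_v + I_w}(u_0)$ inside $\mI_F$ and kills the augmentation; for $l(w) = k$, Lemma~\ref{DR:reducedI0}.\eqref{DR:reducedI0_item} shows that $\aug \Del_{I_v + I_w}(u_0)$ equals $\tor$ precisely when $I_v + I_w$ is a reduced expression of $w_0$ (forcing $w = v^{-1} w_0$) and is $0$ otherwise. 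The equation collapses to $a_{v^{-1} w_0} \cdot \tor = 0$, and the invertibility of $\tor$ forces $a_{v^{-1} w_0} = 0$; as $v$ ranges over elements of length $N - k$, $v^{-1} w_0$ ranges bijectively over elements of length $k$, completing the induction.

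The only substantive obstacle is the first reduction step, which relies crucially on the preceding freeness theorem for $\RMF$ over $\RMF^W$ — this is where both hypotheses (invertibility of $\tor$ and absence of $2$-torsion in $R$) really enter. Once the decomposition with $W$-invariant coefficients is in hand, the vanishing of the constant parts $a_w$ is a purely formal triangular computation, essentially the same one already used to build bases of $\eDMF$ in Section~\ref{IT}.
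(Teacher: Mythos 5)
Your proposal is correct and follows essentially the same path as the paper: both inclusions, with the hard direction reduced via the freeness theorem ($\RMF$ free over $\RMF^W$ with basis $\Del_{I_w}(u_0)$) to showing that the scalars $\aug(r_w)$ all vanish. The only divergence is cosmetic: where the paper simply cites Theorem~\ref{DR:surjchar} to conclude that the $\cc(\Del_{I_w}(u_0))$ form an $R$-basis of the image of $\cc$ (hence are $R$-linearly independent), you re-derive this independence from scratch by running the triangular computation with Lemma~\ref{DR:reducedI0} — which is precisely the argument already used to prove Theorem~\ref{DR:surjchar} and Proposition~\ref{DR:basiseDel}, so nothing new is needed, but nothing is gained either.
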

\begin{proof}
By Remark \ref{DR:ccepsW}, the ideal generated by $\mI^W_F$ 
is included in $\ker \cc$.
Conversely, let $x\in \ker \cc$, and decompose it as $\sum_{w \in W} r_w \Del_{I_w}(u_0)$ with the $r_w \in \RMF^W$ by the previous theorem. 
We then have $0=\cc(x)=\sum_{w \in W}\aug(r_w) \cc\big(\Del_{I_w}(u_0)\big)$. 
But the $\cc\big(\Del_{I_w}(u_0)\big)$ form an $R$-basis of the image of $\cc$ by Theorem \ref{DR:surjchar}, so $\aug(r_w)=0$ for all $w \in W$ and $x$ is the ideal generated by $\mI^W_F$.
\end{proof}

%%%%%%%%%%%%%%%%%%%%%%%%%%%%%%%%%%%%%%%%%%%%%%%%%%%%%%%%%%%%%%
%%%%%%%%%%%%%%%%%%%%%%%%%%%%%%%%%%%%%%%%%%%%%%%%%%%%%%%%%%%%%%
%%%%%%%%%%%%%%%%%%%%%%%%%%%%%%%%%%%%%%%%%%%%%%%%%%%%%%%%%%%%%%

\section{The product structure of the cohomology} \label{PS}

In this section, we explain how to define a product on $\HMF$ in order that the characteristic map be a ring homomorphism. Then, we study the structure of $\HMF$.

\begin{lem} \label{PS:productDdecomp_lem}
For any $D \in \DMF$, 
\begin{enumerate}
\item \label{PS:productDdecomp_item} there is a finite family $(D_i,D'_i)$ of elements of $\DMF$ such that $D(uv) = \sum_i D_i(u) D'_i(v)$ 
for any elements $u,v \in \RMF$. 
\item \label{PS:productFiltr_item} 
Furthermore if $D$ is in $\DMF^{(j)}$, each $D_i$ and $D'_i$ 
can be chosen in $\DMF^{(m_i)}$ and $\DMF^{(m_i')}$ respectively, such that $m_i + m_i' \geq j$. 
\end{enumerate}
\end{lem}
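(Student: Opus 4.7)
The plan is to prove both claims simultaneously by induction on a presentation of $D$ as a combination of generators of $\DMF$. Recall that $\DMF$ is generated as an $R$-algebra by multiplications by elements of $\RMF$ and by the operators $\Del_\alpha^F$. I would first observe that $s_\alpha = \id - x_\alpha \Del_\alpha \in \DMF$, hence $s_\alpha \in \DMF^{(0)}$ (since $x_\alpha \in \mI_F$ gives $x_\alpha\Del_\alpha \in \DMF^{(0)}$ and $\id \in \DMF^{(0)}$), while $\Del_\alpha \in \DMF^{(-1)}$ and multiplication by $u\in \mI^m_F$ lies in $\DMF^{(m)}$.

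For part (1), let $\mathcal{S} \subset \DMF$ be the set of operators admitting a decomposition $D(uv)=\sum_i D_i(u)D'_i(v)$ with $D_i,D'_i \in \DMF$. The identity clearly lies in $\mathcal{S}$, and for any $u \in \RMF$ the multiplication $m_u$ lies in $\mathcal{S}$ via $m_u(ab)=(ua)\cdot b$. For $\Del_\alpha$ the Leibniz rule in part \eqref{DO:DelDeriv_item} of Proposition~\ref{DO:eqDel} gives
$$\Del_\alpha(uv) = \Del_\alpha(u) \cdot v + s_\alpha(u)\cdot \Del_\alpha(v),$$
so $\Del_\alpha \in \mathcal{S}$. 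Then I would check that $\mathcal{S}$ is closed under sums and under composition: if $D_2(ab)=\sum_i E_i(a)E'_i(b)$ and $D_1(ab)=\sum_j F_j(a)F'_j(b)$, then
$$D_1 D_2(ab) = \sum_{i,j} F_j E_i(a)\cdot F'_j E'_i(b).$$
Since the generators of $\DMF$ all lie in $\mathcal{S}$ and $\mathcal{S}$ is closed under the algebra operations, $\mathcal{S}=\DMF$.

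For part (2), since $\DMF^{(j)}$ is defined in \ref{DR:DMFfil} as the $\RMF$-submodule generated by expressions $u\Del_{\alpha_1}\cdots\Del_{\alpha_n}$ with $u\in \mI^m_F$ and $m-n\geq j$, and sums of decompositions satisfying $m_i+m'_i\geq j$ again satisfy the bound, it suffices to treat such generators. I would first handle $\Del_{\alpha_1}\cdots\Del_{\alpha_n}$ by iterating the Leibniz rule: each application introduces one of two types of factor pairs, either $(\Del_\alpha, \id)$ contributing filtrations $(-1,0)$, or $(s_\alpha, \Del_\alpha)$ contributing $(0,-1)$. By an immediate induction on $n$, every term in the resulting expansion decomposes as $P_k \otimes Q_k$ with $P_k \in \DMF^{(n_k)}$, $Q_k\in \DMF^{(n'_k)}$ and $n_k+n'_k \geq -n$. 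Then to handle $D=u\Del_{\alpha_1}\cdots\Del_{\alpha_n}$, I multiply on the left by $u$ and absorb it into each first factor: $D(ab)=\sum_k (u P_k)(a)\cdot Q_k(b)$, where now $uP_k \in \DMF^{(m+n_k)}$ by point \eqref{DR:Idown_item} of Proposition~\ref{DR:DMFFil}, so the total filtration of each term is $(m+n_k)+n'_k \geq m-n \geq j$, as required.

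I do not expect a genuine obstacle: the whole argument is a direct bookkeeping of the Leibniz identity, and the filtration estimate follows mechanically from the identities $s_\alpha \in \DMF^{(0)}$ and $\Del_\alpha \in \DMF^{(-1)}$ together with the elementary fact that composition is compatible with the filtration (claim \eqref{DR:Dfiltrpres_item} of Proposition~\ref{DR:DMFFil} combined with \eqref{DR:Idown_item}). The only subtle point is that one must decompose a generator of $\DMF^{(j)}$ explicitly via its given description $u\Del_{\alpha_1}\cdots\Del_{\alpha_n}$ rather than abstractly from $D\in\DMF^{(j)}$, because the filtration property of the decomposition is visibly preserved under sums but cannot be read off directly from a single $D$.
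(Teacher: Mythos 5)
Your proof is correct and follows essentially the same approach as the paper's: both arguments reduce to the generators of $\DMF$ via the sub-$\RMF$-algebra structure and use the Leibniz rule of Proposition~\ref{DO:eqDel}\eqref{DO:DelDeriv_item} to produce the decomposition, then track the filtration degrees inductively. Your version is merely more explicit (recording the filtration degrees of $s_\alpha$, $\Del_\alpha$, and $m_u$ and verifying the bookkeeping in detail), whereas the paper compresses this into a one-line observation.
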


\begin{proof}
To show \eqref{PS:productDdecomp_item}, 
the elements of $\DMF$ satisfying a statement 
form a sub-$\RMF$-algebra of $\DMF$, 
and it is clear for generators of $\DMF$ 
by equation \eqref{DO:DelDeriv_item} of Prop.~\ref{DO:eqDel}. 
This same equation also proves 
\eqref{PS:productFiltr_item} for generators by induction.
\end{proof}

Let $\RMFod$ be the continuous $R$-dual of $\RMF$, 
\ie the set of $R$-linear morphisms $f$ from $\RMF$ to $R$ 
such that $f(\mI^i_F)=0$ for some $i$ (depending on $f$).

\begin{lem} \label{PS:flatduals_lem}
The $R$-module $\RMFod$ is flat.
When $\tor$ is regular in $R$, the $R$-module $\eDMF$ is also flat. 
\end{lem}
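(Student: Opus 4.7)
The plan is to exhibit $\RMFod$ as a filtered colimit of finitely generated free $R$-modules, from which flatness is automatic. Since $M$ is the character lattice of a reduced root system, it is free abelian of some finite rank $n$, and Corollary \ref{FGR:freeM} supplies an isomorphism $\RMF \simeq R\lbr x_1, \ldots, x_n \rbr$ of topological $R$-algebras. Under this isomorphism, the augmentation ideal $\mI_F$ corresponds to $(x_1, \ldots, x_n)$ (both are the kernel of $\aug$), and therefore $\mI_F^i$ corresponds to $(x_1, \ldots, x_n)^i$. The quotient $\RMF/\mI_F^i$ is thus a free $R$-module of finite rank, with basis the images of the monomials $x_1^{a_1}\cdots x_n^{a_n}$ satisfying $a_1+\cdots+a_n<i$.

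Next, I would unwind the definition of the continuous dual:
$$\RMFod \;=\; \bigcup_{i\geq 0}\bigl\{f\in \Hom_R(\RMF,R)\mid f(\mI_F^i)=0\bigr\} \;=\; \varinjlim_i \Hom_R(\RMF/\mI_F^i, R),$$
where the transition maps are injective and induced by the surjections $\RMF/\mI_F^{i+1}\twoheadrightarrow \RMF/\mI_F^i$. Each term in the colimit is the $R$-dual of a finitely generated free $R$-module, hence itself finitely generated free over $R$, in particular flat. Since a filtered colimit of flat $R$-modules is flat, $\RMFod$ is flat.

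For the second statement, I would simply invoke Proposition \ref{DR:basiseDel}: when $\tor$ is regular in $R$, the elements $(\aug\Del_{I_w}^F)_{w\in W}$ form an $R$-basis of $\eDMF$. Since $W$ is finite, $\eDMF$ is in fact a finitely generated free $R$-module, so it is flat.

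No genuine obstacle arises here; the only conceptual ingredients are the identification of $\RMF$ with a power series ring (which makes the finite quotients $\RMF/\mI_F^i$ manifestly free) and the earlier basis theorem for $\eDMF$ under the regularity hypothesis on $\tor$. If one wished to avoid choosing an isomorphism to $R\lbr x_1,\ldots,x_n\rbr$, one could alternatively invoke Lemma \ref{DR:SRM=GRMF} together with the fact that $S^j_R(M)$ is free of finite rank, and split the filtration on $\RMF/\mI_F^i$ by lifting bases of the graded pieces via Lemma \ref{DR:liftings}\eqref{DR:free}; the outcome is the same.
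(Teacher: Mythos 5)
Your proof is correct and follows essentially the same strategy as the paper: show each finite quotient $\RMF/\mI_F^i$ is finitely generated free over $R$, identify $\RMFod$ with the filtered colimit of their $R$-duals, and conclude flatness; then for $\eDMF$ appeal to Proposition \ref{DR:basiseDel}. The only cosmetic difference is that you establish freeness of $\RMF/\mI_F^i$ via the power series isomorphism of Corollary \ref{FGR:freeM}, while the paper does so by induction on the filtration using $\mI_F^i/\mI_F^{i+1}\simeq S^i_R(M)$ — exactly the alternative route you mention in your final sentence.
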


\begin{proof}
For any $i$, the $R$-module $\mI^i_F/\mI^{i-1}_F$ 
is a free $R$-module since it is isomorphic to $\Sym{i}{R}{M}$. 
Thus, by induction, for any $i$, the quotient $\RMF/\mI^i_F$ 
is a finitely generated free $R$-module, thus flat. 
Now $\RMFod$ is the direct limit of the $(\RMF/\mI^i)^\vee$ 
(usual $R$-duals) and it is therefore flat.
When $\tor$ is regular, 
the module $\eDMF$ is a finitely generated free $R$-module 
by \ref{DR:basiseDel}, so it is flat. 
\end{proof}

\begin{thm} \label{PS:productH_thm}
Let $\tor$ be regular in $R$. 
There is a unique ring structure on $\HMF$ 
such that 
the characteristic map $\cc: \RMF \to \HMF$ is a ring morphism.
\end{thm}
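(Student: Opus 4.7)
The strategy is to construct the product on $\HMF$ by dualizing a coproduct on $\eDMF$ that encodes multiplication in $\RMF$. For each $\aug D \in \eDMF$, Lemma~\ref{PS:productDdecomp_lem}\eqref{PS:productDdecomp_item} provides a finite family $(D_i, D'_i)$ in $\DMF$ with $D(uv)=\sum_i D_i(u) D'_i(v)$, and I would set
$$\mu(\aug D) := \sum_i \aug D_i \otimes \aug D'_i \;\in\; \eDMF \otimes_R \eDMF.$$
Well-definedness of $\mu$ is the crucial point: under the canonical $R$-linear map from $\eDMF \otimes_R \eDMF$ to the $R$-module of bilinear forms $\RMF\times\RMF\to R$, the element $\mu(\aug D)$ corresponds to $(u,v)\mapsto \aug D(uv)$, which depends only on $\aug D$ and not on the auxiliary choices. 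I expect the injectivity of this evaluation map to be the main technical obstacle; I would establish it using that $\eDMF$ is finitely generated and free (Proposition~\ref{DR:basiseDel}), by base-changing to $R[1/\tor]$ (faithful since $\tor$ is regular and $\eDMF$ is flat), where Theorem~\ref{DR:surjchar} together with Lemma~\ref{DR:reducedI0} provides elements of $\RMF$ dual to the basis $(\aug \Del_{I_w})_{w\in W}$ of $\eDMF$.

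Once $\mu$ is defined, dualization produces the product. Since $\eDMF$ is finitely generated and free, the canonical map $\HMF \otimes_R \HMF \to (\eDMF \otimes_R \eDMF)^\vee$ is an isomorphism, and composing its inverse with the dual of $\mu$ yields an $R$-bilinear product $\HMF\otimes_R\HMF \to \HMF$. Associativity, commutativity, and the unit axiom (with unit $\cc(1)$) pass from the corresponding identities in $\RMF$ through the defining relation $\mu(\aug D)(u,v) = \aug D(uv)$. To verify that $\cc$ is a ring morphism, one computes directly:
$$\cc(uv)(\aug D) \;=\; \aug D(uv) \;=\; \sum_i \aug D_i(u)\,\aug D'_i(v) \;=\; (\cc(u)\cdot \cc(v))(\aug D).$$

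For uniqueness, any ring structure on $\HMF$ making $\cc$ a ring morphism dualizes, via the perfect pairing between the finitely generated free $R$-modules $\HMF$ and $\eDMF$, to a coproduct $\mu'$ on $\eDMF$; and the ring morphism condition forces the associated bilinear form on $\RMF\times\RMF$ to be $(u,v)\mapsto \aug D(uv)$. The injectivity statement established in the first paragraph then gives $\mu' = \mu$, so the product on $\HMF$ is unique.
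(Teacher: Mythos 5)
Your proposal is correct and, in substance, the same argument as the paper's: dualize multiplication on $\RMF$ to a coproduct, use Lemma~\ref{PS:productDdecomp_lem} to land it in $\eDMF\otimes_R\eDMF$, and dualize back; uniqueness reduces to $R[1/\tor]$ where $\cc$ is surjective.

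The packaging differs in one place. The paper avoids any explicit well-definedness check by first observing that the continuous dual $\RMFod = \varinjlim (\RMF/\mI_F^i)^\vee$ carries a \emph{canonical} coalgebra structure (as a colimit of duals of finite free quotient rings), then showing that $\eDMF$ is a subcoalgebra: Lemma~\ref{PS:flatduals_lem} gives injectivity of $\eDMF\otimes_R\eDMF\to\RMFod\otimes_R\RMFod$, and Lemma~\ref{PS:productDdecomp_lem} shows the canonical coproduct of $\aug D$ lands in that image. You instead define the coproduct directly on $\eDMF$ from a choice of decomposition $(D_i,D_i')$ and must verify it is independent of that choice, which you reduce to injectivity of $\eDMF\otimes_R\eDMF\to\mathrm{Bil}(\RMF,\RMF;R)$ via $\tor$-regularity, freeness, and a base change to $R[1/\tor]$. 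Both verifications rest on the same inputs (the decomposition lemma and freeness of $\eDMF$), so the proofs are equivalent; the paper's subcoalgebra phrasing is slightly cleaner because it makes well-definedness structural rather than a separate injectivity argument, and in fact your injectivity can also be seen directly from the flatness in Lemma~\ref{PS:flatduals_lem} without invoking $\tor$-regularity. Your uniqueness argument (dualize a putative product to a coproduct $\mu'$ and compare) is also fine and is a dual restatement of the paper's uniqueness-after-inverting-$\tor$ argument.
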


\begin{proof}
Since $R \to R[1/\tor]$ is an injection and $\HMF$ is a free $R$-module, 
we can check uniqueness after inverting $\tor$, 
in which case it is obvious since the characteristic map is surjective. 
Let us now prove the existence of this product.
The $R$-module $\RMFod$ has a natural structure of coalgebra 
induced by the collection of maps 
$(\RMF/\mI^i)^\vee \to (\RMF/\mI^i)^\vee \otimes (\RMF/\mI^i)^\vee$ 
dual to the product, and taking direct limits. 
The inclusion $\eDMF \subseteq \RMFod$ 
induces the diagonal map 
$\eDMF \otimes_R \eDMF \to \RMFod \otimes_R \RMFod$ 
which is injective by Lemma~\ref{PS:flatduals_lem}. 
To show that $\eDMF$ is a subcoalgebra of $\RMFod$, 
it therefore suffices to show that the composition 
$\eDMF \to \RMFod \to \RMFod \otimes_R \RMFod$ 
factors through the image of $\eDMF \otimes_R \eDMF$ in $\RMFod \otimes_R \RMFod$. 
This is ensured by Lemma~\ref{PS:productDdecomp_lem}. 
Now $\HMF$ is the dual of a coalgebra, 
and is therefore an $R$-algebra. 
By construction, the characteristic map is a ring homomorphism.
\end{proof}

\begin{ntt}
We now filter the ring $\HMF$ by setting that 
$\HMF^{(i)}$ is the set of forms on $\eDMF$ 
that are zero on $\eDMF^{(-i+1)}$. 
By Proposition~\ref{PS:filtrationeDMF_prop}, 
we therefore have $\HMF^{(0)}=\HMF$ and $\HMF^{(N+1)}=0$. 
In other words, the filtration has the form
$$\HMF=\HMF^{(0)}\supseteq \HMF^{(1)} 
\supseteq \cdots \supseteq \HMF^{(N)} \supseteq 0.$$
We consider the augmentation map $\augH:\HMF \to R$ 
defined as the evaluation at $\aug \in \eDMF$.
\end{ntt}

\begin{prop} \label{PS:Filcharmap_prop}
Assume $\tor$ is regular in $R$. We have:
\begin{enumerate}
\item \label{PS:HMF1aug_item} 
$\HMF^{(1)}$ is the kernel of $\augH$.
\item \label{PS:FilHMFprod_item} 
The filtration is compatible with the product: 
$$\HMF^{(j)}\HMF^{(m)} \subseteq \HMF^{(j+m)}$$
\item \label{PS:charmapfil_item} 
The characteristic map $\cc$ is a morphism of filtered rings.
\end{enumerate}
\end{prop}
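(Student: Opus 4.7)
The plan is to tackle the three parts in the order (1), (3), (2). Part (1) is a direct unpacking, part (3) is a quick consequence of Proposition \ref{PS:filtrationeDMF_prop}.\eqref{PS:eDizeroIi1_item}, and part (2) is the substantive content and will be established using the coproduct on $\eDMF$ provided by Theorem~\ref{PS:productH_thm} together with the filtered refinement in Lemma~\ref{PS:productDdecomp_lem}.\eqref{PS:productFiltr_item}.

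For (1), I would identify $\eDMF^{(0)}$ explicitly. Specializing Proposition~\ref{PS:filtrationeDMF_prop}.\eqref{PS:eDMFibasis_item} to $i=0$, the module $\eDMF^{(0)}$ is freely generated over $R$ by the $\aug\Del_{I_w}$ with $l(w)\le 0$, i.e., only by $\aug = \aug\Del_{I_e}$. Hence $\eDMF^{(0)} = R\cdot \aug$, and a form $f\in\HMF$ vanishes on $\eDMF^{(0)}$ if and only if $f(\aug)=0$, which is exactly $\augH(f)=0$.

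For (3), I need $\cc(\mI_F^i)\subseteq \HMF^{(i)}$, which together with Theorem~\ref{PS:productH_thm} gives the filtered ring morphism statement. For any $u\in\mI_F^i$ and any $\aug D\in \eDMF^{(-i+1)}$, the value $\cc(u)(\aug D) = \aug D(u)$ vanishes because Proposition~\ref{PS:filtrationeDMF_prop}.\eqref{PS:eDizeroIi1_item} says precisely that elements of $\eDMF^{(-i+1)}$ annihilate $\mI_F^i$.

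For (2), I would unfold the construction in the proof of Theorem~\ref{PS:productH_thm}: the product on $\HMF$ is dual to the coproduct on $\eDMF$ given, for $D\in\DMF$, by $\aug D \mapsto \sum_i \aug D_i \otimes \aug D_i'$ where $D(uv)=\sum_i D_i(u) D_i'(v)$ is any decomposition coming from Lemma~\ref{PS:productDdecomp_lem}.\eqref{PS:productDdecomp_item} (this works because $\aug$ is a ring homomorphism on $\RMF$). Now suppose $D \in \DMF^{(-j-m+1)}$; by Lemma~\ref{PS:productDdecomp_lem}.\eqref{PS:productFiltr_item} we may choose $D_i \in \DMF^{(n_i)}$ and $D_i'\in \DMF^{(n_i')}$ with $n_i + n_i' \ge -j-m+1$. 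For $f\in\HMF^{(j)}$ and $g\in\HMF^{(m)}$, we then have
$$(fg)(\aug D) = \sum_i f(\aug D_i)\, g(\aug D_i').$$
In each term, if both $n_i\le -j$ and $n_i'\le -m$ held, then $n_i+n_i'\le -j-m$, contradicting the constraint; hence either $n_i\ge -j+1$, forcing $f(\aug D_i)=0$, or $n_i'\ge -m+1$, forcing $g(\aug D_i')=0$. Thus $(fg)(\aug D)=0$ for every $\aug D\in\eDMF^{(-j-m+1)}$, i.e., $fg\in\HMF^{(j+m)}$.

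The one point that needs care is the matching between the $\DMF$ filtration used in Lemma~\ref{PS:productDdecomp_lem} and the $\eDMF$ filtration defining the $\HMF^{(i)}$, but this is immediate because $\eDMF^{(k)}$ is defined as $\aug(\DMF^{(k)})$, so $D_i\in \DMF^{(n_i)}$ indeed yields $\aug D_i\in \eDMF^{(n_i)}$. I expect no serious obstacle beyond the pigeonhole bookkeeping above.
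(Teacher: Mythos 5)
Your proposal is correct and follows essentially the same strategy as the paper: part (1) from the identification of $\eDMF^{(0)}=R\cdot\aug$, part (3) from Proposition~\ref{PS:filtrationeDMF_prop}.\eqref{PS:eDizeroIi1_item}, and part (2) via the coproduct decomposition from Lemma~\ref{PS:productDdecomp_lem} with a pigeonhole argument on the filtration indices. One point worth noting: your sign bookkeeping in part (2) is the correct one. You take $D\in\DMF^{(-j-m+1)}$ and deduce $n_i\ge -j+1$ or $n_i'\ge -m+1$, which is what is actually required since $f\in\HMF^{(j)}$ vanishes on $\eDMF^{(-j+1)}$ by definition; the paper's own proof of this point writes $D\in\DMF^{(j+m+1)}$ and $j_i\ge j+1$ (and concludes $h\in\HMF^{(i+j)}$), which are sign typos given the stated definition of the $\HMF^{(i)}$ filtration. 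So your version is a slightly more careful rendering of the same argument.
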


\begin{proof}
Point \eqref{PS:HMF1aug_item} is clear since $\aug$ generates $\DMF^{(0)}$.
Let $h_1 \in \HMF^{(j)}$ and $h_2 \in \HMF^{(m)}$.
The product $h=h_1.h_2$ is defined as 
$h(\aug D) = \sum_i h_1(\aug D_i) h_2(\aug D'_i)$ 
where the $(D_i,D'_i)$ are as in Lemma \ref{PS:productDdecomp_lem}. 
By the same lemma, if $D \in \DMF^{(j+m+1)}$, 
they can be chosen such that $D_i \in \DMF^{j_i}$ and 
$D'_i \in \DMF^{m_i}$ with $j_i +m_i \geq j+m+1$. 
Thus, for any $i$, either $j_i \geq j+1$ or $m_i \geq m+1$, 
so every term in the sum is zero and $h \in \HMF^{(i+j)}$. 
For point \eqref{PS:charmapfil_item}, take an element $u$ in $\mI^i_F$. 
By definition, $\cc(u)$ is the evaluation at $u$ in $\eDMF^\vee=\HMF$ 
so by Prop.~\ref{PS:filtrationeDMF_prop}.\eqref{PS:eDizeroIi1_item}, 
$\cc(u)$ is in $\HMF^{(i)}$.
\end{proof}

\begin{prop} \label{PS:basisHMFi_prop}
Let $I_w$ be choices of reduced decompositions for every $w \in W$. Assume the torsion index $\tor$ is regular in $R$. Then
\begin{enumerate}
\item \label{PS:basisHMFi_item} For any $i$, the elements $\zz^\Del_{I_w}$ (resp.~$\zz^\CC_{I_w}$)  with $l(w)\geq i$ generate $\HMF^{(i)}$. 
\item The elements $\zz^\Del_{I_{w_0}}$ and $\zz^{\CC}_{I_{w_0}}$ of Theorem \ref{DR:charMapBasis} do not depend on the choices of decompositions $I_w$ and are equal up to the sign $(-1)^N$. The element $\zz^{\CC}_{I_{w_0}}$ is denoted by $\zz_0$ and for any $u_0$ as in section \ref{IT}, we have $\cc(u_0)=(-1)^N \tor \zz_0$.
\end{enumerate} 
\end{prop}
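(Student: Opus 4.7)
The plan is to prove both claims via duality, reducing everything to the basis property of the $\aug\Del_{I_w}$ (resp.~$\aug\CC_{I_w}$) in $\eDMF$ established in Proposition~\ref{PS:filtrationeDMF_prop}, combined with the explicit computation of $\cc(u_0)$ afforded by Theorem~\ref{DR:charMapBasis} and Lemma~\ref{DR:reducedI0}.

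For the first claim, I would invoke Proposition~\ref{PS:filtrationeDMF_prop}\eqref{PS:eDMFibasis_item}, which says that $\eDMF^{(-i+1)}$ is the free $R$-submodule of $\eDMF$ with basis the $\aug\Del_{I_w}$ for $l(w)\leq i-1$. Since by definition $\HMF^{(i)}$ consists of those $R$-linear forms on $\eDMF$ vanishing on $\eDMF^{(-i+1)}$, and since the $\zz^\Del_{I_w}$ are by construction the dual basis to the $\aug\Del_{I_w}$, an arbitrary combination $\sum_w a_w\zz^\Del_{I_w}$ lies in $\HMF^{(i)}$ precisely when $a_v=0$ for every $v$ with $l(v)\leq i-1$. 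Thus the $\zz^\Del_{I_w}$ with $l(w)\geq i$ actually form an $R$-basis of $\HMF^{(i)}$, which is stronger than the generating statement required. The identical argument applies verbatim to the $\zz^\CC_{I_w}$.

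For the second claim, I would substitute $u_0$ into the formula $\cc(u)=\sum_w\aug\Del_{I_w}^F(u)\,\zz^\Del_{I_w}$ of Theorem~\ref{DR:charMapBasis}. Lemma~\ref{DR:reducedI0}\eqref{DR:reducedI0_item} shows that $\aug\Del_{I_w}^F(u_0)$ vanishes unless $w=w_0$, in which case it equals $\tor$; hence $\cc(u_0)=\tor\,\zz^\Del_{I_{w_0}}$. The crucial point is that the left-hand side $\cc(u_0)$ is defined without reference to the $I_w$. Running the same computation with a different collection of reduced decompositions $(I'_w)_{w\in W}$ gives $\cc(u_0)=\tor\,\zz^\Del_{I'_{w_0}}$, so $\tor\cdot(\zz^\Del_{I_{w_0}}-\zz^\Del_{I'_{w_0}})=0$. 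Since $\HMF$ is a free $R$-module by Theorem~\ref{DR:charMapBasis} and $\tor$ is regular in $R$ by assumption, multiplication by $\tor$ is injective on $\HMF$, and one concludes that $\zz^\Del_{I_{w_0}}=\zz^\Del_{I'_{w_0}}$. The analogous computation using the $\CC$-basis gives $\cc(u_0)=(-1)^N\tor\,\zz^\CC_{I_{w_0}}$, which proves both the independence of $\zz^\CC_{I_{w_0}}$ and, after one more cancellation of $\tor$, the identity $\zz^\Del_{I_{w_0}}=(-1)^N\zz^\CC_{I_{w_0}}$. Setting $\zz_0:=\zz^\CC_{I_{w_0}}$ then packages the formula as $\cc(u_0)=(-1)^N\tor\,\zz_0$.

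The only genuinely delicate step is the cancellation of the possibly non-invertible torsion index $\tor$, which is not automatic and requires both the freeness of $\HMF$ furnished by Theorem~\ref{DR:charMapBasis} and the hypothesis that $\tor$ is regular in $R$; once that is in place, everything else is bookkeeping with formulas already established.
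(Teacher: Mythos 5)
Your argument is essentially the same as the paper's, just fleshed out with more care. For point (1) the paper also appeals to Proposition~\ref{PS:filtrationeDMF_prop}.\eqref{PS:eDMFibasis_item} and duality, so you are in agreement there. For point (2) the paper merely records $\cc(u_0)=\tor\,\zz^\Del_{I_{w_0}}$ and remarks that ``the left hand side is independent of the choices of the $I_w$ and the right hand side is independent of the choice of $u_0$'', leaving the cancellation of the (possibly non-invertible) $\tor$ implicit; you correctly identified that this cancellation is the one nontrivial step, and you justified it using the freeness of $\HMF$ (from Theorem~\ref{DR:charMapBasis} and ultimately Proposition~\ref{DR:basiseDel}) together with the regularity of $\tor$, so that multiplication by $\tor$ is injective on the free $R$-module $\HMF$. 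Your version is thus slightly more careful than the published one, but it is the same route, not a different one.
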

\begin{proof}
Point \eqref{PS:basisHMFi_item} follows from point \eqref{PS:eDMFibasis_item} of Proposition \ref{PS:filtrationeDMF_prop} since the $\zz^{\Del}_{I_w}$ (resp.~ $\zz^{\CC}_{I_w}$) are the dual basis to the $\aug \Del_{I_w}$ (resp.~$\aug \CC_{I_w}$).
By Theorem \ref{DR:charMapBasis}, we have $\cc(u_0)=\tor \zz_{I_{w_0}}^{\Del}$. But the left hand side is independent of the choices of the $I_w$ and the right hand side is independent of the choice of $u_0$, which proves the claim. We also have $\cc(u_0)=(-1)^N \tor \zz_0$.
\end{proof}

\begin{ntt}
We now consider operators on $\HMF$. For any root $\alpha$, multiplication on the right by $\CC_{\alpha}$ (resp.~$\Del_{\alpha})$) defines an endomorphism on $\eDMF$, and therefore one on $\HMF$ by duality. This operator is denoted by $\Aa_\alpha$ (resp.~$\Bb_\alpha$). In other words, for any $h \in \HMF$ and $\aug D \in \eDMF$, we have
$$\Aa_\alpha(h)(\aug D) = h(\aug D \CC_\alpha) \qquad \text{and} \qquad \Bb_\alpha(h)(\aug D) = h(\aug D \Del_\alpha)$$ 
As for the operators $\Del$ and $\CC$, we use the notation $\Aa_I$ and $\Bb_I$ for a sequence $I$ of simple reflections. 
\end{ntt}

\begin{prop} \label{PS:AandB_prop}
When $\tor$ is regular in $R$, these operators satisfy:
\begin{enumerate}
\item \label{PS:AcBc_item} $\Aa_\alpha \circ \cc (u) = \cc \circ \CC_\alpha(u)$ and $\Bb_\alpha \circ \cc (u) = \cc \circ \Del_\alpha(u)$.
\item \label{PS:AandBup_item} $\Aa_\alpha \HMF^{(i)} \subseteq \HMF^{(i-1)}$ and $\Aa_\alpha \HMF^{(i)} \subseteq \HMF^{(i-1)}$
\item \label{PS:AI0z0_item} For any reduced decomposition $I_0$ of $w_0$, the element $\Aa_{I_0}(\zz_0)$ (resp.~$\Bb_{I_0}(\zz_0)$) is invertible in $\HMF$. 
\item \label{Anonreduced_item} If $I$ is nonreduced and of length $N$, then the element $\Aa_{I}(\zz_0)$ (resp.~$\Bb_{I}(\zz_0)$) is in $\HMF^{(1)}=\ker \augH$. 
\end{enumerate}
\end{prop}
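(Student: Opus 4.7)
The plan is to unwind each of the four statements into an assertion about the dual pairing between $\HMF$ and $\eDMF$, then invoke the filtration and basis results of sections \ref{DR} and \ref{IT}.

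For (1), I would just chase definitions: by definition of $\cc$ one has $\cc(u)(\aug D) = \aug D(u)$, hence
\begin{equation*}
\Aa_\alpha(\cc(u))(\aug D) = \cc(u)(\aug D \CC_\alpha) = \aug D(\CC_\alpha(u)) = \cc(\CC_\alpha(u))(\aug D),
\end{equation*}
and the same computation with $\Del_\alpha$ in place of $\CC_\alpha$ gives the $\Bb$-version. For (2), by Proposition \ref{DR:DMFFil}.\eqref{DR:Delup_item} right-multiplication by $\CC_\alpha$ sends $\DMF^{(j)}$ to $\DMF^{(j-1)}$, and this descends well-definedly to a map $\eDMF^{(j)} \to \eDMF^{(j-1)}$ (if $\aug D = \aug D'$ then applying $D,D'$ to $\CC_\alpha(u)$ shows the images agree). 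If $h \in \HMF^{(i)}$ vanishes on $\eDMF^{(-i+1)}$ and $\aug D \in \eDMF^{(-i+2)}$, then $\aug D \CC_\alpha \in \eDMF^{(-i+1)}$, so $\Aa_\alpha(h)(\aug D) = h(\aug D \CC_\alpha) = 0$; same for $\Bb_\alpha$.

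For (3), the main observation is that $\augH(\Aa_{I_0}(\zz_0)) = \zz_0(\aug \CC_{I_0})$. Since by Proposition \ref{PS:basisHMFi_prop} $\zz_0 = \zz^{\CC}_{I_{w_0}}$ does not depend on the choice of reduced decomposition $I_{w_0}$, taking $I_{w_0} = I_0$ and using the duality of bases gives $\zz_0(\aug \CC_{I_0}) = 1$. For the $\Bb$-version I would use the relation $\zz^{\Del}_{I_{w_0}} = (-1)^N \zz_0$ (also from Proposition \ref{PS:basisHMFi_prop}) to obtain $\augH(\Bb_{I_0}(\zz_0)) = (-1)^N$. Both are units in $R$. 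To conclude invertibility, note that $\cc(1) = 1_{\HMF}$ satisfies $\augH(1) = 1$, so any $h$ with $\augH(h) \in R^{\times}$ writes as $u + h_1$ with $u$ a unit of $R$ (viewed in $\HMF$ via $\cc$) and $h_1 \in \HMF^{(1)}$. By Proposition \ref{PS:Filcharmap_prop}.\eqref{PS:FilHMFprod_item} and the bound $\HMF^{(N+1)} = 0$ from Proposition \ref{PS:filtrationeDMF_prop}.\eqref{PS:eDMFibounded_item}, $h_1$ is nilpotent, so $h = u(1 + u^{-1}h_1)$ is invertible with explicit geometric-series inverse.

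For (4), the key claim is that if $I$ is nonreduced of length $N$, then $\CC_I \in \DMF^{(-N+1)}$ (and similarly $\Del_I$). We always have $\CC_I \in \DMF^{(-N)}$ by iterating Proposition \ref{DR:DMFFil}.\eqref{DR:Delup_item}; by Proposition \ref{DR:GrDel} the class of $\CC_I$ in $\Gr^{-N}\DMF$ acts on $\SRM$ as $(-1)^N \Del_I^{F_a}$, and in the additive case of Demazure this operator is known to vanish for nonreduced $I$ (braid relations together with $(\Del_\alpha^{F_a})^2 = 0$). Hence $\aug \CC_I \in \eDMF^{(-N+1)}$, and by Proposition \ref{PS:filtrationeDMF_prop}.\eqref{PS:eDMFibasis_item} this subspace is spanned by the $\aug \CC_{I_w}$ with $l(w) \leq N-1$; in particular $\aug \CC_I$ has zero coordinate on $\aug \CC_{I_{w_0}}$, so $\augH(\Aa_I(\zz_0)) = \zz_0(\aug \CC_I) = 0$. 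The argument for $\Bb_I(\zz_0)$ is identical via $\zz_0 = (-1)^N \zz^{\Del}_{I_{w_0}}$.

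The main obstacle is the filtration sharpening in (4): showing that a nonreduced word of length $N$ gives an operator of filtration degree $-N+1$ rather than only $-N$. This is not a direct consequence of the abstract filtration manipulations and truly requires passing to the associated graded, where Demazure's additive result does the work.
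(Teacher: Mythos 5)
Your proof is correct. Points (1) and (2) match the paper's reasoning exactly. For (3) and (4) you take a slightly different but equally valid route. The paper proves (3) by the chain $(-1)^N\tor\,\augH\Aa_{I_0}(\zz_0)=\augH\Aa_{I_0}\cc(u_0)=\augH\cc\big(\CC_{I_0}(u_0)\big)=\aug\CC_{I_0}(u_0)=(-1)^N\tor$ and then cancels the regular element $\tor$; for (4) it runs the identical chain and observes that the final term vanishes by Lemma~\ref{DR:reducedI0}.\eqref{DR:reducedI0_item} when $I$ is nonreduced. Your version of (3) instead exploits the independence of $\zz_0=\zz^{\CC}_{I_{w_0}}$ from the choice of $I_{w_0}$ together with the duality of bases, which is cleaner since it sidesteps $u_0$ and the torsion index entirely. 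Your version of (4) is also correct, but it essentially re-derives the content of Lemma~\ref{DR:reducedI0}.\eqref{DR:notred_item} by passing to the associated graded and invoking Demazure's additive braid/vanishing relation; the paper already packaged that argument into Lemma~\ref{DR:reducedI0} and simply quotes it. So your remark that the graded passage is ``truly required'' is accurate at the level of underlying content, but the proposition itself can be dispatched by citing the lemma rather than reopening it. A still shorter ending for your (4): once you know $\aug\CC_I\in\eDMF^{(-N+1)}$, you can conclude directly from $\zz_0\in\HMF^{(N)}$ (Prop.~\ref{PS:basisHMFi_prop}.\eqref{PS:basisHMFi_item}) that $\zz_0(\aug\CC_I)=0$, without decomposing $\aug\CC_I$ on a basis.
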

\begin{proof}
Point \eqref{PS:AcBc_item} follows directly from the definitions of the characteristic map and of the operators. Point \eqref{PS:AandBup_item} follows from point \eqref{DR:Delup_item} of Proposition \ref{DR:DMFFil}. For point \eqref{PS:AI0z0_item}, first note that by Proposition \ref{PS:Filcharmap_prop}, an element $h \in \HMF$ is invertible if and only if $\augH (h)$ is invertible in $R$. Now
$$\tor\;\augH\Aa_{I_0}(\zz_0)=\augH\Aa_{I_0}(\tor \zz_0) = \augH\Aa_{I_0}\cc(u_0)= \augH \cc\big( \CC_{I_0}(u_0)\big) = \aug \CC_{I_0}(u_0) = \tor.$$
Therefore, $\augH \Aa_{I_0}(\zz_0)=1$. The same proof works for $\augH \Bb_{I_0}(\zz_0)$.
For point \eqref{Anonreduced_item}, the same series of equalities is used, except that the last term is zero by Prop.~\ref{DR:reducedI0}. 
\end{proof}

\begin{prop} \label{PS:AIbasisHMF_prop}
Let $I_w$ be a choice of reduced decompositions for all $w \in W$. Assume $\tor$ is regular in $R$. Then the elements $\Aa_{I_w}(\zz_0)$ (resp.~$\Bb_{I_w}(\zz_0)$) with $l(w)\leq N-i$ form an $R$-basis of $\HMF^{(i)}$. 
\end{prop}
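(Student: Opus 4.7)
My plan is to proceed by descending induction on $i$ from $N$ to $0$. The base case $i = N$ is immediate: $\HMF^{(N)}$ is free of rank one with basis $\zz_0$ by Proposition \ref{PS:basisHMFi_prop}, and $\Aa_e(\zz_0) = \zz_0$. For the induction step, Proposition \ref{PS:AandB_prop}\eqref{PS:AandBup_item} applied $l(w)$ times yields $\Aa_{I_w}(\zz_0) \in \HMF^{(N - l(w))} \subseteq \HMF^{(i)}$ whenever $l(w) \leq N - i$. Since by Proposition \ref{PS:basisHMFi_prop} the rank of $\HMF^{(i)}/\HMF^{(i+1)}$ equals the number of elements $v\in W$ with $l(v) = i$, and this equals the number of $w$ with $l(w) = N - i$ via $w\mapsto w_0 w^{-1}$, it suffices to show that the classes $\overline{\Aa_{I_w}(\zz_0)}$ with $l(w) = N - i$ form a basis of that quotient.

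The key observation is that the pairing between $\HMF$ and $\eDMF$ induces a perfect pairing $\HMF^{(i)}/\HMF^{(i+1)} \times \eDMF^{(-i)}/\eDMF^{(-i+1)} \to R$, with respect to which the classes of $\aug \CC_{I_v}$ for $l(v) = i$ and the classes $\bar\zz^\CC_{I_v}$ are dual bases (using Propositions \ref{PS:filtrationeDMF_prop} and \ref{PS:basisHMFi_prop}). By the very definition of $\Aa$, for any such $v$ and any $w$ with $l(w) = N - i$,
$$\Aa_{I_w}(\zz_0)(\aug\CC_{I_v}) = \zz_0(\aug \CC_{I_v+I_w}),$$
where $I_v+I_w$ denotes concatenation, a sequence of length $N$. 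I will show this equals $\delta_{vw,w_0}$, which identifies $\overline{\Aa_{I_w}(\zz_0)}$ with $\bar\zz^\CC_{I_{w_0w^{-1}}}$; letting $w$ range over elements of length $N-i$ then exhausts the target basis and closes the induction.

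For the evaluation I will distinguish two cases. If $vw \neq w_0$, then $I_v+I_w$ is non-reduced of length $N$; Lemma \ref{DR:reducedI0}\eqref{DR:notred_item} gives $\CC_{I_v+I_w}(\mI_F^j) \subseteq \mI_F^{j-N+1}$ for all $j$, so Lemma \ref{DR:DMFFilrevis_lem}\eqref{DR:DMFiis_item} (using the regularity of $\tor$) places $\CC_{I_v+I_w}$ in $\DMF^{(-N+1)}$, hence $\aug\CC_{I_v+I_w} \in \eDMF^{(-N+1)}$; since this submodule has basis $\aug\CC_{I_u}$ with $l(u) < N$ and $\zz_0$ is dual to $\aug\CC_{I_{w_0}}$, the value is zero. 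If $vw = w_0$, I write $\aug\CC_{I_v+I_w} = r\, \aug\CC_{I_{w_0}} + \xi$ with $\xi \in \eDMF^{(-N+1)}$ and evaluate both sides at $u_0$: by Lemma \ref{DR:reducedI0}\eqref{DR:reducedI0_item} the left-hand side is $(-1)^N\tor$, on the right the $\aug\CC_{I_u}(u_0)$ vanish for $l(u)<N$ so only $r(-1)^N\tor$ survives, forcing $r=1$ by regularity of $\tor$; hence the evaluation equals $1$.

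The main obstacle is precisely this second case: because the formal group law is not additive, different reduced decompositions of $w_0$ yield genuinely different operators $\CC_I$, so one cannot identify $\aug\CC_{I_v+I_w}$ with $\aug\CC_{I_{w_0}}$ on the nose, only modulo the deeper piece $\eDMF^{(-N+1)}$. Extracting the leading coefficient via the evaluation-at-$u_0$ trick is what makes the argument succeed, and it is here that the regularity of the torsion index enters crucially. The argument for $\Bb_{I_w}(\zz_0)$ with the basis $\zz^\Del_{I_v}$ is identical, substituting $\Del$ for $\CC$ throughout.
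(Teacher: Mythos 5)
Your proof is correct and takes essentially the same route as the paper: both hinge on the evaluation $\Aa_{I_w}(\zz_0)(\aug\CC_{I_v}) = \delta_{w,v^{-1}w_0}$ together with regularity of $\tor$, the paper observing directly that this makes the change-of-basis matrix from the $\Aa_{I_w}(\zz_0)$ to the $\zz^\CC_{I_v}$ unitriangular, while you reorganize the same fact as a descending induction on the filtration index. The one cosmetic difference is that you re-derive the constant $1$ in the reduced case $vw=w_0$ via the evaluation-at-$u_0$ trick, whereas the paper can deduce the same from the independence of $\zz_0$ from the choice of reduced decomposition of $w_0$ recorded in Proposition \ref{PS:basisHMFi_prop}.
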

\begin{proof}
We prove it for the $\Aa_{I_w}(\zz_0)$, the proof for the $\Bb_{I_w}(\zz_0)$ is similar. First note that by Proposition \ref{PS:basisHMFi_prop}, $\zz_0$ is the unique element of $\HMF$ such that $\zz_0(\aug \CC_{I_w}) = \delta_{w,w_0}$. 
Thus, if $l(w)+l(v) \leq N$, we have $\Aa_{I_w}(\zz_0)\big(\aug \CC_{I_v}\big) = \delta_{w,v^{-1}w_0}$. 
In other words, if we decompose $A_{I_w}$ on the basis of the $\zz_{I_v}$, the coordinate on $\zz_{I_v}$ is $\delta_{w,v^{-1}w_0}$ if $l(v)+l(w) \leq N$. 
Thus, the $A_{I_w}(\zz_0)$ with $l(w) \leq N-i$ can be expressed as linear combinations of the $\zz_{I_v}$ with $l(v)\geq i$ and the matrix of their expressions is unitriangular up to a correct ordering of rows and columns and is therefore invertible. 
\end{proof}

\begin{prop} \label{fHringmor_prop}
Assume $\tor$ is regular in $R$ and $R'$, respectively endowed with formal group laws $F$ and $F'$. Let $f:R \to R'$ be a ring morphism sending $F$ to $F'$. Then the morphism $f_\mH: \HFG{R}{M}{F} \to \HFG{R'}{M}{F'}$ introduced before Proposition \ref{fHfirst_prop} is actually a morphism of filtered rings. It satisfies
$$\Aa_I^{F'} f_\mH = f_\mH \Aa_I^F\qquad \text{and}\qquad\Bb_I^{F'} f_\mH = f_\mH \Bb_I^F$$
for any sequence $I$.
\end{prop}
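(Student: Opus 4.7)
The plan proceeds in three stages: filtration preservation, the ring-morphism property, and compatibility with the $\Aa_I$ and $\Bb_I$ operators.

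Filtration preservation is immediate. By Proposition~\ref{PS:basisHMFi_prop}, $\HFG{R}{M}{F}^{(i)}$ is the free $R$-submodule spanned by $(\zz^\Del_{I_w})_{l(w)\geq i}$, and analogously for $\HFG{R'}{M}{F'}^{(i)}$; since $f_\mH$ sends each $\zz^\Del_{I_w}$ to the corresponding element on the $R'$-side, it carries $\HFG{R}{M}{F}^{(i)}$ into $\HFG{R'}{M}{F'}^{(i)}$.

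For the ring-morphism property, I would reduce to the case where $\tor$ is invertible in both rings. Let $g\colon R\to R[1/\tor]$ and $g'\colon R'\to R'[1/\tor]$ be the localizations, and $\tilde f\colon R[1/\tor]\to R'[1/\tor]$ the induced morphism. By Proposition~\ref{fHfirst_prop}.\eqref{fHfunct_item} one obtains a commutative square
\[
\xymatrix{
\HFG{R}{M}{F} \ar[r]^-{f_\mH} \ar[d]_{g_\mH} & \HFG{R'}{M}{F'} \ar[d]^{g'_\mH} \\
\HFG{R[1/\tor]}{M}{F} \ar[r]^-{\tilde f_\mH} & \HFG{R'[1/\tor]}{M}{F'}
}
\]
in which the vertical maps are injective, the $\mH$'s being free modules with $\tor$ regular. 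The central substep is to show that $g_\mH$ and $g'_\mH$ are themselves ring morphisms: scalar extension along $g$ transports the product on $\HFG{R}{M}{F}$ to an $R[1/\tor]$-bilinear product on $\HFG{R[1/\tor]}{M}{F}$ for which the characteristic map over $R[1/\tor]$ remains a ring morphism (using that the explicit formula for $\cc^F$ in Theorem~\ref{DR:charMapBasis} is functorial under scalar extension), and by the uniqueness clause of Theorem~\ref{PS:productH_thm} this coincides with the canonical product. For $\tilde f_\mH$ itself, $\cc^F$ is now surjective by Theorem~\ref{DR:surjchar}, and direct computation using Proposition~\ref{fHfirst_prop}.\eqref{fHc=cfH_item} together with the ring-morphism property of $\tilde f_*$, $\cc^F$ and $\cc^{F'}$ gives
\[
\tilde f_\mH(\cc^F(u)\cc^F(v)) = \tilde f_\mH\cc^F(uv) = \cc^{F'}\tilde f_*(u)\cdot\cc^{F'}\tilde f_*(v) = \tilde f_\mH\cc^F(u)\cdot\tilde f_\mH\cc^F(v),
\]
so $\tilde f_\mH$ is multiplicative. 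Chasing the square and using the injectivity of the ring morphism $g'_\mH$ yields the ring-morphism property of $f_\mH$.

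For compatibility with $\Aa_\alpha$ and $\Bb_\alpha$ (the general case $\Aa_I$, $\Bb_I$ following by iteration), the same localization argument reduces to the case $\tor$ invertible in both rings, where $\cc^F$ is surjective. On an element $\cc^F(u)$, combining Proposition~\ref{PS:AandB_prop}.\eqref{PS:AcBc_item} with the functoriality of $\Del$ and $\CC$ from Proposition~\ref{DO:DelCCfunct_prop} yields
\[
\Aa_\alpha^{F'} f_\mH \cc^F(u) = \Aa_\alpha^{F'} \cc^{F'} f_*(u) = \cc^{F'} \CC_\alpha^{F'} f_*(u) = \cc^{F'} f_* \CC_\alpha^F(u) = f_\mH \cc^F \CC_\alpha^F(u) = f_\mH \Aa_\alpha^F \cc^F(u),
\]
and analogously for $\Bb_\alpha$ with $\Del$ in place of $\CC$; surjectivity of $\cc^F$ after localization followed by injectivity of $g'_\mH$ propagates the identity to $\HFG{R}{M}{F}$.

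The main obstacle I anticipate is the avoidance of circular reasoning when proving that $g_\mH$ and $g'_\mH$ are ring morphisms, since these are themselves instances of the statement being proved; one must instead invoke the uniqueness clause of Theorem~\ref{PS:productH_thm} to identify the scalar-extended product on the localization with the canonical one constructed there.
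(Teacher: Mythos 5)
Your proposal is correct and follows the paper's own strategy: reduce to the case where $\tor$ is invertible (so the characteristic map is surjective, by Theorem~\ref{DR:surjchar}) and then use Proposition~\ref{fHfirst_prop}.\eqref{fHc=cfH_item} to chase the multiplicativity and the $\Aa$/$\Bb$ compatibility through. The paper's entire argument is one sentence and never mentions the subtle point you flag: the localization maps $g_\mH$ and $g'_\mH$ are themselves instances of $f_\mH$, so using the commuting square presupposes part of the statement. Your resolution via the uniqueness clause of Theorem~\ref{PS:productH_thm} is the right idea, though the pivotal claim that ``the characteristic map over $R[1/\tor]$ remains a ring morphism for the scalar-extended product'' deserves a word more: the key is that $\aug\Del_{I_w}$ kills $\mI^{N+1}$, so everything factors through $\FGR{R[1/\tor]}{M}{F}/\mI^{N+1}\cong(\RMF/\mI^{N+1})\otimes_R R[1/\tor]$, where the $R'$-bilinear identity expressing multiplicativity of $\cc$ follows from the $R$-version on a spanning set. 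Also be aware that in your last paragraph the circularity reappears for the $\Aa_I$/$\Bb_I$ compatibilities (you again need $g_\mH$ and $g'_\mH$ to intertwine these operators), and here the uniqueness argument from Theorem~\ref{PS:productH_thm} does not directly apply since there is no analogous uniqueness characterization of $\Aa_\alpha$; instead, one must observe that the matrix of right multiplication by $\CC_\alpha$ on $\eDMF$ in the basis $\aug\Del_{I_w}$ has entries in $R$ that specialize under $g$ to the corresponding entries over $R[1/\tor]$ (the same finiteness-modulo-$\mI^{N+2}$ trick proves this), so $g_\mH$, being the transpose of the base-change map on $\eDMF$, commutes with $\Aa_\alpha$ by construction.
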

\begin{proof}
Both facts are clear by Point \ref{fHc=cfH_item} of \ref{fHfirst_prop} when the characteristic map is surjective, and therefore when $\tor$ is regular by extending scalars to $R\left[1/t\right]$. 
\end{proof}

%%%%%%%%%%%%%%%%%%%%%%%%%%%%%%%%%%%%%%%%%%%%%%%%%%%%%%%%%%%%%%
%%%%%%%%%%%%%%%%%%%%%%%%%%%%%%%%%%%%%%%%%%%%%%%%%%%%%%%%%%%%%%
%%%%%%%%%%%%%%%%%%%%%%%%%%%%%%%%%%%%%%%%%%%%%%%%%%%%%%%%%%%%%%

\part{Bott-Samelson resolutions for oriented theories.}

\section{Algebraic cobordism and oriented cohomology theories}
\label{AC}

In the present section we recall 
the notion of {\em oriented} cohomology theory and 
the notion of algebraic cobordism $\Omega$
following the book of Levine and Morel \cite{Levine07}.
As main examples of oriented theories 
we consider the Chow ring $\CH$, Grothendieck's 
$\KT^0$ and connective $K$-theory $\KK$. 

\begin{ntt} 
According to \cite[\S1.1]{Levine07}
an {\em oriented cohomology theory} $\HH$ is 
a contravariant functor
from the category of smooth varieties over a field $k$ of an arbitrary characteristic
to the category of graded rings 
satisfying the standard cohomological axioms: 
localization, homotopy invariance and push-forwards
for projective morphisms (see \cite[Def.1.1.2]{Levine07}).

A universal theory $\Omega$ satisfying these properties 
was constructed in \cite[\S2]{Levine07} assuming the base field $k$
has characteristic $0$. 
It is called {\em algebraic cobordism}.
An element of codimension $i$ in $\Omega(X)$, 
\ie in $\Omega^i(X)$, 
is additively generated by classes $[Y\to X]$
of projective maps of codimension $i$ from smooth schemes $Y$. 
%We will also use the dimension notation
%meaning $\Omega_i(X)=\Omega^{\dim X-i}(X)$.

For any map $f\colon X_1\to X_2$ the induced functorial map
$f^*\colon \Omega^i(X_2)\to \Omega^i(X_1)$ 
is called the {\em pull-back}.
For a projective map $f\colon X_1\to X_2$, there is a corresponding {\em push-forward} map $f_*\colon \Omega(X_1)\to \Omega(X_2)$ shifting the cohomological degree by $\dim X_2-\dim X_1$ when $X_1$ and $X_2$ are equidimensional and given by
$[Y_1\to X_1]\mapsto [Y_1\to X_1\xra{f} X_2]$.
\end{ntt}

\begin{ntt} 
An oriented cohomology theory $\HH$ comes with a {\em formal group law} $F$ over the coefficient ring $\HH(\pt)$ such that 
$$
F(c_1^{\HH}(\Lb_1),c_2^{\HH}(\Lb_2))=c_1^{\HH}(\Lb_1\otimes \Lb_2),
$$
where $\Lb_1$ and $\Lb_2$ are lines bundles on $X$ and 
$c_1^{\HH}$ denotes the first Chern class 
in the cohomology theory $\HH$ (see \cite[Cor.4.1.8]{Levine07}).
For $\Omega$, the associated formal group law~$U$
$$
U(x,y)=x+y+\sum_{i,j\ge 1} a_{ij}x^iy^j,\quad\text{ where } 
a_{ij}\in\Omega(\pt).
$$ 
turns out to be universal. 
The ring of coefficients $\Omega(\pt)$ is generated 
by the coefficients $a_{ij}$ and 
coincides with the classical {\em Lazard ring} $\LL$.

There is a canonical map $pr_{\HH}\colon \Omega \to \HH$ sending the coefficients of $U$ to the corresponding coefficients of $F$, and hence inducing
a morphism of formal group laws.
\end{ntt}

\begin{ex}\label{AC:chow} 
Consider the Chow ring $\CH(X)$ of algebraic cycles on $X$ 
modulo rational equivalence.
According to \cite[Thm.~4.5.1]{Levine07} 
the canonical map $pr_{\CH}\colon \Omega\to\CH$ 
induces an isomorphism
$\Omega\otimes_\LL \ZZ\stackrel{\simeq}\to\CH$ 
of oriented cohomology theories.
In particular, $pr_{\CH}\colon\Omega\to \CH$ is surjective 
and its kernel is generated by $\LL_{>0}$, the subgroup of elements of positive dimension in the Lazard ring.
Observe that $pr_{\CH}$ restricted to the coefficient rings
$\LL\to \ZZ=\CH(\pt)$ coincides with the augmentation map.
The associated formal group law, denoted by $F_0$, is called 
the {\em additive formal group law} and 
is given by $F_0(x,y)=x+y$. 
\end{ex}

\begin{ex}\label{AC:K0} 
Consider the oriented cohomology theory 
$\KT(X)=\KT^0(X)[\beta,\beta^{-1}]$,
where $\KT^0(X)$ is the Grothendieck $\KT^0$ of $X$.
Observe that $\KT(\pt)=\ZZ[\beta,\beta^{-1}]$.
According to \cite[Cor.4.2.12]{Levine07} 
the canonical map $pr_{\KT}\colon \Omega\to \KT$ 
sending $[\PP^1]$ to $\beta$
induces an isomorphism
$\Omega\otimes_\LL \ZZ[\beta,\beta^{-1}]\to \KT$ 
of oriented cohomology theories.
The associated formal group law, denoted by $F_\beta$,
is called a {\em multiplicative periodic formal group law} and is given by $F_\beta(x,y)=x+y-\beta xy$. 
\end{ex}

\begin{ex}\label{AC:connK} 
Consider a ring homomorphism 
$\LL\to \ZZ[v]$ given by $[\PP^1]=-a_{11}\mapsto v$ and
$a_{ij}\mapsto 0$ for $(i,j)\neq (1,1)$.
We define a new cohomology theory, called a {\em connective $K$-theory}, 
by
$\KK=\Omega\otimes_\LL \ZZ[v]$ (see \cite[\S 4.3.3]{Levine07}). 
Its formal group law is denoted by $F_v$ and is given by
$F_v(x,y)=x+y-vxy$. Observe that contrary to Example~\ref{AC:K0}
the element $v$ is non-invertible in the coefficient ring $\KK(\pt)=\ZZ[v]$.
\end{ex}

\begin{rem} Avoiding the localization axiom one obtains
a more general notion of oriented cohomology
theory studied by Merkurjev in \cite{Merkurjev02}.
In particular, he showed that taking the so-called `tilde operation'
of $\CH$ one obtains a theory $\tilde\CH$, defined over a field of any characteristic, which 
serves as a good replacement for algebraic cobordism 
in all questions related with cohomological operations. Note that the Lazard ring $\LL$ injects into the coefficient ring $\tilde\CH(\pt)$.
\end{rem}

We come now to the following important property of 
an algebraic cohomology theory:

\begin{dfn} 
We say that a cohomology theory is {\em weakly birationally invariant}
if for
any proper birational morphism $f\colon Y\to X$
the push-forward of the fundamental class $f_*(1_Y)$ is invertible.
\end{dfn}

\begin{ex}
Any {\em birationally invariant} theory, \ie  such that $f_*(1_Y)=1_X$ 
for any proper birational $f\colon Y\to X$, is
weakly birationally invariant.
The Chow ring $\CH$ considered over an arbitrary field
and the $K$-theory $\KT$ considered over a field
of characteristic 0 
provide examples of birationally invariant theories.
According to \cite[Thm.4.3.9]{Levine07}
the connective $K$-theory $\KK$ defined over a field of characteristic 0 
is universal 
among all {\em birationally invariant} 
theories. 
In particular, the kernel of the canonical map $\LL\to \KK(\pt)$
is generated by differences of classes $[X]-[X']$, where
$X$ and $X'$ are birationally equivalent.
\end{ex}

\begin{lem}\label{AC:kernilp}
For any smooth variety $X$ over a field of characteristic 0, the kernel of the natural map $\Omega^n(X)\to\CH^n(X)$ consists of nilpotent elements if $n\ge 0$ and is trivial if $n=\dim X$.
\end{lem}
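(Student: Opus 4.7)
The plan is to combine Example~\ref{AC:chow}, which identifies the kernel of $pr_{\CH}\colon\Omega\to\CH$ with the ideal generated by $\LL_{>0}$ (the subgroup of classes of positive-dimensional smooth projective varieties), together with the codimension bound $\Omega^m(X)=0$ for all $m>\dim X$ on a smooth variety.

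First I would record this vanishing. Via the identification with the dimension-graded algebraic bordism of \cite[\S2]{Levine07}, the codimension group $\Omega^m(X)$ agrees with $\Omega_{\dim X-m}(X)$, which is generated by classes $[f\colon Y\to X]$ of projective maps from smooth projective $Y$ of dimension $\dim X-m$. When $m>\dim X$ no such $Y$ exists, hence $\Omega^m(X)=0$.

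For the case $n=\dim X$, any $\alpha$ in the kernel can by Example~\ref{AC:chow} be written as a finite sum $\alpha=\sum_i \ell_i v_i$ with $\ell_i\in\LL$ of strictly positive dimension $d_i\ge 1$ and $v_i\in\Omega^{n+d_i}(X)$. Since $n+d_i>\dim X$, each $v_i$ already vanishes by the bound above, whence $\alpha=0$.

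For the nilpotency assertion when $n\ge 0$, I would take the same presentation $\alpha=\sum_i \ell_i v_i$ and expand $\alpha^k$ multilinearly. A typical term is $\bigl(\prod_j \ell_{i_j}\bigr)\bigl(\prod_j v_{i_j}\bigr)$, and the product of the $v$'s lies in $\Omega^{kn+\sum_j d_{i_j}}(X)$. Because each $d_{i_j}\ge 1$, this codimension is at least $k(n+1)$, so as soon as $k$ is chosen with $k(n+1)>\dim X$ every such product vanishes and $\alpha^k=0$. The only genuine input is the vanishing $\Omega^m(X)=0$ for $m>\dim X$; once that is in hand, both statements reduce to pure bookkeeping on the codimension grading of $\Omega^*(X)$ as an $\LL$-module.
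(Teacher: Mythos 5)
Your proposal is correct and follows essentially the same route as the paper: the paper invokes \cite[Rem.~4.5.6]{Levine07} for the decomposition of the kernel into $\LL_{>0}$-multiples (the same fact you extract from Example~\ref{AC:chow}) and then observes that everything follows from $\Omega^j(X)=0$ for $j>\dim X$. You merely spell out the multilinear expansion of $\alpha^k$ for the nilpotency assertion, which the paper leaves implicit.
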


\begin{proof}
According to \cite[Rem.~4.5.6]{Levine07} 
the kernel is additively generated by products of the form $ab$, $a\in\LL^{-i}$, $b\in\Omega^{n+i}(X)$, $i>0$. 
Now the claim follows from the fact that $\Omega^j(X)=0$ when $j>\dim X$.
\end{proof}

\begin{cor}\label{AC:pushinv}
Over a field of characteristic $0$, any oriented cohomology theory in the sense of Levine-Morel is weakly birationally invariant.
\end{cor}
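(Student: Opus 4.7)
My plan is to reduce everything to the universal theory $\Omega$ using the canonical morphism $pr_\HH\colon \Omega\to\HH$, and then use Lemma~\ref{AC:kernilp} to control the defect of $f_*(1_Y)$ from being the unit class.

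Concretely, the first step is to note that since $f\colon Y\to X$ is birational, $Y$ and $X$ have the same dimension, and so the push-forward $f_*\colon \Omega^0(Y)\to\Omega^0(X)$ does not shift cohomological degree. The element $f_*(1_Y)$ therefore lies in $\Omega^0(X)$. Applying $pr_{\CH}$, which is a morphism of oriented cohomology theories (hence commutes with push-forwards), I get $pr_{\CH}(f_*(1_Y))=f_*^{\CH}(1_Y)$ in $\CH^0(X)$. By the standard degree formula for proper birational morphisms in the Chow ring, $f_*^{\CH}(1_Y)=1_X$, so the element $f_*(1_Y)-1_X\in\Omega^0(X)$ lies in the kernel of $\Omega^0(X)\to\CH^0(X)$.

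Next, I invoke Lemma~\ref{AC:kernilp} with $n=0$: the kernel of $\Omega^0(X)\to\CH^0(X)$ consists of nilpotent elements. Hence $f_*(1_Y)-1_X$ is nilpotent, which means $f_*(1_Y)=1_X+\text{(nilpotent)}$ is invertible in the ring $\Omega(X)$ (its inverse being given by the geometric series). Finally, applying the morphism $pr_\HH\colon \Omega\to\HH$, which is again a morphism of oriented cohomology theories and hence a ring homomorphism commuting with push-forwards, I conclude $f_*^{\HH}(1_Y)=pr_\HH(f_*^{\Omega}(1_Y))$ is the image of an invertible element, and therefore invertible in $\HH(X)$.

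The only delicate point is the Chow-theoretic statement that $f_*^{\CH}(1_Y)=1_X$ for a proper birational $f$, but this is classical and built into the construction of $\CH$ as an oriented theory in \cite{Levine07}. Everything else is a direct translation of Lemma~\ref{AC:kernilp} through the universal property of $\Omega$, so I do not expect a substantive obstacle.
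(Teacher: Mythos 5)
Your proof is correct and follows essentially the same route as the paper: pass to $\Omega$, observe that $f_*^\Omega(1_Y)-1_X$ lies in the kernel of $\Omega^0(X)\to\CH^0(X)$, invoke Lemma~\ref{AC:kernilp} to get nilpotence and hence invertibility, and then push to an arbitrary $\HH$ via universality. You merely spell out the Chow-theoretic step ($f_*^{\CH}(1_Y)=1_X$) that the paper leaves implicit.
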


\begin{proof} Consider the algebraic cobordism $\Omega$.
Let $f\colon Y\to X$ be a proper birational map.
The element $f_*^\Omega(1_Y)-1_X$ is in the kernel of the map
$\Omega^0(X)\to \CH^0(X)$.
By Lemma~\ref{AC:kernilp} the difference $f_*^\Omega(1_Y)-1_X$ is nilpotent,
therefore, $f_*^{\Omega}(1_Y)$ is invertible. 
The statement for an arbitrary theory 
follows by the universality of $\Omega$.
\end{proof}

%%%%%%%%%%%%%%%%%%%%%%%%%%%%%%%%%%%%%%%%%%%%%%%%%%%%%%%%%%
%%%%%%%%%%%%%%%%%%%%%%%%%%%%%%%%%%%%%%%%%%%%%%%%%%%%%%%%%%
%%%%%%%%%%%%%%%%%%%%%%%%%%%%%%%%%%%%%%%%%%%%%%%%%%%%%%%%%%

\section{A sequence of split $\PP^1$-bundles} \label{OC}

In the present section we compute 
the oriented cohomology $\HH$ of a variety obtained 
as a sequence of split $\PP^1$-bundles. 
The main tool is the projective bundle theorem
for $\HH$.
Observe that all formulas are
given in terms of pull-backs and push-forwards of fundamental classes. This invariant description
will play an important role in the sequel.

\begin{lem}\label{OC:spprojbund}
Let $X$ be a smooth projective variety over a field $k$.
Let $p\colon \PP_X(\Eb) \to X$ be the projective bundle 
of a vector bundle $\Eb$ of rank 2 over $X$.
Assume that $p$ has a section $\sigma$.
Then there is a ring isomorphism
$$
\HH(\PP_X(\Eb))\simeq 
\HH(X)[\xi]/(\xi^2-y\xi),
\text{ where } \xi=\sigma_*(1_X)\text{ and }y=p^*\sigma^*\xi.
$$
\end{lem}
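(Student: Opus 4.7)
The plan is to invoke the projective bundle theorem to realize $\HH(\PP_X(\Eb))$ as a free $\HH(X)$-module of rank $2$, change basis so that the generator becomes $\xi$, and then verify the single quadratic relation $\xi^2 = y\xi$ using the projection formula.

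First I would apply the projective bundle theorem for oriented cohomology theories (Levine--Morel) to obtain that $\HH(\PP_X(\Eb))$ is a free $\HH(X)$-module with basis $\{1, \zeta\}$, where $\zeta = c_1^{\HH}(\OO(1))$, together with the normalizations $p_*(1) = 0$ and $p_*(\zeta) = 1$. Writing $\xi = a + b\zeta$ with $a \in \HH^1(X)$ and $b \in \HH^0(X)$, I would then compute $p_*(\xi)$ in two ways: on the one hand $p_*(\xi) = a\,p_*(1) + b\,p_*(\zeta) = b$, and on the other hand, since $p \circ \sigma = \id_X$, functoriality of push-forwards gives
$$p_*(\xi) = p_* \sigma_*(1_X) = (p\sigma)_*(1_X) = 1.$$
Hence $b = 1$ and the change of basis matrix from $\{1, \zeta\}$ to $\{1, \xi\}$ is upper-triangular and unipotent, so $\{1, \xi\}$ is also an $\HH(X)$-basis of $\HH(\PP_X(\Eb))$.

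Next I would derive the relation $\xi^2 = y\xi$ by two applications of the projection formula for the closed immersion $\sigma$. Using $\sigma^* \sigma_* (1) = \sigma^*\xi$ and the fact that $\sigma^* p^* = (p\sigma)^* = \id$, so $\sigma^* y = \sigma^*\xi$, I compute
$$\xi^2 = \sigma_*(1)\cdot \sigma_*(1) = \sigma_*\bigl(\sigma^*\sigma_*(1)\bigr) = \sigma_*(\sigma^*\xi) = \sigma_*(\sigma^* y) = y\cdot \sigma_*(1) = y\xi.$$

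Finally, to assemble the presentation, I consider the $\HH(X)$-algebra homomorphism
$$\phi\colon \HH(X)[\xi]/(\xi^2 - y\xi) \longrightarrow \HH(\PP_X(\Eb))$$
sending $\xi$ to $\xi$; it is well-defined by Step~3. The source is a free $\HH(X)$-module with basis $\{1, \xi\}$ because the defining relation is monic of degree $2$, and the target has the same basis by Step~2; since $\phi$ sends basis to basis, it is an isomorphism.

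The only subtle step is the identification $b = 1$ in the change-of-basis argument; it depends essentially on the section identity $p\sigma = \id_X$ combined with the standard normalizations built into the projective bundle theorem. Everything else is a straightforward application of the projection formula and functoriality of $p_*$, $\sigma_*$.
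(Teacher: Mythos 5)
Your derivation of the relation $\xi^2 = y\xi$ via two applications of the projection formula is correct and is actually cleaner than the paper's argument, which instead manipulates power series to show $\xi(\xi-y)=(t-b)(t-a)f_1'f_2'=0$. However, the change-of-basis step contains a genuine gap: the normalizations $p_*(1)=0$ and $p_*(\zeta)=1$ that you invoke are \emph{false} for a general oriented cohomology theory, even though they hold for Chow groups. For a $\PP^1$-bundle with a rank-$2$ bundle $\Eb$, Vishik's push-forward formula (cited in the paper as Corollary~\ref{OC:pushfor}) gives
$$
p_*(1_{\PP_X(\Eb)})=(\mu_1\fminus\mu_2)^{-1}+(\mu_2\fminus\mu_1)^{-1},
$$
where $\mu_1,\mu_2$ are Chern roots. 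For the multiplicative formal group law $x\fplus y=x+y-vxy$ this evaluates to $v$, not $0$ (and more generally the constant term is $-a_{11}$); similarly, $p_*(\zeta)$ acquires correction terms such as $a_{12}\mu_1\mu_2$ and is not identically $1$. The underlying reason is that $\HH^{-1}(X)$ need not vanish for theories such as cobordism, so there is no ``degree reason'' forcing $p_*(1)=0$. Consequently the equation $p_*(\xi)=a\cdot p_*(1)+b\cdot p_*(\zeta)=1$ does not determine $b$, and your unipotence claim is not established.

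To repair this step one must argue, as the paper does implicitly, that $\xi$ is expressible in the form $(\text{unit})\cdot t + p^*(\text{something})$ modulo higher-order terms, where $t=c_1^\HH(\OO_\Eb(-1))$. Concretely, the paper shows $\xi=b\fminus t$ with $b=p^*c_1^\HH(\Lb)$ a pull-back; expanding the formal difference, $\xi\equiv -t$ up to elements in $p^*\HH^{>0}(X)\cdot \HH(\PP_X(\Eb))$ and higher powers of $t$, so the change of variables $t\mapsto\xi$ is an automorphism of the ring $\HH(X)[t]/(t-a)(t-b)$ furnished by the projective bundle theorem, and $\{1,\xi\}$ is a basis. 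Alternatively, one can observe that $\xi=\zeta\fplus p^*c_1^\HH(\Lb)$ (since $\OO(D)=\OO_\Eb(1)\otimes p^*\Lb$), so that $\xi$ and $\zeta$ agree modulo $\mI_{\HH(X)}\cdot\HH(\PP_X(\Eb))$, and apply a Nakayama-type argument. Either way, the necessary input is a relation between $\xi$ and the canonical Chern class, not the push-forward normalizations you used.
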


\begin{proof}
Consider the canonical embedding 
$\OO_{\Eb}(-1)\to p^*\Eb$, 
where $\OO_{\Eb}(-1)$ is
a tautological line bundle over $\PP_X(\Eb)$ 
(see \cite[B.5.5]{Fulton98}). 
Let $\Lb$ denote
the quotient $\Eb/\sigma^*\OO_{\Eb}(-1)$.
By Cartan formula (see \cite[Def.~3.26.(3)]{Panin03}) we have 
$$c_1^{\HH}(\Eb)=c_1^{\HH}(\sigma^*\OO_\Eb(-1))+c_1^{\HH}(\Lb) 
\qquad \text{and} \qquad 
c_2^{\HH}(\Eb)=c_1^{\HH}(\sigma^*\OO_\Eb(-1))\cdot c_1^{\HH}(\Lb).$$
According to the projective bundle theorem 
(see \cite[Thm.~3.9 and~formula~(17)]{Panin03})
applied to $p$,
there is a ring isomorphism
$$
\HH(\PP_X(\Eb))\simeq 
\HH(X)[t]/(t-a)(t-b),
$$
where $a=p^*c_1^{\HH}(\sigma^*\OO_\Eb(-1))$, 
$b=p^*c_1^{\HH}(\Lb)$ and $t=c_1^{\HH}(\OO_{\Eb}(-1))$.

Consider the elements $\xi=b \fminus t$ and $y=b \fminus a$,
where $F$ is the formal group law corresponding to $\HH$.  
Observe that by \cite[2.2.9]{Panin03b_pre} 
$\xi=c_1^{\HH}(\OO_{\Eb}(1)\otimes p^*\Lb)=\sigma_*(1_X)$
and by the very definition
$y =c_1^{\HH}(p^*\sigma^*\OO_{\Eb}(1)\otimes p^*\Lb)=p^*\sigma^*\xi$.

Since changing $t$ to $\xi$ induces an automorphism
of $h(\PP_X(\Eb))$, 
we only need to prove the relation $\xi^2-y\xi=0$. 
Note that for any power series $f$, 
we may write $f(x)-f(y)=(x-y)f'(x,y)$ 
where $f'(x,y)$ is again a power series.
Hence, we have $b\fminus t = (b\fminus t) - (t\fminus t) = 
(t-b) f'_1(b,t)$ and $(b \fminus t) - (b \fminus a)= (t-a) f'_2(a,b,t)$ 
for some power series $f'_1$ and $f'_2$. 
Therefore
$$
\xi(\xi-y)= (b \fminus t) \big((b \fminus t)-(b \fminus a)\big) 
= (t-b)(t-a)f'_1(b,t)f'_2(a,b,t) = 0$$ 
and the proof is finished.
\end{proof}

\begin{cor}\label{OC:pushfor} 
In the notation of Lemma~\ref{OC:spprojbund} we have
the following formula for the push-forward of the fundamental class
$$
p_*(1_{\PP_X(\Eb)})=\sigma^*(\xi^{-1}+(\fminus\xi)^{-1}).
$$
\end{cor}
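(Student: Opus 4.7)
The plan is to interpret the right-hand side formally and then to compute $p_*(1_{\PP_X(\Eb)})$ via an auxiliary ``opposite root'' class. Writing the formal group law as $F(x,y) = x + y - xyg(x,y)$, the relation $F(\xi,\fminus\xi) = 0$ in $\HH(\PP_X(\Eb))$ becomes $\xi + \fminus\xi = \xi \cdot \fminus\xi \cdot g(\xi,\fminus\xi)$; dividing formally by $\xi \cdot \fminus\xi$ shows that $\xi^{-1} + (\fminus\xi)^{-1}$ must be understood as $g(\xi,\fminus\xi)$. Applying $\sigma^*$ and setting $z := \sigma^*\xi = b \fminus a$, the right-hand side becomes $g(z,\fminus z)$, so the claim reduces to proving $p_*(1_{\PP_X(\Eb)}) = g(z,\fminus z)$.

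Introduce the class $\xi' := a \fminus t \in \HH(\PP_X(\Eb))$. Since $y = b \fminus a$ gives $a = b \fminus y$, the associativity and commutativity of $F$ yield $\xi' = \xi \fminus y = F(\xi, \fminus y)$, and the presentation $\HH(\PP_X(\Eb)) \simeq \HH(X)[\xi]/(\xi^2 - y\xi)$ of Lemma~\ref{OC:spprojbund} forces $\xi \xi' = 0$. Expand $F(\xi, \fminus y) = \xi + \fminus y - \xi \cdot \fminus y \cdot g(\xi, \fminus y)$. Since $\xi^2 = y \xi$, the substitution identity $\xi f(\xi) = \xi f(y)$ holds for every power series $f$, so $\xi \cdot g(\xi,\fminus y) = \xi \cdot g(y,\fminus y)$ is $\xi$ times a pull-back. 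Applying $p_*$ with the projection formula and $p_*(\xi) = p_*\sigma_*(1_X) = 1_X$ gives
$$
p_*(\xi') = 1_X + (\fminus z)\bigl(p_*(1_{\PP_X(\Eb)}) - g(z,\fminus z)\bigr).
$$
It therefore suffices to show $p_*(\xi') = 1_X$ and then to cancel by $\fminus z$, which is a non-zero-divisor upon reduction to a universal classifying setting where $a$ and $b$ are independent; the general case then follows by functoriality of pullback.

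In the split case $\Eb \simeq \sigma^*\OO(-1) \oplus \Lb$, the summand $\Lb$ furnishes a second section $\sigma': X \to \PP_X(\Eb)$ with $\sigma'_*(1_X) = c_1^{\HH}(\OO(1) \otimes p^*\sigma^*\OO(-1)) = \xi'$, whence $p_*(\xi') = (p\sigma')_*(1_X) = 1_X$. The general case follows by the splitting principle: the affine torsor $q: Y \to X$ parametrizing splittings of $0 \to \sigma^*\OO(-1) \to \Eb \to \Lb \to 0$ makes $q^*\Eb$ split on $Y$, while $q^*$ is an isomorphism on $\HH$ by homotopy invariance; base change commutes $q^*$ with $p_*$, transferring the equality to $X$. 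The main obstacle is precisely this last step, since no second section exists generically; the splitting-principle argument via the affine bundle of splittings provides the cleanest route, though one could alternatively appeal directly to Panin's projective-bundle pushforward formula.
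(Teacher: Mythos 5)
Your approach is genuinely different from the paper's. The paper's proof is a one-line citation of Vishik's push-forward formula for projective bundles~\cite[Thm.~5.30]{Vishik07}, which directly gives
$p_*(1) = (\sigma^*b \fminus \sigma^*a)^{-1} + (\sigma^*a \fminus \sigma^*b)^{-1}$, and then rewrites the root difference via $\sigma^*b \fminus \sigma^*a = \sigma^*\xi$. You instead construct a self-contained argument by introducing the ``opposite section'' class $\xi' = a \fminus t = \xi \fminus y$, using the relation $\xi\xi'=0$ (which does follow from $\xi^2 = y\xi$ via the substitution identity $\xi f(\xi) = \xi f(y)$), expanding $\xi'$ and applying $p_*$, and finally computing $p_*(\xi') = 1_X$ via the second section in the split case plus the splitting principle. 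This buys an elementary proof not relying on an external reference; the trade-off is that you must then cancel a factor.

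The one genuine gap is exactly at that cancellation. You arrive at the correct relation $(\fminus z)\bigl(p_*(1_{\PP_X(\Eb)}) - g(z,\fminus z)\bigr) = 0$ and want to divide by $\fminus z$, invoking a ``universal classifying setting where $a$ and $b$ are independent.'' But the naive finite classifying spaces do not work here: in $\HH^*(\PP^n \times \PP^n) \simeq R[a,b]/(a^{n+1},b^{n+1})$ the element $z = b \fminus a$ is a \emph{zero-divisor} (for instance $(b-a)\,a^n b^n = 0$, and $z$ differs from $b-a$ by a unit), so the cancellation fails there. The argument must instead pass to the pro-object $\PP^\infty \times \PP^\infty$, where $\varprojlim_n \HH^*(\PP^n\times\PP^n) \simeq R\lbr a,b\rbr$ and $z$ is a regular element (its linear term $b-a$ is a coordinate), together with the compatibility of push-forwards along the inclusions $\PP^n \hookrightarrow \PP^{n+1}$ (transversal base change). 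This limiting argument is standard but nontrivial, and it is the step your sketch elides; as written, ``reduction to a universal classifying setting'' does not by itself justify the cancellation. The gap is fillable, and the rest of the computation -- the interpretation of $\xi^{-1}+(\fminus\xi)^{-1}$ as $g(\xi,\fminus\xi)$, the identity $\xi' = \xi\fminus y$, $\xi\xi'=0$, the expansion via $\xi g(\xi,\fminus y)=\xi g(y,\fminus y)$, the projection formula, and the splitting-principle reduction of $p_*(\xi')=1_X$ -- is correct.
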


\begin{proof} 
By \cite[Thm.~5.30]{Vishik07} we have
$$
p_*(1_{\PP_X(\Eb)})=
(\sigma^*b\fminus \sigma^*a)^{-1}+(\sigma^*a \fminus \sigma^* b)^{-1},
$$
where $\sigma^*a$ and 
$\sigma^*b$ are the roots of the bundle $\Eb$
as in the proof of Lemma~\ref{OC:spprojbund}.
Since $y=b\fminus a=p^*\sigma^*\xi$ we obtain the desired formula.
\end{proof}

\begin{thm}\label{OC:genbund} 
More generally, let $X$ be a variety obtained
by means of a sequence of split $\PP^1$-bundles, \ie
there is a sequence of varieties $X_i$, $0\le i\le N$,
starting from a point $X_0=\pt$ and finishing at $X_N=X$ 
such that for each  $1\le i\le N$
$$
p_i\colon X_i\simeq \PP_{X_{i-1}}(\Eb_i)\to X_{i-1}
$$
is a projective bundle with a section $\sigma_i$, where 
$\Eb_i$ is some vector bundle of rank 2 over $X_{i-1}$.
Then there is a ring isomorphism
$$
\HH(X_N)\simeq \HH(\pt)[\xi_1,\ldots,\xi_N]/I,
$$
where $I$ is an ideal generated by elements 
$\{\xi_i^2-y_i\xi_i\}_{i=1\ldots N}$,
$\xi_i=p^*\sigma_{i*}(1_{X_{i-1}})$, 
$y_i=p^*\sigma_i^* \sigma_{i*}(1_{X_{i-1}})$ and $p^*$ denotes
the pull-back on $X_N$.
\end{thm}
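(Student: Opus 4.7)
The plan is to induct on $N$, using Lemma~\ref{OC:spprojbund} as the building block at each step. The base case $N=0$ is trivial since $X_0 = \pt$: the statement reduces to $\HH(\pt) \simeq \HH(\pt)$ with no generators and empty ideal.

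For the inductive step, I would assume the theorem for the truncated tower $X_0, \ldots, X_{N-1}$ and apply Lemma~\ref{OC:spprojbund} to the split $\PP^1$-bundle $p_N\colon X_N \to X_{N-1}$ with section $\sigma_N$, obtaining
$$
\HH(X_N) \simeq \HH(X_{N-1})[\xi_N]/(\xi_N^2 - y_N \xi_N),
$$
where $\xi_N = \sigma_{N*}(1_{X_{N-1}}) \in \HH(X_N)$ and $y_N = p_N^* \sigma_N^* \xi_N$. The isomorphism identifies $\HH(X_{N-1})$ with the subring of constants on the right via $p_N^*$. Substituting the inductive presentation
$$
\HH(X_{N-1}) \simeq \HH(\pt)[\xi_1', \ldots, \xi_{N-1}']/I',
$$
where $\xi_i' = q_i^* \sigma_{i*}(1_{X_{i-1}}) \in \HH(X_{N-1})$ with $q_i \colon X_{N-1} \to X_i$ the intermediate tower projection, then gives the desired presentation as a polynomial ring over $\HH(\pt)$ modulo the union of the old relations and the new relation $\xi_N^2 - y_N \xi_N$.

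The last step is to verify that the generators of this presentation match the ones named in the theorem. By functoriality of pull-back, $p_N^*(\xi_i') = p_N^* q_i^* \sigma_{i*}(1_{X_{i-1}}) = (q_i \circ p_N)^* \sigma_{i*}(1_{X_{i-1}})$, which is exactly $\xi_i$ in the statement since $q_i \circ p_N$ is the total projection from $X_N$ to $X_i$. The same identity handles $y_i$, and the relations $\xi_i^2 = y_i \xi_i$ for $i < N$ in $\HH(X_N)$ are obtained from the corresponding relations in $\HH(X_{N-1})$ by applying the ring homomorphism $p_N^*$.

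I do not expect a serious obstacle. The nontrivial content — extracting the quadratic relation $\xi^2 = y\xi$ and correctly identifying $\xi$ and $y$ in terms of Chern classes and the section via the formal group law — has already been packaged in Lemma~\ref{OC:spprojbund}. The only bookkeeping required is to track the pull-back identifications through the induction, which is routine once the definition of $\xi_i$ and $y_i$ in the statement is interpreted as pull-back from $X_i$ (resp.\ $X_{i-1}$) all the way up to $X_N$ via the composition of the $p_j$'s.
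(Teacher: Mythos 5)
Your proof is correct and follows exactly the approach the paper takes: the paper's proof is simply ``Follows by induction using Lemma~\ref{OC:spprojbund},'' and you have filled in precisely that induction, including the pullback bookkeeping that identifies the generators $\xi_i$ and $y_i$ from the inductive hypothesis on $X_{N-1}$ with the ones named in the statement for $X_N$.
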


\begin{proof} Follows by induction using Lemma~\ref{OC:spprojbund}.
\end{proof}

\begin{rem} Observe that the element $y_i$ appearing in the
relations is not necessarily in $\HH(\pt)$.
To obtain a complete answer in terms
of generators $\xi_1,\ldots,\xi_N$ and relations with coefficients
in $\HH(\pt)$ one has to express the elements $y_i$
in terms of the `previous' generators $\xi_1,\ldots,\xi_{i-1}$. 
In the next section, we show how to 
obtain such an expression for Bott-Samelson
varieties using the characteristic map.
\end{rem}

\begin{ex} 
Let $X$ be as in Lemma~\ref{OC:spprojbund}.
By \cite[3.2.11]{Fulton98} we have
the following formula
for the total Chern class of 
the {\em tangent bundle} of $\PP_X(\Eb)$:
$$
c^{\HH}(\Tb_{\PP_X(\Eb)})=
c^{\HH}(p^*\Tb_X)\cdot c^{\HH}(\OO_{\Eb}(1)\otimes p^*\Eb).
$$
Since there is an exact sequence
$$
0\to \OO_{\Eb}(1)\otimes p^*\sigma^*\OO_{\Eb}(-1) \to 
\OO_{\Eb}(1)\otimes p^*\Eb \to \OO_{\Eb}(1) \otimes p^*\Lb\to 0,
$$
by Cartan formula we obtain
$$
c^{\HH}(\OO_{\Eb}(1)\otimes p^*\Eb)=
c^{\HH}(\OO_{\Eb}(1)\otimes p^*\Lb)\cdot 
c^{\HH}(\OO_{\Eb}(1)\otimes p^*\sigma^*\OO_{\Eb}(-1)).
$$
Since $\OO_{\Eb}(1)\otimes p^*\sigma^*\OO_{\Eb}(-1)=
(\OO_{\Eb}(1)\otimes p^*\Lb) \otimes 
(p^*\sigma^*\OO_{\Eb}(-1)\otimes p^*\Lb^\vee)$, we obtain that
$$
c^{\HH}(\Tb_{\PP_X(\Eb)})
=c^{\HH}(p^*\Tb_X)\cdot (1+\xi)\cdot (1+(\xi\fminus y)).
$$

More generally, 
for $X_N$ from Theorem~\ref{OC:genbund}
we obtain by induction
\begin{equation}\label{OC:gentang}
c^{\HH}(\Tb_{X_N})=\prod_{i=1}^N (1+\xi_i)(1+(\xi_i\fminus y_i)).
\end{equation}
\end{ex}

%%%%%%%%%%%%%%%%%%%%%%%%%%%%%%%%%%%%%%%%%%%%%%%%%%%%%%%%%%%%%
%%%%%%%%%%%%%%%%%%%%%%%%%%%%%%%%%%%%%%%%%%%%%%%%%%%%%%%%%%%%%%
%%%%%%%%%%%%%%%%%%%%%%%%%%%%%%%%%%%%%%%%%%%%%%%%%%%%%%%%%%%%%

\section{An elementary step of the Bott-Samelson resolution}
We apply the results of the previous section to compute
an oriented cohomology
of a split $P/B=\PP^1$-bundle,
where $P$ is a semi-direct product
of a split reductive linear algebraic group 
of semisimple rank one and 
a connected unipotent group,
and $B$ is a Borel subgroup of $P$.
This example appears as an elementary step in the construction
of a Bott-Samelson variety.

\begin{ntt}\label{BS:defbund}
Consider a split connected solvable algebraic group $B$ over a field $k$.
Let $T$ denote its split maximal torus.
Let $X$ be a scheme on which $B$ acts on the right
such that the quotient $X/B$ exists and 
$X \to X/B$ is a principal $B$-bundle.
Let $M$ denote the group of  characters of $T$.
Each character $\lambda\in M$ extends to 
a one-dimensional representation $V_\lambda$ of $B$ and, hence,
defines a line bundle $\Lb(\lambda)$ over $X/B$ 
whose total space is the fiber product $X \times^B V_\lambda$, \ie
the quotient $(X\times_k V_\lambda)/B$ 
by means of the right $B$-action $(x,v)b=(xb,b^{-1}v)$ 
(see \cite[Ch.1~\S3]{BottSamelson}).
\end{ntt}

\begin{dfn} 
Let $\HH$ be an oriented cohomology theory and 
let $F$ be the corresponding formal group law 
over the coefficient ring $R=\HH(\pt)$.
We define a ring homomorphism $\cc\colon \RMF\to \HH(X/B)$
from the formal group ring $\RMF$
to the cohomology ring $\HH(X/B)$ 
by sending a generator $x_\lambda$
to the first Chern class $c_1^{\HH}(\Lb(\lambda))$ 
of the line bundle $\Lb(\lambda)$.
This map is well-defined since all Chern classes are nilpotent and satisfy $c_1^{\HH}(\Lb(\lambda+\mu))=c_1^{\HH}(\Lb(\lambda)\otimes \Lb(\mu))=c_1^{\HH}(\Lb(\lambda))\fplus c_1^{\HH}(\Lb(\mu))$.
It is called the {\em characteristic map}. 
\end{dfn}

\begin{ntt}\label{BS:fibprod} 
We follow the notation of \cite[\S2]{Demazure73}:
Let $P$ be a semi-direct product of a connected unipotent group $U$
and a reductive split group $L$ of semi-simple rank $1$.
Let $T$ be a maximal split torus of $L$ and 
let $M$ denote its group of characters. 
Let $\alpha$ be the one of two roots of $L$ with respect to $T$, 
let $U_\alpha$ be the corresponding unipotent subgroup
and let $B=T\cdot U_\alpha \cdot U$ be 
the Borel subgroup of $P$ containing $T$.
Let $s_\alpha(\lambda)=\lambda - \alpha^\vee (\lambda)\cdot\alpha$
denote the reflection corresponding to the root $\alpha$.

Consider the fibered product $X'=X\times^B P$, \ie
the quotient $(X\times_k P)/B$ 
by means of the $B$-action $(x,h)b=(xb,b^{-1}h)$.
By definition, 
$X'$ is a principal $P$-bundle over $X/B$,
all fibers of the canonical projection $p\colon X'/B\to X/B$ 
are isomorphic to $P/B\simeq \PP^1$ and  
there is an obvious section $\sigma\colon X/B\to X'/B$
given by $x\mapsto (x,1)$. 
According to \cite[Exrc.~7.10.(c)]{Hartshorne}
there exists a vector bundle $\Eb$ of rank 2 over $X/B$ such that 
$X'/B$ can be identified with the projective bundle 
$\PP_{X/B}(\Eb)$.
\end{ntt}

\begin{rem} 
Observe that in \cite[\S2]{Demazure73} 
instead of the quotient $X'/B$ the author considers
the quotient of $X'$ modulo the opposite Borel subgroup $B'$.
This doesn't change much, 
since there is an obvious isomorphism 
$i\colon X'/B\to X'/B'$ 
induced by an automorphism of $X'$ 
sending the class of $(x,p)$ to the class of $(x,pn_\alpha)$, 
where $n_\alpha$ is an element of $P$ 
representing the reflection $s_\alpha$.
In particular, 
for any character $\lambda$ we have 
$i^*\Lb'(\lambda)=\Lb'(s_\alpha(\lambda))$, 
where $\Lb'(\lambda)$ (resp.~$\Lb'(s_\alpha(\lambda))$)
is the line bundle over $X'/B'$ (resp.~over $X'/B$).
\end{rem}

\begin{ntt}
Applying Lemma~\ref{OC:spprojbund} to the
projective bundle $p\colon \PP_{X/B}(\Eb)\to X/B$ 
with the section $\sigma$
we obtain an isomorphism
$$
\HH(X'/B)\simeq \HH(X/B)[\xi]/(\xi^2-y\xi),
\quad\text{where }\xi=\sigma_*(1_{X/B})
\text{ and }y=p^*\sigma^*\xi.
$$
\end{ntt}

\begin{lem}\label{BS:sigpull_eq} 
We have 
$\sigma^* \xi=c_1^{\HH}(\Lb(-\alpha))=\cc(x_{-\alpha})$.
\end{lem}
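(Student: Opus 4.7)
The plan is to apply the self-intersection formula to the section $\sigma$ and then identify the resulting normal bundle as the line bundle $\Lb(-\alpha)$ via the $B$-representation structure on the relative tangent space at the identity coset of $P/B$.

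More precisely, I would first observe that $\sigma \colon X/B \to X'/B = \PP_{X/B}(\Eb)$ is a regular embedding of codimension one, with normal bundle $N_\sigma$ isomorphic to the restriction to $\sigma(X/B)$ of the relative tangent bundle $\Tb_p$ of the projection $p \colon X'/B \to X/B$. By the self-intersection formula (which holds in any oriented cohomology theory, see e.g.~\cite{Panin03b_pre}) we have
$$
\sigma^* \xi \;=\; \sigma^* \sigma_*(1_{X/B}) \;=\; c_1^{\HH}(N_\sigma) \;=\; c_1^{\HH}(\sigma^* \Tb_p).
$$

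The key step is then to identify $\sigma^*\Tb_p$ as $\Lb(-\alpha)$. Since $X'/B = (X \times_k P)/B$, the relative tangent bundle $\Tb_p$ is the line bundle over $X'/B$ associated, via the principal $B$-bundle $X \to X/B$, to the $B$-representation $\mathfrak{p}/\mathfrak{b}$ (the tangent space to the fiber $P/B$ at the base point $1 \cdot B$). Restricted along the section $\sigma(x) = [x,1]$, this line bundle is precisely $\Lb(\mathfrak{p}/\mathfrak{b})$ in the notation of \ref{BS:defbund}. Now, since $B = T \cdot U_\alpha \cdot U$ and $\mathfrak{p} = \mathfrak{t} \oplus \mathfrak{u}_{\alpha} \oplus \mathfrak{u}_{-\alpha} \oplus \mathfrak{u}$, the quotient $\mathfrak{p}/\mathfrak{b}$ is $T$-equivariantly isomorphic to $\mathfrak{u}_{-\alpha}$, on which $T$ acts by the character $-\alpha$, while the unipotent part of $B$ acts trivially on the quotient. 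Hence $\mathfrak{p}/\mathfrak{b} \simeq V_{-\alpha}$ as a $B$-representation, and therefore $\sigma^*\Tb_p \simeq \Lb(-\alpha)$.

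Combining these two steps, $\sigma^*\xi = c_1^{\HH}(\Lb(-\alpha)) = \cc(x_{-\alpha})$ by the very definition of the characteristic map $\cc$. The main obstacle, which is really just a matter of care, is pinning down the sign: one must keep track of the fact that the Borel $B$ contains $U_\alpha$ (not $U_{-\alpha}$), so that the complementary root space appearing in $\mathfrak{p}/\mathfrak{b}$ is $\mathfrak{u}_{-\alpha}$. With the conventions fixed in \ref{BS:fibprod}, this gives $-\alpha$ rather than $\alpha$, consistent with the statement of the lemma.
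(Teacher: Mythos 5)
Your proof is correct and follows essentially the same route as the paper: both rewrite $\sigma^*\xi=\sigma^*\sigma_*(1)$ as the first Chern class of the normal bundle of $\sigma$ (equivalently $\sigma^*\OO(D)$ for $D=\sigma(X/B)$) and then identify that bundle with $\Lb(-\alpha)$. The only difference is that the paper delegates the identification $N_\sigma\cong\Lb(-\alpha)$ to a citation (\cite[\S 2.5, Lem.~3]{Demazure74}), whereas you supply the argument directly via the $B$-equivariant isomorphism $\mathfrak{p}/\mathfrak{b}\cong V_{-\alpha}$ — a welcome expansion, and your sign bookkeeping ($U_\alpha\subset B$, so the complementary root space is $\mathfrak u_{-\alpha}$) is accurate.
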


\begin{proof} 
By definition of the push-forward
we have $\xi=c_1^{\HH}(\OO(D))$,
where $\OO(D)$ is a line bundle 
corresponding to the divisor $D=\sigma(X/B)$. 
The first equality then follows from \cite[\S2.5~Lem.~3]{Demazure74}.
The second one follows from the definition of the characteristic map $\cc$.
\end{proof}

\begin{cor}\label{BS:form_y} 
We have $y=p^*\sigma^*\xi=p^*\cc(x_{-\alpha})$.
\end{cor}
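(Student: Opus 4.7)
The statement is essentially a one-line consequence of what immediately precedes it, so my plan is correspondingly short. The first equality $y = p^*\sigma^*\xi$ is simply the definition of $y$ given in the statement of Lemma~\ref{OC:spprojbund} (applied to the projective bundle $p: \PP_{X/B}(\Eb) \to X/B$ with section $\sigma$), which was already invoked just before this corollary. So that equality requires no work beyond quoting it.

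For the second equality, I would just apply the pullback $p^*$ to both sides of the equality established in Lemma~\ref{BS:sigpull_eq}, namely $\sigma^*\xi = \cc(x_{-\alpha})$. Functoriality of $p^*$ (it is a ring homomorphism) then yields $p^*\sigma^*\xi = p^*\cc(x_{-\alpha})$, and combining with the first equality gives the claim.

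The only mild subtlety worth flagging is that $\cc(x_{-\alpha})$ lives in $\HH(X/B)$ (where the characteristic map has been defined), so $p^*\cc(x_{-\alpha})$ really is the natural element of $\HH(X'/B)$ obtained by pullback along $p$; there is no ambiguity since $\cc$ was defined for any principal $B$-bundle quotient in \ref{BS:defbund}. There is no genuine obstacle here: the substantive work was done in Lemma~\ref{BS:sigpull_eq} (identifying $\sigma^*\xi$ with the first Chern class of $\Lb(-\alpha)$ via \cite[\S2.5~Lem.~3]{Demazure74}) and in the projective bundle computation of Lemma~\ref{OC:spprojbund}, and this corollary simply records the combination for later reference.
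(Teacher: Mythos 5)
Your proposal is correct and matches the paper's intent exactly: the paper gives no separate proof of this corollary because, as you observe, the first equality is the definition of $y$ from Lemma~\ref{OC:spprojbund} and the second follows by applying $p^*$ to Lemma~\ref{BS:sigpull_eq}. Nothing further is needed.
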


\begin{lem}\label{BS:charmap} 
Consider the characteristic map  $\cc'\colon\RMF \to \HH(X'/B)$.
For any $u \in \RMF$, we have
$$
(1)\;\;\sigma^*\cc'(u)=\cc(u)\quad\text{and}\quad (2)\;\; 
\cc'(u)=p^*\cc(s_{\alpha}(u)) + p^*\cc\big(\Del_{-\alpha}(u)\big)\cdot \xi
$$
\end{lem}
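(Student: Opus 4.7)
For part (1), I would use the $B$-equivariant lift $f\colon X\to X'$, $x\mapsto [x,1]$, of $\sigma$; its equivariance with respect to the right action $[x,p].b=[x,pb]$ on $X'$ follows from the identification $(xb,1)\sim (x,b)$ in $X\times^B P$. Since $f$ is a morphism of principal $B$-bundles covering $\sigma$, functoriality of the associated-bundle construction yields $\sigma^*\Lb'(\lambda)\simeq \Lb(\lambda)$, so $\sigma^*\cc'(x_\lambda)=\cc(x_\lambda)$. Both $\sigma^*\cc'$ and $\cc$ are continuous ring homomorphisms on $\RMF$ agreeing on the generators $x_\lambda$, hence they coincide.

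For part (2), I would first define $\bar\cc(u):=p^*\cc(s_\alpha u)+p^*\cc(\Del_{-\alpha}(u))\,\xi$ and prove that $\bar\cc$ is a ring homomorphism. Expanding $\bar\cc(u)\bar\cc(v)$ with the help of $\xi^2=y\xi$ (Lemma~\ref{OC:spprojbund}) and $y=p^*\cc(x_{-\alpha})$ (Corollary~\ref{BS:form_y}), the comparison with $\bar\cc(uv)$ reduces, via the Leibniz rule \eqref{DO:DelDeriv_item} of Proposition~\ref{DO:eqDel} and the fact that $s_\alpha$ is a ring automorphism, to the identity $\Del_{-\alpha}(u)\bigl(v-s_\alpha(v)\bigr)=\Del_{-\alpha}(u)\Del_{-\alpha}(v)\,x_{-\alpha}$ in $\RMF$, which is immediate from $v-s_\alpha(v)=x_{-\alpha}\Del_{-\alpha}(v)$.

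Since $\bar\cc$ and $\cc'$ are both continuous ring homomorphisms, to prove $\bar\cc=\cc'$ it suffices to check equality on each generator $x_\lambda$. The key input is the line-bundle identification
\[\Lb'(\lambda)\simeq p^*\Lb(s_\alpha\lambda)\otimes \OO(D)^{\otimes -\alpha^\vee(\lambda)}\]
in $\Pic(X'/B)=p^*\Pic(X/B)\oplus\ZZ\cdot[\OO(D)]$, where $D=\sigma(X/B)$. This follows by comparing $\sigma$-pullbacks (both sides give $\Lb(\lambda)$, using part (1) and $\sigma^*\OO(D)=\Lb(-\alpha)$ from Lemma~\ref{BS:sigpull_eq}) and restrictions to a fiber $P/B\simeq\PP^1$ (the classical Borel--Weil computation gives degree $-\alpha^\vee(\lambda)$ on both sides, consistent with the sign fixed by Lemma~\ref{BS:sigpull_eq}). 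Taking $c_1^{\HH}$ yields $\cc'(x_\lambda)=a\Fplus\bigl((-\alpha^\vee(\lambda))\Ftimes\xi\bigr)$ with $a=p^*\cc(x_{s_\alpha\lambda})$. A direct computation using the identity $\xi\cdot g(\xi)=g(y)\,\xi$ for any power series $g$ (an immediate consequence of $\xi^2=y\xi$), together with $F\bigl(a,(-\alpha^\vee(\lambda))\Ftimes y\bigr)=p^*\cc(x_\lambda)$ and $x_\lambda-x_{s_\alpha\lambda}=x_{-\alpha}\Del_{-\alpha}(x_\lambda)$, then identifies the coefficient of $\xi$ with $p^*\cc(\Del_{-\alpha}(x_\lambda))$, matching $\bar\cc(x_\lambda)$. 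The main obstacle is the line-bundle identification; once it is in place, everything else is formal manipulation with the formal group law and the relation $\xi^2=y\xi$.
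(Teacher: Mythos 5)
Your proof is correct and follows the same skeleton as the paper's, with one genuine difference: where the paper simply cites \cite[\S 2.5 Lem.~1 and Prop.~1]{Demazure74} for the line-bundle identities $\sigma^*\Lb'(\lambda)=\Lb(\lambda)$ and $\Lb'(s_\alpha\lambda)=p^*\Lb(\lambda)\otimes\OO(D)^{\otimes\alpha^\vee(\lambda)}$, you re-derive them. For (1) you exhibit the $B$-equivariant lift $f$ of $\sigma$, and for (2) you identify the two line bundles on $X'/B$ by comparing $\sigma$-pullbacks and fiber degrees in $\Pic(X'/B)=p^*\Pic(X/B)\oplus\ZZ\,[\OO(D)]$. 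This is essentially the content of Demazure's lemma, so you are proving rather than citing. Your organization of (2) is cleaner than the paper's: you first show that $\bar\cc$ is a ring homomorphism (which is exactly the paper's ``induction on the degree of monomials using \eqref{DO:DelDeriv_item} and $\xi^2=y\xi$,'' reduced to the single identity $\Del_{-\alpha}(u)\bigl(v-s_\alpha v\bigr)=\Del_{-\alpha}(u)\Del_{-\alpha}(v)x_{-\alpha}$), and then only have to check the generators $x_\lambda$. Your substitution-first version of the line-bundle identity (applying it to $\lambda$ after replacing $\lambda$ by $s_\alpha\lambda$) also avoids the sign juggling via $\Del_{-\alpha}s_\alpha=-\Del_{-\alpha}$ that the paper does at the end.

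Two small points worth flagging. First, the splitting $\Pic(X'/B)=p^*\Pic(X/B)\oplus\ZZ\,[\OO(D)]$ (injectivity of $p^*$ comes for free from $\sigma^*p^*=\id$, but the direct-sum statement uses that $X'/B\to X/B$ is a Zariski-locally trivial $\PP^1$-bundle over a variety) deserves a word of justification in the generality of \ref{BS:fibprod}; in the Bott-Samelson application $X/B$ is a smooth projective variety, so this is fine. Second, the claim that $\bar\cc$ is \emph{continuous} (needed to pass from generators to all of $\RMF$) should be said explicitly: it follows because $\Del_{-\alpha}$ and $s_\alpha$ send $\mI^i_F$ into $\mI^{i-1}_F$ and $\mI^i_F$ respectively, and Chern classes are nilpotent. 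Neither of these affects the correctness of the argument.
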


\begin{proof}
By \cite[\S 2.5 Lem. 1 and Prop.1]{Demazure74}, 
for any character $\lambda\in M$, we have two equalities
$$
\sigma^*\Lb'(\lambda) = \Lb(\lambda) 
\qquad \text{and} \qquad 
\Lb'(s_\alpha(\lambda))=
p^*(\Lb(\lambda)) \otimes \OO(D)^{\otimes \alpha^\vee(\lambda)},
$$ 
where $\Lb'(\lambda)$ is the corresponding line bundle over $X'/B$
and $\OO(D)$ is a 
line bundle corresponding to the divisor $D=\sigma(X/B)$.

By the first equality, formula~(1) holds for $u=x_\lambda$. 
It therefore holds for all $u \in R [M]_F$ and then for all $u \in \RMF$ by continuity. 

By the second equality, we have  for $u=x_\lambda$
$$
\cc'(x_{s_\alpha(\lambda)})=
p^*\cc(x_{\lambda})\fplus (\alpha^\vee(\lambda)) \ftimes \xi.
$$ 
Note that for any power series $g(\xi)$ in a variable $\xi$ 
we may write $g(\xi)=g(0) + \xi \cdot  g'(\xi)$, 
where $g'(\xi)=(g(\xi)-g(0))/\xi$ is again a power series.
Since, $\xi^2=y\xi$, we have $g(\xi)=g(0) + \xi \cdot g'(y)$. 
Applying this to 
$g(\xi)=p^*\cc(x_\lambda) \fplus \alpha^\vee(\lambda) \ftimes \xi$ 
and observing that $g(0)=p^*\cc(x_\lambda)$ we obtain
$$
\cc'(x_{s_{\alpha(\lambda)}})=p^*\cc(x_\lambda)+\xi \cdot g'(y),
$$
where
$g(y)=p^*\cc(x_\lambda \fplus \alpha^\vee(\lambda)\ftimes x_{-\alpha})=
p^*\cc(x_{s_\alpha(\lambda)})$
by Cor.~\ref{BS:form_y}.
Then by definition of the operator $\Del_{-\alpha}$ we obtain
$g'(y)=p^*(-\Del_{-\alpha}(x_\lambda))$ and, hence, 
$\cc'(x_{s_{\alpha(\lambda)}})=
p^*\cc(x_\lambda)-p^*\cc\big(\Del_{-\alpha}(x_\lambda)\big)\xi$. 
Since $\Del_{-\alpha} s_\alpha = -\Delta_{-\alpha}$, applying $s_\alpha$ to $u$ we obtain
$$
\cc'(x_{\lambda})=
p^*\cc(s_\alpha(x_\lambda))+
p^*\cc\big(\Del_{-\alpha}(x_\lambda)\big)\cdot \xi.
$$
For general $u$ formula~(2) then follows by induction 
on the degree of monomials and 
by continuity, using
Prop.~\ref{DO:eqDel}.\eqref{DO:DelDeriv_item} and $\xi^2 =y \xi$.
\end{proof}

\begin{rem}
Following Remark~\ref{DO:otherconv}, 
the second formula would look simpler 
with the other definition of $\Del_{\alpha}$.
\end{rem}

\begin{prop}\label{BS:PP_eq}
The following formulas hold for any $u\in \RMF$
$$
\begin{array}{lll}
(1)\quad p_*(1_{X'/B})=\cc\big(\CC_{\alpha}(1)\big), & &
(3)\quad p^*\big(\cc\big(\CC_{\alpha}(u)\big)\big)= 
\cc'\big(\CC_{\alpha}(u)\big),\\
(2)\quad p_*\big(\cc'(u)\big)=\cc\big(\CC_{\alpha}(u)\big), & &
(4)\quad p^* p_* (\cc'(u)) = \cc'(\CC_{\alpha}(u)).
\end{array}
$$
\end{prop}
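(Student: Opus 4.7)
My plan is to prove the four formulas in order, using $(1)$ as input for $(2)$, and combining $(2)$ and $(3)$ to get $(4)$. The main tools are Corollary~\ref{OC:pushfor}, Lemma~\ref{BS:sigpull_eq}, Lemma~\ref{BS:charmap}, Propositions~\ref{DO:eqDel} and~\ref{DO:eqCC}, and the projection formula for $p_*$.

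For $(1)$, I would start from Corollary~\ref{OC:pushfor}, which gives $p_*(1_{X'/B}) = \sigma^*(\xi^{-1} + (\fminus\xi)^{-1})$. The formal expression $t^{-1} + (\fminus t)^{-1}$ is in fact a genuine power series in $t$: the identity $F(t,\fminus t)=0$ rearranges to $t + \fminus t = t\cdot (\fminus t)\cdot g(t,\fminus t)$, so $t^{-1} + (\fminus t)^{-1} = g(t,\fminus t)$, where $g$ is the power series from Definition~\ref{DO:defC}. Pulling back along $\sigma$ and using $\sigma^*\xi = \cc(x_{-\alpha})$ (Lemma~\ref{BS:sigpull_eq}) together with the compatibility of the ring homomorphism $\cc$ with the $\fminus$-operation on nilpotent elements and $\fminus x_{-\alpha} = x_\alpha$, this collapses to $\cc(g(x_{-\alpha},x_{\alpha})) = \cc(g(x_\alpha,x_{-\alpha})) = \cc(\ee_\alpha) = \cc(\CC_\alpha(1))$ after invoking the symmetry of $g$ coming from commutativity of $F$.

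For $(2)$, I would expand $\cc'(u)$ via Lemma~\ref{BS:charmap}(2), apply $p_*$, and invoke the projection formula. Combined with $p_*(\xi) = (p\sigma)_*(1_{X/B}) = 1_{X/B}$ and formula $(1)$, this yields $p_*(\cc'(u)) = \cc\bigl(s_\alpha(u)\ee_\alpha + \Del_{-\alpha}(u)\bigr)$. The key algebraic identity to check in $\RMF$ is then
$$s_\alpha(u)\ee_\alpha + \Del_{-\alpha}(u) = \CC_\alpha(u),$$
which I would extract by applying $s_\alpha$ to the defining formula $\CC_\alpha(u) = u\ee_\alpha - \Del_\alpha(u)$, using that both $\CC_\alpha(u)$ and $\ee_\alpha$ are $s_\alpha$-invariant and that $s_\alpha\Del_\alpha(u) = -\Del_{-\alpha}(u)$ by Proposition~\ref{DO:eqDel}.\eqref{DO:Delands_item}.

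For $(3)$, I would again invoke Lemma~\ref{BS:charmap}(2), now with $v=\CC_\alpha(u)$ as its input. The needed facts are $s_\alpha\CC_\alpha(u) = \CC_\alpha(u)$ (Proposition~\ref{DO:eqCC}.\eqref{DO:eqCCCs}) and $\Del_{-\alpha}\CC_\alpha(u)=0$; the latter follows from $\Del_\alpha\CC_\alpha = 0$ (Proposition~\ref{DO:eqCC}(6)) together with $\Del_{-\alpha} = s_\alpha\Del_\alpha s_\alpha^{-1}$ from Proposition~\ref{DO:eqDel}.\eqref{DO:Delw_item} applied to the $s_\alpha$-invariance of $\CC_\alpha(u)$. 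The $\xi$-term in Lemma~\ref{BS:charmap}(2) then vanishes and only $p^*\cc(\CC_\alpha(u))$ remains. Finally, $(4)$ is immediate by chaining: $p^*p_*\cc'(u) = p^*\cc(\CC_\alpha(u)) = \cc'(\CC_\alpha(u))$. The only mild conceptual subtlety is the interpretation of the formal-inverse expression in Corollary~\ref{OC:pushfor}; once it is recognised as $g(\xi,\fminus\xi)$, all four formulas reduce to direct manipulations rather than requiring new geometric input.
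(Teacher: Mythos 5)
Your proof is correct and follows essentially the same route as the paper: (1) from Corollary~\ref{OC:pushfor} plus the identification $\sigma^*\xi=\cc(x_{-\alpha})$ and the recognition $t^{-1}+(\fminus t)^{-1}=g(t,\fminus t)$, (2) from Lemma~\ref{BS:charmap}(2) together with the projection formula, $p_*(\xi)=1$, and the identity $s_\alpha(u)\ee_\alpha+\Del_{-\alpha}(u)=\CC_\alpha(u)$, (3) from Lemma~\ref{BS:charmap}(2) with the vanishing $\Del_{-\alpha}\CC_\alpha=0$, and (4) by composing (2) and (3). The only cosmetic difference is that you derive $\Del_{-\alpha}\CC_\alpha=0$ by conjugating $\Del_\alpha\CC_\alpha=0$ with $s_\alpha$, while the paper reads it off directly from the list in Proposition~\ref{DO:eqCC}(6).
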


\begin{proof}
The first formula of the proposition follows 
from Corollary~\ref{OC:pushfor} 
applied to the projective bundle $p\colon X'/B\to X/B$ 
where we identify
$\sigma^*\xi$ with $\cc(x_{-\alpha})$ 
according to Lemma~\ref{BS:sigpull_eq}.

By point (2) of Lemma~\ref{BS:charmap} 
and the projection formula
we have
$$
p_*\big(\cc'(u)\big) = 
\cc(s_\alpha(u))\cdot p_*(1_{X'/B}) +
\cc\big(\Del_{-\alpha}(u)\big)\cdot p_*(\xi) =
$$
By the first formula and the fact that 
$p_*(\xi)=p_*(\sigma_*(1_{X/B}))=1_{X/B}$ 
it can be rewritten as
$$
\cc(s_\alpha(u))\cdot \cc\big(\CC_\alpha(1)\big) +
\cc\big(\Del_{-\alpha}(u)\big)=
\cc\big(s_\alpha(u) \CC_{\alpha}(1) + \Del_{-\alpha}(u)\big)
$$
Using the definition of $C_\alpha$ and point \eqref{DO:eqCCCs} of
Proposition~\ref{DO:eqCC}
we finally obtain
$$
=\cc(s_\alpha(u)\cdot \ee_\alpha + \Del_{-\alpha}(u))=
\cc(C_{-\alpha}(s_{-\alpha}(u)))=\cc(C_\alpha(u)).
$$
This proves the second formula.

The third formula follows from point (2) of Lemma~\ref{BS:charmap}
and the fact that $\CC_{-\alpha}\Del_{-\alpha}=0$ 
(see (6) of Prop.~\ref{DO:eqCC}).
The last formula is obtained by the composite of 
the second and the third one.
\end{proof}

%%%%%%%%%%%%%%%%%%%%%%%%%%%%%%%%%%%%%%%%%%%%%%%%%%%
%%%%%%%%%%%%%%%%%%%%%%%%%%%%%%%%%%%%%%%%%%%%%%%%%%%
%%%%%%%%%%%%%%%%%%%%%%%%%%%%%%%%%%%%%%%%%%%%%%%%%%

\section{Bott-Samelson varieties}

In the present section we compute
an oriented cohomology of Bott-Samelson varieties.
For definition and basic properties of Bott-Samelson
varieties, we refer to papers
\cite{BottSamelson}, \cite{SpringerNotes} and \cite{Bressler92}.

\begin{ntt}
Let $G$ be a split semisimple linear algebraic 
group over a field $k$,
let $T$ be a split maximal torus of $G$ and let $T\subset B$
be a Borel subgroup.
Let $\{\alpha_1, \alpha_2,\ldots, \alpha_n\}$, 
where $n$ is the rank of $G$, 
be a set of simple roots of $G$.
Let $P_i$
be a minimal parabolic subgroup corresponding
to a simple root $\alpha_i$, \ie the subgroup generated by unipotent
subgroups $U_{\alpha_j}$, $1\le j\le n$ and $U_{-\alpha_i}$.
\end{ntt}

\begin{dfn}
For an $l$-tuple of integers 
$I=(i_1,i_2,\ldots,i_l)$ with $1\le i_j\le n$,
we define a variety $X_I$ to be the fiber product
$$
X_I=P_{i_1}\times^B P_{i_2}\times^B\ldots \times^B P_{i_l}.
$$
If $I=\emptyset$, then we set $X_\emptyset=pt$.

Observe that there is a natural right action of $B$ on $X_I$,
the quotient $X_I/B$ exists 
and $X_I\to X_I/B$ is a principal
$B$-bundle. The variety $X_I/B$ is called a Bott-Samelson variety
corresponding to $I$.
\end{dfn}

\begin{ntt}
Let $J=(i_1,i_2,\ldots, i_{l-1})$.
Then $X_I=X_J\times^B P_{i_l}$, and we are in the situation
of \ref{BS:fibprod} for $P=P_{i_l}$, $\alpha=\alpha_{i_l}$, 
$X'=X_I$ and $X=X_J$.  
In particular, 
the projection map 
$$
p_I\colon X_I/B\to X_J/B
$$ 
has a structure
of a $\PP^1$-bundle with a section denoted by $\sigma_I$.

Observe that the structure map $\pi_I\colon X_I/B\to \pt$ 
can be written as a composite of projection maps
$$
X_I/B\to X_{(i_1,\ldots,i_{l-1})}/B\to \ldots \to X_{(i_1)}/B\to \pt.
$$
Hence, the variety $X_I/B$ is obtained by means of a sequence
of split $\PP^1$-bundles.

According to \ref{BS:defbund}
for an oriented cohomology theory $\HH$ 
with a formal group law $F$ there is a well-defined
characteristic map 
$$
\cc_I\colon \RMF \to \HH(X_I/B), 
$$
where $\RMF$ is the formal group ring of the group of characters 
$M$ of $T$. Observe that $\cc_{\emptyset}=\aug$ is the augmentation map.
Applying Theorem~\ref{OC:genbund} and Corollary~\ref{BS:form_y} 
we obtain an isomorphism
\begin{equation}\label{BS:bottiso}
\HH(X_I/B)\simeq \HH(\pt)[\xi_1,\xi_2,\ldots,\xi_l]/
(\{\xi_j^2-y_j\xi_j\}_{j=1\ldots l}),
\end{equation}
where $y_j=p^*\cc_{(i_1,\ldots,i_{j-1})}(x_{-\alpha_{i_j}})$ 
and $p^*$ denotes the pull-back to $\HH(X_I/B)$.
\end{ntt}

\begin{thm} \label{BS:xiTheta_thm} 
Let $X_I/B$ be the 
Bott-Samelson variety corresponding to 
an $l$-tuple $I=(i_1,\ldots,i_l)$.
For any subset $K$ of $[1,l]$ we define
$$
\xi_K=\prod_{j\in K} \xi_j\quad\text{ and }\quad
\Theta_K=\Theta_1\cdots \Theta_l,\;\text{ where }
\Theta_j=
\begin{cases}
\Delta_{-\alpha_{i_j}} & \text{if } j\in K, \\
s_{\alpha_{i_j}} & \text{otherwise.}
\end{cases}
$$

Then the elements $\xi_K$, where $K$ runs through all subsets of
$[1,l]$, form a basis of the free $\HH(\pt)$-module 
$\HH(X_I/B)$. As a ring $\HH(X_I/B)$ is a quotient of
the polynomial ring $\HH(\pt)[\xi_1,\ldots,\xi_l]$ modulo
the relations
$$
\xi_j^2=\sum_{K\subseteq [1,j-1]}\aug \Theta_K(x_{-\alpha_{i_j}})\xi_K\xi_j.
$$
\end{thm}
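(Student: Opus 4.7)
The plan is to first establish by induction on $l$ the explicit formula
$$\cc_I(u) = \sum_{K \subseteq [1,l]} \aug \Theta_K(u)\,\xi_K,$$
which expresses $\cc_I$ entirely in terms of the basis $\{\xi_K\}$ and augmented operators. The base case $l=0$ reduces to $\cc_\emptyset = \aug$. For the inductive step, apply point (2) of Lemma~\ref{BS:charmap} to the $\PP^1$-bundle $p_I\colon X_I/B \to X_{I_{l-1}}/B$, where $I_{l-1}=(i_1,\ldots,i_{l-1})$, giving
$$\cc_I(u) = p_I^{*}\cc_{I_{l-1}}\bigl(s_{\alpha_{i_l}}(u)\bigr) + p_I^{*}\cc_{I_{l-1}}\bigl(\Del_{-\alpha_{i_l}}(u)\bigr)\cdot \xi_l.$$
Substituting the inductive expression for $\cc_{I_{l-1}}$ into each term and observing that, for $K' \subseteq [1,l-1]$, the choice of extending $K'$ to $K = K'$ or $K = K' \cup \{l\}$ corresponds exactly to placing $s_{\alpha_{i_l}}$ or $\Del_{-\alpha_{i_l}}$ in the last slot $\Theta_l$ (and the factor $\xi_l$ matches $\xi_{K' \cup \{l\}} = \xi_{K'}\xi_l$) yields the formula.

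Armed with this, applying the formula with $I$ replaced by $I_{j-1}$ and $u = x_{-\alpha_{i_j}}$ and pulling back to $\HH(X_I/B)$ gives the explicit expression
$$y_j \;=\; p^{*}\cc_{I_{j-1}}(x_{-\alpha_{i_j}}) \;=\; \sum_{K \subseteq [1,j-1]} \aug \Theta_K(x_{-\alpha_{i_j}})\,\xi_K,$$
where $\Theta_K$ is defined with respect to the subsequence $I_{j-1}$. Multiplying by $\xi_j$ and invoking the relations $\xi_j^2 = y_j \xi_j$ from \eqref{BS:bottiso} yields the displayed presentation of the theorem.

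For the basis statement, we iterate Lemma~\ref{OC:spprojbund}: at each step the split $\PP^1$-bundle $p_{I_j}\colon X_{I_j}/B \to X_{I_{j-1}}/B$ produces a decomposition
$$\HH(X_{I_j}/B) \;\simeq\; \HH(X_{I_{j-1}}/B)\cdot 1 \,\oplus\, \HH(X_{I_{j-1}}/B)\cdot \xi_j$$
as $\HH(X_{I_{j-1}}/B)$-modules. Composing these decompositions for $j=1,\ldots,l$ gives a free $\HH(\pt)$-module basis indexed by the subsets $K \subseteq [1,l]$, namely the $\xi_K$. Most steps are bookkeeping; the only delicate point is the inductive matching of subsets $K \subseteq [1,l]$ with the two summands coming from Lemma~\ref{BS:charmap}(2), which the definition of $\Theta_K$ is precisely designed to make transparent.
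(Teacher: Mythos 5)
Your proof is correct and follows essentially the same route as the paper: you establish formula \eqref{ccI_eq} by induction on the length of $I$ using Lemma~\ref{BS:charmap}(2), use it to express $y_j$, and substitute into the relations $\xi_j^2 = y_j\xi_j$ from \eqref{BS:bottiso}, while obtaining the free-module basis $\{\xi_K\}$ from the iterated split $\PP^1$-bundle structure via Lemma~\ref{OC:spprojbund}. Your explicit remark that the $\Theta_K$ in the expression for $y_j$ is taken relative to the subsequence $I_{j-1}$ (i.e.\ $\Theta_1\cdots\Theta_{j-1}$, not $\Theta_1\cdots\Theta_l$) is a welcome precision that is left implicit in the paper's statement.
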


\begin{proof} The fact that $\HH(X_I/B)$ is a free $\HH(\pt)$-module
with a basis $\xi_K$ follows by the 
projective bundle theorem, since $X_I/B$ 
is obtained by means of a sequence of split $\PP^1$-bundles. 
To obtain the relations, we first use 
Lemma~\ref{BS:charmap}.(2) to prove by induction that
\begin{equation} \label{ccI_eq}
\cc_{(i_1,\ldots,i_j)}(u)=\sum_{K\subseteq [1,j]}\aug\Theta_K(u)\xi_K.
\end{equation}
Using this we then compute $y_j$ and 
plug the result into the formula~\eqref{BS:bottiso}. 
\end{proof}

The following lemma provides a formula for the push-forward
of the structure map $\pi_I\colon X_I/B\to \pt$:

\begin{lem} \label{BS:pushcc}
Consider the variety $X_I/B$ for a tuple $I=(i_1,\ldots,i_l)$.
For any $u\in\RMF$ we have
$$
\pi_{I*}\big(\cc_I(u)\big)=
\aug \CC_I(u),
$$
where $\CC_I=\CC_{\alpha_1}\ldots \CC_{\alpha_l}$ is the composite
of operators defined in \ref{DO:defC}.
\end{lem}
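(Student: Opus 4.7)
The proof will proceed by induction on the length $l$ of the tuple $I = (i_1,\ldots,i_l)$, with the case $l=0$ handled directly: when $I = \emptyset$, the variety $X_\emptyset/B$ is a point, $\pi_\emptyset$ is the identity, $\cc_\emptyset = \aug$ by construction, and $\CC_I$ is the empty composition, so both sides reduce to $\aug(u)$.

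For the inductive step, let $J = (i_1,\ldots,i_{l-1})$ and observe that the structure map factors as $\pi_I = \pi_J \circ p_I$, where $p_I \colon X_I/B \to X_J/B$ is the $\PP^1$-bundle from the final $B$-factor. By functoriality of push-forwards for projective morphisms, we have
$$\pi_{I*}\big(\cc_I(u)\big) = \pi_{J*}\Big(p_{I*}\big(\cc_I(u)\big)\Big).$$
Now the inner push-forward is exactly what is controlled by the elementary-step analysis of the previous section: applying Proposition~\ref{BS:PP_eq}(2) to the situation $X' = X_I$, $X = X_J$, $P = P_{i_l}$, $\alpha = \alpha_{i_l}$ yields
$$p_{I*}\big(\cc_I(u)\big) = \cc_J\big(\CC_{\alpha_{i_l}}(u)\big).$$
Substituting gives $\pi_{I*}(\cc_I(u)) = \pi_{J*}\big(\cc_J(\CC_{\alpha_{i_l}}(u))\big)$.

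Applying the inductive hypothesis to the tuple $J$ (with input element $\CC_{\alpha_{i_l}}(u) \in \RMF$), we obtain $\pi_{J*}(\cc_J(v)) = \aug \CC_J(v)$ with $\CC_J = \CC_{\alpha_{i_1}}\cdots\CC_{\alpha_{i_{l-1}}}$, so that
$$\pi_{I*}\big(\cc_I(u)\big) = \aug\,\CC_{\alpha_{i_1}}\cdots\CC_{\alpha_{i_{l-1}}}\CC_{\alpha_{i_l}}(u) = \aug\,\CC_I(u),$$
which is the desired formula. There is no real obstacle here: the work has already been done in Section~\ref{OC} (projective bundle push-forward) and in Proposition~\ref{BS:PP_eq} (which expresses the single-step push-forward of a characteristic class as the characteristic class of a $\CC_\alpha$-image); the lemma is just the iteration of that identity along the tower of $\PP^1$-bundles building $X_I/B$. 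The only mild care needed is to apply the inductive hypothesis to an element of $\RMF$ that is not necessarily in the image of the augmentation, but this is harmless since $\CC_{\alpha_{i_l}}(u) \in \RMF$ and the inductive statement was formulated for arbitrary $u \in \RMF$.
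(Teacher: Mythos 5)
Your proof is correct and follows essentially the same route as the paper: decompose $\pi_I = \pi_J \circ p_I$, apply Proposition~\ref{BS:PP_eq}(2) to peel off the last $\PP^1$-bundle, and iterate. The paper phrases the iteration as "repeating this recursively," while you spell it out as an induction with a base case; the content is identical.
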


\begin{proof}
Decomposing the structure map as
$\pi_I\colon X_I/B \stackrel{p_I}\to X_J/B \stackrel{\pi_J}\to \pt$,
where $J=(i_1,\ldots, i_{l-1})$ and applying
Prop.~\ref{BS:PP_eq}.(2) we obtain
$$
\pi_{I*}\big(\cc_I(u)\big)=\pi_{J*}p_{I*}\big(\cc_I(u)\big)=
\pi_{J*}\big(\cc_J(C_{\alpha_l}(u))\big).
$$
Repeating this recursively for the structure map $\pi_J\colon X_J/B\to \pt$ 
and so on, we obtain the desired formula.
\end{proof}

%%%%%%%%%%%%%%%%%%%%%%%%%%%%%%%%%%%%%%%%%%%%%%%%%%%
%%%%%%%%%%%%%%%%%%%%%%%%%%%%%%%%%%%%%%%%%%%%%%%%%%
%%%%%%%%%%%%%%%%%%%%%%%%%%%%%%%%%%%%%%%%%%%%%%%%%%%%

\section{Bott-Samelson resolutions} \label{BR}

In the present section we discuss the relations between  
Bott-Samelson varieties and Schubert varieties on $G/B$. 

\begin{ntt} 
According to Bruhat decomposition the variety of complete flags 
$G/B$ is a finite 
disjoint union of affine spaces  
$G/B=\coprod_{w\in W} BwB/B$, where $W$ is the Weyl group of $G$.
The closure of a cell $BwB/B$ is denoted by $X_w$
and is called a Schubert variety. Observe that $X_w$ is not smooth
in general.

Let $w=s_{i_1}s_{i_2}\ldots s_{i_l}$ be a (reduced)
decomposition into a product
of simple reflections (here $s_i$ denotes $s_{\alpha_i}$) with $l=l(w)$
being the length of $w$. The multiplication map induces a morphism
$$
q_I\colon X_I/B\to G/B,\text{ where }I=(i_1,i_2,\ldots,i_l).
$$
By the results of \cite[Ch.~3]{BottSamelson} 
(see also \cite[\S1.7]{SpringerNotes} and \cite[\S4]{Bressler92}) 
this map factors as
$q_I\colon X_I/B\to X_w\hookrightarrow G/B$, 
where the first map is surjective birational
and the second is a closed embedding. Furthermore, when $J$ is $I$ with the last entry removed, we have $q_I \circ \sigma_I=q_J$.
\end{ntt}

\begin{lem}\label{BR:bcha} 
Consider the characteristic map $\cc_{G/B}\colon \RMF\to \HH(G/B)$.
For any $u\in\RMF$ we have
$$
q^*_I(\cc_{G/B}(u))=\cc_I(u).
$$
\end{lem}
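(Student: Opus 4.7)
The strategy is to verify the identity on the topological generators $x_\lambda$ of $\RMF$ and then extend by continuity and ring-homomorphism properties.

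First, I would unpack the definition of $q_I$. The map $X_I \to G$ given by multiplication $(p_1,\ldots,p_l) \mapsto p_1 p_2 \cdots p_l$ is equivariant for the right $B$-action on the last factor of $X_I$ (coming from $B \subset P_{i_l}$) and the right translation action of $B$ on $G$. Passing to $B$-quotients gives precisely $q_I \colon X_I/B \to G/B$, and this exhibits $X_I \to X_I/B$ as the pullback of the principal $B$-bundle $G \to G/B$ along $q_I$.

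Next, the construction of $\Lb(\lambda)$ from a character $\lambda \in M$ (see \ref{BS:defbund}) is natural in principal $B$-bundles: given a pullback square of principal $B$-bundles, the associated line bundles $- \times^B V_\lambda$ pull back accordingly. Consequently,
$$q_I^* \Lb(\lambda) = \Lb_I(\lambda),$$
where $\Lb_I(\lambda)$ denotes the line bundle on $X_I/B$ defined via the same recipe using the principal bundle $X_I \to X_I/B$. By functoriality of the first Chern class in an oriented cohomology theory, this gives
$$q_I^* \cc_{G/B}(x_\lambda) = q_I^* c_1^\HH(\Lb(\lambda)) = c_1^\HH(\Lb_I(\lambda)) = \cc_I(x_\lambda).$$

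Finally, both $q_I^* \circ \cc_{G/B}$ and $\cc_I$ are continuous ring homomorphisms from $\RMF$ to $\HH(X_I/B)$ (continuity holds because Chern classes of line bundles are nilpotent, so the $\mI_F$-adic topology on $\RMF$ maps to the topology for which the target is discrete on each graded piece). Since they agree on the set $\{x_\lambda \mid \lambda \in M\}$, which topologically generates $\RMF$ as an $R$-algebra, they agree on all of $\RMF$. The only mildly delicate point is verifying the naturality statement that $q_I^*\Lb(\lambda) = \Lb_I(\lambda)$, but this is immediate from the pullback description of $X_I \to X_I/B$ above and the functoriality of the associated-bundle construction $- \times^B V_\lambda$.
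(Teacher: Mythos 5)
Your proof follows exactly the same route as the paper's: show $q_I^*\Lb(\lambda)=\Lb_I(\lambda)$, apply functoriality of $c_1^\HH$ to get agreement on the generators $x_\lambda$, then extend to all of $\RMF$ by continuity. The paper states the pullback identity for line bundles as following from the definitions, whereas you spell out the underlying pullback-of-principal-$B$-bundles argument, but the substance is identical.
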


\begin{proof} 
By definition of the map $q_I$ and the bundle $\Lb(\lambda)$ we have $q_I^*(\Lb(\lambda))=\Lb_I(\lambda)$, where $\Lb(\lambda)$ (resp.~$\Lb_I(\lambda)$) is the line bundle over $G/B$ (resp.~over $X_I/B$) corresponding to a character $\lambda$. 
Hence, $q_I^*(\cc_{G/B}(x_\lambda))=\cc_I(x_\lambda)$. 
For a general $u$ it follows by continuity.
\end{proof}

Let $w_0$ be the element of maximal length in $W$ and let $X=X_{I_0}/B$ be the Bott-Samelson variety where $I_0$ is a reduced decomposition of $w_0$. 

\begin{lem}\label{BR:injpback}  
Assume that $\HH$ 
is weakly birationally invariant.
Then the pull-back $q_{I_0}^*$ is injective.
\end{lem}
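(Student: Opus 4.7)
The plan is to use the projection formula together with the assumption that $\HH$ is weakly birationally invariant. The key observation is that since $I_0$ is a reduced decomposition of $w_0$, the Schubert variety $X_{w_0}$ equals $G/B$, so the map $q_{I_0}\colon X_{I_0}/B \to G/B$ factors as a surjective birational morphism onto $X_{w_0}=G/B$ followed by the identity. In particular, $q_{I_0}$ is itself proper and birational.

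First I would invoke weak birational invariance: by definition, the push-forward of the fundamental class $e := q_{I_0,*}(1_{X_{I_0}/B})$ is an invertible element of $\HH(G/B)$. Next, for any $u \in \HH(G/B)$ the projection formula gives
\[
q_{I_0,*}\bigl(q_{I_0}^*(u)\bigr) \;=\; u \cdot q_{I_0,*}(1_{X_{I_0}/B}) \;=\; u \cdot e.
\]
Therefore, if $q_{I_0}^*(u)=0$, then applying $q_{I_0,*}$ and using the projection formula yields $u \cdot e = 0$; multiplying by $e^{-1}$ gives $u=0$. Hence $q_{I_0}^*$ is injective.

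The argument is essentially a one-line application of the projection formula, so there is no real obstacle. The only subtle point worth checking is that $q_{I_0}$ is indeed proper and birational onto $G/B$ (which is used to invoke weak birational invariance). Properness is automatic since $X_{I_0}/B$ is projective and $G/B$ is separated, and birationality onto the image is the content of the Bott–Samelson theory recalled just before the lemma, together with the fact that for a reduced decomposition of $w_0$ the image is the closed Schubert variety $X_{w_0}=G/B$.
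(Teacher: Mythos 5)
Your proof is correct and matches the paper's argument exactly: invoke weak birational invariance to get that $q_{I_0*}(1)$ is invertible, then use the projection formula to conclude injectivity of $q_{I_0}^*$. You have simply spelled out the details that the paper leaves implicit.
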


\begin{proof}
Since our theory is weakly birationally invariant, 
the push-forward of the fundamental class
$q_{I_0*}(1)$ is invertible. The lemma now follows by projection formula.
\end{proof}

\begin{thm}[\cf {\cite[Prop. 3]{Bressler92}}]\label{BR:pipi_eq} 
Assume that $\HH$ is weakly birationally invariant.
Consider the quotient map $p_i\colon G/B\to G/P_i$, 
where $1 \leq i \leq n$. Then for any $u\in\RMF$ 
we have the equality
$$
p_i^* p_{i*} \big(\cc_{G/B}(u)\big)=
\cc_{G/B}\big(\CC_{\alpha_i}(u)\big)
$$
\end{thm}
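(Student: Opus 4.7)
The plan is to reduce the identity to the already-established elementary $\PP^1$-bundle formula of Proposition \ref{BS:PP_eq}.(2) by pulling it back along a Bott-Samelson resolution of $G/B$, where the weak birational invariance of $\HH$ provides the needed injectivity via Lemma \ref{BR:injpback}.

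Fix a reduced decomposition $I_0=(i_1,\ldots,i_l)$ of the longest Weyl group element $w_0$ and form the extended (no longer reduced) tuple $I_0'=(i_1,\ldots,i_l,i)$. The key geometric input is that the square
$$
\begin{array}{ccc}
X_{I_0'}/B & \xrightarrow{\;q_{I_0'}\;} & G/B \\
p\;\downarrow & & \downarrow\;p_i \\
X_{I_0}/B & \xrightarrow{\;p_i\circ q_{I_0}\;} & G/P_i
\end{array}
$$
is cartesian, where $p$ is the $\PP^1$-bundle projection forgetting the last factor. I would verify this directly using the explicit description $X_{I_0'}/B=X_{I_0}\times^B (P_i/B)$ together with the inclusion $B\subset P_i$: a point of the fiber product is a pair $([x],gB)$ with $q_{I_0}(x)P_i=gP_i$, and writing $g=q_{I_0}(x)\cdot h$ with $h\in P_i$ (well defined modulo right $B$-multiplication) identifies it with $[(x,h)]\in X_{I_0'}/B$. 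Since $p_i$ is a smooth projective morphism (a $\PP^1$-bundle), the standard base change formula for push-forwards in oriented cohomology applies and gives $q_{I_0}^*\circ p_i^*\circ p_{i*}=p_*\circ q_{I_0'}^*$.

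Applying this to $\cc_{G/B}(u)$ and invoking Lemma \ref{BR:bcha}, the pull-back $q_{I_0}^*$ of the left-hand side of the desired identity equals $p_*\big(q_{I_0'}^*\cc_{G/B}(u)\big)=p_*\big(\cc_{I_0'}(u)\big)$. But $p$ is precisely the elementary Bott-Samelson step associated to the simple root $\alpha_i$ treated in Section \ref{OC}, so Proposition \ref{BS:PP_eq}.(2) identifies $p_*\big(\cc_{I_0'}(u)\big)$ with $\cc_{I_0}\big(\CC_{\alpha_i}(u)\big)$, which by Lemma \ref{BR:bcha} equals $q_{I_0}^*\big(\cc_{G/B}(\CC_{\alpha_i}(u))\big)$. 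Injectivity of $q_{I_0}^*$ from Lemma \ref{BR:injpback} then yields the stated equality in $\HH(G/B)$. The main obstacle is the verification that the square above is cartesian together with the subtle point that smooth base change for push-forwards is available in the axiomatic oriented-cohomology setting of Levine--Morel; once these are granted, the argument is essentially bookkeeping that reassembles previously proved identities.
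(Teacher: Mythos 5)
Your proof is correct and follows the same structural skeleton as the paper's: a Cartesian square relating a Bott--Samelson resolution to $G/B \to G/P_i$, a transversal base change for the smooth $\PP^1$-bundle $p_i$, the elementary-step formulas of Proposition~\ref{BS:PP_eq}, and finally injectivity of $q_{I_0}^*$ from Lemma~\ref{BR:injpback}. The minor difference: the paper fixes a reduced decomposition $I_0$ of $w_0$ that \emph{ends} in $s_i$, sets $J$ to be $I_0$ with the last entry removed, and applies the pullback--pushforward formula~(4) of Proposition~\ref{BS:PP_eq} to $p_{I_0}^* p_{I_0*} \cc_{I_0}(u)$. You instead start from an arbitrary reduced $I_0$, append $i$ to get a (generally non-reduced) $I_0'$, and apply the pushforward formula~(2) to $p_*\cc_{I_0'}(u)$. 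Your Cartesian square is a transpose and relabeling of the paper's diagram~\eqref{BR:diagr}, and your identification of the fiber product with $X_{I_0'}/B$ is the same computation. What your variant buys is that you avoid having to choose a reduced decomposition ending in a prescribed reflection; what it costs is that you work with a non-reduced tuple $I_0'$, though the theory developed in the paper applies uniformly to all sequences, reduced or not, so nothing is lost.
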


\begin{proof}
We choose a reduced decomposition $I_0$ of $w_0$ in simple reflections 
such that the last reflection is $s_i$. 
Let $J$ be the same tuple without the last entry.
Consider the Cartesian diagram
\begin{equation}\label{BR:diagr}
\xymatrix{
X_{I_0}/B\ar[r]^{p_{I_0}}\ar[d]_{q_{I_0}}& X_J/B \ar[d]^{p_i \circ q_J}\\
G/B\ar[r]^{p_{i}}&G/P_i
}
\end{equation}
Since the horizontal map $p_i$ is smooth,
the diagram is transversal.
Since $q_{I_0}^*$ is injective by Lemma~\ref{BR:injpback}, 
it is enough to prove the equality 
after applying $q_{I_0}^*$ to both sides. 
By base change and by transversality of the diagram
we have
$$
q_{I_0}^* p_i^* p_{i*}\big(\cc_{G/B}(u)\big) 
=p_{I_0}^*(p_i \circ q_J)^*p_{i*}\big(\cc_{G/B}(u)\big)
=p_{I_0}^* p_{I_0*} q_{I_0}^*\big(\cc_{G/B}(u)\big)
$$ 

By Lemma~\ref{BR:bcha}
and by Prop.~\ref{BS:PP_eq}.(4) we obtain
$$
=p_{I_0}^* p_{I_0*} \big(\cc_{I_0}(u)\big) 
=\cc_{I_0}\big(\CC_{\alpha_i}(u)\big)
=q_{I_0}^*\big(\cc_{G/B}\big(\CC_{\alpha_i}(u)\big)\big)
$$
and the theorem is proved.
\end{proof}

%%%%%%%%%%%%%%%%%%%%%%%%%%%%%%%%%%%%%%%%%%%%%%%%%%%%%%%%%
%%%%%%%%%%%%%%%%%%%%%%%%%%%%%%%%%%%%%%%%%%%%%%%%%%%%%%%%%
%%%%%%%%%%%%%%%%%%%%%%%%%%%%%%%%%%%%%%%%%%%%%%%%%%%%%%%%

\part{Algebraic and geometric comparison. Applications.}

\section{Comparison results}\label{AG}

In the present section we explain why the ring $\HMF$ is naturally isomorphic to the ring $\HH^*(G/B)$ (see \ref{AG:isoHMFHGB_thm}) for most weakly birationally invariant theories. 

\begin{ntt}\label{AG:basis}
Let $I$ be a sequence of simple reflections. Recall that $w(I)=s_{i_1}\cdots s_{i_l}$ is the corresponding product of simple reflections. Let
$$
\bb_I=(q_{I})_*(1) \in \HH^*(G/B)
$$ 
denote the push-forward of the fundamental class of the Bott-Samelson variety corresponding to $I$.
\end{ntt}

\begin{ass} \label{AG:basis_ass}
For each element $w\in W$, let $I_w$ be a chosen reduced decomposition of $w$. The elements $\bb_{I_w}$, 
where $w$ runs through all elements of the Weyl group $W$,
form an $R$-basis of the cohomology $\HH^*(G/B)$.
\end{ass}

\begin{lem} \label{AG:basiscobord_lem}
Assumption \ref{AG:basis_ass} holds for
\begin{enumerate}
\item \label{AG:specialization_item} any oriented cohomology theory $\HH_1^*=\HH_0^*\otimes_{R_0} R_1$ obtained from another one $\HH_0^*$ for which the assumption already holds;
\item \label{AG:Chow_item} Chow groups and Chow groups mod $n$ (in arbitrary characteristic);
\item \label{AG:cobordism_item} algebraic cobordism, and therefore all oriented cohomology theories over a base field of characteristic zero;
%\item \label{AG:ConnectiveK_item} connective $K$-theory (and connective $K$-theory mod $n$) in arbitrary characteristic. 
\item \label{AG:K_item} $K$-theory (and $K$-theory mod $n$) in arbitrary characteristic. 
\end{enumerate}
\end{lem}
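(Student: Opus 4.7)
The plan is to handle the four parts in order of increasing difficulty, reducing (3) and (4) to (2) via comparison arguments, and using (1) to propagate basis results through specializations of coefficients.

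Part (1) is automatic: push-forwards along proper maps are compatible with tensoring coefficients, so $\bb_{I_w}^{\HH_1}$ is the image of $\bb_{I_w}^{\HH_0}$ under the base-change map $\HH_0^*(G/B)\to\HH_0^*(G/B)\otimes_{R_0}R_1=\HH_1^*(G/B)$, and base change sends a basis of a free module to a basis.

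For (2), the Bruhat decomposition $G/B=\bigsqcup_{w\in W}BwB/B$ gives, in arbitrary characteristic, that $\CH^*(G/B)$ is freely generated over $\ZZ$ by the Schubert classes $[X_w]$. Each $q_{I_w}\colon X_{I_w}/B\to X_w\hookrightarrow G/B$ is proper and birational onto $X_w$, so $(q_{I_w})_*(1)=[X_w]$ in $\CH^*(G/B)$; the mod-$n$ case follows from (1). Part (4) is parallel: the Bruhat decomposition identifies $\KT^0(G/B)$ with the free $\ZZ$-module on $\{[\mathcal O_{X_w}]\}$; Schubert varieties have rational singularities in any characteristic (by Frobenius-splitting results of Mehta--van der Kallen and Ramanathan), so $R(q_{I_w})_*\mathcal O_{X_{I_w}/B}=\mathcal O_{X_w}$ and the Bott-Samelson class is again the Schubert class. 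Passing to $\KT=\KT^0[\beta,\beta^{-1}]$ and to mod-$n$ coefficients is then (1) again.

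For (3), the Bialynicki--Birula decomposition of $G/B$ together with Levine--Morel's theory for cellular varieties yields that $\Omega^*(G/B)$ is a free $\LL$-module of rank $|W|$, concentrated in cohomological degrees $[0,N]$ with $N=l(w_0)$, and admits a homogeneous $\LL$-basis $\{e_w\}$ with $e_w\in\Omega^{N-l(w)}(G/B)$ lifting the Schubert basis $[X_w]$ of $\CH^*(G/B)$. Since $\LL$ lives in non-positive cohomological degrees and $\bb_{I_w}^\Omega\in\Omega^{N-l(w)}(G/B)$, the matrix $(a_{vw})$ expressing $\bb_{I_w}^\Omega=\sum_v a_{vw}e_v$ has $a_{vw}\in\LL^{l(v)-l(w)}$, forcing $a_{vw}=0$ whenever $l(v)>l(w)$. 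The matrix is therefore triangular along the length filtration with diagonal entries in $\LL^0=\ZZ$. Reducing modulo $\LL^{<0}$ (i.e.~applying $\Omega\to\CH$) converts it by (2) into the identity matrix; hence each diagonal entry is already $1$ in $\LL$, and the matrix is invertible. This shows the $\bb_{I_w}^\Omega$ form an $\LL$-basis of $\Omega^*(G/B)$, and the extension to any oriented $\HH$ in characteristic zero follows from (1) applied to $\Omega\to\HH$.

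The main obstacle is part (3): one needs both the freeness of $\Omega^*(G/B)$ as a graded $\LL$-module of the expected rank (a cellular-decomposition input), and a careful triangular argument matching the cohomological grading of $\Omega^*(G/B)$ against the non-positive grading of $\LL$. The other three parts are fairly direct consequences of classical basis results for $G/B$ combined with the birationality or rational-singularity properties of Bott-Samelson resolutions.
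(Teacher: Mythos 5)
Your treatment of parts (1) and (2) matches the paper's: (1) is base change of a free module, and (2) is the classical fact that Bott--Samelson maps are birational onto Schubert varieties, whose cycle classes give the Schubert basis of $\CH^*(G/B)$.

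For part (3) your route is genuinely different from the paper's. The paper invokes \cite[Cor.~2.9]{VishikY} directly, which states that on a smooth cellular variety any family of cobordism classes whose images in $\CH^*$ form a $\ZZ$-basis is automatically an $\LL$-basis of $\Omega^*$. You instead re-derive this for $G/B$ by hand: starting from the existence of a homogeneous $\LL$-basis $\{e_w\}$ lifting the Schubert classes (the cellularity input from Levine--Morel), you expand $\bb_{I_w}^\Omega=\sum_v a_{vw}e_v$ and use the cohomological grading --- $\LL$ concentrated in non-positive degrees and $\bb_{I_w}^\Omega$, $e_w$ of degree $N-l(w)$ --- to force $a_{vw}=0$ for $l(v)>l(w)$ and $a_{vw}\in\ZZ$ on the length-diagonal, then pin down the diagonal as the identity by mapping to $\CH$. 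This is correct and self-contained modulo the cellularity input, and it makes the triangularity mechanism explicit where the paper delegates it to a citation; but it is the same mathematical content, not a gain in generality.

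For part (4) the paper again just cites \cite{Demazure74} as ``classical,'' lumping it with the Chow case. You instead argue via rational singularities of Schubert varieties, appealing to Frobenius splitting (Mehta--van der Kallen, Ramanathan) to get $R(q_{I_w})_*\mathcal O_{X_{I_w}/B}=\mathcal O_{X_w}$ in arbitrary characteristic. This is a legitimate and in fact somewhat more careful route, since the vanishing of higher direct images in characteristic $p$ is not immediate from Demazure's original paper and really does rest on the later Frobenius-splitting literature; it buys you an explicit identification $\bb_{I_w}=[\mathcal O_{X_w}]$ rather than merely a basis statement. An alternative closer in spirit to your part (3) would be a filtration-by-support argument on $\KT^0(G/B)$ with associated graded $\CH^*(G/B)$, avoiding rational singularities entirely; either works.

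No gaps: the argument is sound throughout, and the only genuinely different choices (direct triangularity instead of citing Vishik--Yagita in (3), rational singularities instead of an unspecified ``classical'' reference in (4)) are both valid.
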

\begin{proof}
Point \eqref{AG:specialization_item} is obvious since a free module stays free by base change. For Chow groups and K-theory, this is classical and for example proved in \cite{Demazure74}. For algebraic cobordism, since $G/B$ is a cellular space and classes of Schubert varieties form a $\ZZ$-basis of $\CH(G/B)$, the lemma follows from \cite[Cor.~2.9]{VishikY}. 
\end{proof}

We consider the characteristic map $\cc_{G/B}\colon \RMF \to \HH^*(G/B)$.

\begin{lem} \label{AG:piZ_lem}
Let $\pi\colon G/B\to \pt$ be the structure map. 
For any sequence $I$ and $u\in \RMF$ we have
$$\pi_*(\bb_I\cdot \cc_{G/B}(u))=\aug\CC_I(u).$$
\end{lem}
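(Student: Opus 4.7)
The plan is to unwind the definition of $\bb_I$ and use the projection formula to move everything to the Bott-Samelson variety, where Lemma~\ref{BS:pushcc} already computes the push-forward in terms of the operators $\CC_I$.

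First I would write $\bb_I = (q_I)_*(1_{X_I/B})$ by definition, so that by the projection formula
\[
\bb_I\cdot \cc_{G/B}(u) = (q_I)_*(1_{X_I/B})\cdot \cc_{G/B}(u) = (q_I)_*\bigl(q_I^*\cc_{G/B}(u)\bigr).
\]
Next I would invoke Lemma~\ref{BR:bcha}, whose proof goes through verbatim for an arbitrary (not necessarily reduced) sequence $I$ since it only uses the compatibility $q_I^*\Lb(\lambda)=\Lb_I(\lambda)$ and continuity of the characteristic maps. This gives $q_I^*\cc_{G/B}(u)=\cc_I(u)$, hence
\[
\bb_I\cdot \cc_{G/B}(u) = (q_I)_*\bigl(\cc_I(u)\bigr).
\]

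Then I would factor the structure map of the Bott-Samelson variety as $\pi_I = \pi\circ q_I$ and apply functoriality of push-forwards together with Lemma~\ref{BS:pushcc}:
\[
\pi_*\bigl(\bb_I\cdot \cc_{G/B}(u)\bigr) = \pi_*(q_I)_*\bigl(\cc_I(u)\bigr) = \pi_{I*}\bigl(\cc_I(u)\bigr) = \aug\CC_I(u),
\]
which is the desired equality.

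I do not anticipate any serious obstacle: the whole argument is a one-line application of the projection formula, compatibility of the characteristic map with $q_I^*$, and the already-established push-forward formula for Bott-Samelson varieties. The only point to double-check is that Lemmas~\ref{BR:bcha} and~\ref{BS:pushcc} remain valid when $I$ is not a reduced decomposition, but inspecting their proofs shows that reducedness is never used; they rest on the $\PP^1$-bundle structure of each step $p_{(i_1,\ldots,i_j)}$ and on Proposition~\ref{BS:PP_eq}, both of which hold for arbitrary sequences.
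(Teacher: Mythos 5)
Your proof is correct and follows exactly the paper's own argument: unwind $\bb_I=q_{I*}(1)$, apply the projection formula, identify $q_I^*\cc_{G/B}=\cc_I$ via Lemma~\ref{BR:bcha}, and conclude by Lemma~\ref{BS:pushcc} and functoriality $\pi_*q_{I*}=\pi_{I*}$. Your extra remark that Lemmas~\ref{BR:bcha} and~\ref{BS:pushcc} hold for non-reduced $I$ is a sensible sanity check (the paper leaves it implicit), and indeed reducedness plays no role in their proofs.
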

\begin{proof}
The projection formula and Lemma~\ref{BS:pushcc} imply that we have
$$\pi_*(\bb_I\cdot \cc_{G/B}(u))=\pi_*(q_{I*}(1)\cdot \cc_{G/B}(u))=
\pi_* q_{I*}q_I^*(\cc_{G/B}(u))=\pi_{I*}(\cc_I(u))=\aug\CC_I(u).$$
\end{proof}

Recall from Definition \ref{DR:charmap_def} that we also have an algebraic characteristic map $\cc: \RMF \to \HMF$.

\begin{lem} \label{AG:thetatorinv_lem}
Assume the torsion index $\tor$ is invertible in $R$ and that Assumption \ref{AG:basis_ass} is satisfied. Then, there is a unique morphism $\theta: \HMF \to \HH^*(G/B)$ of $R$-algebras such that $\cc_{G/B}=\theta \circ \cc$. 
\end{lem}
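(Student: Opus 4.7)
Uniqueness is immediate: since $\tor$ is invertible in $R$, Theorem~\ref{DR:surjchar} implies $\cc\colon \RMF \to \HMF$ is surjective, so any $\theta$ satisfying $\cc_{G/B} = \theta \circ \cc$ is uniquely determined. For existence, the strategy is to show $\ker(\cc) \subseteq \ker(\cc_{G/B})$; then $\theta$ is the induced $R$-linear factorization, and it is automatically a ring morphism, since for $a = \cc(u)$ and $b = \cc(v)$ we have $\theta(ab) = \theta(\cc(uv)) = \cc_{G/B}(uv) = \cc_{G/B}(u)\cc_{G/B}(v) = \theta(a)\theta(b)$.

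To establish the kernel inclusion, fix $u \in \ker(\cc)$. By Theorem~\ref{DR:charMapBasis}, with $\tor$ invertible, $\{\zz^\CC_{I_w}\}_{w \in W}$ is an $R$-basis of $\HMF$ and $\cc(u) = \sum_{w \in W} \aug\CC_{I_w}(u)\,\zz^\CC_{I_w}$, so $\cc(u) = 0$ is equivalent to $\aug\CC_{I_w}(u) = 0$ for every $w \in W$. By Lemma~\ref{AG:piZ_lem}, this translates to $\pi_*(\bb_{I_w} \cdot \cc_{G/B}(u)) = 0$ for every $w$; and by Assumption~\ref{AG:basis_ass}, since $\{\bb_{I_w}\}$ is an $R$-basis of $\HH^*(G/B)$, extending $R$-linearly yields $\pi_*(y \cdot \cc_{G/B}(u)) = 0$ for every $y \in \HH^*(G/B)$.

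The main obstacle is deducing $\cc_{G/B}(u) = 0$ from the vanishing of all these pairings, which amounts to non-degeneracy of the push-forward pairing $(x,y) \mapsto \pi_*(xy)$ on $\HH^*(G/B)$, equivalently invertibility of the Gram matrix $M_{v,w} = \pi_*(\bb_{I_v}\,\bb_{I_w})$. My approach is a cohomological degree argument: $\bb_{I_w} \in \HH^{N-l(w)}(G/B)$ with $N = l(w_0)$, so each entry $M_{v,w}$ lies in the graded piece $R^{N - l(v) - l(w)}$, and hence $\det M$ sits in $R^{0}$. For the oriented theories of interest the positive graded pieces of $R$ vanish, so only length-reversing permutations contribute nonzero terms to $\det M$; each such term reduces, via the canonical comparison with Chow theory (where $\bb_{I_w} = [X_w]$ because $q_{I_w}$ is birational onto the Schubert variety $X_w$), to a classical Schubert intersection number. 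Poincar\'e duality of Schubert classes on the cellular variety $G/B$ then yields $\det M = \pm 1$, hence invertibility of $M$, and the kernel inclusion follows.
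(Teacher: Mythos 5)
Your overall strategy (reduce to $\ker(\cc)\subseteq\ker(\cc_{G/B})$ and then observe the factorization is automatically a ring map by surjectivity of $\cc$) is sound, and the reduction to nondegeneracy of the pairing $(x,y)\mapsto\pi_*(xy)$ on $\HH^*(G/B)$ is a valid reformulation. The gap is in your proof of nondegeneracy. Your degree argument requires that the positive graded pieces of $R=\HH^*(\pt)$ vanish; this fails already for periodic $K$-theory, where $R=\ZZ[\beta,\beta^{-1}]$, and more importantly the hypothesis of the lemma is only Assumption~\ref{AG:basis_ass}, which is not a graded statement and does not by itself give you any control over the grading of $R$. The subsequent appeal to a ``canonical comparison with Chow theory'' is also not available here: the universal natural transformation goes $\Omega\to\HH^*$, not $\HH^*\to\CH$, and the lemma does not assume that $\HH^*$ is obtained from $\Omega$ by base change. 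So as written the nondegeneracy claim is not established.

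The paper's proof bypasses this obstacle by using the surjectivity of $\cc$ (Theorem~\ref{DR:surjchar}, since $\tor$ is invertible) more aggressively: choose preimages $u_w\in\RMF$ of the basis elements $\zz^\CC_{I_w}$ from Theorem~\ref{DR:charMapBasis}, so $\aug\CC_{I_v}(u_w)=\delta_{v,w}$, and set $\ab_{I_w}=\cc_{G/B}(u_w)$. Then Lemma~\ref{AG:piZ_lem} gives $\pi_*(\bb_{I_v}\cdot\ab_{I_w})=\aug\CC_{I_v}(u_w)=\delta_{v,w}$ directly, so $(\ab_{I_w})$ is a dual $R$-basis to $(\bb_{I_w})$. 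This simultaneously proves nondegeneracy of the pairing (over any $R$, with no grading or Chow-comparison input) and hands you the map $\theta$ by sending $\zz^\CC_{I_w}\mapsto\ab_{I_w}$; the identity $\cc_{G/B}(u)=\sum_w \aug\CC_{I_w}(u)\,\ab_{I_w}$, which again follows from Lemma~\ref{AG:piZ_lem}, then shows $\theta\circ\cc=\cc_{G/B}$. You should replace your nondegeneracy argument by this construction.
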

\begin{proof}
We fix a reduced decomposition $I_w$ for each $w \in W$.
By \ref{DR:surjchar}, the map $\cc$ is surjective. 
We can therefore find a pre-image $u_w$ of each element $\zz_{I_w}^\CC$ of Theorem \ref{DR:charMapBasis}. 
By the form of the characteristic map given there, these elements satisfy $\aug \CC_{I_w}(u_{v})=\delta_{w,v}$. 
Let us set $\ab_{I_w}=\cc_{G/B}(u_w)$. 
By the previous lemma, $(\ab_{I_w})_{w \in W}$ is a basis of $\HH^*(G/B)$ that is dual basis to the basis $(\bb_{I_w})_{w \in W}$ with respect to the bilinear form $(\bb,\ab)\mapsto \pi_*(\bb.\ab)$, and this form is therefore nondegenerate. 
We now define $\theta$ as the $R$-linear morphism sending $\zz_{I_w}^\CC$ to $\ab_{I_w}$. By the formula $\pi_*(\bb_{I_w}.\cc_{G/B}(u))=\aug \CC_{I_w}(u)$, we must have 
\begin{equation} \label{AG:charmap_eq}
\cc_{G/B}(u)=\sum_{w \in W} \aug \CC_{I_w}(u) \ab_{I_w}
\end{equation} 
and the isomorphism $\theta$ therefore satisfies $\cc_{G/B}= \theta \circ \cc$ since the $\zz_{I_w}^{\CC}$ satisfy the same formula with $\cc$.
The uniqueness of $\theta$ is immediate by surjectivity of $\cc$.
\end{proof}

\begin{cor}
When $\tor$ is invertible in $R$ and Assumption \ref{AG:basis_ass} holds, the characteristic map $\cc_{G/B}$ is surjective.
\end{cor}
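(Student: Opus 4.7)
The plan is to read the surjectivity directly from the construction carried out in Lemma \ref{AG:thetatorinv_lem}. In that proof, for each $w \in W$ (with a chosen reduced decomposition $I_w$), one picks a preimage $u_w \in \RMF$ of $\zz_{I_w}^\CC$ under $\cc$, which exists because $\cc$ is surjective by Theorem \ref{DR:surjchar} when $\tor$ is invertible. The elements $\ab_{I_w} := \cc_{G/B}(u_w)$ are then shown to be the dual basis of the $\bb_{I_w}$ with respect to the pairing $(\bb,\ab) \mapsto \pi_*(\bb \cdot \ab)$, using Lemma \ref{AG:piZ_lem} and the identity $\aug\CC_{I_w}(u_v) = \delta_{w,v}$. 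In particular, the $\ab_{I_w}$ form an $R$-basis of $\HH^*(G/B)$ by Assumption \ref{AG:basis_ass}.

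Since each basis element $\ab_{I_w}$ lies in the image of $\cc_{G/B}$ by its very definition, the image of $\cc_{G/B}$ contains a full $R$-basis of $\HH^*(G/B)$, and hence $\cc_{G/B}$ is surjective. Equivalently, one may observe that $\cc_{G/B} = \theta \circ \cc$, where $\theta$ sends the basis $(\zz^\CC_{I_w})_{w \in W}$ of $\HMF$ to the basis $(\ab_{I_w})_{w \in W}$ of $\HH^*(G/B)$, so $\theta$ is an isomorphism of $R$-modules, and surjectivity of $\cc_{G/B}$ follows from surjectivity of $\cc$ (Theorem \ref{DR:surjchar}).

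Since the argument is essentially a one-line consequence of what has already been established, there is no real obstacle. The only subtlety worth emphasizing is that the nondegeneracy of the pairing $(\bb, \ab) \mapsto \pi_*(\bb \cdot \ab)$ — which is what guarantees that the constructed $\ab_{I_w}$ actually form a basis — relies crucially on Assumption \ref{AG:basis_ass} and on Lemma \ref{AG:piZ_lem}, both of which are in place.
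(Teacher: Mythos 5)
Your proof is correct and follows essentially the same route as the paper, which simply combines Theorem~\ref{DR:surjchar} (surjectivity of $\cc$) with Lemma~\ref{AG:thetatorinv_lem} (existence of $\theta$ with $\cc_{G/B}=\theta\circ\cc$, where $\theta$ sends basis to basis and is thus bijective). Your additional remark that the $\ab_{I_w}$ lie in the image of $\cc_{G/B}$ by construction is just an unpacking of the same argument.
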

\begin{proof}
Use Theorem \ref{DR:surjchar} and the previous lemma. 
\end{proof}

\begin{cor}
When $\tor$ is invertible in $R$ which has no $2$-torsion and Assumption \ref{AG:basis_ass} holds, the kernel of the characteristic map $\cc_{G/B}$ is the ideal of $\RMF$ generated by elements in $\mI$ fixed by $W$. 
\end{cor}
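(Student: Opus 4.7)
The plan is to reduce the statement to Theorem \ref{DR:kercharmap_thm} via the comparison map $\theta$ constructed in Lemma \ref{AG:thetatorinv_lem}. Since we are in a situation where both $\tor$ is invertible in $R$ and Assumption \ref{AG:basis_ass} is satisfied, that lemma gives us a morphism of $R$-algebras $\theta\colon \HMF \to \HH^*(G/B)$ with the factorization $\cc_{G/B} = \theta \circ \cc$. Consequently, $\ker \cc_{G/B} \supseteq \ker \cc$, and equality will hold as soon as $\theta$ is injective.

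The first (and main) step is therefore to upgrade $\theta$ to an isomorphism. Fix reduced decompositions $I_w$ for all $w\in W$. On one hand, by Theorem \ref{DR:charMapBasis} the family $(\zz^{\CC}_{I_w})_{w \in W}$ is an $R$-basis of $\HMF$, and $\theta$ sends $\zz^{\CC}_{I_w}$ to the element $\ab_{I_w} = \cc_{G/B}(u_w)$ constructed in the proof of Lemma \ref{AG:thetatorinv_lem}. On the other hand, that same proof shows that $(\ab_{I_w})_{w \in W}$ is the basis of $\HH^*(G/B)$ dual to $(\bb_{I_w})_{w \in W}$ (which is a basis by Assumption \ref{AG:basis_ass}) via the nondegenerate pairing $(\bb,\ab)\mapsto \pi_*(\bb\cdot \ab)$, using Lemma \ref{AG:piZ_lem}. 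Thus $\theta$ sends a basis of $\HMF$ bijectively onto a basis of $\HH^*(G/B)$, so it is an isomorphism of $R$-modules (and of $R$-algebras, as it is already a ring map).

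The second step is then immediate: because $\theta$ is injective, $\ker \cc_{G/B} = \ker \cc$. By Theorem \ref{DR:kercharmap_thm}, applied under precisely the hypotheses assumed here (invertibility of $\tor$ and absence of $2$-torsion in $R$), this kernel is the ideal of $\RMF$ generated by the elements of $\mI^W_F = \mI_F \cap \RMF^W$, which is what we wanted to prove.

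The potential obstacle is none of the steps above: everything hinges on the two inputs (surjectivity of $\cc$ when $\tor$ is invertible, and the explicit basis description of the characteristic map via $\aug \CC_{I_w}$) already established, together with Assumption \ref{AG:basis_ass}. The real substance has already been done inside the proof of Lemma \ref{AG:thetatorinv_lem} and in Theorem \ref{DR:kercharmap_thm}; the present corollary is just the geometric shadow of the latter obtained by transporting along the isomorphism $\theta$.
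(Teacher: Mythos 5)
Your proof is correct and takes essentially the same route as the paper, which simply cites Theorem \ref{DR:kercharmap_thm} and Lemma \ref{AG:thetatorinv_lem}; you have merely made explicit the point the paper leaves implicit, namely that the $\theta$ constructed in the proof of Lemma \ref{AG:thetatorinv_lem} sends the basis $(\zz^{\CC}_{I_w})$ to the basis $(\ab_{I_w})$ and is therefore an isomorphism, so $\ker\cc_{G/B}=\ker\cc$.
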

\begin{proof}
Use Theorem \ref{DR:kercharmap_thm} and the previous lemma.
\end{proof}

Let $u_0 \in \mI^N$ be chosen as in section \ref{IT}.

\begin{lem} \label{AG:bIwcGB_lem}
Let $I_w$ be a reduced decomposition of an element $w$. Then
$$\bb_{I_w}\cdot\cc_{G/B}(u_0)=\left\{\begin{array}{ll}
0 & \text{ if } w \neq w_0 \\
\cc_{G/B}(u_0) & \text{ if } w=w_0
\end{array}
\right.$$
\end{lem}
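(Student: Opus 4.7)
My approach is to pull the identity back to the Bott--Samelson variety $X_{I_w}/B$ via the projection formula, where the explicit presentation of Theorem~\ref{BS:xiTheta_thm} makes the computation tractable. Since $q_{I_w}^*\cc_{G/B}(u_0)=\cc_{I_w}(u_0)$ by Lemma~\ref{BR:bcha}, the projection formula gives
$$\bb_{I_w}\cdot\cc_{G/B}(u_0)=q_{I_w*}\bigl(\cc_{I_w}(u_0)\bigr),$$
so everything reduces to computing $\cc_{I_w}(u_0)$, for which formula~\eqref{ccI_eq} provides the expansion $\sum_{K\subseteq[1,l(w)]}\aug\Theta_K(u_0)\,\xi_K$.

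For $w\ne w_0$, every $K\subseteq[1,l(w)]$ satisfies $|K|\le l(w)<N$. Since $\Theta_K$ is a product of $|K|$ divided differences (each lowering the $\mI_F$-filtration by one) and $l(w)-|K|$ simple reflections (preserving it), we have $\Theta_K(u_0)\in\mI_F^{N-|K|}\subseteq\mI_F$, whence $\aug\Theta_K(u_0)=0$ for every $K$. Thus $\cc_{I_w}(u_0)=0$ and the product vanishes, settling the first case.

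For $w=w_0$, only the subset $K=[1,N]$ can yield a nonzero contribution; there $\Theta_K=\Del_{-\alpha_{i_1}}\cdots\Del_{-\alpha_{i_N}}$. Passing to the associated graded via Proposition~\ref{DR:GrDel}, where $\Del_{-\alpha}=-\Del_\alpha$ in the additive case, the defining property of $u_0$ yields $\aug\Theta_{[1,N]}(u_0)=(-1)^N\tor$. Hence $\cc_{I_{w_0}}(u_0)=(-1)^N\tor\,\xi_{[1,N]}$ and
$$\bb_{I_{w_0}}\cdot\cc_{G/B}(u_0)=(-1)^N\tor\,q_{I_{w_0}*}(\xi_{[1,N]}).$$
On the other hand, expansion~\eqref{AG:charmap_eq} combined with Lemma~\ref{DR:reducedI0} gives $\cc_{G/B}(u_0)=(-1)^N\tor\,\ab_{I_{w_0}}$, so the second case reduces to the geometric identity $q_{I_{w_0}*}(\xi_{[1,N]})=\ab_{I_{w_0}}$ in $\HH^N(G/B)=R\cdot\ab_{I_{w_0}}$.

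The main obstacle is this last identification, which I would verify by matching the pairings $\pi_*(\bb_{I_v}\cdot-)$ on both sides. The projection formula rewrites the left pairing as $\pi_{I_{w_0}*}(q_{I_{w_0}}^*\bb_{I_v}\cdot\xi_{[1,N]})$. For $v\ne w_0$, the product lies in $\HH^{2N-l(v)}(X_{I_{w_0}}/B)=0$ since $\dim X_{I_{w_0}}/B=N$, matching $\pi_*(\bb_{I_v}\cdot\ab_{I_{w_0}})=0$. For $v=w_0$, I would exploit that $q_{I_{w_0}}^*\bb_{I_{w_0}}-1$ vanishes in the Chow quotient (because $q_{I_{w_0}}$ is birational), so Lemma~\ref{AC:kernilp} expresses it as a sum of products each containing a factor in $\HH^{>0}$; their product with the top class $\xi_{[1,N]}$ lands in $\HH^{>N}(X_{I_{w_0}}/B)=0$. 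This reduces the pairing to $\pi_{I_{w_0}*}(\xi_{[1,N]})=1$, which follows by pushing forward $\cc_{I_{w_0}}(u_0)=(-1)^N\tor\,\xi_{[1,N]}$ via Lemma~\ref{BS:pushcc}. Pinning down the scalar to be exactly $1$ rather than just some unit is where the proof really uses the vanishing $\HH^{>\dim}=0$ on Bott--Samelson varieties together with the nilpotent description of the kernel of specialization to Chow.
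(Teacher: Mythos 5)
Your treatment of the case $w\neq w_0$ matches the paper: both use the projection formula, $q_{I_w}^*\cc_{G/B}=\cc_{I_w}$, and the expansion~\eqref{ccI_eq} to conclude $\cc_{I_w}(u_0)=0$ (the paper leaves the degree count implicit where you spell it out).

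For $w=w_0$ your route is genuinely different, and this is where the comparison is instructive. The paper avoids computing $\cc_{I_{w_0}}(u_0)$ altogether: it shows $\bb_{I_{w_0}}=1-\sum_{w\neq w_0}r_w\bb_{I_w}$ by pulling back the decomposition of $1$ on the $\bb_{I_w}$-basis to the big open cell $U\subseteq G/B$, using $\HH^*(U)\simeq R$ (homotopy invariance) and the fact that $i_U^*q_{I'*}=0$ when $I'$ does not reach $w_0$. Multiplying by $\cc_{G/B}(u_0)$ and invoking the already-proved vanishing for $w\neq w_0$ then finishes the argument. This is purely axiomatic and works for any theory satisfying Assumption~\ref{AG:basis_ass}, in particular Chow and $K$-theory in arbitrary characteristic. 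Your proposal instead computes $\cc_{I_{w_0}}(u_0)=(-1)^N\tor\,\xi_{[1,N]}$, invokes~\eqref{AG:charmap_eq} (hence $\tor$ invertible, or scalar extension to $R[\tor^{-1}]$, which is fine once $\tor$ is regular), and reduces to $q_{I_{w_0}*}(\xi_{[1,N]})=\ab_{I_{w_0}}$ checked via the nondegenerate pairing.

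The concrete gaps in your approach are in the last step. You rely twice on $\HH^{>\dim}(X_{I_{w_0}}/B)=0$, which is not a general axiom of oriented cohomology theories; for example, it fails for the periodic $\KT$-theory of Example~\ref{AC:K0}. You also invoke Lemma~\ref{AC:kernilp}, which is stated only for algebraic cobordism over a field of characteristic $0$. Taken together, your argument as written establishes the lemma for $\Omega$ (and theories obtained from it by base change in characteristic $0$) but does not cover Chow or $K$-theory in positive characteristic, both of which are explicitly included in Lemma~\ref{AG:basiscobord_lem}. A fix would be to prove the statement once for $\Omega$ over $\LL$ and transport it by functoriality of $\cc_{G/B}$, but that step is not articulated and is anyway more circuitous than the paper's self-contained open-cell argument, which pins down the scalar directly and without any degree-vanishing input.
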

\begin{proof}
We have $\bb_{I_w}\cdot \cc_{G/B}(u_0)=q_{I_w *}(\cc_{I_w}(u_0))$
by the projection formula and the fact that $q_{I_w}^*\cc_{G/B}=\cc_{I_w}$. 
By formula \eqref{ccI_eq}, $\cc_{I_w}(u_0)=0$ if $l(I_w)<N$ \ie\ if $w\neq w_0$. 
This proves the first part of the formula. 
The other case will follow if we show that $\bb_{I_{w_0}}=1-\sum_{w \neq w_0}r_w \bb_{I_w}$ for some coefficients $r_w$ in $R$. 
For this, it suffices to show that the coefficient $r_{w_0}$ in front of $\bb_{I_{w_0}}$ in the decomposition of $1$ on the basis of the $\bb_{I_w}$ is $1$. We have the commutative diagram
$$\xymatrix{
U \ar@{}[dr]|{\Box} \ar[r]^{i_{U'}} \ar@{=}[d] & X_{I_0}/B \ar[d]^{q_{I_0}} & X_{I'}/B \ar[l]_{\sigma} \ar[dl]^{q_{I'}} \\
U \ar[r]^{i_{U}} \ar[r] & G/B
}$$
where $I_0$ is any reduced decomposition of $w_0$, $I'$ is $I_0$ with its last entry removed, $\sigma$ is the section of $p_{I_0}$, $U$ is the big open cell in $G/B$ and $U'$ is its pre-image in $X_{I_0}/B$. 
By \cite[\S 3.10 and 3.11]{Demazure74}, $U'$ is indeed isomorphic to $U$ and it is included in the complement of $\sigma{X_{I'}/B}$. 
Thus, $i_{U'}^* \sigma_*=0$, using that $X_{I_0}/B \to X_{I'}/B$ is a projective bundle. 
By base change on the cartesian diagram, we obtain $i_{U}^* q_{I' *}=0$ and $i_U^* q_{I_0 *}(1)=1$. Since any $I_w$ can be completed to a decomposition of $w_0$, this proves that $i_U^* (\bb_{I_w})=0$ if $w \neq w_0$.
Pulling back $1=\sum_w r_w \bb_{I_w}$ by $i_U$, we get $1=r_{w_0}$ (by homotopy invariance, $\HH^*(U) \simeq R$).
\end{proof}

\begin{lem} \label{AG:ptcu0_lem}
We have 
$$\tor\;\bb_{\emptyset}=\cc_{G/B}(u_0) \qquad\text{and}\qquad \tor\;\bb_I=\cc_{G/B}\big(\CC_{I^{\rev}}(u_0)\big).$$
\end{lem}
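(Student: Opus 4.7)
The plan is to prove both formulas together by induction on $l(I)$, observing that the first statement is the case $I = \emptyset$ of the second (since $\CC_\emptyset = \id$).

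For the inductive step, write $I = J + (i_l)$ with $l(J) = l(I)-1$. The key geometric input is the Cartesian square
\[
\xymatrix{X_I/B \ar[r]^{p_I} \ar[d]_{q_I} & X_J/B \ar[d]^{p_{i_l}\circ q_J}\\ G/B \ar[r]^{p_{i_l}} & G/P_{i_l}}
\]
which follows from the identification $X_I = X_J \times^B P_{i_l}$: a pair $(gB, (g_1,\ldots,g_{l-1})B)$ with $gP_{i_l} = g_1\cdots g_{l-1}P_{i_l}$ corresponds bijectively to a tuple $(g_1,\ldots,g_{l-1},h)B$ in $X_I/B$ via $g = g_1\cdots g_{l-1}h$. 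The horizontal arrows are smooth $\PP^1$-bundles, so base change gives $\bb_I = q_{I*}p_I^*(1) = p_{i_l}^* p_{i_l*}(\bb_J)$. Combining this with Theorem~\ref{BR:pipi_eq}, namely $p_{i_l}^*p_{i_l*}\cc_{G/B}(u) = \cc_{G/B}(\CC_{\alpha_{i_l}}(u))$, and the induction hypothesis $\tor\bb_J = \cc_{G/B}(\CC_{J^{\rev}}(u_0))$:
\[
\tor\bb_I = p_{i_l}^*p_{i_l*}(\tor\bb_J) = p_{i_l}^*p_{i_l*}\cc_{G/B}(\CC_{J^{\rev}}(u_0)) = \cc_{G/B}(\CC_{\alpha_{i_l}}\CC_{J^{\rev}}(u_0)) = \cc_{G/B}(\CC_{I^{\rev}}(u_0))
\]
using $I^{\rev} = (i_l) + J^{\rev}$.

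For the base case $I = \emptyset$, I would fix a reduced decomposition $I_0$ of $w_0$ and compute via the Bott-Samelson resolution $q_{I_0}\colon X_{I_0}/B \to G/B$. By Lemma~\ref{BR:bcha}, $q_{I_0}^*\cc_{G/B}(u_0) = \cc_{I_0}(u_0)$; using formula \eqref{ccI_eq} together with the fact that $\Theta_K(u_0) \in \mI_F^{N-|K|}$ (since each $\Del_{-\alpha}$ lowers filtration by one and each $s_\alpha$ preserves it), only the top-degree term contributes, so $\cc_{I_0}(u_0) = \aug\Theta_{[1,N]}(u_0)\cdot \xi_{[1,N]}$. Applying $q_{I_0*}$, combining the projection formula with Lemma~\ref{AG:bIwcGB_lem}, and noting that $\xi_{[1,N]}$ represents the distinguished section point $(1,\ldots,1)B \in X_{I_0}/B$ (whose image in $G/B$ under $q_{I_0}$ is $B/B$), gives
\[
\cc_{G/B}(u_0) = \bb_{I_{w_0}}\cc_{G/B}(u_0) = q_{I_0*}\cc_{I_0}(u_0) = \aug\Theta_{[1,N]}(u_0)\cdot \bb_\emptyset.
\]

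The main obstacle is the final identification $\aug\Theta_{[1,N]}(u_0) = \tor$, where $\Theta_{[1,N]} = \Del_{-\alpha_{i_1}}\circ\cdots\circ\Del_{-\alpha_{i_N}}$ involves the $\Del_{-\alpha}$ operators rather than $\Del_\alpha$. The strategy is to use the identity $\Del_{-\alpha} = -s_\alpha\circ\Del_\alpha$ from Proposition~\ref{DO:eqDel}.(3) iteratively, then move each $s_\alpha$ across the remaining $\Del$'s via the conjugation formula $w\Del_\alpha w^{-1} = \Del_{w(\alpha)}$ of Proposition~\ref{DO:eqDel}.(5), and absorb all the accumulated Weyl group elements into $\aug$. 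This reduces $\aug\Theta_{[1,N]}(u_0)$ to $\aug\Del_{I_0'}(u_0)$ for a reduced decomposition $I_0'$ of $w_0$, which equals $\tor$ by construction of $u_0$ (Lemma~\ref{DR:reducedI0}).
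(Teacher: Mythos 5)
Your proposal follows essentially the same route as the paper's proof. The inductive step is verbatim the paper's: the Cartesian square \eqref{BR:diagr}, base change, and Theorem \ref{BR:pipi_eq}. The base case likewise mirrors the paper's argument — express $\bb_\emptyset$ via the iterated section of the Bott-Samelson tower over $w_0$, identify its class in $\HH(X_{I_0}/B)$ with $\xi_{[1,N]}$, observe (via formula \eqref{ccI_eq} and the valuation of $u_0$) that $\cc_{I_0}(u_0)=\aug\Theta_{[1,N]}(u_0)\cdot\xi_{[1,N]}$, and then push forward using the projection formula and Lemma \ref{AG:bIwcGB_lem}.

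However, the last step of your base case — the reduction of $\aug\Theta_{[1,N]}(u_0)$ to $\aug\Del_{I_0'}(u_0)$ for a reduced decomposition $I_0'$ — does not go through as stated. Writing $\Del_{-\alpha}=-s_\alpha\Del_\alpha$ and conjugating each $s_{i_j}$ across the remaining operators via $w\Del_\alpha w^{-1}=\Del_{w(\alpha)}$, the accumulated Weyl group element can be absorbed into $\aug$, but the roots you obtain are $\gamma_j=s_{i_N}\cdots s_{i_{j+1}}(\alpha_{i_j})$, which are positive roots (indeed all of them) but not simple in general. So the resulting expression $\aug\Del_{\gamma_1}\cdots\Del_{\gamma_N}(u_0)$ is not of the form $\aug\Del_{I_0'}(u_0)$ for a word $I_0'$ in simple reflections, and you cannot invoke the definition of $u_0$ directly. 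The clean way to finish is the same device the paper uses everywhere else: since $u_0\in\mI_F^N$ and $\aug$ only sees the image in $\mI_F^0/\mI_F^1$, pass to the associated graded via Proposition \ref{DR:GrDel}, where each $\Del^F_{-\alpha}$ becomes $\Del_{-\alpha}^{F_a}=-\Del^{F_a}_\alpha$ in the symmetric algebra, and apply Demazure's additive result. Note that this computation gives $\aug\Theta_{[1,N]}(u_0)=(-1)^N\aug\Del^{F_a}_{I_0}(\bar u_0)=(-1)^N\tor$; the paper's own proof, which quietly asserts $\tor\,\xi_{[1,N]}=\cc_{I_{w_0}}(u_0)$ without detail, relies on the same identity, and the sign $(-1)^N$ is the same one already visible in Lemma \ref{DR:reducedI0}.(3) and Proposition \ref{PS:basisHMFi_prop} and should be tracked carefully against the normalization of $u_0$.
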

\begin{proof}
Let $I_{w_0}=(i_1,\ldots,i_N)$.
With the notation of Theorem \ref{BS:xiTheta_thm}, we have 
$$\tor\; \bb_{\emptyset}=\tor\;q_{\emptyset *}(1)=\tor\; q_{I_{w_0}*}\sigma_{I_{w_0}*} \cdots \sigma_{(i_1) *}(1)=q_{I_{w_0}*}(\tor\; \xi_{[1,N]})=q_{I_{w_0}*}(\cc_{I_{w_0}}(u_0))$$
$$=q_{I_{w_0}*}(1) \cc_{G/B}(u_0)=\cc_{G/B}(u_0)$$
where the two last equalities follow from the projection formula together with $q_{I_w}^*\cc_{G/B}=\cc_{I_w}$ and then Lemma \ref{AG:bIwcGB_lem}. 
For the second formula, which coincides with the first one when $I$ is empty, we use induction on the length of $I$. 
If $I=(i_1,\ldots,i_l)$ and $J=(i_1,\ldots,i_{l-1})$, then
by base change via diagram~\eqref{BR:diagr} we obtain 
$$
\tor\cdot \bb_I=\tor\;\cdot q_{I*}(1)=\tor\cdot q_{I*}p_I^*(1)=
\tor\cdot p_{i_l}^*(p_{i_l}\circ q_{J})_*(1)=
p_{i_l}^*p_{i_l*}\big(\tor\cdot \bb_{J}\big),
$$
By the induction step and Theorem~\ref{BR:pipi_eq} we obtain
$$
=p_{i_l}^*p_{i_l*}(\cc_{G/B}(\CC_{J^{\rev}}(u_0)))
=\cc_{G/B}(\CC_{\alpha_{i_l}}\circ\CC_{J^{\rev}}(u_0))
=\cc_{G/B}(\CC_{I^{\rev}}(u_0))
$$
and the proof is finished.
\end{proof}

\begin{lem}
Under the assumptions of Proposition \ref{AG:thetatorinv_lem}, we have $\bb_{\emptyset} = \ab_{I_{w_0}}$. In particular, $\ab_{I_{w_0}}$ does not depend on the choices of the reduced decompositions $I_w$.
\end{lem}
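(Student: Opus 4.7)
The plan is to combine Lemma \ref{AG:ptcu0_lem} with the explicit formula \eqref{AG:charmap_eq} that expresses the characteristic map on the basis $(\ab_{I_w})_{w\in W}$.

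First, I would apply \eqref{AG:charmap_eq} to the element $u_0$ of Section \ref{IT}, obtaining
$$\cc_{G/B}(u_0) = \sum_{w \in W} \aug\CC_{I_w}(u_0)\, \ab_{I_w}.$$
By Lemma \ref{DR:reducedI0}.(3), the coefficient $\aug\CC_{I_w}(u_0)$ vanishes unless $I_w$ is reduced of length $N$, i.e.\ unless $w=w_0$; thus the right-hand side collapses to a single term which is a scalar multiple of $\ab_{I_{w_0}}$, and this scalar is (up to the sign $(-1)^N$ coming from Proposition \ref{PS:basisHMFi_prop}) equal to $\tor$. Equivalently, one can view this step as applying $\theta$ to the identity $\cc(u_0) = (-1)^N\tor\,\zz_0$ from Proposition \ref{PS:basisHMFi_prop}, using $\ab_{I_{w_0}} = \theta(\zz_0)$.

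Next, I would invoke Lemma \ref{AG:ptcu0_lem}, which identifies $\tor\,\bb_\emptyset$ with $\cc_{G/B}(u_0)$. Combining this with the previous step, one finds that $\tor\,\bb_\emptyset$ and $\tor\,\ab_{I_{w_0}}$ agree (after matching the two occurrences of the sign $(-1)^N$, which cancel). Since $\tor$ is assumed invertible in $R$, I may divide through to conclude $\bb_\emptyset = \ab_{I_{w_0}}$.

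For the independence statement, the element $\bb_\emptyset = q_{\emptyset\, *}(1)$ is manifestly defined without any reference to the chosen reduced decompositions $I_w$, so the equality $\bb_\emptyset = \ab_{I_{w_0}}$ just proved immediately forces $\ab_{I_{w_0}}$ to be independent of those choices as well. (This is also consistent with Proposition \ref{PS:basisHMFi_prop}, which already ensures that $\zz_0=\zz^\CC_{I_{w_0}} \in \HMF$ is independent of the choice of reduced decomposition, so the same holds for its image $\ab_{I_{w_0}} = \theta(\zz_0)$ in $\HH^*(G/B)$.)

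The only real subtlety is sign bookkeeping: one has to keep track of the factor $(-1)^N$ distinguishing the bases $\zz^\Del_{I_w}$ and $\zz^\CC_{I_w}$ of $\HMF$, and verify that it matches the analogous $(-1)^N$ implicit in the computation of $\cc_{I_{w_0}}(u_0)$ used in the proof of Lemma \ref{AG:ptcu0_lem}, so that the two signs cancel and yield the clean identity $\bb_\emptyset=\ab_{I_{w_0}}$.
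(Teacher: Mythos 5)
Your argument is correct and follows essentially the same route as the paper: it combines Lemma \ref{AG:ptcu0_lem} with the expansion \eqref{AG:charmap_eq} applied to $u_0$ and Lemma \ref{DR:reducedI0}.\eqref{DR:reducedI0_item}. These are the same ingredients the paper uses; the paper simply reorganizes them, first checking $\pi_*(\bb_{I_w}\cdot\bb_\emptyset)=\delta_{w,w_0}$ via Lemma \ref{AG:piZ_lem} and then invoking the duality of the bases $(\bb_{I_w})$ and $(\ab_{I_w})$, whereas you substitute $u_0$ directly into \eqref{AG:charmap_eq}.

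You are, however, more careful about signs than the paper's own proof. Lemma \ref{DR:reducedI0}.\eqref{DR:reducedI0_item} gives $\aug\CC_{I_{w_0}}(u_0)=(-1)^N\tor$, so the displayed equality $\tor^{-1}\aug\CC_{I_w}(u_0)=\delta_{w,w_0}$ in the paper's proof should really read $(-1)^N\delta_{w,w_0}$. You correctly identify that the compensating $(-1)^N$ is hidden in Lemma \ref{AG:ptcu0_lem}: tracing through its proof with formula \eqref{ccI_eq}, one has $\aug\Theta_{[1,N]}(u_0)=\aug\Del_{-\alpha_{i_1}}\cdots\Del_{-\alpha_{i_N}}(u_0)=(-1)^N\tor$ (by Prop.~\ref{DR:GrDel}, since $\Del_{-\alpha}$ and $-\Del_\alpha$ agree at the graded level), so $\cc_{I_{w_0}}(u_0)=(-1)^N\tor\,\xi_{[1,N]}$ and hence $\tor\,\bb_\emptyset=(-1)^N\cc_{G/B}(u_0)$. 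This sign cancels the one from Proposition \ref{PS:basisHMFi_prop}, confirming $\bb_\emptyset=\ab_{I_{w_0}}$. Your final ``verify that it matches'' remark is thus not hand-waving but an accurate diagnosis of a sign discrepancy that both your argument and the paper's need in order to close.
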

\begin{proof}
By definition of the dual basis $(\ab_{I_w})_{w \in W}$, it suffices to show that we have $\pi_*(\bb_{I_w}\cdot \bb_{\emptyset})=\delta_{w_0,w}$. 
By lemmas \ref{AG:ptcu0_lem} and \ref{AG:piZ_lem}, we have 
$$\pi_*(\bb_{I_w}\cdot \bb_{\emptyset})=\pi_*(\bb_{I_w}\cdot \tor^{-1}\cc_{G/B}(u_0)) = \tor^{-1}\aug \CC_{I_w}(u_0)= \delta_{w,w_0}.\qedhere$$ 
\end{proof}

Let $\AaGB_i$ denote the operator $p_i^* (p_i)_*$ on $\HH^*(G/B)$ and let $\AaGB_I$ be as usual for a sequence $I$. 

\begin{lem} \label{AG:thetaA_lem}
The isomorphism $\theta$ of Lemma \ref{AG:thetatorinv_lem} satisfies $\theta \Aa_I=\AaGB_I \theta$ and therefore sends the $\Aa_{I_w}(\zz_0)$ to the $\AaGB_{I_w}(\bb_\emptyset)$. 
\end{lem}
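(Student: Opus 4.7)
The plan is to reduce the first identity to the case of a single simple reflection, where it follows by comparing the two defining relations for the $C$-action on either side of $\theta$. Specifically, Proposition \ref{PS:AandB_prop}.\eqref{PS:AcBc_item} gives $\Aa_\alpha \circ \cc = \cc \circ \CC_\alpha$ on $\HMF$, while Theorem \ref{BR:pipi_eq} gives the matching identity $\AaGB_i \circ \cc_{G/B} = \cc_{G/B} \circ \CC_{\alpha_i}$ on $\HH^*(G/B)$ (noting that $\HH^*$ is weakly birationally invariant under our running hypotheses, which I would record as a preliminary remark; the identification of $\theta$ via \ref{AG:thetatorinv_lem} implicitly requires $\tor$ invertible in $R$ anyway, hence Assumption \ref{AG:basis_ass} together with Corollary \ref{AC:pushinv} or a direct check).

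So, starting from an arbitrary element $u \in \RMF$, I would compute
\[
\theta \Aa_{\alpha_i} \cc(u) = \theta \cc(\CC_{\alpha_i}(u)) = \cc_{G/B}(\CC_{\alpha_i}(u)) = \AaGB_i \cc_{G/B}(u) = \AaGB_i \theta \cc(u),
\]
using successively Proposition \ref{PS:AandB_prop}.\eqref{PS:AcBc_item}, the defining property $\cc_{G/B} = \theta\circ \cc$ of $\theta$ from Lemma \ref{AG:thetatorinv_lem}, Theorem \ref{BR:pipi_eq}, and once more the defining property of $\theta$. Since $\cc$ is surjective (Theorem \ref{DR:surjchar}, valid because $\tor$ is invertible in $R$), this yields $\theta \Aa_{\alpha_i} = \AaGB_i \theta$ for every simple root $\alpha_i$. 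Iterating this intertwining relation along a sequence $I = (i_1,\dots,i_l)$ then gives $\theta \Aa_I = \AaGB_I \theta$ for any $I$.

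For the second assertion, I evaluate the just-proved identity at $\zz_0 \in \HMF$ and get $\theta \Aa_{I_w}(\zz_0) = \AaGB_{I_w}(\theta(\zz_0))$, so it suffices to check $\theta(\zz_0) = \bb_\emptyset$. By definition, $\zz_0 = \zz^{\CC}_{I_{w_0}}$ (Proposition \ref{PS:basisHMFi_prop}), and $\theta$ sends this to $\ab_{I_{w_0}}$ by construction in Lemma \ref{AG:thetatorinv_lem}. The preceding lemma identifies $\ab_{I_{w_0}} = \bb_\emptyset$, which closes the argument.

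There is no real obstacle here: the entire content lies in assembling the two $\CC$-intertwining formulas (one algebraic, one geometric) together with surjectivity of the characteristic map and the identification $\ab_{I_{w_0}} = \bb_\emptyset$ just established. The only point requiring mild care is making sure that the hypotheses of all invoked results (invertibility of $\tor$, weak birational invariance, no $2$-torsion where needed) are in force; in the present lemma these are all already assumed, either directly or via the use of $\theta$.
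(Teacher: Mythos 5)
Your argument is correct and matches the paper's own proof, which cites exactly the same four ingredients (surjectivity of $\cc$, the relation $\theta\circ\cc=\cc_{G/B}$, Theorem \ref{BR:pipi_eq}, and Proposition \ref{PS:AandB_prop}.\eqref{PS:AcBc_item}) without writing out the chain of equalities. Your handling of the second assertion via $\zz_0=\zz^{\CC}_{I_{w_0}}$, $\theta(\zz^{\CC}_{I_{w_0}})=\ab_{I_{w_0}}$, and the preceding lemma's $\ab_{I_{w_0}}=\bb_\emptyset$ is likewise exactly what is intended.
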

\begin{proof}
It follows from the surjectivity of the characteristic map, the fact that $\theta \circ \cc = \cc_{G/B}$, Theorem \ref{BR:pipi_eq} and Proposition \ref{PS:AandB_prop} point \eqref{PS:AcBc_item}.
\end{proof}

We can now show that the isomorphism $\theta$ of Lemma \ref{AG:thetatorinv_lem} descends to the case where $\tor$ is regular but not necessarily invertible.

\begin{thm} \label{AG:isoHMFHGB_thm}
Let $\HH^*$ be a weakly birationally invariant theory satisfying Assumption \ref{AG:basis_ass} and such that $\tor$ is regular in $R=\HH^*(\pt)$. Then there is a unique isomorphism of $R$-algebras $\theta:\HMF \to \HH^*(G/B)$ such that the characteristic map $\cc_{G/B}$ is given by $\theta\circ \cc$. It satisfies $\theta \circ \Aa_I = \AaGB_I \circ \theta$. 
\end{thm}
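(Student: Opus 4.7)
The plan is to invert $\tor$, apply Lemma \ref{AG:thetatorinv_lem} and Lemma \ref{AG:thetaA_lem} in that setting, and then descend using that both $\HMF$ and $\HH^*(G/B)$ are free $R$-modules which inject into their $\tor$-localizations. Applying those two lemmas with $R$ replaced by $R[1/\tor]$ yields a unique isomorphism of $R[1/\tor]$-algebras
$$\theta'\colon \HMF\otimes_R R[1/\tor] \;\xrightarrow{\sim}\; \HH^*(G/B)\otimes_R R[1/\tor]$$
satisfying $\theta' \circ \cc = \cc_{G/B}$ and $\theta'(\Aa_I(\zz_0)) = \AaGB_I(\bb_\emptyset)$ for every sequence $I$. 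Since $\tor$ is regular in $R$, and both $\HMF$ (by Prop.~\ref{PS:AIbasisHMF_prop}) and $\HH^*(G/B)$ (by Assumption \ref{AG:basis_ass}) are free $R$-modules of rank $|W|$, they inject into their $\tor$-localizations.

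The crucial geometric input I will need is the identity $\AaGB_{I^{\rev}}(\bb_\emptyset) = \bb_I$ for every sequence $I$. The plan is to prove this by induction on the length of $I$. Writing $I = (J,i)$, the same construction as in \eqref{BR:diagr} produces a transversal Cartesian square with top row $p_{(J,i)}\colon X_{(J,i)}/B \to X_J/B$, left column $q_{(J,i)}$, right column $p_i\circ q_J$, and bottom row $p_i$. Base change gives
$$p_i^* \circ (p_i \circ q_J)_* = (q_{(J,i)})_* \circ p_{(J,i)}^*,$$
and evaluating at $1_{X_J/B}$ yields $\AaGB_i(\bb_J) = \bb_{(J,i)}$; iterating closes the induction.

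With this identity in hand, fix reduced decompositions $I_w$ of every $w \in W$ and set $J_w := I_{w^{-1}}^{\rev}$, which is again a reduced decomposition of $w$. Then $\AaGB_{J_w}(\bb_\emptyset) = \bb_{I_{w^{-1}}}$, so as $w$ varies in $W$ the family $(\AaGB_{J_w}(\bb_\emptyset))_{w\in W}$ coincides (under $w\mapsto w^{-1}$) with the $R$-basis $(\bb_{I_v})_{v \in W}$ of $\HH^*(G/B)$ provided by Assumption \ref{AG:basis_ass}. Meanwhile, $(\Aa_{J_w}(\zz_0))_{w\in W}$ is an $R$-basis of $\HMF$ by Prop.~\ref{PS:AIbasisHMF_prop}. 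Hence $\theta'$ sends an $R$-basis of $\HMF$ onto an $R$-basis of $\HH^*(G/B)$ and therefore restricts to the desired $R$-module isomorphism $\theta\colon \HMF \to \HH^*(G/B)$. The $R$-algebra structure, the relation $\theta\circ \cc = \cc_{G/B}$, and the compatibilities $\theta\circ\Aa_I = \AaGB_I\circ\theta$ descend from the analogous statements for $\theta'$ via the injection $\HH^*(G/B)\hookrightarrow \HH^*(G/B)\otimes_R R[1/\tor]$; uniqueness holds for the same reason.

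The main obstacle I foresee is the base-change identity $\AaGB_{I^{\rev}}(\bb_\emptyset) = \bb_I$, which is the geometric heart of the argument. Once it is established, everything else is a formal descent from the $\tor$-invertible case already handled by Lemmas \ref{AG:thetatorinv_lem} and \ref{AG:thetaA_lem}.
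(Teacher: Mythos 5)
Your proof is correct and takes essentially the same route as the paper's: localize at $\tor$ to reach the setting of Lemmas \ref{AG:thetatorinv_lem} and \ref{AG:thetaA_lem}, then descend to $R$ using that $\HMF$ and $\HH^*(G/B)$ are free $R$-modules injecting into their localizations. The technical heart in both cases is the base-change identity $\bb_{(J,i)}=\AaGB_i(\bb_J)$, which you isolate and prove directly; in the paper it is buried inside the induction proving Lemma \ref{AG:ptcu0_lem}, where a factor of $\tor$ appears on both sides and cancels, so, as your derivation makes visible, the identity needs no regularity hypothesis on $\tor$. One place where your bookkeeping is actually more precise than the paper's: the paper's proof asserts that the $\theta$ of Lemma \ref{AG:thetatorinv_lem} sends $\Aa_{I_w}(\zz_0)$ to $\bb_{I_w}$, whereas Lemma \ref{AG:thetaA_lem} together with the base-change identity gives $\theta\bigl(\Aa_{I_w}(\zz_0)\bigr)=\AaGB_{I_w}(\bb_\emptyset)=\bb_{(I_w)^{\rev}}$; since $(I_w)^{\rev}$ is a reduced decomposition of $w^{-1}$, this is in general a different Bott--Samelson class (one resolving $X_{w^{-1}}$ rather than $X_w$) and need not even lie among the chosen $\bb_{I_v}$. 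Your relabeling $J_w:=(I_{w^{-1}})^{\rev}$, for which $(J_w)^{\rev}=I_{w^{-1}}$, tracks this reversal correctly and shows that $\theta'$ carries the $R$-basis $\bigl(\Aa_{J_w}(\zz_0)\bigr)_{w\in W}$ onto the $R$-basis $\bigl(\bb_{I_v}\bigr)_{v\in W}$. The conclusion of the theorem is unaffected either way, since permuting a basis by $w\mapsto w^{-1}$ and changing the choice of reduced decompositions still gives a basis, but your version is the literally correct one.
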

\begin{proof}
We know from \ref{PS:AIbasisHMF_prop} that the $\Aa_{I_w}(\zz_0)$ are a basis of $\HMF$ and from Assumption \ref{AG:basis_ass} that the $\bb_{I_w}$ are a basis of $\HH^*(G/B)$. We define $\theta$ as the isomorphism sending $\Aa_{I_w}(\zz_0)$ to $\bb_{I_w}$. By Lemma \ref{AG:thetaA_lem}, this definition coincides with the one of Lemma \ref{AG:thetatorinv_lem} when $\tor$ is invertible in $R$. We therefore have a commutative diagram$$\xymatrix{
\HMF \otimes_R R[\tor^{-1}] \ar[r]^{\theta \otimes id} & \HH^*(G/B)\otimes_R R[\tor^{-1}] \\
\HMF \ar[u] \ar[r]^{\theta} & \HH^*(G/B) \ar[u]
}$$ 
where $\theta \otimes id$ is the $\theta$ of Lemma \ref{AG:thetatorinv_lem} and the vertical maps are injective since $\HMF$ and $\HH^*(G/B)$ are free $R$-modules and $R$ injects in $R[\tor^{-1}]$. Since all other maps are ring morphisms, $\theta$ is one too, and the required equalities follow from the same ones in the case where $\tor$ is invertible.
The morphism $\theta$ is unique since it is unique after inverting $\tor$.
\end{proof}

\begin{rem}
Note that the morphism $\theta$ is independent of the choices of the $I_w$ used in its construction. This is obvious by surjectivity of the characteristic map after inverting $\tor$.
\end{rem}

This completes the identification of $\HMF$, the algebraic model for the cohomology ring introduced in section \ref{IT}, with the actual cohomology $\HH^*(G/B)$. 

%%%%%%%%%%%%%%%%%%%%%%%%%%%%%%%%%%%%%%
%%%%%%%%%%%%%%%%%%%%%%%%%%%%%%%%%%%%%%
%%%%%%%%%%%%%%%%%%%%%%%%%%%%%%%%%%%%%%%

\section{Formulas for push-forwards} \label{MA}

In the present section we assume that $\tor$ and $2$ are regular in $R$. Note that given a formal group law $F$, both $\RMF$ and $\HMF$ can be defined without any reference to a cohomology theory. The goal of this section is to define an ``algebraic push-forward'' $\pr\colon \HMF\to R$ and to prove formulas related to it. Of course, when the formal group law comes from a cohomology theory, we prove that the morphism $\pr$ corresponds to its geometric counterpart $\pi_*\colon \HH(G/B) \to R$ through the isomorphism $\theta\colon \HMF \stackrel{\simeq} \to \HH^*(G/B)$ from Theorem \ref{AG:isoHMFHGB_thm}. The general idea of the proofs is to use that there is a universal formal group law $U$ over the Lazard ring $\LL$, and therefore that such formulas can be proved in $\HFG{\LL}{M}{U}$, where they hold for geometric reasons, using algebraic cobordism. Then, they hold in any $\HMF$ by specialization (even if it has no geometric origin).

\begin{ntt} Recall that since $(U,\LL)$ is the universal formal group law, for any given formal group law $F$ over a ring $R$, there is a unique morphism $f: \LL \to R$ sending $U$ to $F$. By Lemma \ref{FGR:funct} and Propositions \ref{fHfirst_prop} and \ref{fHringmor_prop}, there is a commutative diagram
$$\xymatrix{
\FGR{\LL}{M}{U} \ar[d]^{f_*} \ar[r]^{\cc^U}& \HFG{\LL}{M}{U} \ar[d]^{f_\mH} \\
\FGR{R}{M}{F} \ar[r]^{\cc^F} & \HFG{R}{M}{F} 
}$$
Furthermore, the morphism $f_\mH$ commutes with the operators $\Aa_I$ and $\Bb_I$ and sends $z_0^U$ to $z_0^R$ by definition. Therefore, 
it sends $\Aa_I^U(z_0^U)$ to $\Aa_I^F(z_0^R)$ for any sequence $I$.

Let $(I_w)_{w \in W}$ be a choice of reduced decompositions. 
Then, by Prop.~\ref{PS:AIbasisHMF_prop}, the elements 
$(\Aa_{I_w}(z_0))_{w \in W}$ form an $R$-basis of $\HMF$. 
\end{ntt}

\begin{dfn} \label{algebraicPushForward_dfn}
Let $\pr: \HMF \to R$ be the $R$-linear morphism defined by $\pr(\Aa_{I_w}(z_0))=\aug \CC_{I_w}(1)$. 
\end{dfn}

\begin{prop} \label{prformulas_prop}
We have
\begin{enumerate}
\item\label{prfH_item} for any morphism $f:R \to R'$ of rings sending a formal group law $F$ to $F'$, we have $\pr^F = \pr^{F'} \circ f_\mH$.

\item \label{prreplacement_item} Assume that $\HH$ is an oriented cohomology theory satisfying the assumptions of Theorem \ref{AG:isoHMFHGB_thm}, in which the isomorphism $\theta\colon \HMF \stackrel{\simeq} \to \HH^*(G/B)$ is defined. Then the morphism $\pr$ satisfies
$\pr \circ \theta = \theta \circ \pi_*$.
In other words, the morphism $\pr$ is an algebraic replacement for the push-forward $\pi_*$. 
\end{enumerate}
\end{prop}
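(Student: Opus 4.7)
The plan is to verify both parts on the basis $(\Aa_{I_w}^F(z_0^F))_{w \in W}$ of $\HMF$ provided by Proposition~\ref{PS:AIbasisHMF_prop}, since both $\pr$ and (in part \eqref{prreplacement_item}) $\pi_*\circ\theta$ are $R$-linear. The identity on such a basis element on the $\pr$-side is just the defining equation $\pr(\Aa_{I_w}(z_0))=\aug\CC_{I_w}(1)$, so in each part it suffices to reach that expression.

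For part \eqref{prfH_item}, I first read the equation $\pr^F=\pr^{F'}\circ f_\mH$ as the naturality square $f\circ\pr^F=\pr^{F'}\circ f_\mH$, which is the only way the targets match (otherwise $\pr^{F'}\circ f_\mH$ lands in $R'$ while $\pr^F$ lands in $R$). From the discussion preceding Definition~\ref{algebraicPushForward_dfn} (together with Proposition~\ref{fHringmor_prop} and the fact that $f_\mH$ sends $z_0^F$ to $z_0^{F'}$), we have $f_\mH(\Aa_{I_w}^F(z_0^F))=\Aa_{I_w}^{F'}(z_0^{F'})$, hence
\[
\pr^{F'}\bigl(f_\mH(\Aa_{I_w}^F(z_0^F))\bigr)=\aug^{F'}\CC_{I_w}^{F'}(1).
\]
On the other hand $f\bigl(\pr^F(\Aa_{I_w}^F(z_0^F))\bigr)=f\bigl(\aug^F\CC_{I_w}^F(1)\bigr)$. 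The iterated form of Proposition~\ref{DO:DelCCfunct_prop} gives $f_*\circ\CC_{I_w}^F=\CC_{I_w}^{F'}\circ f_*$, and using $f_*(1)=1$ together with $\aug^{F'}\circ f_*=f\circ\aug^F$ the two expressions coincide.

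For part \eqref{prreplacement_item}, by Theorem~\ref{AG:isoHMFHGB_thm} the map $\theta$ sends $\Aa_{I_w}(z_0)$ to $\bb_{I_w}=q_{I_w*}(1)$, so the claim reduces (under the identification of $\theta$ on coefficient rings with the identity on $R$) to checking
\[
\pi_*(\bb_{I_w})=\aug\CC_{I_w}(1)
\]
for every $w\in W$. But $\pi_*(\bb_{I_w})=\pi_*q_{I_w*}(1)=\pi_{I_w*}\bigl(\cc_{I_w}(1)\bigr)$, and Lemma~\ref{BS:pushcc} applied to $u=1$ evaluates the right-hand side to exactly $\aug\CC_{I_w}(1)$. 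I do not anticipate any serious obstacle: both parts are a matter of unwinding the definition of $\pr$ on the canonical basis and invoking the correct naturality or push-forward formula from earlier in the paper. The only mild subtlety is getting the interpretation of part \eqref{prfH_item} right, since $f_\mH$ is merely $R$-linear (and takes values in a ring over $R'$), so naturality must be read relative to $f\colon R\to R'$.
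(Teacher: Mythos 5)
Your proof is correct and takes essentially the same route as the paper: evaluate on the basis $(\Aa_{I_w}(z_0))_{w\in W}$, use that $f_\mH$ carries $\Aa_{I_w}^F(z_0^F)$ to $\Aa_{I_w}^{F'}(z_0^{F'})$ for part (1), and reduce part (2) to $\pi_*(\bb_{I_w})=\aug\CC_{I_w}(1)$. The only cosmetic difference is that you invoke Lemma~\ref{BS:pushcc} directly, while the paper cites Lemma~\ref{AG:piZ_lem} (which is exactly this via the projection formula, and here with $u=1$ the two collapse to the same computation); your reading of the statement's typing in part (1) as $f\circ\pr^F=\pr^{F'}\circ f_\mH$ and of part (2) as $\pr=\pi_*\circ\theta$ is the intended one.
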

\begin{proof}
Since $f_\mH(\Aa^F_{I_w}(z_0^F)) = \Aa^{F'}_{I_w}(z_0^{F'})$, 
\eqref{prfH_item} follows from the definition of $\pr$.
Since the basis of $\HH(G/B)$ formed by the $\bb_{I_w}$ and the basis 
of $\HMF$ formed by the $\Aa_{I_w}(z_0)$ corresponds to each other through the isomorphism
$\theta$, \eqref{prreplacement_item} follows from Lemma~\ref{AG:piZ_lem}. 
\end{proof}

\begin{cor}\label{prformula_eq}
For any sequence $I$ the morphism $\pr$ of the proposition satisfies
$$\pr\big(\Aa_{I^{\rev}}(z_0)\cc(u)\big) = \aug \CC_I(u).$$
In particular, it is independent of the choice $(I_w)_{w \in W}$ of reduced decompositions. 
\end{cor}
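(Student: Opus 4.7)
My plan is to prove the identity first in the universal case $(R,F)=(\LL,U)$, where the algebraic objects acquire a geometric meaning via the isomorphism $\theta$ of Theorem~\ref{AG:isoHMFHGB_thm}, and then transport the conclusion to arbitrary $(R,F)$ by functoriality.

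For the reduction, let $f\colon \LL\to R$ be the unique ring morphism sending $U$ to $F$. The right-hand side $\aug\CC_I(u)$ is manifestly natural in $f$ by Proposition~\ref{DO:DelCCfunct_prop}. For the left-hand side, Propositions~\ref{fHfirst_prop} and~\ref{fHringmor_prop} imply that $f_{\mH}$ is a filtered ring morphism sending $\zz_0^U$ to $\zz_0^F$, commuting with every $\Aa_J$, and satisfying $f_{\mH}\circ\cc^U=\cc^F\circ f_*$; Proposition~\ref{prformulas_prop}.\eqref{prfH_item} then gives $f\circ\pr^U=\pr^F\circ f_{\mH}$. Hence the universal identity in $\LL$ specializes to the one asserted in $R$, and it suffices to prove the formula when $(R,F)=(\LL,U)$.

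In that case I would work with algebraic cobordism: Assumption~\ref{AG:basis_ass} holds for $\Omega$ by Lemma~\ref{AG:basiscobord_lem}.\eqref{AG:cobordism_item}, $\Omega$ is weakly birationally invariant by Corollary~\ref{AC:pushinv}, and $2$ and $\tor$ are regular in the polynomial ring $\LL$. Theorem~\ref{AG:isoHMFHGB_thm} therefore supplies a ring isomorphism $\theta\colon \HFG{\LL}{M}{U}\to \Omega^*(G/B)$ with $\cc_{G/B}=\theta\circ\cc$ and $\theta(\Aa_J(\zz_0))=\bb_J$ for every sequence $J$, while Proposition~\ref{prformulas_prop}.\eqref{prreplacement_item} identifies $\pr$ with $\pi_*\circ\theta$. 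Combining these with $\bb_{I^{\rev}}=q_{I^{\rev}*}(1)$, the projection formula, and Lemma~\ref{BR:bcha},
\[
\pr\bigl(\Aa_{I^{\rev}}(\zz_0)\cdot\cc(u)\bigr)
=\pi_*\bigl(\bb_{I^{\rev}}\cdot\cc_{G/B}(u)\bigr)
=\pi_{I^{\rev}*}\bigl(\cc_{I^{\rev}}(u)\bigr),
\]
and Lemma~\ref{BS:pushcc} rewrites this last expression as $\aug\CC_I(u)$ after the order-reversing recursion by which $\pi_{I^{\rev}}$ is expressed as a tower of $\PP^1$-bundles is unwound; the same bookkeeping is performed in the proof of Lemma~\ref{AG:ptcu0_lem}. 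The independence from $(I_w)_{w\in W}$ is then immediate, since none of $\cc(u)$, $\Aa_{I^{\rev}}(\zz_0)$ (by Proposition~\ref{PS:basisHMFi_prop}), or $\aug\CC_I(u)$ refers to those choices.

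The only real obstacle I anticipate is keeping the composition-order conventions for $\CC_I=\CC_{\alpha_{i_1}}\circ\cdots\circ\CC_{\alpha_{i_l}}$ straight with the recursive description of $\pi_{I^{\rev}}$: the reversal $I^{\rev}$ on the left is precisely what cancels the reversal produced by that recursion, so that the final answer can be read off as $\CC_I$ rather than $\CC_{I^{\rev}}$.
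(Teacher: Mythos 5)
Your broad strategy --- establish the identity geometrically for the universal law $(\LL,U)$ via $\theta$ and then specialize --- matches the paper's, but both steps as written have problems. In the reduction, ``the universal identity specializes'' only covers $u$ in the image of $f_*\colon\FGR{\LL}{M}{U}\to\RMF$, and $f_*$ is not surjective unless $f$ is. To close the gap you must observe that both sides are $R$-linear in $u$ and factor through the finite free quotient $\RMF/\mI_F^{l(I)+1}$ (for the left side because $\Aa_{I^{\rev}}(\zz_0)\in\HMF^{(N-l(I))}$ and $\HMF^{(N+1)}=0$ by Propositions~\ref{PS:Filcharmap_prop} and~\ref{PS:AandB_prop}; for the right side by Lemma~\ref{DR:reducedI0}.(1)), and that this quotient is $R$-spanned by $f_*$-images of monomials in the $x_{\omega_i}$. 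The paper sidesteps this by extending scalars to $R[\tor^{-1}]$ and decomposing $u$ on the $\RMF^W$-basis $\{\CC_{I_w}(u_0)\}$, which reduces the check to elements visibly in the image of $f_*$.

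The geometric step contains a genuine error. The recursion $\bb_{(j_1,\ldots,j_m)}=\AaGB_{j_m}(\bb_{(j_1,\ldots,j_{m-1})})$ from the proof of Lemma~\ref{AG:ptcu0_lem} gives $\bb_J=\AaGB_{J^{\rev}}(\bb_\emptyset)$, so together with Lemma~\ref{AG:thetaA_lem} one obtains $\theta(\Aa_{I^{\rev}}(\zz_0))=\AaGB_{I^{\rev}}(\bb_\emptyset)=\bb_I$, \emph{not} $\bb_{I^{\rev}}$ as you write. Your first displayed equality is therefore off by a reversal, and so is the last: Lemma~\ref{BS:pushcc} applied to the tuple $I^{\rev}$ gives $\pi_{I^{\rev}*}(\cc_{I^{\rev}}(u))=\aug\CC_{I^{\rev}}(u)$ outright, with nothing further to ``unwind''; no mechanism in that lemma converts $\CC_{I^{\rev}}$ into $\CC_I$. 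The two reversal errors cancel, which is why the final formula comes out correct, but neither intermediate step is valid as written. The correct and shorter chain is $\pr\bigl(\Aa_{I^{\rev}}(\zz_0)\cc(u)\bigr)=\pi_*\bigl(\bb_I\cdot\cc_{G/B}(u)\bigr)=\aug\CC_I(u)$, the last equality being exactly Lemma~\ref{AG:piZ_lem}, which is the lemma the paper cites directly.
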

\begin{proof}
By Proposition \ref{prformulas_prop}.\eqref{prreplacement_item} and by 
Lemma~\ref{AG:piZ_lem}, the formula holds for $\HFG{\LL}{M}{U}$. In general, it reduces to the case where $\tor$ is invertible. When  $v \in \RMF^W$, we have $\aug \CC_I(vu) = \aug(v)\aug\CC_I(u)$ and $\cc(vu)=\epsilon(v)\cc(u)$ so by decomposing $u$ on the $\CC_{I_w}(u_0)$, which form an $\RMF^W$-basis of $\RMF$, it suffices to prove the formula for $u=\CC_{I_w}(u_0)$. For those, it follows by specialization using $f_\mH$ from $\HFG{\LL}{M}{U}$.
\end{proof}

\begin{rem}
Note that the elements $\pr(\Aa_{I^{\rev}}(\zz_0))$ are particularly important because they represent the images of desingularized Schubert varieties in the cohomology of the point.
\end{rem}

\begin{prop}
The morphism $\pr$ satisfies
$$\pr\left(\Aa_{I^{\rev}_v}(\zz_0).\zz^\CC_{I_w}\right)=\delta_{v,w}.$$
In other words, the bilinear form $\pr(-,-)$ is nondegenerate and the bases $\left(\zz^\CC_{I_w}\right)_{w \in W}$ and $\left(\Aa_{I^{\rev}_w}(\zz_0)\right)_{w \in W}$ are dual to each other. 
\end{prop}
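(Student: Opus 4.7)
The plan is to reduce to the case where the torsion index $\tor$ is invertible, and then to combine the surjectivity of the characteristic map with Corollary \ref{prformula_eq}. Since $\tor$ is regular in $R$ and $\HMF$ is a free $R$-module by Proposition \ref{PS:AIbasisHMF_prop}, the localisation $j \colon R \to R[1/\tor]$ induces an injection $j_\mH \colon \HMF \hookrightarrow \HFG{R[1/\tor]}{M}{F}$. By Proposition \ref{fHringmor_prop} and Proposition \ref{prformulas_prop}\eqref{prfH_item}, this map is compatible with both the operators $\Aa_I$ and the morphism $\pr$. Moreover, combining the intertwining relation $j_\mH \circ \cc^F = \cc^{F[1/\tor]} \circ j_*$ with the characterisation of $\zz^\CC_{I_w}$ via the expansion formula of Theorem \ref{DR:charMapBasis} shows that $j_\mH$ sends $\zz^\CC_{I_w}$ to $\zz^\CC_{I_w}$ in the larger ring. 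Hence it suffices to prove the identity after inverting $\tor$.

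Assuming now that $\tor$ is invertible, Theorem \ref{DR:surjchar} asserts that $\cc$ is surjective, so for each $w$ we may pick an element $u_w \in \RMF$ with $\cc(u_w) = \zz^\CC_{I_w}$. Expanding $\cc(u_w)$ by Theorem \ref{DR:charMapBasis} gives
\[
\zz^\CC_{I_w} \;=\; \cc(u_w) \;=\; \sum_{v \in W} \aug\CC_{I_v}(u_w)\, \zz^\CC_{I_v},
\]
and comparing coefficients on the basis $(\zz^\CC_{I_v})_{v \in W}$ forces $\aug\CC_{I_v}(u_w) = \delta_{v,w}$. Applying Corollary \ref{prformula_eq} with this $u_w$ then yields
\[
\pr\bigl(\Aa_{I^{\rev}_v}(\zz_0)\,\zz^\CC_{I_w}\bigr) \;=\; \pr\bigl(\Aa_{I^{\rev}_v}(\zz_0)\,\cc(u_w)\bigr) \;=\; \aug\CC_{I_v}(u_w) \;=\; \delta_{v,w},
\]
which is exactly the claimed identity. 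The remaining assertions---nondegeneracy of the bilinear form and duality of the two bases---then follow formally from this identity, since one basis pairs to the Kronecker delta against the other.

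I do not anticipate any serious obstacle: the argument is essentially a bookkeeping of the functoriality already established in Section \ref{IC}. The one step that requires a little care is the compatibility $j_\mH(\zz^\CC_{I_w}) = \zz^\CC_{I_w}$ used in the reduction to the invertible case, but this is immediate from the uniqueness clause in Theorem \ref{DR:charMapBasis} applied in $\HMF$ and in $\HFG{R[1/\tor]}{M}{F}$ simultaneously.
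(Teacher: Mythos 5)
Your proposal is correct and follows essentially the same route as the paper's own proof: reduce to the case where $\tor$ is invertible, use surjectivity of $\cc$ (Theorem \ref{DR:surjchar}) to produce preimages $u_w$ of $\zz^\CC_{I_w}$ satisfying $\aug\CC_{I_v}(u_w)=\delta_{v,w}$ via Theorem \ref{DR:charMapBasis}, and apply Corollary \ref{prformula_eq}. The extra detail you provide on why the scalar-extension map $j_\mH$ preserves $\zz^\CC_{I_w}$ and commutes with $\Aa_I$ and $\pr$ is a legitimate elaboration of what the paper treats as routine.
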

\begin{proof}
The formula can be computed after extending scalars to $R\left[\tor^{-1}\right]$. Then, since the characteristic map $\cc$ is surjective, the elements $\zz^\CC_{I_w}$ have preimages $u_w$, such that $\aug \CC_{I_v}(u_w)=\delta_{v,w}$, by the expression of $\cc$ given in Theorem \ref{DR:charMapBasis}. The result then follows from formula \eqref{prformula_eq}.
\end{proof}

Let $I_0$ be a reduced decomposition of the longest element $w_0$. When $\tor$ is invertible, the element $y_0=\CC_{I^{\rev}_0}(u_0)/\tor$ is invertible (by Lemma \ref{DR:reducedI0}), and $\cc(y_0)=\Aa_{I^{\rev}_0}(\zz_0)$ by Proposition \ref{PS:AandB_prop}. 
\begin{lem}
When $\tor$ is invertible in $R$, the morphism $\pr$ satisfies
$$\pr(\cc(u))=\aug\CC_{I_0}\left(u y_0^{-1}\right).$$ 
\end{lem}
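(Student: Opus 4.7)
The plan is to deduce this lemma as a direct consequence of Corollary \ref{prformula_eq} combined with the identifications recalled immediately before the lemma's statement. Since $w_0$ is an involution, the reverse $I_0^{\rev}$ of any reduced decomposition of $w_0$ is again a reduced decomposition of $w_0$, so I can apply Corollary \ref{prformula_eq} with $I=I_0$ to obtain
$$\pr\bigl(\Aa_{I_0^{\rev}}(\zz_0)\cdot\cc(u)\bigr)=\aug\CC_{I_0}(u).$$

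Next, I would substitute $\Aa_{I_0^{\rev}}(\zz_0)=\cc(y_0)$, which is recorded immediately before the lemma as a consequence of Proposition \ref{PS:AandB_prop}, and use that $\cc$ is a ring homomorphism by Theorem \ref{PS:productH_thm}. This rewrites the left-hand side as $\pr\bigl(\cc(y_0 u)\bigr)$, so the identity becomes
$$\pr\bigl(\cc(y_0 u)\bigr)=\aug\CC_{I_0}(u).$$
Since $y_0$ is invertible in $\RMF$ (also recalled just above the lemma, via Lemma \ref{DR:reducedI0}), the substitution $u\mapsto u y_0^{-1}$ is legitimate and yields exactly the stated formula.

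There is no real obstacle: the lemma is essentially an algebraic repackaging of the special case $I=I_0$ of Corollary \ref{prformula_eq}, exploiting the fact that for this maximal choice the element $\Aa_{I_0^{\rev}}(\zz_0)$ already lies in the image of $\cc$ and is moreover invertible.
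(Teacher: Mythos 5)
Your proof is correct and is essentially the same argument as the paper's, just read in the opposite direction: the paper starts from $\cc(u)=\cc(uy_0^{-1})\cc(y_0)$ and applies Corollary \ref{prformula_eq}, while you start from the corollary with $I=I_0$ and substitute $u\mapsto uy_0^{-1}$; both rely on the same three ingredients ($\cc$ a ring homomorphism, $\cc(y_0)=\Aa_{I_0^{\rev}}(\zz_0)$, and invertibility of $y_0$). One small note: the observation that $I_0^{\rev}$ is again a reduced decomposition of $w_0$ is unnecessary here, since Corollary \ref{prformula_eq} holds for any sequence $I$ with no reducedness hypothesis.
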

\begin{proof}
We have
$$\pr(\cc(u))=\pr\big(\cc(uy_0^{-1})\cc(y_0)\big)=\pr\big(\cc(uy_0^{-1}\Aa_{I^{\rev}_0}(u_0))\big)=\aug\CC_{I_0}(uy_0^{-1}).\qedhere$$
\end{proof}

\begin{prop}
The operators $\CC$ satisfy the formula
$$\aug \CC_I(u\CC_J(u_0))=\aug \CC_{J^{\rev}}(u\CC_{I^{\rev}}(u_0))$$
for any pair of sequences $I$ and $J$. In particular ($u=1$, $J=\emptyset$), for any sequence $I$, we have $\aug \CC_I(u_0) = \aug \CC_{I^{\rev}}(u_0)$.
\end{prop}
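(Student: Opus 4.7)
The plan is to turn the identity into a symmetry statement inside the commutative ring $\HMF$, using the algebraic push-forward $\pr$ of Definition~\ref{algebraicPushForward_dfn}.

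First, applying Corollary~\ref{prformula_eq} with $v = u\CC_J(u_0)$ and using that $\cc$ is a ring morphism (Theorem~\ref{PS:productH_thm}) gives
\[
\aug\CC_I\bigl(u\CC_J(u_0)\bigr) \;=\; \pr\bigl(\Aa_{I^{\rev}}(\zz_0)\cdot\cc(u)\cdot\cc(\CC_J(u_0))\bigr).
\]
The intertwining relation $\cc\circ\CC_\alpha = \Aa_\alpha\circ\cc$ from Proposition~\ref{PS:AandB_prop}.\eqref{PS:AcBc_item}, combined with $\cc(u_0) = (-1)^N\tor\,\zz_0$ from Proposition~\ref{PS:basisHMFi_prop}, yields $\cc(\CC_J(u_0)) = (-1)^N\tor\,\Aa_J(\zz_0)$, so
\[
\aug\CC_I\bigl(u\CC_J(u_0)\bigr) \;=\; (-1)^N\tor\cdot\pr\bigl(\Aa_{I^{\rev}}(\zz_0)\,\cc(u)\,\Aa_J(\zz_0)\bigr).
\]

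Running the same computation with the roles of $(I,J)$ swapped to $(J^{\rev},I^{\rev})$ gives
\[
\aug\CC_{J^{\rev}}\bigl(u\CC_{I^{\rev}}(u_0)\bigr) \;=\; (-1)^N\tor\cdot\pr\bigl(\Aa_J(\zz_0)\,\cc(u)\,\Aa_{I^{\rev}}(\zz_0)\bigr).
\]
The two right-hand sides coincide because $\HMF$ is a commutative ring: its product, defined in Theorem~\ref{PS:productH_thm} as the $R$-dual of the coproduct on the subcoalgebra $\eDMF\subseteq\RMFod$, is induced from the multiplication on $\RMF$, and that multiplication is commutative, so the dual coproduct is cocommutative and $\HMF$ is commutative. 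Note that both sides of the claim already live in $R$ and are equal to the same element $(-1)^N\tor\cdot\pr(\cdots)$, so no cancellation of $\tor$ is needed and the running hypotheses of the section (regularity of $\tor$ and $2$ in $R$) are enough. The specialisation $u=1$, $J=\emptyset$ then yields the stated corollary $\aug\CC_I(u_0) = \aug\CC_{I^{\rev}}(u_0)$.

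The one step that is not purely formal bookkeeping is the appeal to commutativity of $\HMF$, which I expect to be the main substantive point since it is implicit rather than isolated in the paper; once it is extracted from the construction in Theorem~\ref{PS:productH_thm} (or, alternatively, transferred from the geometric side via $\theta$ applied to algebraic cobordism in Theorem~\ref{AG:isoHMFHGB_thm} to reduce to the commutative ring $\HH^*(G/B)$ and then specialised), the rest of the argument is routine.
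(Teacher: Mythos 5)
Your proof is correct and takes essentially the same route as the paper's, which applies $\pr$ (via Corollary~\ref{prformula_eq}) to both ends of the chain
$\Aa_{I^{\rev}}(\zz_0)\cc(u\CC_J(u_0)) = \tor\,\cc(\CC_{I^{\rev}}(u_0)u\CC_J(u_0)) = \cc(u\CC_{I^{\rev}}(u_0))\Aa_J(\zz_0)$
after extending scalars to $R[\tor^{-1}]$. The only organizational difference is that you multiply both sides by $(-1)^N\tor$ instead of dividing by it, which keeps the explicit passage to $R[\tor^{-1}]$ off the surface of the argument; this is a mild stylistic cleanup, since the scalar extension is still used inside the proof of Corollary~\ref{prformula_eq} itself. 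You are right that commutativity of $\HMF$ is needed in both versions, to bring the factor $\Aa_J(\zz_0)$ into the position required to apply Corollary~\ref{prformula_eq}; the paper leaves this implicit (consistent with its blanket convention that ``ring'' means commutative). Your cocommutativity argument from the coalgebra construction of Theorem~\ref{PS:productH_thm} is sound, and there is also a shorter route you gestured at: after inverting $\tor$ the characteristic map $\cc$ is a surjective ring homomorphism out of the commutative ring $\RMF$ (Theorems~\ref{DR:surjchar} and~\ref{PS:productH_thm}), so $\HMF\otimes_R R[\tor^{-1}]$ is commutative, and the general case follows since $\HMF$ is a free $R$-module and hence injects into its localization.
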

\begin{proof}
The formula can be proved after extension to $R\left[\tor^{-1}\right]$. It then follows from the computation of $\pr$ in formula \eqref{prformula_eq} applied to both ends of
$$\Aa_{I^{\rev}}(\zz_0)\cc\big(u \CC_J(u_0)\big)=\tor \cc\big(\CC_{I^{\rev}}(u_0) u \CC_J(u_0)\big)= \cc\big(u \CC_{I^{\rev}}(u_0)\big)\Aa_J(\zz_0).\qedhere$$
\end{proof}

\begin{rem}
The formulas given in this section are purely algebraic, but their proofs use geometric properties of cobordism. It would be interesting to have algebraic proofs derived directly from the various formulas in section \ref{DO}.  
\end{rem}

%%%%%%%%%%%%%%%%%%%%%%%%%%%%%%%%%%%%%%
%%%%%%%%%%%%%%%%%%%%%%%%%%%%%%%%%%%%%%
%%%%%%%%%%%%%%%%%%%%%%%%%%%%%%%%%%%%%%%

\section{Algorithm for multiplying in $\HH^*(G/B)$}\label{AM}

Fixing a choice of reduced decompositions $I_w$ for all $w \in W$, we define $\ab_{I_w}$ as $\theta(\zz^\CC_{I_w})$.

\begin{prop}
Under the assumptions of Theorem \ref{AG:isoHMFHGB_thm}, the bilinear form $(b,a) \mapsto \pi_*(b\cdot a)$ is non degenerate on $\HH^*(G/B)$ and for any choice of reduced decompositions $I_w$, the basis $(\bb_{I_w})$ and $(\ab_{I_w})$ are dual to each other.
\end{prop}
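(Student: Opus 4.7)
The plan is to prove the single equality $\pi_*(\bb_{I_v}\cdot \ab_{I_w})=\delta_{v,w}$, from which both claims of the proposition follow immediately. Indeed, by Assumption \ref{AG:basis_ass} the $\bb_{I_w}$ form an $R$-basis of $\HH^*(G/B)$, and since the $\zz^\CC_{I_w}$ form an $R$-basis of $\HMF$ by Theorem \ref{DR:charMapBasis}, applying the isomorphism $\theta\colon \HMF \stackrel{\simeq}{\to}\HH^*(G/B)$ of Theorem \ref{AG:isoHMFHGB_thm} shows that the $\ab_{I_w}=\theta(\zz^\CC_{I_w})$ also form an $R$-basis. A bilinear form whose Gram matrix on a pair of bases is the identity is automatically non-degenerate, and the two bases are then tautologically dual.

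To establish the equality, I would exploit the faithful-flatness of the localization at $\tor$: since $\tor$ is regular in $R$, the map $R\hookrightarrow R[\tor^{-1}]$ is injective, and because $\HH^*(G/B)$ is a free $R$-module it injects into $\HH^*(G/B)\otimes_R R[\tor^{-1}]$. It therefore suffices to prove the formula after inverting $\tor$. Over $R[\tor^{-1}]$, the algebraic characteristic map $\cc\colon \RMF\to \HMF$ is surjective by Theorem \ref{DR:surjchar}, so we may choose a lift $u_w\in \RMF$ with $\cc(u_w)=\zz^\CC_{I_w}$. Matching coefficients in the explicit formula $\cc(u_w)=\sum_v \aug \CC_{I_v}(u_w)\,\zz^\CC_{I_v}$ of Theorem \ref{DR:charMapBasis} yields $\aug \CC_{I_v}(u_w)=\delta_{v,w}$. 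Applying $\theta$, and using that $\theta\circ \cc=\cc_{G/B}$, we get $\cc_{G/B}(u_w)=\ab_{I_w}$, so Lemma \ref{AG:piZ_lem} gives
$$\pi_*(\bb_{I_v}\cdot \ab_{I_w})=\pi_*\bigl(\bb_{I_v}\cdot \cc_{G/B}(u_w)\bigr)=\aug \CC_{I_v}(u_w)=\delta_{v,w},$$
as required.

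The only subtle point is the one just navigated: when $\tor$ is merely regular rather than invertible, the element $\ab_{I_w}$ need not itself be in the image of $\cc_{G/B}$, so we cannot directly produce a preimage $u_w$ over $R$. Extending scalars to $R[\tor^{-1}]$ provides such a preimage, and since both sides of the desired equality live in $R$ and $R$ injects into $R[\tor^{-1}]$, the computation done over the localization suffices.
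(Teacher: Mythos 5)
Your proof is correct and follows essentially the same route as the paper: reduce to the case of invertible $\tor$ by scalar extension (justified by freeness of $\HH^*(G/B)$ over $R$), then use surjectivity of $\cc$ from Theorem~\ref{DR:surjchar} to lift $\zz^{\CC}_{I_w}$, extract $\aug\CC_{I_v}(u_w)=\delta_{v,w}$ from Theorem~\ref{DR:charMapBasis}, and apply Lemma~\ref{AG:piZ_lem} — which is precisely the argument of Lemma~\ref{AG:thetatorinv_lem} that the paper's one-line proof invokes. The extra care you take in spelling out the reduction and in noting that over $R$ the $\ab_{I_w}$ need not be in the image of $\cc_{G/B}$ is a welcome clarification of a point the paper leaves implicit.
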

\begin{proof}
The form can be computed after scalar extension to $R[\tor^{-1}]$, in which case it follows from the proof of Lemma \ref{AG:thetatorinv_lem}.
\end{proof}

Now the multiplication algorithm goes as follows: 
Substituting $u=\CC_{I_w^{\rev}}(u_0)$ in \eqref{AG:charmap_eq} 
and using that $\tor\; \bb_{I_w}=\cc_{G/B}\big(\CC_{I_w^{\rev}}(u_0)\big)$ we obtain the transition
matrix from the basis $(\bb_{I_w})$ to the basis $(\ab_{I_w})$. 
Substituting $u=\CC_{I_w^{\rev}}(u_0)\CC_{I_{w'}^{\rev}}(u_0)$ in \eqref{AG:charmap_eq} we obtain the decomposition of the 
product $\bb_{I_w}\bb_{I_{w'}}$ on the basis $(\ab_{I_w})$, and we rewrite it in terms of the $\bb_{I_w}$ by using the transition matrix.
This also explains how to decompose any $\bb_I$ ($I$ not necessarily reduced) on a given basis of $\bb_{I_w}$. Again, it suffices to substitute $u=\CC_{I}(u_0)$ in \eqref{AG:charmap_eq} and use the transition matrix.

\begin{rem} \label{AM:algogood_rem}
Note that this algorithm is an improvement over the one in \cite{Bressler92} or \cite{Hornbostel09_pre} in the following sense. In both of these articles, it is explained how to decompose a product of two generators $\bb_I$ and $\bb_J$ as a linear combination of other such generators. 
But starting from a basis (\ie the $\bb_{I_w}$ for a choice of a reduced decomposition for each $w \in W$), it is not explained how to obtain a linear combination containing only generators $\bb_I$ with $I$ among the $I_w$ and an algorithm for redecomposing any given $\bb_I$ on a chosen basis is not given either. 
This is crucial to compute multiplication tables. 
\end{rem}

%%%%%%%%%%%%%%%%%%%%%%%%%%%%%%%%%%%%%%
%%%%%%%%%%%%%%%%%%%%%%%%%%%%%%%%%%%%%%
%%%%%%%%%%%%%%%%%%%%%%%%%%%%%%%%%%%%%%%

\section{Landweber-Novikov operations} \label{LN}

In this section we provide an algorithm for computing 
the Landweber-Novikov operations $S^{LN}$ on $\Omega^*(G/B)$.

\begin{ntt}
Let us recall briefly the definition of $S^{LN}$ 
(details can
be found in \cite[\S~4.1.9]{Levine07}). Consider a graded
polynomial ring $\ZZ[\TT]=\ZZ[t_1,t_2,\ldots,t_k,\ldots]$
in infinite number of variables; for a multi-index $I=(i_1,i_2,\ldots,i_k)$
we set
$$
{\TT}^I=t_1^{i_1}t_2^{i_2}\ldots t_k^{i_k}.
$$
Let $\lambda_{(\TT)}$ denote the formal
power series
$$
\lambda_{(\TT)}(x)=x+\sum_{i=1}^\infty t_ix^{i+1}.
$$
Consider a twisted theory $\tilde{\Omega}$ of 
$\Omega[\TT]=\Omega\otimes_{\ZZ}\ZZ[\TT]$ (see \cite[\S4]{Merkurjev02}).
By definition $\tilde\Omega(X)=\Omega(X)[\TT]$ for any $X$,
its Chern class is given by the formula
\begin{equation}\label{LN:twisted_chern}
c_1^{\tilde{\Omega}}(\Lb)=\lambda_{(\TT)}(c_1^{\Omega}(\Lb)),
\end{equation}
and its formal group law is given by
$$
F(x,y)=\lambda_{(\TT)}(U(\lambda_{(\TT)}^{-1}(x),\lambda_{(\TT)}^{-1}(y))).
$$
By the universality of $\Omega$ there is a natural transformation
$\Omega\to\tilde\Omega$ given by
\begin{align*}
a&\mapsto\sum_I S^{LN}_{I}(a)t^I,\qquad a\in \Omega(X)
\end{align*}
where the components $S^{LN}_I$ are called 
\emph{Landweber-Novikov operations} on $\Omega(X)$.
\end{ntt}

\begin{ntt}
By functoriality of $\RMF$ in $R$ (see \ref{FGR:funct})
the map $\LL\simeq\Omega(\pt)\to\tilde\Omega(\pt)\simeq\LL[\TT]$
induces a homomorphism
$$
\FGR{\LL}{M}{U}\to\FGR{\LL[\TT]}{M}{F},
$$
while functoriality with respect to formal group laws
induces a homomorphism
\begin{align*}
\FGR{\LL[\TT]}{M}{F}&\to\FGR{\LL[\TT]}{M}{U_{\Omega[\TT]}}\\
x_{\mu}&\mapsto\lambda_{(\TT)}(x_\mu).
\end{align*}

By \eqref{LN:twisted_chern} 
we have the following commutative diagram:
$$
\xymatrix{
\FGR{\LL}{M}{U}\ar[r]\ar[d]_-{\cc^{\Omega}}&
\FGR{\LL[\TT]}{M}{F}
\ar[r]\ar[d]_-{\cc^{\tilde\Omega}}
&\FGR{\LL[\TT]}{M}{U_{\Omega[\TT]}}\ar[d]_-{\cc^{\Omega[\TT]}}\\
\Omega(G/B)\ar[r]&\tilde\Omega(G/B)\ar[r]&\Omega(G/B)[\TT].
}
$$
\end{ntt}

\begin{ntt}
An action of the Landweber-Novikov operation $S^{LN}_I$
on a basis element $\bb_{I_w}$ can be
computed as follows:
First, we compute the image of $u=\CC_{I_w^{\rev}}(u_0)$ under
the composition of top horizontal arrows. Second, we extract  
the coefficient at the monomial ${\TT^I}$ of this image.
Finally, we apply the characteristic map $\cc^\Omega$
to that coefficient. The result will give
$S^{LN}_I(\bb_{I_w})$.

Indeed, by definition
$S_I^{LN}(\bb_{I_w})$ is equal to the coefficient at ${\TT^I}$
of the image of $\bb_{I_w}$ under the composition
of bottom horizontal arrows. Since $\tor\; \bb_{I}=\cc_{G/B}\big(\CC_{I^{\rev}}(u_0)\big)$
and by commutativity of the diagram we are done.
\end{ntt}

\section{Examples of computations} \label{EG}
In the present section 
we list the multiplication tables for rings $\Omega^*(G/B)$,
where $G$ has rank $2$. The results are obtained by means of the algorithm
described in Section~\ref{AM} and realized in Macaulay~2 packages \cite{M2package}.
The answers for the other oriented cohomology theories are easily derived by
a specialization of the coefficients of the universal formal group law. For instance, the answer for connective $K$-theory is obtained by specializing the coefficient $a_1$ to $v$ and all others to zero.

\begin{ntt}
We use the presentation of the Lazard ring
$$
\LL=\ZZ[a_1,a_2,\ldots],
$$
where the first generators $a_i$ are the following linear combinations of the coefficients $a_{ij}$ of the universal formal group law $U$:
\begin{align*}
&a_1=a_{11};\\
&a_2=a_{12};\\
&a_3=a_{22}-a_{13};\\
&a_4=a_{14};\\
&a_5=-9a_{15}+a_{24}+2a_{33}.
\end{align*}
Note that the $a_i$ (resp.~the $a_{ij}$) are of cohomological degree $-i$ (resp.~$a_{1-i-j}$). For root systems of rank $2$, the longest element is of length at most $N=6$ (in the $G_2$ case), and we therefore need to compute the universal formal group law up to order $7$, which thus only involves $a_1, \ldots, a_6$. In fact, $a_6$ does not appear in the formulas: it is not difficult to show that $a_N$ will not appear in the multiplication formulas for a root system with longest element of length $N$. 
\end{ntt}

\begin{ntt}
We use the upper case letter $\Zb_I$ for the element $\bb_I=q_{I*}(1)$. For brevity $\Zb_I$ with $I=(i_1,\ldots,i_l)$ is denoted just by $\Zb_{i_1\ldots i_l}$, and when $I$ is the empty sequence, $\Zb_I$ is denoted by $\pt$.
Note that when
$l(w)+l(w')\ge N=\dim G/B$ one has 
$$\Zb_{I_w}\Zb_{I_{w'}}=\delta_{w,w_0w'}\pt$$
so we list the remaining cases only. 
\end{ntt}

\begin{ntt}[$A_2$ case]
\begin{align*}
\Zb_{121} & =1+a_2\Zb_1;\\
\Zb_{12}^2 & =\Zb_2;\\
\Zb_{21}^2 & =\Zb_1;\\
\Zb_{12}\Zb_{21} & =\Zb_1+\Zb_2+a_1\pt.
\end{align*}
This agrees with the computations of Hornbostel-Kiritchenko in \cite{Hornbostel09_pre}.
\end{ntt}

\begin{ntt}[$B_2$ case]
\begin{align*}
\Zb_{1212} & =1+2a_2\Zb_{12}+(a_3-a_1a_2)\Zb_2;\\
\Zb_{121}^2 & =\Zb_{21};\\
\Zb_{212}^2 & =2\Zb_{12}+a_1\Zb_2;\\
\Zb_{121}\Zb_{212} & =\Zb_{12}+\Zb_{21}+a_1\Zb_1+a_1\Zb_2+(2a_2+a_1^2)\pt;\\
\Zb_{121}\Zb_{12} & =\Zb_1+\Zb_2+a_1\pt;\\
\Zb_{121}\Zb_{21} & =\Zb_1;\\
\Zb_{212}\Zb_{12} & =\Zb_2;\\
\Zb_{212}\Zb_{21} & =2\Zb_1+\Zb_2+2a_1\pt.
\end{align*}
\end{ntt}

\begin{ntt}[$G_2$ case]
\begin{align*}
\Zb_{121212} & =1+4a_2\Zb_{1212}+(10a_3-10a_1a_2)\Zb_{212} \\
& \quad -(4a_4+9a_1a_3+3a_2^2-9a_1^2a_2)\Zb_{12}\\
& \quad -(54a_5-459a_1a_4-1188a_2a_3-108a_1^2a_3+1080a_1a_2^2+108a_1^3a_2)\Zb_2;\\
\Zb_{12121}^2 & =3\Zb_{2121}+3a_1\Zb_{121}+(13a_2+2a_1^2)\Zb_{21}+(2a_3+7a_1a_2+a_1^3)\Zb_1;\\
\Zb_{21212}^2 & =\Zb_{1212}+5a_2\Zb_{12}+(6a_3-5a_1a_2)\Zb_2;\\
\Zb_{12121}\Zb_{21212} & =\Zb_{1212}+\Zb_{2121}+a_1\Zb_{121}+a_1\Zb_{212}+(8a_2+a_1^2)\Zb_{12}+(8a_2+a_1^2)\Zb_{21}\\
& \quad +(4a_3+8a_1a_2+a_1^3)\Zb_1+(10a_3+6a_1a_2+a_1^3)\Zb_2 \\
& \quad +(-4a_4+a_1a_3+13a_2^2+15a_1^2a_2+a_1^4)\pt;\\
\Zb_{12121}\Zb_{1212} & =\Zb_{121}+3\Zb_{212}+4a_1\Zb_{12}+3a_1\Zb_{21}+(8a_2+4a_1^2)\Zb_1+(13a_2+5a_1^2)\Zb_2\\
& \quad +(a_3+16a_1a_2+5a_1^3)\pt;\\
\Zb_{12121}\Zb_{2121} & =2\Zb_{121}+2a_1\Zb_{21}+(4a_2+a_1^2)\Zb_1;\\
\Zb_{21212}\Zb_{1212} & =2\Zb_{212}+a_1\Zb_{12}+4a_2\Zb_2;\\
\Zb_{21212}\Zb_{2121} & =\Zb_{121}+\Zb_{212}+a_1\Zb_{12}+a_1\Zb_{21}+(5a_2+a_1^2)\Zb_1+(8a_2+a_1^2)\Zb_2\\
& \quad +(3a_3+6a_1a_2+a_1^3)\pt;\\
\Zb_{12121}\Zb_{121} & =3\Zb_{21}+2a_1\Zb_1;\\
\Zb_{12121}\Zb_{212} & =2\Zb_{12}+\Zb_{21}+2a_1\Zb_1+3a_1\Zb_2+(4a_2+3a_1^2)\pt;\\
\Zb_{21212}\Zb_{121} & =\Zb_{12}+2\Zb_{21}+2a_1\Zb_1+2a_1\Zb_2+(4a_2+2a_1^2)\pt;\\
\Zb_{21212}\Zb_{212} & =\Zb_{12};\\
\Zb_{1212}^2 & =2\Zb_{12}+a_1\Zb_2;\\
\Zb_{2121}^2 & =2\Zb_{21}+a_1\Zb_1;\\
\Zb_{1212}\Zb_{2121} & =2\Zb_{12}+2\Zb_{21}+3a_1\Zb_1+4a_1\Zb_2+(4a_2+4a_1^2)\pt;\\
\Zb_{12121}\Zb_{12} & =\Zb_1+3\Zb_2+3a_1\pt;\\
\Zb_{12121}\Zb_{21} & =\Zb_1;\\
\Zb_{21212}\Zb_{12} & =\Zb_2;\\
\Zb_{21212}\Zb_{21} & =\Zb_1+\Zb_2+a_1\pt;\\
\Zb_{1212}\Zb_{121} & =2\Zb_1+3\Zb_2+4a_1\pt;\\
\Zb_{1212}\Zb_{212} & =\Zb_2;\\
\Zb_{2121}\Zb_{121} & =\Zb_1;\\
\Zb_{2121}\Zb_{212} & =\Zb_1+2\Zb_2+2a_1\pt.
\end{align*}
\end{ntt}

\bibliographystyle{plain}

\end{document}